\theoremstyle{plain}
\newtheorem{theorem}{Theorem}[section]
\newtheorem*{theorem*}{Theorem}
\newtheorem{lemma}[theorem]{Lemma}
\newtheorem{proposition}[theorem]{Proposition}
\newtheorem*{proposition*}{Proposition}
\newtheorem*{conjecture*}{Conjecture}
\theoremstyle{definition}
\theoremstyle{remark}
\renewcommand{\eqref}[1]{\textnormal{(\ref{#1})}}
\numberwithin{equation}{section}
\newcommand{\rmi}{\mathrm{i}}
\newcommand{\Rcal}{\mathcal{R}}
\newcommand{\Qcal}{\mathcal{Q}}
\title[Localization and Geometrization in Plasmon Resonances]{Localization and geometrization in plasmon resonances and geometric structures of Neumann-Poincar\'e eigenfunctions}
\author{Eemeli Bl{\aa}sten}
\address{Helsinki, Finland.}
\email{eemeli@countermail.com}
\author{Hongjie Li}
\address{Department of Mathematics, Hong Kong Baptist University, Kowloon, Hong Kong SAR.}
\email{hongjie\_li@yeah.net}
\author{Hongyu Liu}
\address{Department of Mathematics, Hong Kong Baptist University, Kowloon, Hong Kong SAR.}
\email{hongyu.liuip@gmail.com, hongyuliu@hkbu.edu.hk}
\author{Yuliang Wang}
\address{Department of Mathematics, Hong Kong Baptist University, Kowloon, Hong Kong SAR.}
\email{yuliang@hkbu.edu.hk}
\begin{document}

\begin{abstract}

This paper reports some novel and intriguing discoveries about the localization and geometrization phenomenon in plasmon resonances and the intrinsic geometric structures of 
Neumann-Poincar\'e eigenfunctions. It is known that plasmon resonance generically occurs in the quasi-static regime where the size of the plasmonic inclusion is sufficiently small 
compared to the wavelength. In this paper, we show that the global smallness condition on the plasmonic inclusion can be replaced by a local high-curvature condition, and the plasmon
resonance occurs locally near the high-curvature point of the plasmonic inclusion. We provide partial theoretical explanation and link with the geometric structures of the Neumann-
Poincar\'e (NP) eigenfunctions. The spectrum of the Neumann-Poincar\'e operator has received significant attentions in the literature. We show for the first time some intrinsic geometric 
structures of the Neumann-Poincar\'e eigenfunctions near high-curvature points.

\medskip 
\noindent{\bf Keywords.} plasmonics, localization, geometrization, high-curvature, Neumann-Poincar\'e eigenfunctions 
 
\medskip
\noindent{\bf Mathematics Subject Classification (2010).} 35R30, 35B30, 35Q60, 47G40

\end{abstract}

\maketitle

\section{Introduction}\label{sec:Intro}

There is considerable interest in the mathematical study of plasmon materials in recent years. Plasmon materials are a type of metamaterials that are artificially engineered to allow the 
presence of negative material parameters. We refer to \cite{ADM,AMRZ,AKL,KLO}, \cite{Ack13,Ack14,ARYZ,Bos10,Brl07,CKKL,Klsap,LLL,GWM1,GWM2,GWM3,GWM4,GWM6,GWM7,GWM8,GWM9,Pen1,Pen2,Ves} 
and \cite{AKKY,AKKY2,AKM1,AKM2,DLL,LiLiu2d,LiLiu3d,LLL2,KM,LLBW} and the references therein for the relevant studies in acoustics, 
electromagnetism and elasticity, respectively.

One peculiar and intriguing phenomenon associated with the plasmon materials is the so-called anomalous resonance \cite{GWM3}. Mathematically, the plasmon resonance is associated
to the infinite dimensional kernel of a certain non-elliptic partial differential operator (PDO). In fact, the presence of negative material parameters breaks the ellipticity of the underlying 
partial differential equations (PDEs) that govern the various physical phenomena. Consequently, the non-elliptic PDO may possess a nontrivial kernel, which in turn may induce various
resonance phenomena due to appropriate external excitations. In \cite{Ack13}, applying techniques from the layer potential theory, the plasmon resonance is connected to the spectrum of the
 classical Neumann-Poincar\'e (NP) operator. Indeed, the aforementioned nontrivial kernel function of the underlying non-elliptic PDO can be represented as a single-layer potential. In 
 order for plasmon resonance to occur, the density function of the above single-layer potential has to be an eigenfunction of the corresponding Neumann-Poincar\'e operator. In such a
 way, the plasmon parameters are also connected to the eigenvalues of the corresponding NP operator in a delicate way. The spectral properties of the NP operator were recently
 extensively investigated in the literature \cite{AKM1,DLL,HKL,HP,JK18,KK18,KLY,s25}. However, the corresponding studies are mainly concerned with the spectra of various NP operators
 in different geometric or physical setups. In this paper, we discover certain intrinsic geometric structures of the NP eigenfunctions. In fact, it is shown that the NP eigenfunctions as well as
 the associated single-layer potentials possess certain curvature-dependent behaviours locally near a boundary point. To our best knowledge, this is the first study in the literature on the
 intrinsic geometric properties of the NP eigenfunctions. The geometric results can be used to provide theoretical explanation of the localization and geometrization phenomenon in
 plasmon resonances, which is another novel and intriguing discovery in this paper, and also one of the major motivations for the investigation of the geometric structures of NP
 eigenfunctions. 

The localization and geometrization in wave scattering were discovered and proposed in \cite{BL}. It states that if a certain wave scattering phenomenon occurs associated with a small
object compared to the wavelength, then the similar phenomenon occurs for a ``big" object but locally near a high-curvature boundary point. Noting that the global smallness condition
means that the curvature is intrinsically high everywhere and hence the introduction of a local high-curvature condition is a natural one for the occurrence of the local scattering behaviour.
In \cite{BL}, the localization and geometrization phenomena were shown and justified in several time-harmonic scattering scenarios. In this paper, we show that the same principle actually
holds for the plasmon resonances. In fact, in many of the existing studies on plasmon resonances, the quasi-static approximation has played a critical role where the plasmonic inclusion is
of a size much smaller than the wavelength. There are also several studies that go beyond the quasi-static limit \cite{GWM3,KLO,s25,LLLW,Ngu2}. In \cite{GWM3}, double negative
materials are employed in the shell and in \cite{Ngu2}, in addition to the employment of double negative materials, a so-called double-complementary medium structure is incorporated into
the construction of the plasmonic device. In \cite{KLO}, it is actually shown that resonance does not occur for the
classical core-shell plasmonic structure without the quasistatic approximation as long as the core and shell are strictly convex. In \cite{s25,LLLW}, in order for the plasmon
resonances to occur beyond the quasi-static approximation, the corresponding plasmonic configuration has to be designed in a subtle and delicate way. Nevertheless, we show that for
a plasmonic structure that is resonant in the quasi-static regime but non-resonant out of the quasi-static regime, the resonance always occurs locally near a high-curvature boundary point of
the plasmonic inclusion. That is, the localization and geometrization phenomenon occurs for the plasmon resonances. 
This is mainly demonstrated by certain generic numerical examples. To seek a theoretical explanation, it naturally leads to the investigation of the geometric properties of the NP
eigenfunctions as well as the associated single-layer potentials near a high-curvature boundary point. 

The focus of our study to is present the novel and intriguing discoveries on the localization and geometrization in plasmon resonances as well as the intrinsic geometric structures of the NP
eigenfunctions. We present our results mainly for the two-dimensional case though the extension to the three-dimensional case is also appealing. Moreover, in addition to the theoretical
analysis, we resort to extensive numerical experiments in our study. 

The rest of the paper is organized as follows. In Sections 2 and 3, we briefly discuss the plasmon resonances in the electrostatic and quasi-static cases. Section 4 presents the localization
and geometrization phenomenon in the plasmon resonances. In Section 5, we investigate the geometric structures of the NP eigenfunctions. The paper is concluded in Section 6 with some
relevant discussions.

\section{Plasmon resonance in electrostatics and spectral system of NP operator}

Let $D$ be a bounded domain in $\mathbb{R}^2$ with a $C^2$-smooth boundary $\partial D$ and a connected complement $\mathbb{R}^2\backslash\overline{D}$. Consider a dielectric
medium configuration as follows,
\begin{equation}\label{eq:config1}
\epsilon_\delta(x)=\begin{cases}
\epsilon_c+\mathrm{i}\delta,\quad x\in D,\\
\ \ 1,\hspace*{1cm} x\in\mathbb{R}^2\backslash\overline{D},
\end{cases}
\end{equation}
where $\epsilon_c\in\mathbb{R}_-$ and $\delta\in\mathbb{R}_+$. Let $u\in H_{loc}^1(\mathbb{R}^2)$ signify the electric field associated with the medium configuration \eqref{eq:config1},
 and it satisfies the following PDE system,
\begin{equation}\label{eq:elect1}
\begin{cases}
& \nabla\cdot(\epsilon_\delta(x)\nabla u_{\delta}(x))=f(x),\quad x\in\mathbb{R}^2,\medskip\\
& \displaystyle{u(x)=\mathcal{O}\left(\frac{1}{|x|}\right)}\quad\mbox{as\ $|x|\rightarrow \infty$},
\end{cases}
\end{equation} 
where $f\in L^2(\mathbb{R}^2)$ is compactly supported in $\mathbb{R}^2\backslash\overline{D}$ and 
\[
\int_{\mathbb{R}^2}\ f(x)\ dx=0. 
\] 
Associated with the electrostatic system \eqref{eq:elect1}, the configuration $(\epsilon_\delta, f)$ is said to be {\it resonant} if there holds
\begin{equation}\label{eq:res1}
\mathbf{E}_\delta(\epsilon_\delta, f):=\frac \delta 2\int_{D} |\nabla u_\delta|^2\ dx\rightarrow \infty\quad \mbox{as}\ \ \delta\rightarrow +0. 
\end{equation}
The condition \eqref{eq:res1} indicates that if plasmon resonance occurs, then highly oscillating behaviours are exhibited by the
resonant field around the plasmon inclusion. Mathematically, the resonance is induced by the nontrivial kernel of the non-elliptic PDO (partial differential operator) 
\begin{equation}\label{eq:pdo1}
L_{\epsilon_0} u:=\nabla\cdot(\epsilon_0\nabla u),
\end{equation} 
where $\epsilon_0$ is $\epsilon_\delta$ with $\delta$ formally taken to be zero. The kernel of $L_{\epsilon_0}$ consists of nontrivial functions satisfying
\begin{equation}\label{eq:kernel1}
u\in H_{loc}^1(\mathbb{R}^2);\quad L_{\epsilon_0} u(x)=0,\quad x\in \mathbb{R}^2;\quad u(x)=\mathcal{O}\left(\frac{1}{|x|}\right)\ \mbox{as}\ |x|\rightarrow\infty. 
\end{equation}
It is noted that $\epsilon_c$ is allowed to be negative, and hence the PDO $L_{\epsilon_0}$ is a non-elliptic operator. Therefore, if the plasmon constant $\epsilon_c$ is properly chosen, 
$\mathrm{Ker}(L_{\epsilon_0})$ as defined in \eqref{eq:kernel1} can be nonempty, which in turn can induce resonance  as described in \eqref{eq:res1} for a properly chosen external 
source $f$.

The connection to the spectral system of the Neumann-Poincar\'e operator can be described as follows. By the layer-potential theory, one seeks a solution to \eqref{eq:kernel1} of the 
following form
\begin{equation}\label{eq:s1}
u(x)=S_{\partial D}[\varphi](x),\quad \varphi\in L_0^2(\partial D),
\end{equation}
where $L_0^2(\partial D)$ is the space of square integrable functions with zero average on $\partial D$, and $S_{\partial D}[\varphi]$ is the singe-layer operator defined as
\begin{equation}\label{eq:singlayer1}
S_{\partial D}[\varphi](x):=\int_{\partial D} G(x-y)\varphi(y)ds(y), \quad x\in\mathbb{R}^2,
\end{equation}
with
\begin{equation}\label{eq:fun0}
 G(x)=\frac{1}{2\pi}\ln |x|,
\end{equation}
being the fundamental solution of the Laplace operator in two dimensions. On the boundary $\partial D$, the single layer potential enjoys the following jump relationship
\begin{equation}\label{ju}
\partial_{\nu} S_{\partial D}[\varphi]|_{\pm}(x)=\left( \pm \frac{1}{2} +  K_{\partial D}^*\right)[\varphi](x), \quad x\in\partial D,
\end{equation}
where $\partial_{\nu}$ is the outward unit normal to $\partial D$ and $\pm$ indicate the limits to $\partial D$ from outside and inside of $D$, respectively. In \eqref{ju}, the operator $K_{\partial D}^*$ is defined as
\begin{equation}\label{K0}
 K_{\partial D}^*[\varphi](x)=\frac{1}{2\pi} \int_{\partial D} \frac{\langle x-y, \nu_x\rangle }{|x-y|^2} \varphi(y) ds(y), \quad x\in \partial D,
\end{equation}
which is called the Neumann-Poincar\'e (NP) operator. By matching the transmission conditions across the boundary,
\begin{equation}
 u|_-=u|_+,\quad
 \epsilon_c  \partial_{\nu}u|_-=\partial_{\nu}u|_+,
\end{equation}
and with the help of the jump formula \eqref{ju}, solving the system \eqref{eq:kernel1} is equivalent to solving the following problem 
\begin{equation}\label{eq:spc1}
 K_{\partial D}^*[\varphi](x)=\frac{\epsilon_c+1}{2(\epsilon_c-1)} \varphi(x), \quad x\in\partial D.
\end{equation}

Clearly, according to our discussion made above, for the occurrence of the plasmon resonances, there are two critical conditions to be fulfilled from a spectral perspective 
associated with the NP operator defined in \eqref{K0}. First, the plasmon constant $\epsilon_c$ should be properly chosen such that the parameter $\lambda(\epsilon_c)$ defined as
\begin{equation}\label{eq:lambda1}
\lambda(\epsilon_c) =\frac{\epsilon_c+1}{2(\epsilon_c-1)} ,
\end{equation}
belong to the spectrum of the NP operator $K^*$. Second, the single layer potential given in \eqref{eq:s1} associated with
the NP eigenfunction $\varphi$ according to \eqref{eq:spc1} should exhibit certain highly 
oscillating behaviours around the plasmonic inclusion. The second condition naturally leads to the investigation of the structures of the NP eigenfunctions. 

\section{Plasmon resonance for small inclusions: quasi-static approximation}

In this section we consider the plasmon resonance for the wave scattering in the quasi-static regime. That is, the size of the plasmonic inclusion is much smaller than the underlying wavelength. To that end, we let $\Omega$ be a bounded domain in $\mathbb{R}^2$ with a $C^2$-smooth boundary $\partial \Omega$ and a connected complement $\mathbb{R}^2\backslash\overline{\Omega}$. Set $D= s\Omega$, where $s\in\mathbb{R}_+$ signifies a scaling parameter. Introduce the following plasmonic configuration, 
\begin{equation}\label{eq:q1}
\epsilon_{\mathcal{D},\delta}(x)=\begin{cases}
\epsilon_{c}+\mathrm{i}\delta,\quad x\in \mathcal{D},\\
\ \ 1,\hspace*{1cm} x\in\mathbb{R}^2\backslash\overline{\mathcal{D}},
\end{cases}
\end{equation}
where $\mathcal{D}=D$ or $\Omega$. Associated with the medium configuration \eqref{eq:q1} in $D$, the wave scattering is governed by the following Helmholtz system 
\begin{equation}\label{ge1}
\begin{cases}
 \nabla\cdot(\epsilon_{{D},\delta}(x)\nabla u_{\delta}(x)) + k^2 u_{\delta}(x)=f(x), \quad x\in\mathbb{R}^2,\\
\displaystyle{\lim_{|x|\rightarrow \infty}|x|^{1/2}\left( \frac{x}{|x|}\cdot\nabla u_{\delta} - \rmi k u_{\delta}\right)  \rightarrow 0\  \ \mbox{as} \quad |x|\rightarrow \infty,}
 \end{cases}
\end{equation}
where $k\in\mathbb{R}_+$ signifies a wavenumber and $f(x)$ is an external source that is compactly supported in $\mathbb{R}^2\backslash \overline{{D}}$. The last limit in \eqref{ge1} is referred to as the Sommerfeld radiation condition. \eqref{ge1} describes the transverse electromagnetic wave scattering (cf. \cite{s25}). 

Similar to the electrostatic case, if \eqref{eq:res1} occurs for the wave field in \eqref{ge1}, the configuration is said to be resonant. In order to study the plasmon resonance associated with the Helmholtz system \eqref{ge1}, by a straightforward scaling argument, the PDE system \eqref{ge1} can be transformed to 
\begin{equation}\label{ge2}
\nabla\cdot(\epsilon_{\Omega,\delta}(x)\nabla v_{\delta}(x)) + s^2k^2 v_{\delta}(x)= \tilde{f}(x), \quad x\in\mathbb{R}^2,
\end{equation}
where $v_\delta(x)=u_{\delta}(x/s)$ and $\tilde f(x)=f(x/s)$. In what follows, we introduce the following PDO, 
\begin{equation}\label{eq:pdo2}
(L_{\epsilon_{\Omega,0}} + s^2k^2) u:=\nabla\cdot(\epsilon_{\Omega,0}\nabla u) + s^2k^2 u.
\end{equation} 
Similar to our discussion in the previous section, for the occurrence of the plasmon resonance, one needs to determine a nontrivial kernel of $L_{\epsilon_{\Omega,0}} + s^2k^2$ associated with a proper choice of $\epsilon_c$. To that end, we introduce
\begin{align}
S_{\partial D}^k[\varphi](x):=& \int_{\partial D} G^k(x-y)\varphi(y)ds(y), \quad x\in\mathbb{R}^2,\label{eq:slk}\\
(K_{\partial D}^k)^*[\varphi](x):=& \int_{\partial D} \partial_{\nu_x}G^k(x-y)\varphi(y)ds(y)  \quad x\in\partial D,\label{eq:dlk}
\end{align}
where  
\[
 G^k(x)=-\frac{\rmi}{4} \mathrm{H}_0^1(k|x|),
\]
with $\mathrm{H}_0^1(t)$ zeroth-order Hankel function of the first kind. $S_{\partial D}^k$ and $(K_{\partial D}^k)^*$ are, respectively, the single-layer potential and
the  NP operator with a finite frequency $k\in\mathbb{R}_+$. According to our earlier discussion in Section 2, in order to study the plasmon resonance associated with \eqref{eq:q1}, 
it suffices to investigate the nontrivial kernel of the PDO $L_{\epsilon_{\Omega,0}} + s^2k^2$. Similar to the electrostatic case, by using the layer-potential techniques, the study is 
reduced to analyzing the spectral system of the NP operator $(K_{\partial \Omega}^{sk})^*$ and the highly oscillating behaviours of the single-layer potentials 
$S_{\partial \Omega}^{sk}[\varphi]$ with $\varphi$ being the NP eigenfunctions. Imposing the quasi-static condition,
\begin{equation}\label{eq:qs1}
s\cdot k\ll 1,
\end{equation} 
one has that
\begin{equation}\label{eq:fs1}
 -\frac{\rmi}{4} \mathrm{H}_0^1(sk|x|)=\frac{1}{2\pi}\ln|x| + \tau+\sum_{n=1}^{\infty}(b_n \ln(sk|x|)+c_n)(sk|x|)^{2n},
\end{equation}
where
\begin{equation}\label{eq:parameters1}
\begin{split}
& b_n=\frac{(-1)^n}{2\pi}\frac{1}{2^{2n}(n!)^2}, \quad c_n=-b_n\left( \gamma -\ln2-\frac{\pi \rmi}{2}-\sum_{j=1}^n\frac{1}{j} \right),\\
& \qquad\qquad \tau=\frac{1}{2\pi} (\ln(sk) + \gamma -\ln 2) -\frac{\rmi}{4},
\end{split}
\end{equation}
with $\gamma$ the Euler constant. Therefore one can derive the following asymptotic expansions
\begin{equation}\label{eq:ask}
S_{\partial \Omega}^{sk} = S_{\partial \Omega} +\tau \langle \cdot,1\rangle +(sk)^2\ln (sk) \Rcal^{sk},
\end{equation}
where $\Rcal^{sk}$ is a bounded operator from $L^2(\partial D)$ to $H^1(\partial D)$ and 
\begin{equation}\label{eq:akk}
  (K_{\partial \Omega}^{sk})^*=K_{\partial \Omega}^*+(sk)^2\ln (sk) \Qcal^{sk},
\end{equation}
where the operator $\Qcal^{sk}$ is a bounded operator from $L^2(\partial \Omega)$ to itself. Hence, under the quasi-static approximation \eqref{eq:qs1}, the plasmon resonance for the Helmholtz system \eqref{ge1} again relies on the spectral properties of the NP operator $K_{\partial \Omega}^*$ and the oscillating behaviours of the corresponding single-layer potentials that are same to the electrostatic case. In fact, it is rigorously justified in \cite{AMRZ, AKL} that under \eqref{eq:qs1} and 
\begin{equation}\label{eq:qs2}
 s^2 |\ln s| \delta^{-1} \ll1,
\end{equation}
the Helmholtz system \eqref{ge1} is resonant for $\epsilon_c$ chosen from the resonant electrostatic case. Instead of discussing more theoretical details about the plasmon resonance within the quasi-static approximation, we next present several numerical examples for demonstration. 

\begin{figure}[t]
\centering
\subfigure[]{
\includegraphics[width=0.2\textwidth]{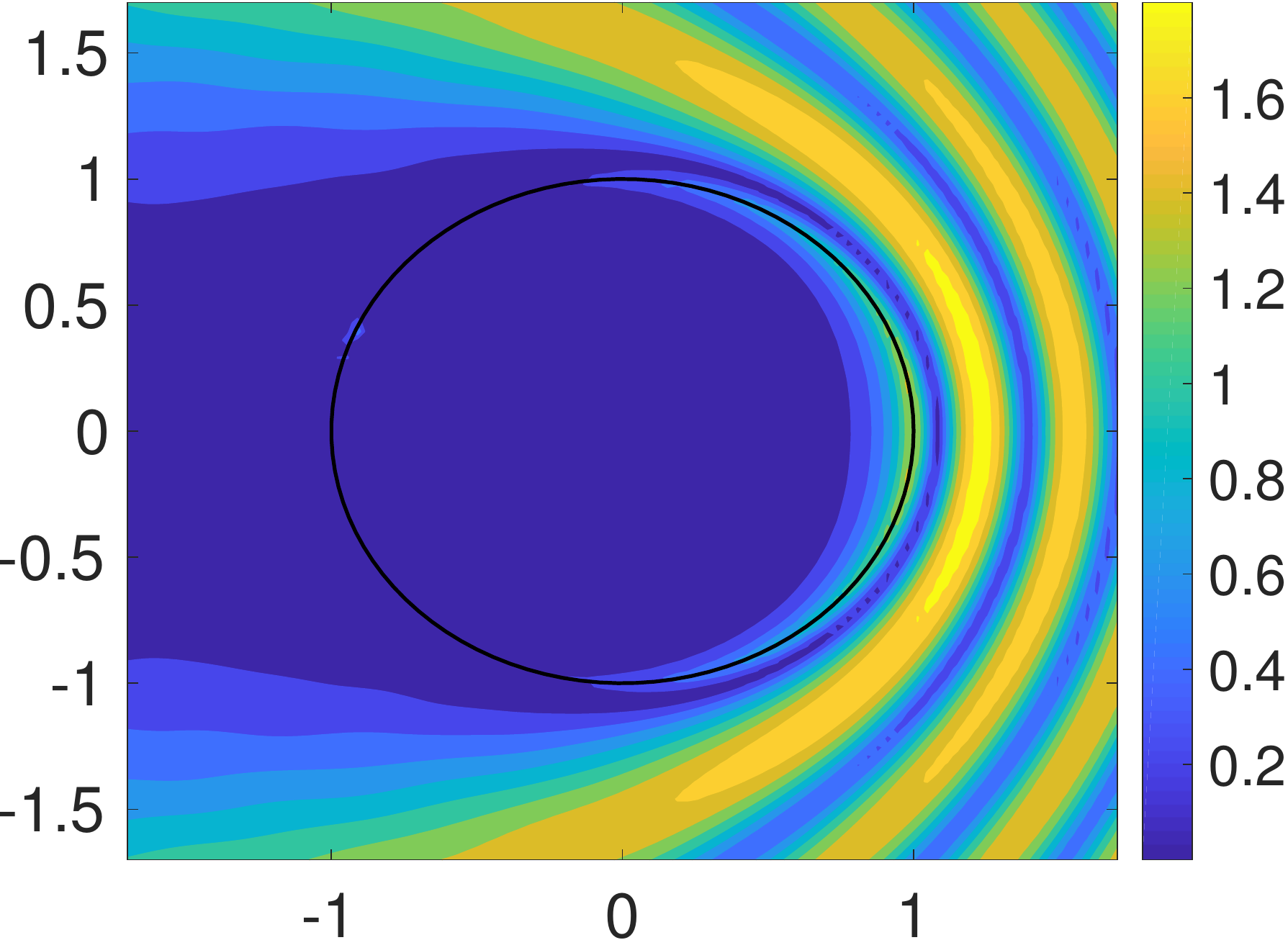}}
\subfigure[]{
\includegraphics[width=0.2\textwidth]{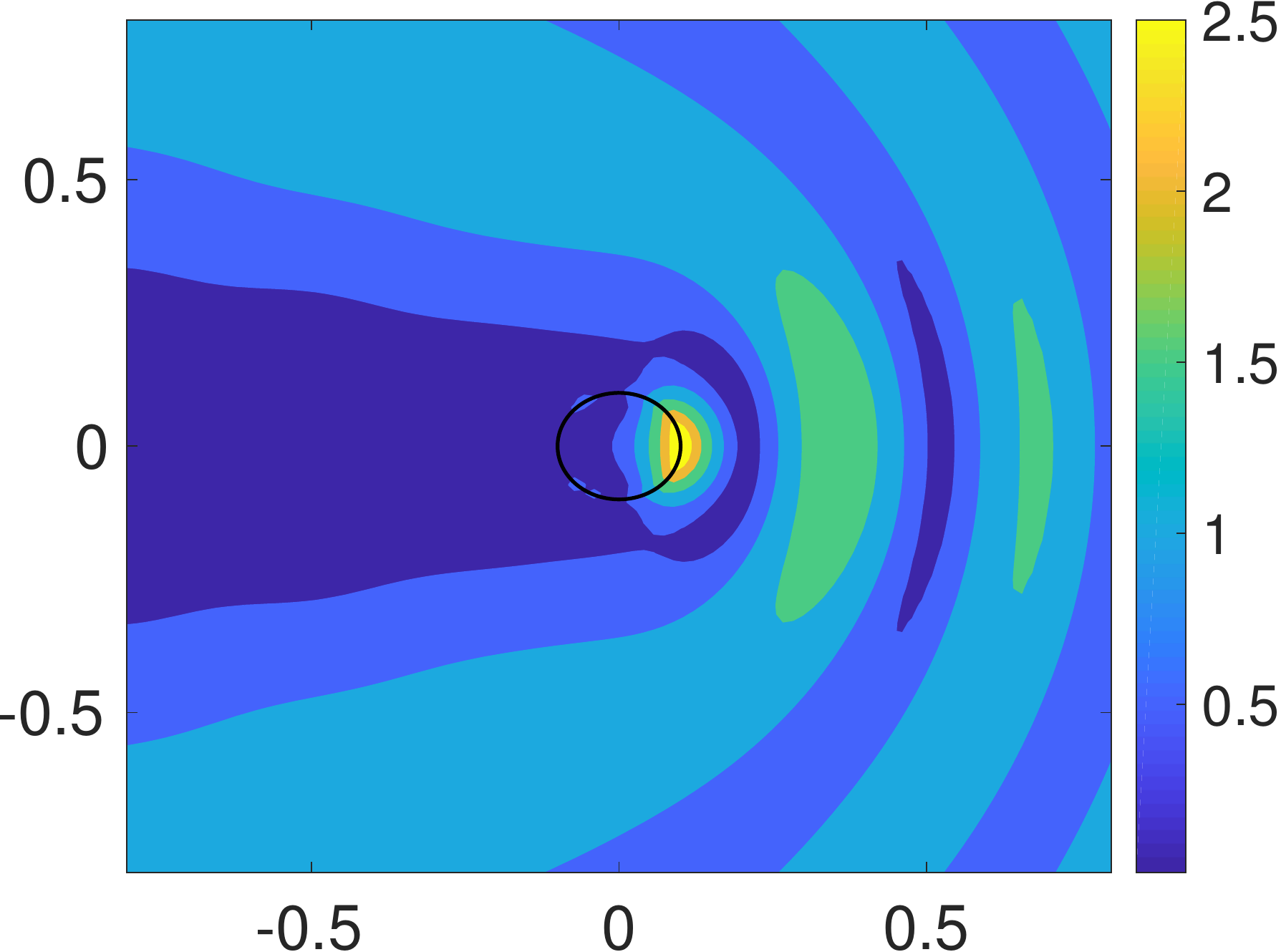}}
\subfigure[]{
\includegraphics[width=0.2\textwidth]{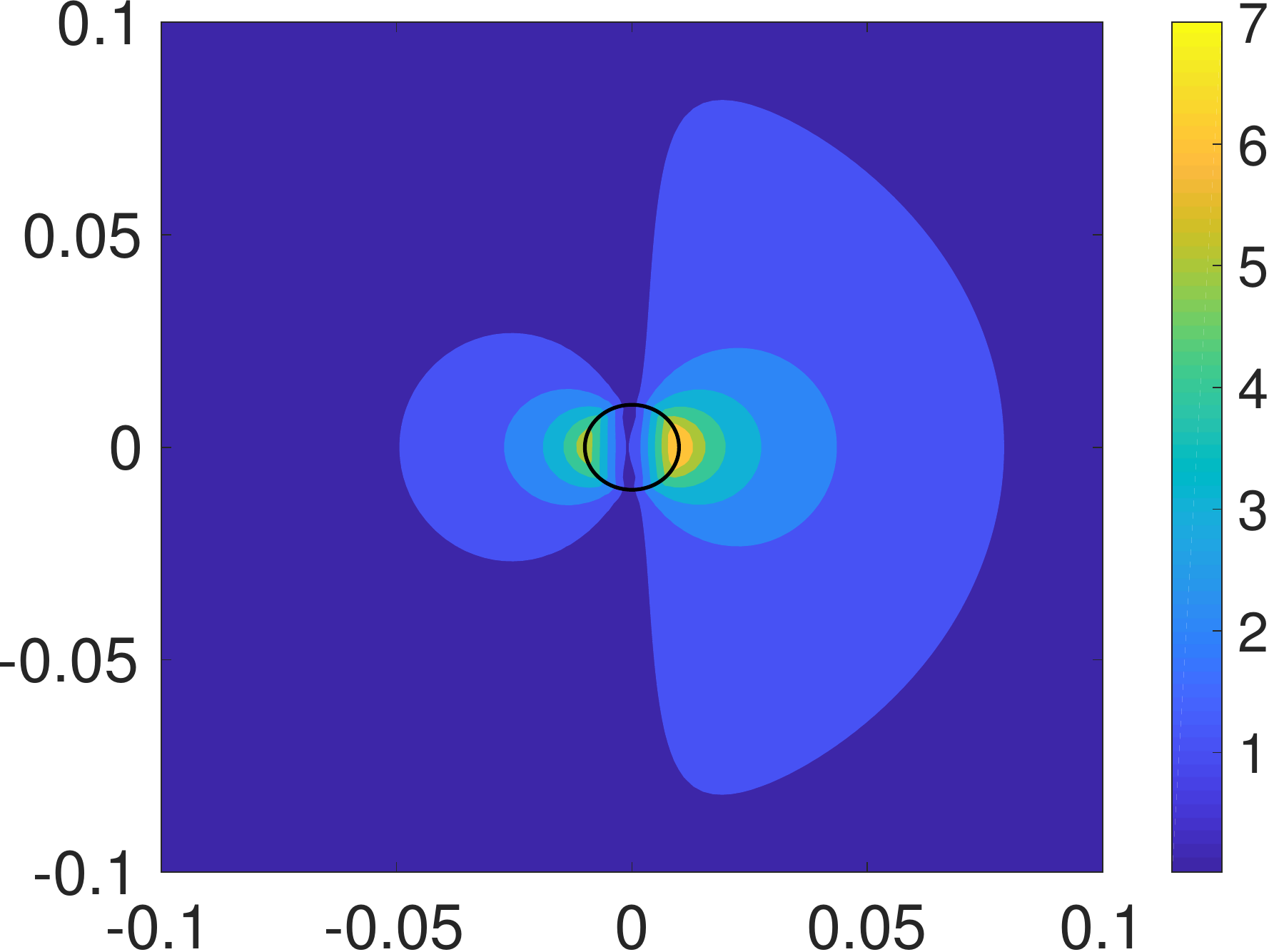}}
\subfigure[]{
\includegraphics[width=0.2\textwidth]{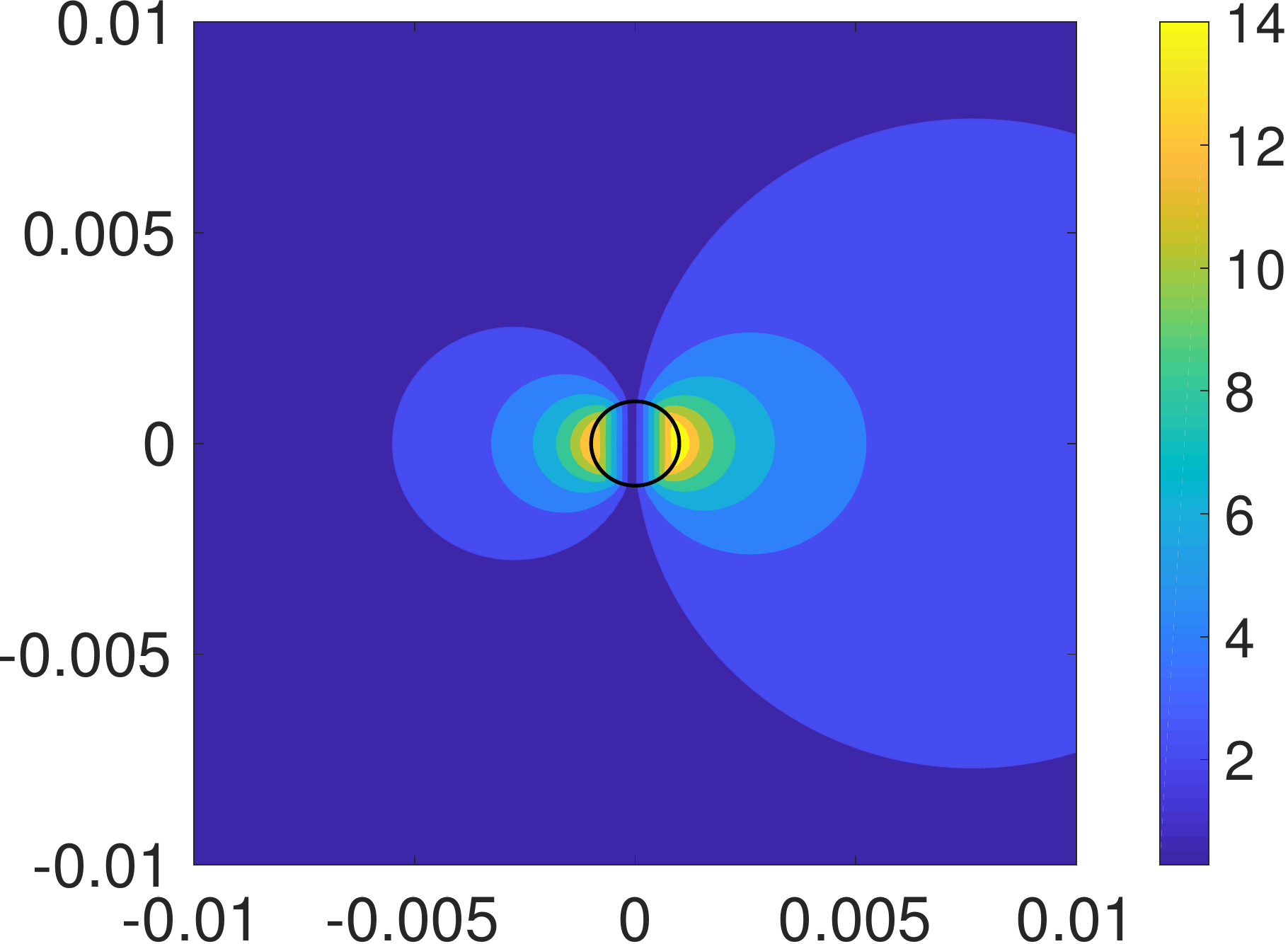}}\\
\caption{\label{figrs1} Moduli of the total wave fields of \eqref{ge1}, \eqref{eq:nn1} and \eqref{eq:nn2} with (a). $s=2$; (b).~$s=0.2$; (c). $s=0.02$; (d).~$s=0.002$.  }
\end{figure}

Consider a plasmon configuration of the form $\epsilon_{D,\delta}$ in \eqref{eq:q1} with
\begin{equation}\label{eq:nn1}
D=B_s(0),\quad \epsilon_c=-1,\quad \delta=0.001,
\end{equation}
where $B_s(0)$ is a central disc of radius $s\in\mathbb{R}_+$. The choice of $\epsilon_c=-1$ makes $\lambda(\epsilon_c)$ defined in \eqref{eq:lambda1} identically zero. 
It is noted that if $D$ is a central disk, then $0$ is actually the eigenvalue of $K_D^*$, and on the other hand, if $D$ is an arbitrary domain with a $C^2$ boundary, $K_D^*$ is 
a compact operator and $0$ is an accumulation point of its eigenvalues. Hence, with $\epsilon_c=-1$, the first condition for the occurrence of the plasmon resonance is fulfilled. 
For the corresponding Helmholtz system \eqref{ge1}, we choose
\begin{equation}\label{eq:nn2}
f=-\nabla\cdot(\epsilon_{D,\delta}\nabla u^i)-k^2 u^i,\quad u^i(x)=e^{\mathrm{i}k x\cdot d},\ k=10,\ d=(-1, 0). 
\end{equation}
That means, the wave scattering is caused by an incident plane wave which plays the role of an external source. In Fig.~ \ref{figrs1}, we plot the moduli of the total wave fields, namely $|u_\delta+u^i|$, against different parameters $s=2$, $s=0.2$, $s=0.02$ and $s=0.002$. The numerical results clearly show the critical role of the quasi-static approximation for the occurrence of the plasmon resonance. In fact, it can be seen that if the size of the plasmonic inclusion, namely $D=B_s(0)$, is not small enough compared to the wavelength, then resonance does not occur, and as $s$ becomes smaller, both conditions \eqref{eq:qs1} and \eqref{eq:qs2} are fulfilled, then resonance occurs.

\section{Localization and geometrization in plasmon resonance}\label{sect:4}

In this section, we consider the localization and geometrization for the plasmon resonance. We first present 
some numerical examples to illustrate this kind of peculiar phenomenon. Our numerical examples follow a similar setup as that 
specified in \eqref{eq:nn1} and \eqref{eq:nn2} with $s=2$. According to our study in the previous section, we know that resonance does not occur. However, we pull
out a part of the boundary of the plasmonic inclusion $D$ to form a boundary point with a relatively high curvature; see Fig.~\ref{figbc} for the geometric setup. 
Similar to the numerical experiments in Fig.~\ref{figrs1}, we numerically plot the total wave field associated to the plasmon inclusion as described above against 
the change of the curvature of the aforesaid boundary point; see Fig.~\ref{figr1}. 
\begin{figure}[t]
\centering
\includegraphics[width=0.2\textwidth]{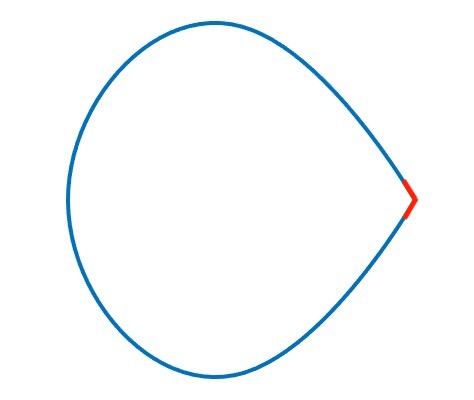}
\caption{\label{figbc} Geometry of the plasmonic inclusion $D$ with a high-curvature boundary point. }
\end{figure}
\begin{figure}[t]
\centering
\subfigure[]{
\includegraphics[width=0.2\textwidth]{i1.pdf}}
\subfigure[]{
\includegraphics[width=0.2\textwidth]{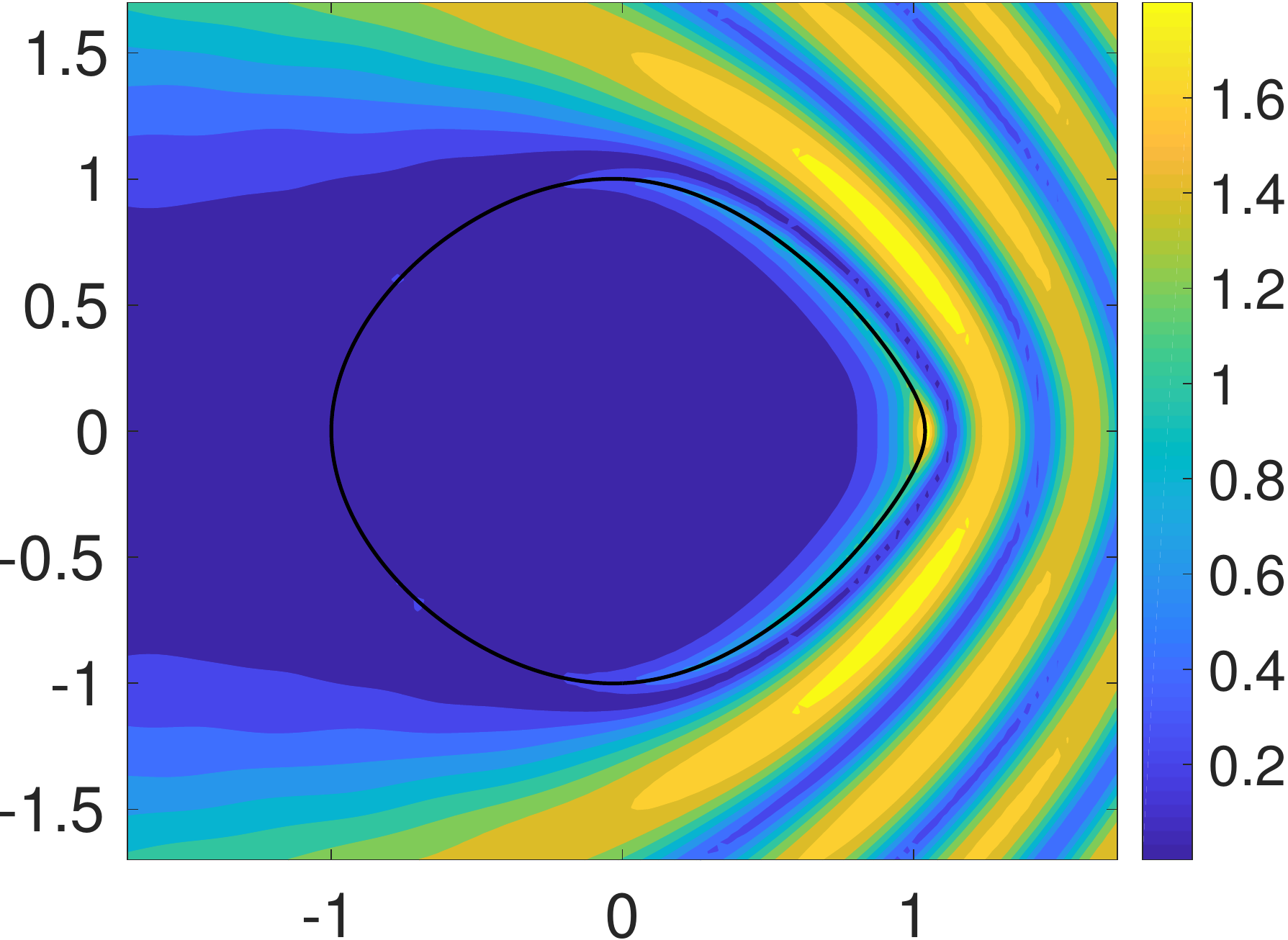}}
\subfigure[]{
\includegraphics[width=0.2\textwidth]{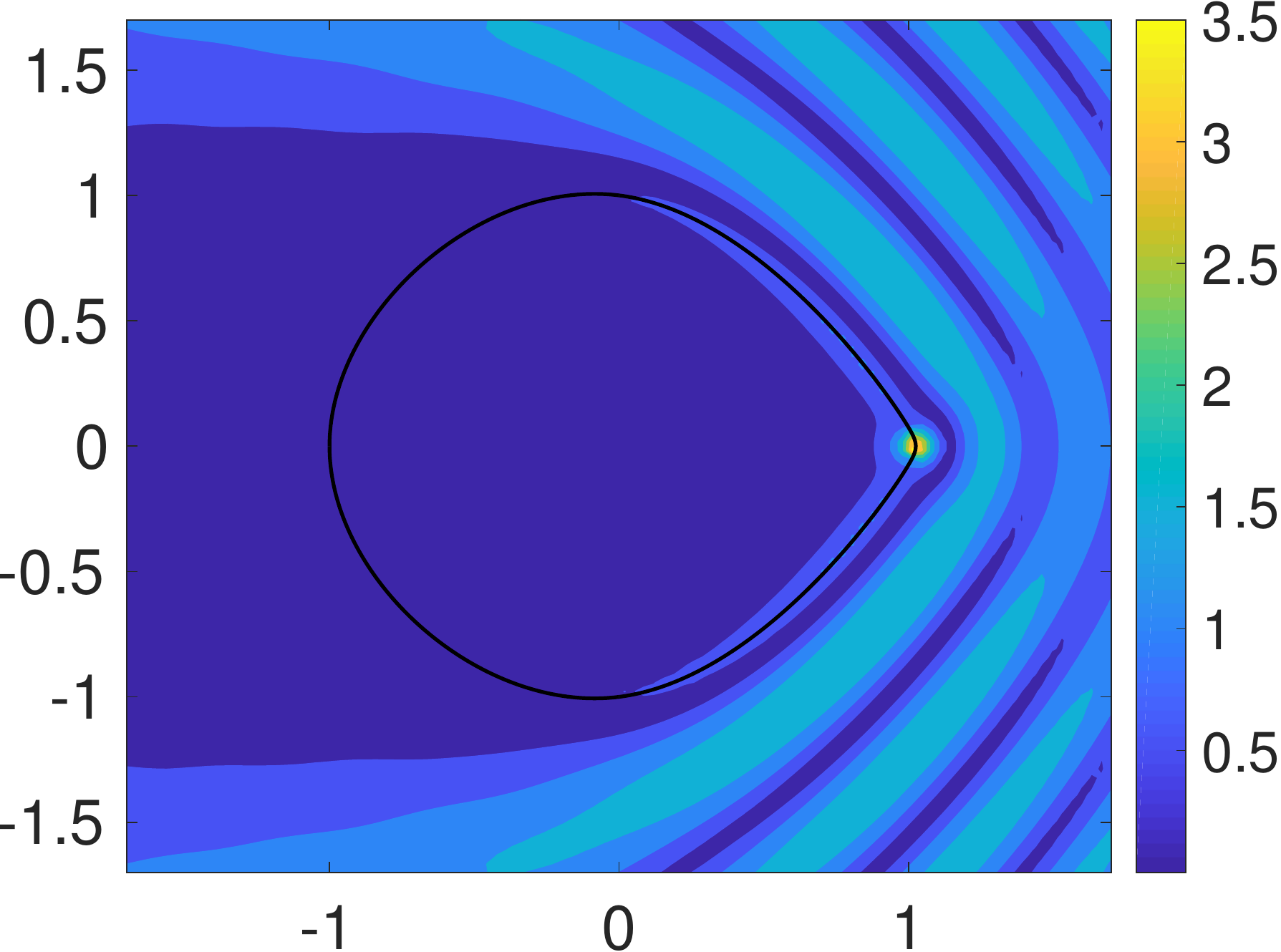}}
\subfigure[]{
\includegraphics[width=0.2\textwidth]{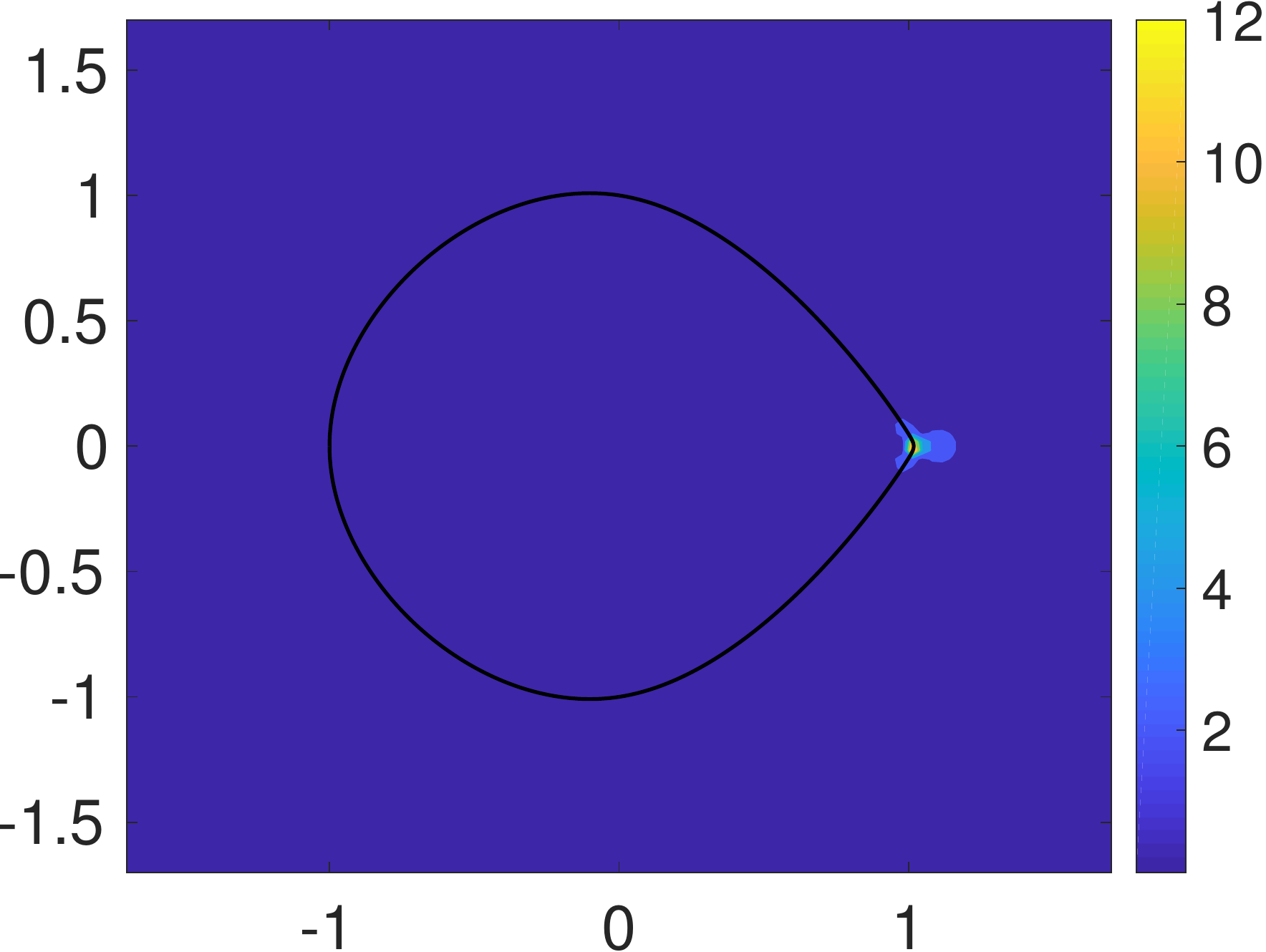}}\\
\subfigure[]{
\includegraphics[width=0.2\textwidth]{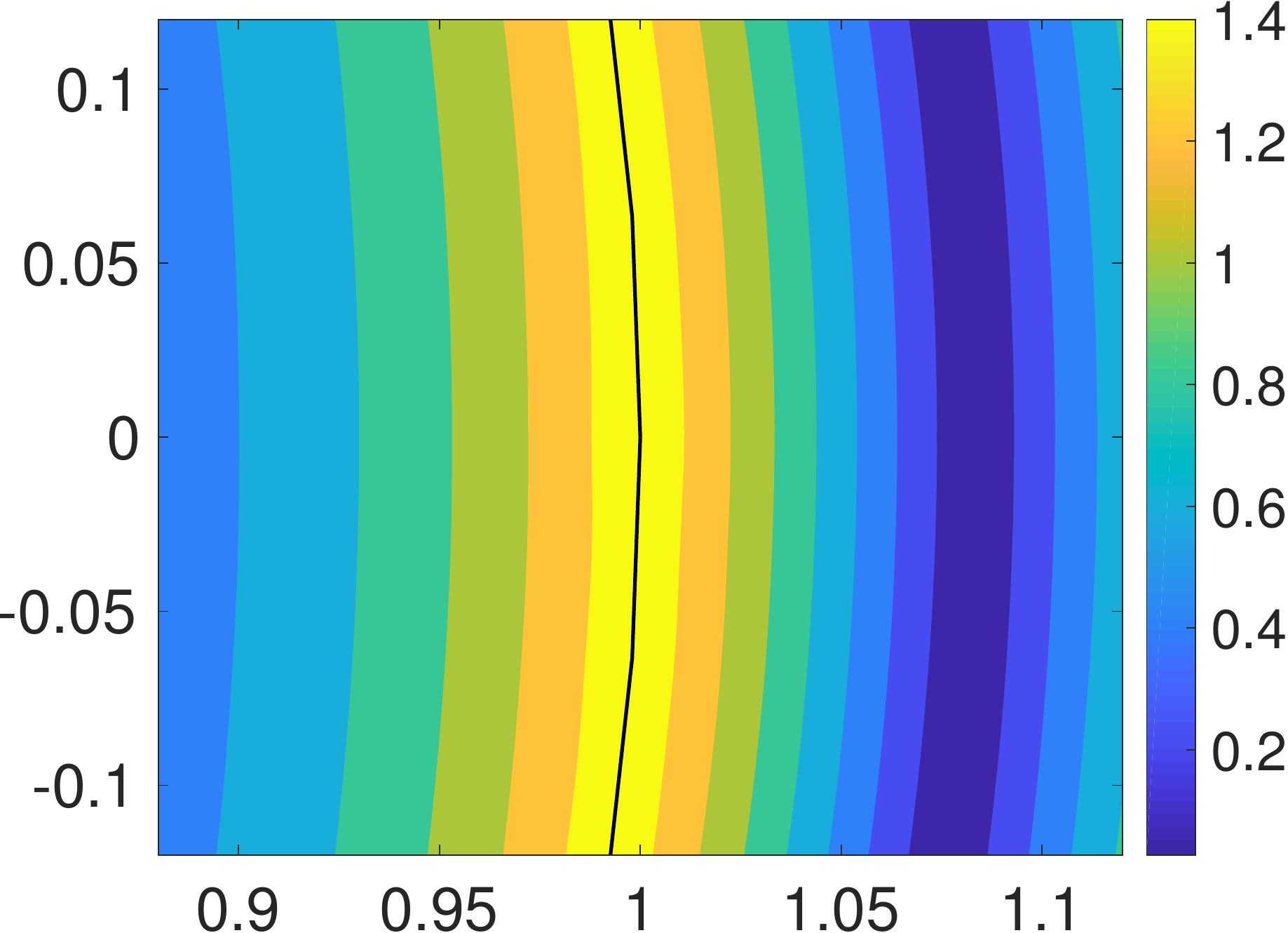}}
\subfigure[]{
\includegraphics[width=0.2\textwidth]{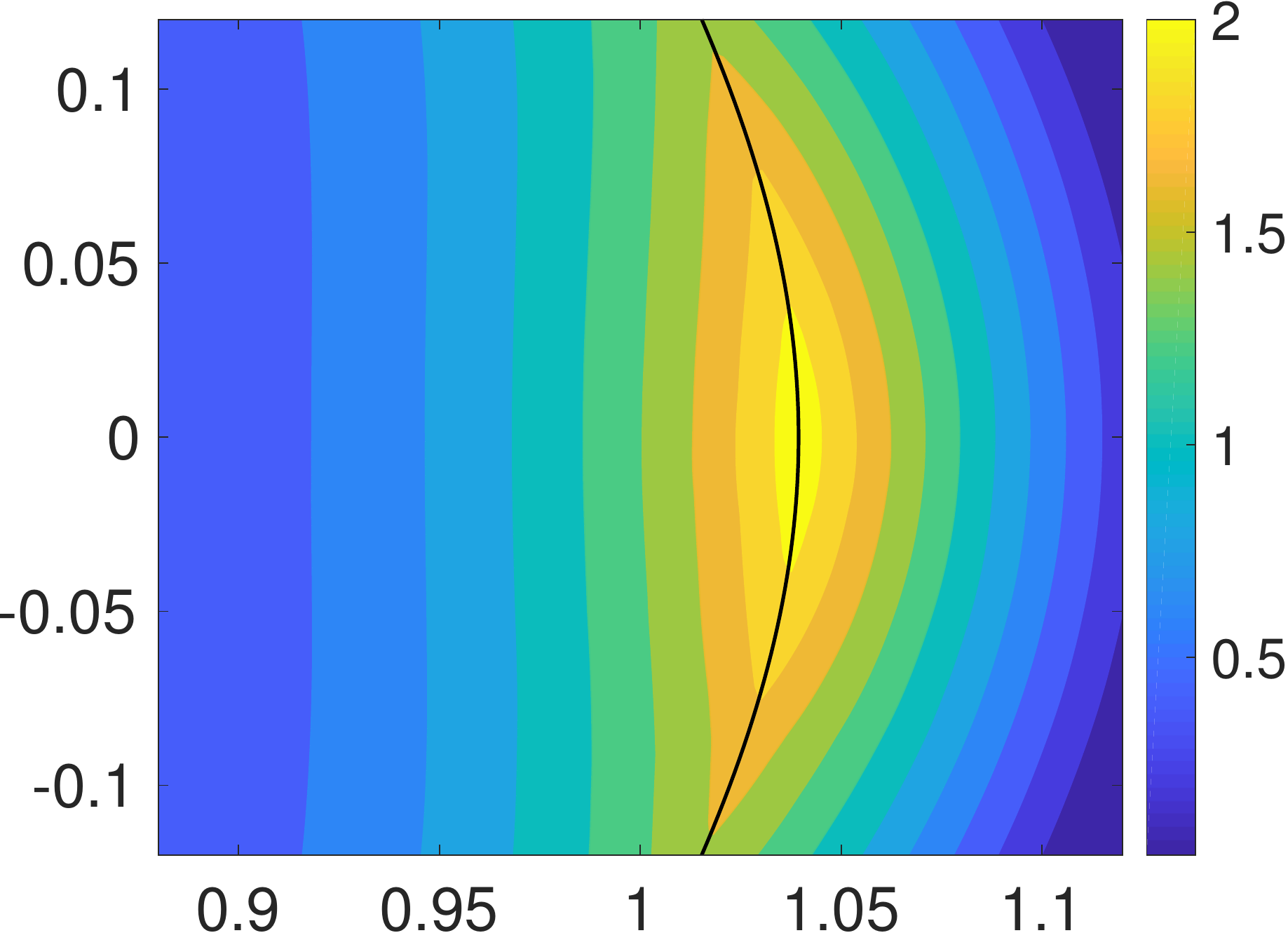}}
\subfigure[]{
\includegraphics[width=0.2\textwidth]{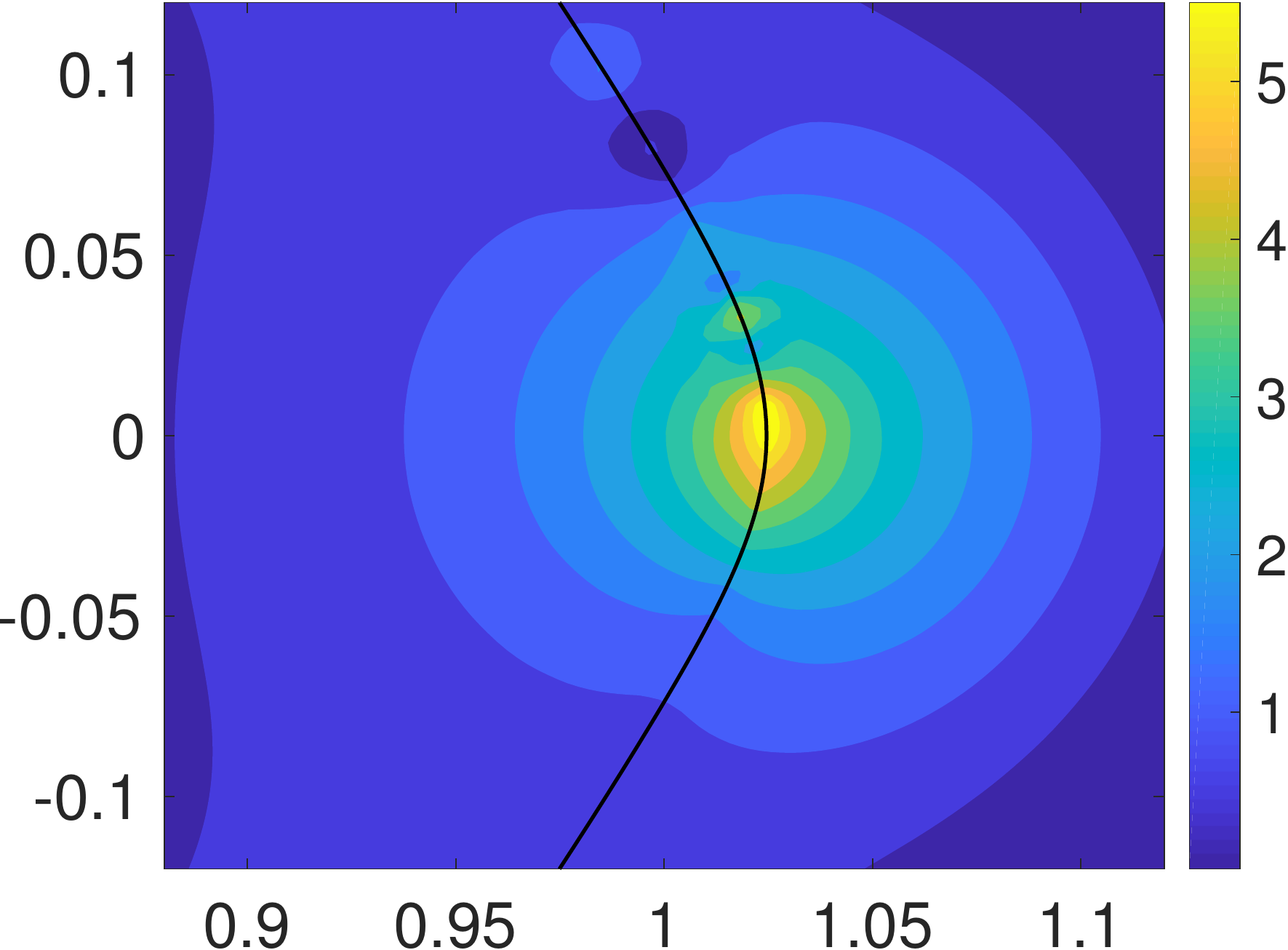}}
\subfigure[]{
\includegraphics[width=0.2\textwidth]{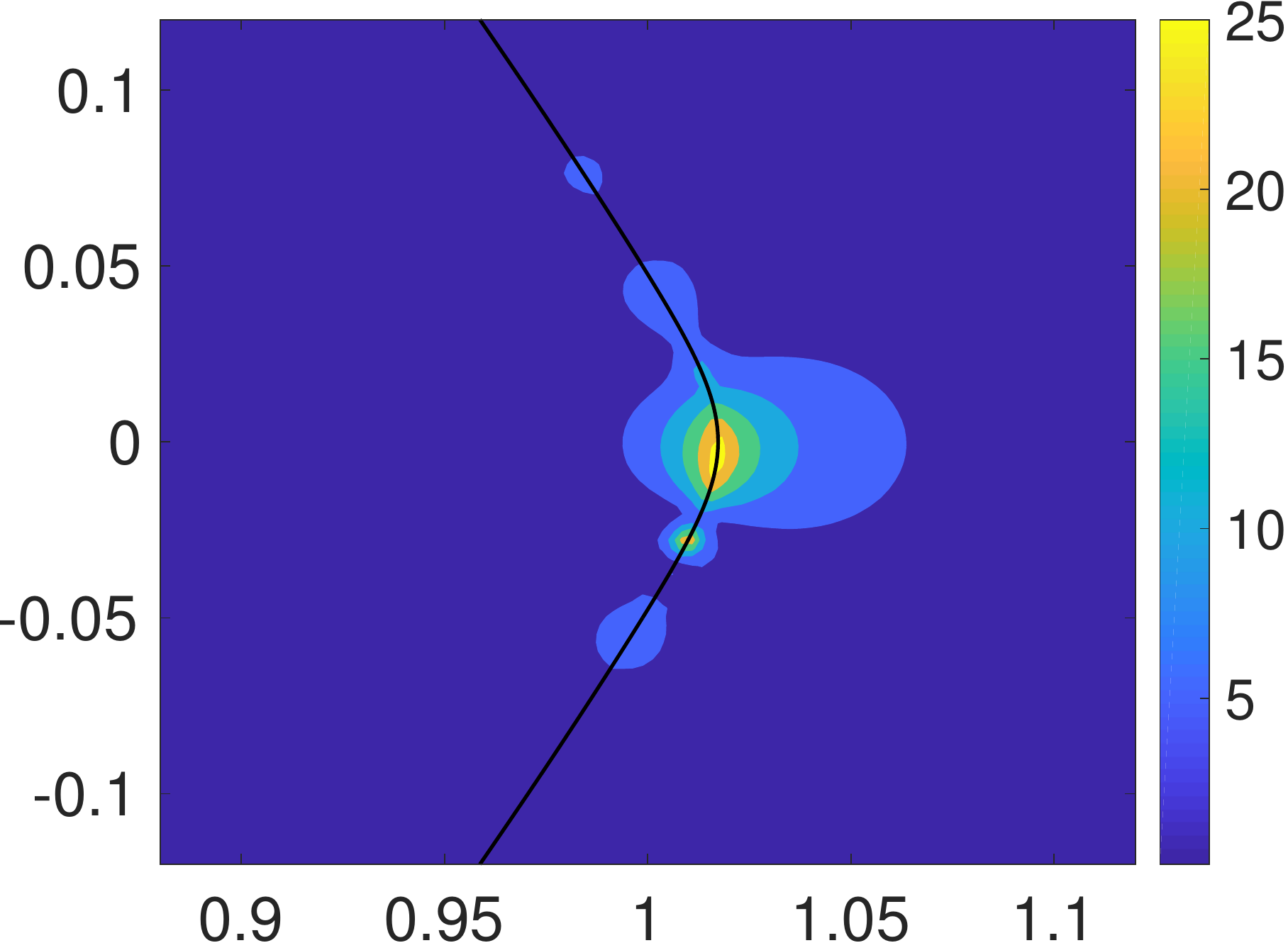}}\\
\caption{\label{figr1} Localization and geometrization phenomenon in the plasmon resonance. 
The first row plots the moduli of the total fields with increasing curvatures at the same boundary point. 
The second row plots the moduli of the total fields locally around the high-curvature point.}
\end{figure}
It can be readily seen that as the curvature of that boundary point increases to a certain degree, then resonance occurs locally 
around that high-curvature point. This is referred to as the localization and geometrization in the plasmon resonance. By localization, 
we mean that the resonance occurs only locally around a boundary point, whereas by geometrization, 
we mean that the global geometric smallness condition \eqref{eq:qs1} can be replaced by a locally high-curvature condition. 
At this point, we would like to present our novel viewpoint about the plasmon resonance. That is, on the one hand, the plasmonic parameter is unquestionably a critical 
ingredient for the occurrence of resonance, but on the other hand,  the quasi-static approximation, namely the smallness of the size of 
the plasmonic inclusion, is not the main cause for the resonance and instead, the high curvature is actually the main cause. 

Next we try to provide a theoretical explanation of the localization and geometrization phenomenon in the plasmon resonance. 
According to our earlier discussion on the plasmon resonances, respectively, in the electrostatic and quasi-static cases, one needs to 
study the quantitative properties of the eigenfunctions of the NP operator $(K_{\partial D}^k)^*$ in \eqref{eq:dlk} and the corresponding single-layer 
potential $S_{\partial D}^k[\varphi]$ in \eqref{eq:slk} locally around the high-curvature point of $\partial D$. To that end, let us consider a domain 
$D$ as plotted in Fig.~\ref{figbc} and $x_R$ be the vertex of the red part which possesses the largest curvature among all the boundary points. Set
\begin{equation}\label{eq:boundary parts}
\Gamma_1:=\partial D\cap B_\rho(x_R),\quad\Gamma_2:=\partial D\backslash\Gamma_1,
\end{equation}
where $\rho\in\mathbb{R}_+$ is sufficiently small. We have
\begin{proposition}\label{prop:1}
Let $\partial D$, $\Gamma_1$ and $\Gamma_2$ be described above. There holds
\begin{equation}\label{eq:cc1}
\big(K_{\partial D}^k\big)^*\big|_{\Gamma_1}=K_{\Gamma_1}^*+\mathcal{R}_{\Gamma_1,\rho}+\mathcal{T},
\end{equation}
where $\mathcal{T}$ is a smooth operator on $L^2(\Gamma_1)$, and $\mathcal{R}_{\Gamma_1,\rho}$ is a bounded operator on $L^2(\Gamma_1)$ satisfying 
$\|\mathcal{R}_{\Gamma_1,\rho}\|= \mathcal{O}\big((\rho k)^2\ln(\rho k)\big).$
\end{proposition}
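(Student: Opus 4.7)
The plan is to insert the series expansion \eqref{eq:fs1} of the Hankel kernel (with $s$ replaced by $1$) into \eqref{eq:dlk} and then sort the resulting pieces by regularity and $\rho$-dependence. Extending any $\varphi\in L^2(\Gamma_1)$ by zero to $\partial D$, the operator $(K_{\partial D}^k)^*|_{\Gamma_1}$ is simply the integral operator on $L^2(\Gamma_1)$ with kernel $\partial_{\nu_x}G^k(x-y)$, so the whole question reduces to decomposing this kernel on $\Gamma_1\times\Gamma_1$.

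Differentiating \eqref{eq:fs1} termwise in $x$, the constant $\tau$ is annihilated and $\partial_{\nu_x}G(x-y)$ reproduces exactly the kernel of $K_{\Gamma_1}^*$, giving the leading piece of the decomposition. The remainder splits into a polynomial part $\sum_{n\ge1}\partial_{\nu_x}[c_n(k|x-y|)^{2n}]$ and a logarithmic part $\sum_{n\ge1}\partial_{\nu_x}[b_n\ln(k|x-y|)(k|x-y|)^{2n}]$. A direct computation gives, for the polynomial summands, a kernel of the form $2nc_n k^{2n}|x-y|^{2n-2}\langle x-y,\nu_x\rangle$, which is jointly smooth in $(x,y)$ on $\Gamma_1\times\Gamma_1$ for every $n\ge1$; using the rapid decay of the $c_n$ from \eqref{eq:parameters1}, their sum defines a smoothing integral operator that I would identify as $\mathcal{T}$.

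For the logarithmic summands an analogous computation produces the kernel
\[
b_nk^{2n}|x-y|^{2n-2}\bigl(1+2n\ln(k|x-y|)\bigr)\langle x-y,\nu_x\rangle,
\]
whose absolute value, using the trivial bound $|\langle x-y,\nu_x\rangle|\le|x-y|$, is controlled by $C_nk^{2n}|x-y|^{2n-1}(1+|\ln(k|x-y|)|)$. Since $\Gamma_1\subset B_\rho(x_R)$ has arclength of order $\rho$, a Schur-test estimate yields
\[
\sup_{x\in\Gamma_1}\int_{\Gamma_1}C_nk^{2n}|x-y|^{2n-1}(1+|\ln(k|x-y|)|)\,ds(y)\ \lesssim\ (k\rho)^{2n}|\ln(k\rho)|,
\]
and the same bound with the roles of $x,y$ swapped. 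Thus the $n$th term contributes an $L^2(\Gamma_1)$ operator norm of order $(k\rho)^{2n}|\ln(k\rho)|$; the $n=1$ term dominates for $k\rho\ll 1$, giving the claimed $\mathcal{O}((k\rho)^2|\ln(k\rho)|)$ estimate on $\mathcal{R}_{\Gamma_1,\rho}$.

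The main obstacle is essentially bookkeeping: one must justify termwise differentiation of \eqref{eq:fs1}, which follows from uniform convergence of the Hankel series on compact subsets of $\mathbb{R}^2\setminus\{0\}$, and operator-norm convergence of the series defining $\mathcal{T}$ and $\mathcal{R}_{\Gamma_1,\rho}$, both automatic from the explicit formulas in \eqref{eq:parameters1}. No deep analytical difficulty is expected; the conceptual content of the proposition is the observation that the locality of $\Gamma_1$ enters only through the prefactor $(k|x-y|)^2$ already built into \eqref{eq:fs1}, so one gets a small correction as soon as $k\rho\ll 1$, regardless of the global size of $\partial D$.
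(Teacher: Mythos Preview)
Your argument is correct and more explicit than the paper's. The analytic core is the same --- expand $\partial_{\nu_x}G^k$ via the Hankel series and observe that on an arc of diameter $\rho$ the correction to the static NP kernel has operator norm $\mathcal{O}\big((k\rho)^2|\ln(k\rho)|\big)$ --- but the two proofs allocate the pieces of the decomposition differently. The paper first splits the integration domain as $\partial D=\Gamma_1\cup\Gamma_2$ and identifies $\mathcal{T}$ with the cross term $(K_{\Gamma_2}^k)^*|_{\Gamma_1}$, which is smoothing because the kernel is $C^\infty$ for $x\in\Gamma_1$, $y\in\Gamma_2$; it then extracts $\mathcal{R}_{\Gamma_1,\rho}$ from $(K_{\Gamma_1}^k)^*$ by invoking the scaled expansion \eqref{eq:akk} ``by direct calculations.'' Your zero-extension convention kills the $\Gamma_2$ contribution outright, and you instead take $\mathcal{T}$ to be the smooth polynomial tail $\sum_{n\ge1}c_n(k|x-y|)^{2n}$ of the Hankel series, leaving only the logarithmic terms in $\mathcal{R}_{\Gamma_1,\rho}$. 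Both partitions satisfy the stated conclusion. Your route has the virtue of making the Schur-type norm estimate explicit rather than deferring to \eqref{eq:akk}; the paper's route is the one that extends naturally if $(K_{\partial D}^k)^*|_{\Gamma_1}$ is meant to act on densities living on all of $\partial D$ rather than only on $\Gamma_1$.
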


\begin{proof}{
From \eqref{eq:boundary parts}, one has that 
\[
 \big(K_{\partial D}^k\big)^*\big|_{\Gamma_1} = \big(K_{\Gamma_1}^k\big)^*\big|_{\Gamma_1} + \big(K_{\Gamma_2}^k\big)^*\big|_{\Gamma_1},
\]
where $\big(K_{\Gamma_2}^k\big)^*\big|_{\Gamma_1}$ is smooth operator on $L^2(\Gamma_1)$. Since $\Gamma_1:=\partial D\cap B_\rho(x_R)$ 
with $\rho\in\mathbb{R}_+$ sufficiently small, from the asymptotic expression for the NP operator in \eqref{eq:akk}, 
one has by direct calculations that 
\[
  \big(K_{\Gamma_1}^k\big)^*\big|_{\Gamma_1} =K_{\Gamma_1}^*|_{\Gamma_1}+\mathcal{R}_{\Gamma_1,\rho},
\]
where $\mathcal{R}_{\Gamma_1,\rho}$ is a bounded operator on $L^2(\Gamma_1)$ satisfying 
\[
\|\mathcal{R}_{\Gamma_1,\rho}\|= \mathcal{O}\big((\rho k)^2\ln(\rho k)\big).
\]

The proof is complete. 
}
\end{proof}

Hence, by Proposition~\ref{prop:1} and our earlier discussion on the plasmon resonance in the electrostatic 
and quasi-static cases, in order to understand the localization and geometrization phenomenon illustrated in Fig.~\ref{figr1}, 
it is unobjectionable to say that one should investigate the spectral properties of the eigenfunctions of $K_{\partial D}^*$ locally 
near a high-curvature point. The rest of the paper is devoted to investigating the geometric structures of the NP eigenfunctions 
as well as the associated single-layer potentials near a high-curvature point. Finally, we mention that the geometrization with a high-curvature 
condition is a critical ingredient in our study. It is known that if $\partial D$ is $C^2$-smooth, then the corresponding NP operator is compact, 
and hence its spectrum consists only of eigenvalues. If the high-curvature point becomes a corner, then the corresponding NP operator 
possesses continuous spectra \cite{HKL,KLY,HP}, which shall make the situation more complicated. Nevertheless, the NP operator may still 
possess eigenvalues in the corner domain case, and it is worth of future investigation on the corresponding NP eigenfunctions in such a case.

\section{ Geometric structures of NP eigenfunctions}

In this section, we consider the geometric structures of the eigenfunctions of the NP operator $K_{\partial D}^*$ 
as well as the corresponding single layer potential $S_{\partial D}[\varphi]$ near a high-curvature point of $\partial D$. 

First, we present some basic results about the spectral structure of  $K_{\partial D}^*$. Throughout the rest of the paper, 
we assume that $\partial D$ is $C^2$-smooth. As discussed earlier, $K_{\partial D}^*$ is a compact operator and its spectrum consists 
of at most countably many eigenvalues that can only accumulate at $0$. We also know that (cf. \cite{Ack13})
\begin{equation}\label{eq:sk1}
\sigma(K_{\partial D}^*)\subset (-1/2, 1/2],
\end{equation}
where and also in what follows, $\sigma(K_{\partial D}^*)$ signifies the spectrum of $K_{\partial D}^*$. There holds the following property
\begin{lemma}\label{lem:c}
Suppose that $\lambda_0=1/2$ is an eigenvalue of $K_{\partial D}^*$ and $\psi_0\in L^2(\partial D)$ is an eigenfunction, i.e. $K_{\partial D}^*[\psi_0]=1/2\psi_0$. Then there holds
\[
 S_{\partial D}[\psi_0](x)=C\in\mathbb{C},\quad x\in D.
\]
\end{lemma}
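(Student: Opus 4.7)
The plan is to apply the interior jump formula for the normal derivative of the single-layer potential and use uniqueness of the Neumann problem for harmonic functions on $D$.

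First I would set $u(x) := S_{\partial D}[\psi_0](x)$ for $x \in D$. Since the fundamental solution $G$ in \eqref{eq:fun0} is harmonic away from the origin, standard layer-potential theory gives $\Delta u = 0$ in $D$, so $u$ is harmonic there. Next, I would compute the interior normal derivative by invoking the jump relation \eqref{ju}:
\[
\partial_\nu u\big|_{-}(x) = \Bigl(-\tfrac{1}{2} + K_{\partial D}^*\Bigr)[\psi_0](x), \quad x \in \partial D.
\]
By the eigenfunction hypothesis $K_{\partial D}^*[\psi_0] = \tfrac{1}{2}\psi_0$, the right-hand side equals zero identically on $\partial D$.

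Therefore $u$ is a harmonic function in the bounded domain $D$ satisfying the homogeneous Neumann boundary condition $\partial_\nu u|_{-} = 0$ on $\partial D$. The standard uniqueness theorem for the interior Neumann problem on a domain with $C^2$ boundary (which one sees by integrating $|\nabla u|^2$ over $D$ and applying the divergence theorem, yielding $\nabla u \equiv 0$) forces $u$ to be constant on the connected domain $D$. This constant may be complex since $\psi_0$ is allowed to be complex-valued, so we obtain $S_{\partial D}[\psi_0](x) = C \in \mathbb{C}$ for all $x \in D$, as claimed.

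There is no real obstacle here: the argument is a clean application of the jump formula combined with Neumann-uniqueness. The only minor care point is to use the interior trace $\partial_\nu u|_{-}$ rather than the exterior one (where the corresponding formula would give $\partial_\nu u|_{+} = \psi_0$, which is generally nonzero), and to note that $D$ is connected so that the harmonic function with vanishing gradient is a single constant rather than a locally constant function.
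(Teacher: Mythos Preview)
Your proof is correct and follows essentially the same approach as the paper: both set $u = S_{\partial D}[\psi_0]$, use the jump relation together with the eigenvalue hypothesis to obtain $\partial_\nu u|_- = 0$, and then conclude via the energy identity $\int_D |\nabla u|^2 = \int_{\partial D} (\partial_\nu u|_-)\,\overline{u}\,ds = 0$. The paper writes out this Green's formula step directly, whereas you package it as Neumann uniqueness, but the underlying argument is identical.
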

\begin{proof}
Set
\[
 u(x)=S_{\partial D}[\psi_0](x), \quad x\in \mathbb{R}^2. 
\]
By Green's formula one can show that
\[
 \int_{D}|\nabla u|^2dx=\int_{\partial D}\frac{\partial u}{\partial \nu}\big|_-\overline{u}ds=\int_{\partial D}\left(K^*[\psi_0]-1/2\psi_0\right)\overline{u}ds=0.
\]
where $\nu$ signifies the exterior unit normal vector to $\partial D$. Hence, $u$ must be constant on $\partial D$. 

The proof is complete. 
\end{proof}

It is known that both $K_{\partial D}^*$ and $S_{\partial D}$ are pseudo-differential operators of order $-1$ (cf. \cite{Ned}). 
Hence, if $\psi\in L^2(\partial D)$ is an eigenfunction satisfying $K_{\partial D}^*[\psi]=\lambda \psi$ for an 
eigenvalue $\lambda\in (-1/2, 1/2]$, it can be straightforwardly verify that $\psi\in C^{0,1}(\partial D)$. 
In fact, if $\partial D$ is $C^\infty$-smooth, then $\psi\in C^\infty (\partial D)$.

Next, we investigate the geometric structures of the NP eigenfunctions. We start with the case that $\partial D$ is an ellipse whose NP eigenfunctions can be explicitly calculated. For $x=(x_1, x_2)\in\mathbb{R}^2$, we introduce the following elliptic coordinates $(\rho, \omega)$, 
\begin{equation}\label{eq:ellipse}
 x_1=R_0 \cos \omega \cosh \rho , \quad x_2=R_0\sin \omega \sinh \rho, \quad \rho>0, \; 0\leq \omega\leq 2 \pi, \ R_0\in\mathbb{R}_+
\end{equation}
An elliptic domain $D$ is defined by 
\begin{equation}\label{eq:ellipse1}
 D=\{(\rho,\omega); \rho\leq \rho_0,  \; 0\leq \omega\leq 2 \pi \},
\end{equation}
whose boundary is given by 
\begin{equation}\label{eq:ellipse2}
 \partial D=\{(\rho,\omega); \rho= \rho_0,  \; 0\leq \omega\leq 2 \pi \}.
\end{equation}
In Fig.~\ref{figeb} we give a specific example with $R_0=1$ and $\rho_0=0.05$. 
In what follows, we set 
\[
 \Xi:=R_0\sqrt{\sinh^2 \rho_0 + \sin^2\omega}. 
\]
We have

\begin{lemma}[\cite{ CKKL}]\label{lem:spe_elli}
Let $\partial D$ be an ellipse described in \eqref{eq:ellipse} and \eqref{eq:ellipse2}. There hold that 
\[
K_{\partial D}^*[\phi_{1,n}]= a_n \phi_{1,n}  \quad \mbox{and} \quad K_{\partial D}^*[\phi_{2,n}]= -a_n \phi_{2,n}, \quad n\geq 0,
\]
where
\begin{equation}\label{eq:ee1}
\phi_{1,n}=\Xi^{-1} \cos n\omega, \quad \phi_{2,n}=\Xi^{-1} \sin n\omega  \quad \mbox{and} \quad  a_n= \frac{1}{2 e^{2n\rho_0}}.
\end{equation}
Moreover, associated with the eigenfunctions in \eqref{eq:ee1}, one has for $n\geq 1$ that
\[
S_{\partial D}[\phi_{1,n}](x)=
 \begin{cases}
 -\frac{e^{n\rho} + e^{-n\rho} }{2ne^{n \rho_0}} \cos n\omega ,\quad & \rho\leq \rho_0, \\
 -\frac{e^{n\rho_0} + e^{-n\rho_0} }{2ne^{n \rho}} \cos n\omega ,\quad & \rho> \rho_0,
 \end{cases}
\]
and 
\[
S_{\partial D}[\phi_{2,n}](x)=
 \begin{cases}
 -\frac{e^{n\rho} + e^{-n\rho} }{2ne^{n \rho_0}} \sin n\omega ,\quad & \rho\leq \rho_0, \\
 -\frac{e^{n\rho_0} + e^{-n\rho_0} }{2ne^{n \rho}} \sin n\omega ,\quad & \rho> \rho_0.
 \end{cases}
\]
\end{lemma}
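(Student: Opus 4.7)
The plan is to verify the claimed expressions for $S_{\partial D}[\phi_{i,n}]$ directly by separation of variables in elliptic coordinates, and then read off the NP action from the jump relation \eqref{ju}. The whole argument leverages the fact that $\partial D = \{\rho = \rho_0\}$ is a coordinate surface for \eqref{eq:ellipse}, so that every layer-potential quantity decouples into Fourier modes in $\omega$.

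First I would record the differential-geometric data. The metric induced by \eqref{eq:ellipse} is $\Xi_\rho^2(d\rho^2 + d\omega^2)$ with $\Xi_\rho = R_0\sqrt{\sinh^2\rho + \sin^2\omega}$; in particular $\Xi = \Xi_{\rho_0}$ is the conformal factor on $\partial D$, the arc-length element is $ds = \Xi\, d\omega$, and the exterior conormal derivative is $\partial_\nu = \Xi^{-1}\partial_\rho$. The Laplacian is $\Delta = \Xi_\rho^{-2}(\partial_\rho^2 + \partial_\omega^2)$, so separable harmonic functions with angular index $n\ge 1$ are spanned by $(A e^{n\rho} + B e^{-n\rho})T_n(\omega)$ with $T_n\in\{\cos n\omega,\sin n\omega\}$, and decay at infinity forces $A=0$ in the exterior region.

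Next, for each $i\in\{1,2\}$ and $n\ge 1$, let $u_n^{(i)}$ denote the right-hand side of the claimed formula for $S_{\partial D}[\phi_{i,n}]$. Inspection shows that $u_n^{(i)}$ is harmonic in $D$ and in $\mathbb{R}^2\setminus\overline D$, its interior and exterior traces at $\rho=\rho_0$ both equal $-\tfrac{e^{n\rho_0}+e^{-n\rho_0}}{2n e^{n\rho_0}}T_n(\omega)$, and it decays exponentially in $\rho$ at infinity. A short differentiation yields
\[
\partial_\rho u_n^{(i)}\big|_{\rho_0^\pm} = \pm\tfrac{1}{2}\bigl(1 \mp e^{-2n\rho_0}\bigr)\,T_n(\omega),
\]
so the jump of the conormal derivative is $\partial_\nu u_n^{(i)}|_+ - \partial_\nu u_n^{(i)}|_- = T_n/\Xi = \phi_{i,n}$. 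Combined with standard uniqueness for harmonic functions on $\mathbb{R}^2\setminus\partial D$ with a prescribed continuous trace and decay at infinity, this forces $u_n^{(i)} = S_{\partial D}[\phi_{i,n}]$, establishing the single-layer part of the lemma.

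Finally, the NP eigenvalue is read off by averaging the two traces via \eqref{ju}:
\[
K_{\partial D}^*[\phi_{i,n}] = \tfrac{1}{2}\bigl(\partial_\nu u_n^{(i)}|_+ + \partial_\nu u_n^{(i)}|_-\bigr) = \tfrac{1}{2}\,e^{-2n\rho_0}\,\frac{T_n(\omega)}{\Xi},
\]
from which the eigenvalue magnitude $a_n = (2 e^{2n\rho_0})^{-1}$ drops out. The main obstacle is not analytic difficulty but clean bookkeeping: tracking sign conventions between the $\cos$ and $\sin$ sectors carefully so as to recover the $\pm a_n$ split asserted in the statement, and handling the $n=0$ constant mode separately since the corresponding single-layer potential acquires a logarithmic contribution at infinity and does not fit the $n\ge 1$ template used above.
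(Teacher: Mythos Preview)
The paper does not prove this lemma; it is quoted from \cite{CKKL} without argument, so there is no ``paper's own proof'' to compare against. Your plan---build the candidate single-layer potentials by separation of variables in elliptic coordinates, match traces and normal-derivative jump across $\rho=\rho_0$, then read off $K_{\partial D}^*$ from the average in \eqref{ju}---is exactly the standard derivation and is correct in outline.

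One substantive point your verification will expose, and which your write-up glosses over by treating the $\cos$ and $\sin$ sectors with a single symbol $T_n$: the stated interior formula for $S_{\partial D}[\phi_{2,n}]$ cannot be right as printed. The function $(e^{n\rho}+e^{-n\rho})\sin n\omega=2\cosh(n\rho)\sin n\omega$ is \emph{not} single-valued across the focal segment $\{\rho=0\}$ (the points $(\rho,\omega)=(0,\omega)$ and $(0,2\pi-\omega)$ coincide in $\mathbb{R}^2$ but $\sin n\omega$ changes sign), so it is not a regular harmonic function on $D$. The correct interior representative in the $\sin$ sector is $\sinh(n\rho)\sin n\omega$, i.e.\ one needs $e^{n\rho}-e^{-n\rho}$ inside and $e^{n\rho_0}-e^{-n\rho_0}$ outside. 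With the printed $+$ signs, both $\phi_{1,n}$ and $\phi_{2,n}$ would produce the \emph{same} average of normal derivatives and hence the same eigenvalue $+a_n$, contradicting the claimed split $\pm a_n$. This is precisely the ``sign bookkeeping'' you flag: the $\pm a_n$ dichotomy comes from $\cosh$ versus $\sinh$ in the interior, not from anything else. Relatedly, your displayed formula $\partial_\rho u_n^{(i)}|_{\rho_0^\pm}=\pm\tfrac12(1\mp e^{-2n\rho_0})T_n$ matches the (corrected) $\sin$ case but not the $\cos$ case, where the inner sign should be $\pm$ rather than $\mp$; the jump is $1$ in both cases, but the averages differ in sign, which is what delivers $\pm a_n$.
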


With the explicit forms of the NP eigenfunctions and the associated single-layer potentials, we are in a position to investigate their geometric structures. Before that, we first note that for an ellipse defined by \eqref{eq:ellipse} and \eqref{eq:ellipse2}, the corresponding curvature at a boundary point $(\rho_0, \omega)\in\partial D$ can be directly calculated to be
\begin{equation}\label{eq:cv1}
 \kappa(\omega)=\frac{\cosh \rho_0 \sinh \rho_0}{R_0(\sinh^2 \rho_0 + \sin^2\omega)^{3/2}}.
\end{equation}
Hence, the largest curvature is attainable at the two vertices with $\omega=\pi$ and $\omega=0$ respectively on the semi-major axis, denoted as $x_\circ$ and $x_*$ in what follows; see Fig.~\ref{figeb} for an illustration. Henceforth, the points on $\partial D$ that attain the largest curvature are referred to as the high-curvature points. By \eqref{eq:cv1}, the largest curvature is given by
\begin{equation}\label{eq:maxcur}
 \kappa_{\max}:=\frac{\cosh \rho_0}{R_0 \sinh^2 \rho_0 }.
\end{equation}
It is noted that for a fixed $R_0$, the curvature $\kappa_{\max}$ increases as $\rho_0$ decreases and actually one has that $\kappa_{\max}\rightarrow \infty$ as $\rho_0\rightarrow+0$. In what follows, we shall also need the conormal derivative of a function $\psi(x)$ defined over $\partial D$. Let $\partial D$ be parametrized as $x(s)$, and then the conormal derivative of $\psi(x)$ is defined as
\[
d \psi=\psi^{\prime}(x)\cdot \frac{x^{\prime}(s)}{|x^{\prime}(s)|}=\frac{d}{ds}\psi(x(s))\frac{1}{|x^{\prime}(s)|}.
\]

\begin{figure}[t]
\includegraphics[width=8cm] {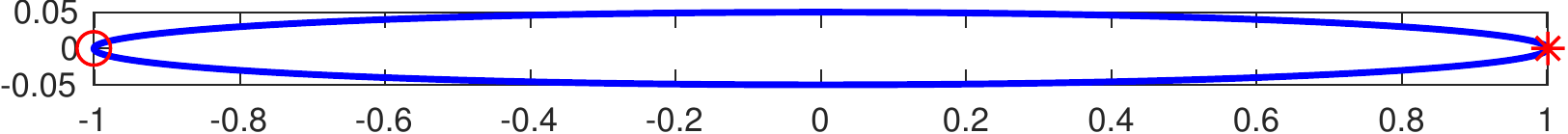}
\caption{\label{figeb} An ellipse defined by \eqref{eq:ellipse} and \eqref{eq:ellipse2} with $R_0=1$ and $\rho_0=0.05$. The left and right vertices $x_\circ$ and $x_*$, respectively, on the semi-major axis are the high-curvature points.   }
\end{figure}

\begin{proposition}\label{prop:main2}
Let $\partial D$ be an ellipse described in \eqref{eq:ellipse} and \eqref{eq:ellipse2}, and let $\phi_{1,n}$ and $\phi_{2,n}$ be the NP eigenfunctions derived in Lemma~\ref{lem:spe_elli} for $K_{\partial D}^*$ with $n\geq 1$. Then one has
\begin{enumerate}
\item $\phi_{1,n}(x)$ achieves its maximum absolute value on $\partial D$ at $x_\circ$ and $x_*$,
\begin{equation}\label{eq:gs1}
|\phi_{1,n}(x_\circ)|=|\phi_{1,n}(x_*)|=\tau_{\max},\quad \tau_{\max}:=\frac{1}{R \sinh\rho_0}, 
\end{equation}
and there holds the following asymptotic relationship as $\kappa_{\max}\rightarrow +\infty$, or equivalently $\rho_0\rightarrow +0$,
\begin{equation}\label{eq:gs2}
\tau_{\max}\sim \alpha\kappa_{\max}^{p}, \quad p=1/2,\ \ \alpha\in\mathbb{R}_+. 
\end{equation}

\item $d\phi_{2,n}$ achieves its maximum absolutely value on $\partial D$ at $x_\circ$ and $x_*$,
\begin{equation}\label{eq:gs3}
|d\phi_{2,n}(x_\circ)|=|d\phi_{2,n}(x_*)|=\tau_{\max}',\quad \tau_{\max}':=\frac{n}{R_0 \sinh^2\rho_0},
\end{equation}
and moreover there holds
\begin{equation}\label{eq:gs4}
\tau_{\max}'\rightarrow\infty \quad\mbox{as}\ \ \kappa_{\max}\rightarrow\infty. 
\end{equation}

\item $S_{\partial D}[\phi_{1,n}]$ and $d S_{\partial D}[\phi_{2,n}]$, respectively, achieve their maximum absolute values at $x_\circ$ and $x_*$. 
\end{enumerate}

\end{proposition}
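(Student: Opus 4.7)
The plan is to work entirely with the explicit formulas supplied by Lemma~\ref{lem:spe_elli}, so that the three claims reduce to one-variable calculus on the boundary parameter $\omega\in[0,2\pi]$.

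For part~(1), from $\phi_{1,n}=\Xi^{-1}\cos n\omega$ I would write $|\phi_{1,n}(\omega)|=|\cos n\omega|/\Xi(\omega)$. The numerator is bounded by $1$, while $\Xi(\omega)=R_0\sqrt{\sinh^2\rho_0+\sin^2\omega}\ge R_0\sinh\rho_0$ with equality exactly at $\omega\in\{0,\pi\}$. Since $|\cos n\omega|=1$ at those same two points, the product attains its supremum at $x_*$ and $x_\circ$, giving $\tau_{\max}=1/(R_0\sinh\rho_0)$. The asymptotic \eqref{eq:gs2} then follows from $\sinh\rho_0=\rho_0+O(\rho_0^3)$ and $\cosh\rho_0=1+O(\rho_0^2)$, which turn \eqref{eq:maxcur} into $\kappa_{\max}\sim 1/(R_0\rho_0^2)$ and $\tau_{\max}\sim 1/(R_0\rho_0)$; eliminating $\rho_0$ yields $\tau_{\max}\sim R_0^{-1/2}\kappa_{\max}^{1/2}$, i.e.\ the exponent $p=1/2$.

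For part~(2), I would first observe that the parametrization $\omega\mapsto(R_0\cos\omega\cosh\rho_0,R_0\sin\omega\sinh\rho_0)$ has speed
\[
|x'(\omega)|=R_0\sqrt{\sin^2\omega\cosh^2\rho_0+\cos^2\omega\sinh^2\rho_0}=\Xi(\omega),
\]
after which the definition of the conormal derivative produces
\[
d\phi_{2,n}(\omega)=\frac{1}{\Xi}\frac{d}{d\omega}\!\left(\frac{\sin n\omega}{\Xi}\right)=\frac{n\cos n\omega}{\Xi^{2}}-\frac{R_0^{2}\sin\omega\cos\omega\,\sin n\omega}{\Xi^{4}}.
\]
At $\omega\in\{0,\pi\}$ the second summand vanishes and the first delivers the asserted value $\tau_{\max}'$ in \eqref{eq:gs3}. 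To confirm that these points are global maximizers of $|d\phi_{2,n}|$, I would bound the second summand in absolute value by $|\sin n\omega|\,R_0^{2}|\sin\omega\cos\omega|/\Xi^{4}$ and use $|\sin n\omega|\le n|\sin\omega|$ together with the monotonicity of $\Xi(\omega)$ in $|\sin\omega|$ to compare the full expression against its value at $\sin\omega=0$. The blow-up \eqref{eq:gs4} is then immediate since $\sinh\rho_0\to 0$ as $\rho_0\to 0$.

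Part~(3) I would handle by restricting the interior single-layer formulas of Lemma~\ref{lem:spe_elli} to $\rho=\rho_0$, giving $S_{\partial D}[\phi_{1,n}]|_{\partial D}=-\tfrac{1+e^{-2n\rho_0}}{2n}\cos n\omega$ and the analogous $\sin n\omega$ expression for $\phi_{2,n}$. Hence $|S_{\partial D}[\phi_{1,n}]|$ is maximized whenever $|\cos n\omega|=1$, which in particular includes $\omega=0,\pi$, i.e.\ the points $x_*$ and $x_\circ$. Differentiating the trace of $S_{\partial D}[\phi_{2,n}]$ and dividing by $|x'(\omega)|=\Xi$ yields $dS_{\partial D}[\phi_{2,n}]=-\tfrac{1+e^{-2n\rho_0}}{2}\cos n\omega/\Xi$, and the very argument of part~(1) again identifies $x_*,x_\circ$ as maximizers. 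The main obstacle is the global-maximum verification in part~(2): unlike in parts (1) and (3), the conormal derivative of $\phi_{2,n}$ is a genuine two-term expression whose maximum cannot be read off from a single factored form; I expect to handle this by a careful comparison exploiting $|\sin n\omega|\le n|\sin\omega|$ together with the explicit dependence of $\Xi$ on $\sin^2\omega$, which should provide just enough slack to absorb the correction term.
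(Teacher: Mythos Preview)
Your approach is exactly what the paper has in mind: its proof consists of the single sentence that ``the proposition can be verified by straightforward though a bit tedious calculations'' using the explicit formulas from Lemma~\ref{lem:spe_elli}, and you are carrying out precisely those calculations. Parts~(1) and~(3) are fully correct as written.

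Regarding your stated concern in part~(2): the plan you sketch does in fact close. After bounding $|\cos n\omega|\le 1$, $|\cos\omega|\le 1$, and $|\sin n\omega|\le n|\sin\omega|$, and writing $a=R_0^2\sinh^2\rho_0$, $b=\Xi^2\ge a$, the triangle-inequality bound becomes
\[
|d\phi_{2,n}|\;\le\;\frac{n}{b}+\frac{n(b-a)}{b^{2}}\;=\;n\Bigl(\frac{2}{b}-\frac{a}{b^{2}}\Bigr),
\]
and the inequality $n(2/b-a/b^{2})\le n/a$ is equivalent to $(b-a)^{2}\ge 0$, with equality precisely when $b=a$, i.e.\ $\sin\omega=0$. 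So the ``just enough slack'' you anticipate is literally a perfect square, and the global maximum at $x_\circ,x_*$ follows cleanly.
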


\begin{proof}
{ With the explicit forms of solutions in Lemma \ref{lem:spe_elli}, the proposition can be verified by straightforward though a bit tedious calculations. }
\end{proof}

In Proposition~\ref{prop:main2}, we did not consider the case with $n=0$ due to Lemma~\ref{lem:c}. 
Clearly, the properties in Proposition~\ref{prop:main2} can be used to explain the localization and geometrization 
phenomenon discovered in Section~\ref{sect:4}, at least for the elliptic geometry case. In fact, we perform the numerical experiment in Fig.~\ref{figr1} again, 
but with the plasmonic inclusion $\partial D$ in Fig.~\ref{figbc} replaced by an ellipse in Fig.~\ref{figeb}. The loss parameter $\delta$ is set to be $0.0001$. The total 
wave field is plotted in Fig.~\ref{el_reson}. Clearly, strong resonant behaviours are observed locally around the two high-curvature points $x_\circ$ and $x_*$. 
It is remarked that the incident plane wave propagates from the left to the right and the vertex $x_\circ$ is located in the shadow region. Hence, the resonant behaviour 
around $x_*$ is stronger than that around $x_\circ$. 

\begin{figure}[t]
\centering
\subfigure[]{
\includegraphics[width=0.2\textwidth]{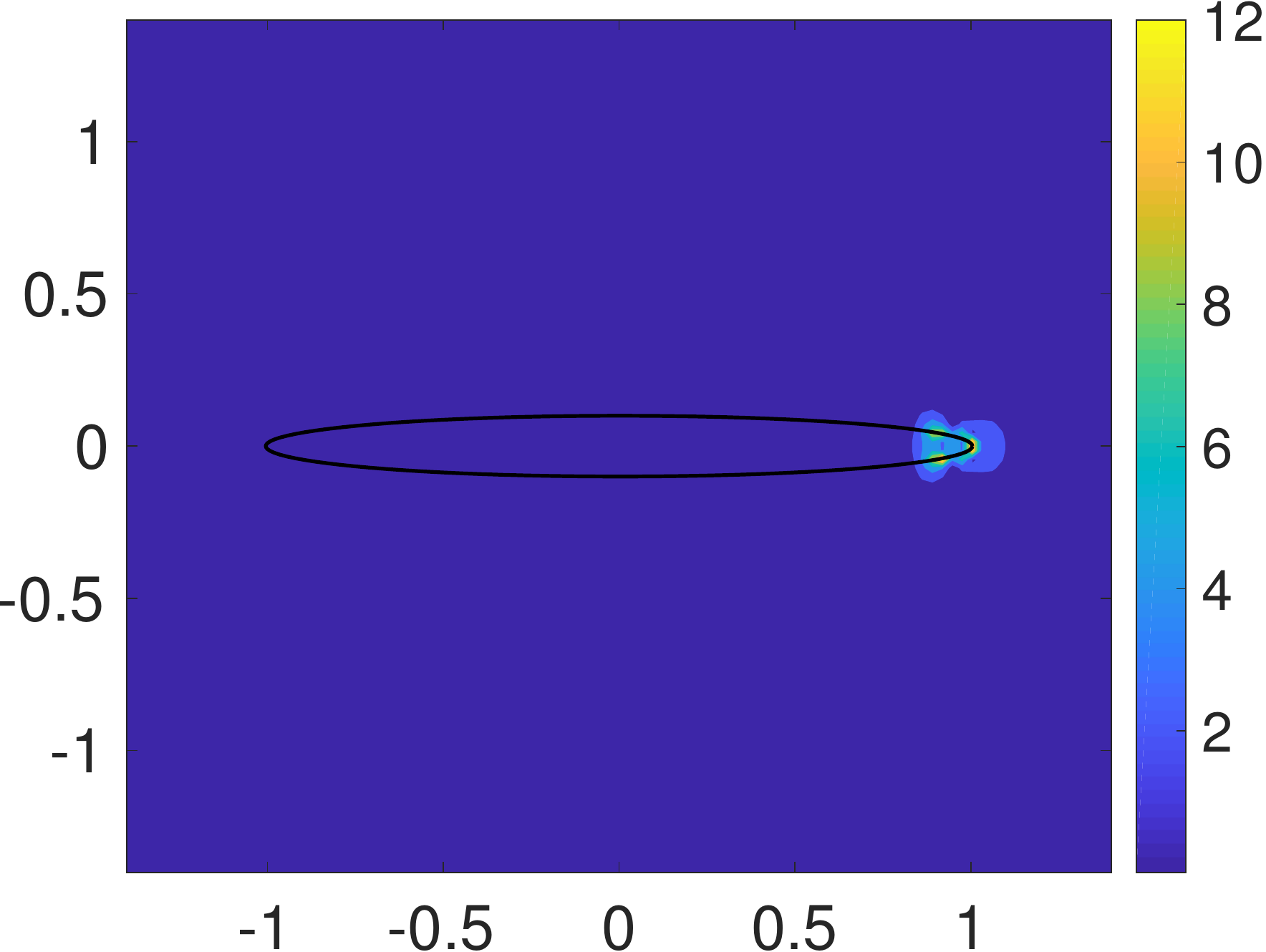}}
\subfigure[]{
\includegraphics[width=0.2\textwidth]{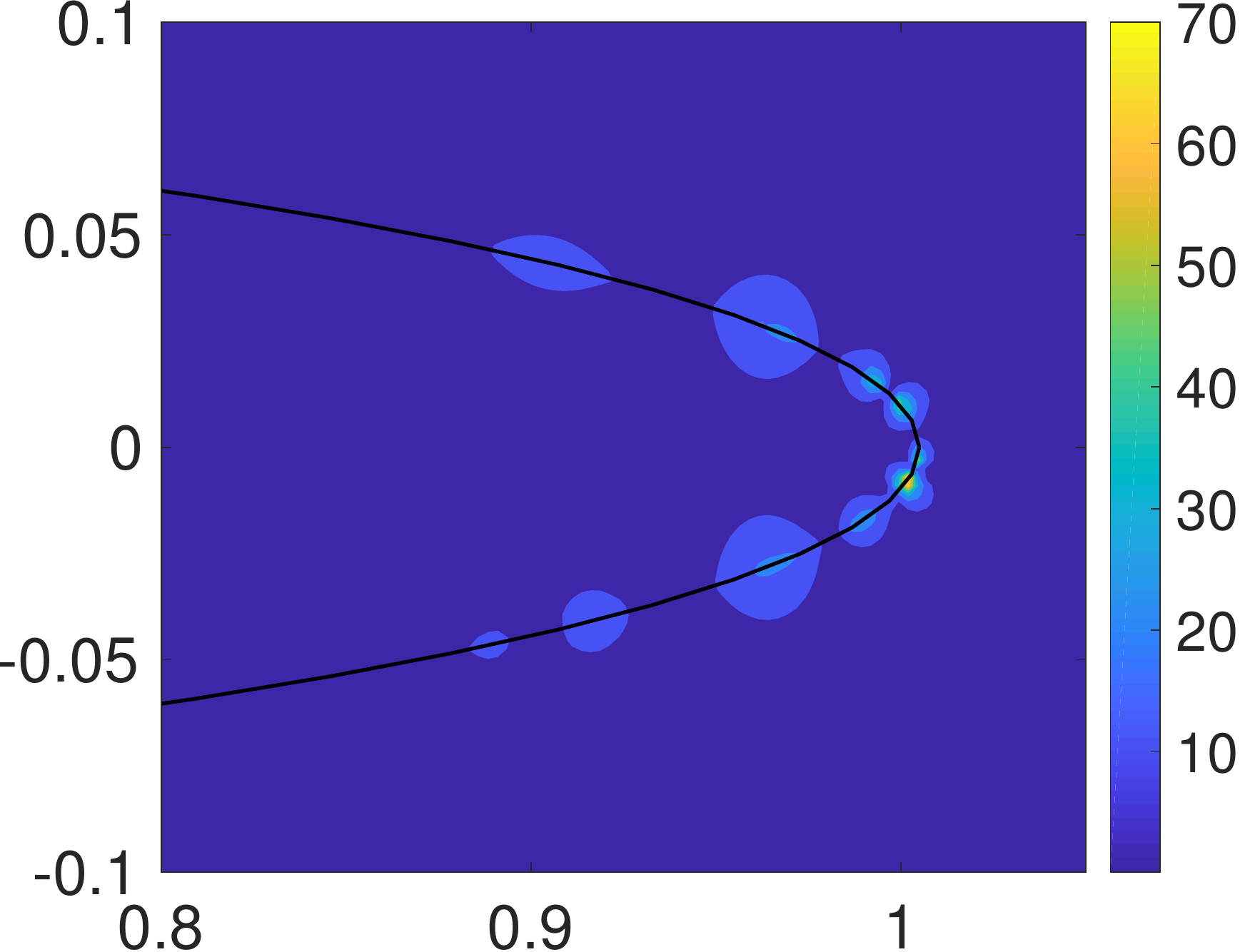}}
\subfigure[]{
\includegraphics[width=0.2\textwidth]{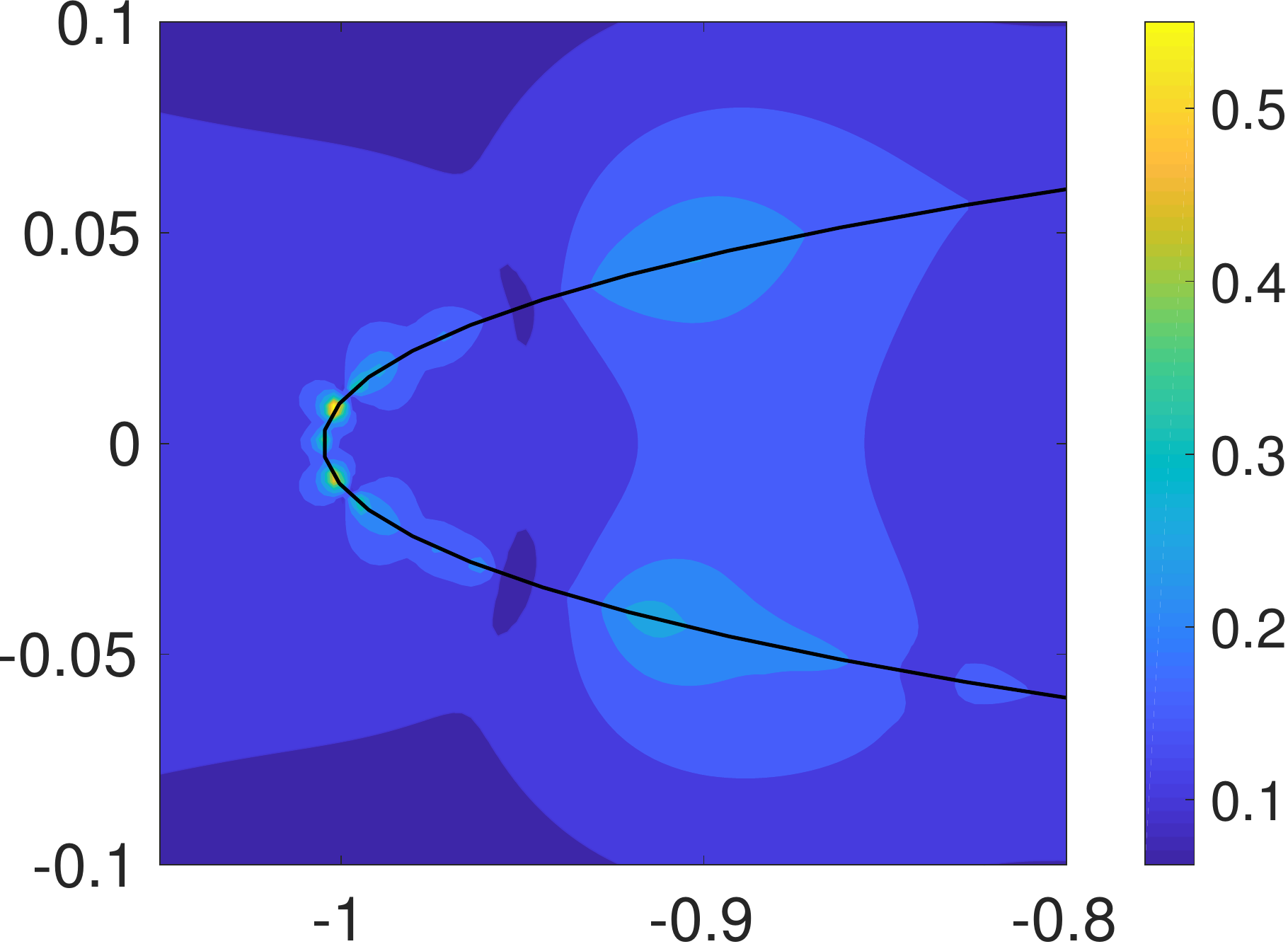}}\\
\caption{\label{el_reson} Localization and geometrization phenomenon in the plasmon resonance associated with an elliptical inclusion. (a). Modulus of 
the total wave field around the whole plamsonic inclusion; (b), (c). Modulus of the total wave field around $x_*$ and $x_o$, respectively.  }
\end{figure}

We believe those beautiful geometric structures in Proposition~\ref{prop:main2} for the NP eigenfunctions and the associated single-layer 
potentials hold for more general geometries. However, dealing with the general geometries, it is unpractical to derive the explicit 
forms of the NP eigenfunctions and the associated single-layer potentials. We have conducted extensive numerical experiments within general geometries and indeed 
the NP eigenfunctions exhibit certain intrinsic geometric structures near a boundary point with a high curvature. Before presenting our discoveries, we first introduce the notion
of a symmetric domain. Consider a star-shaped domain $D$ whose boundary $\partial D$ is parametrized as follows,
\begin{equation}\label{eq:boundcon}
 \partial D=r(\theta)\hat x(\theta),\ \ \hat{x}(\theta)=(\cos\theta, \sin \theta), \quad \theta\in[0,2\pi),
\end{equation}
where $r\geq 0$ is the radial function. If there exists $n\in\mathbb{N}$ such that 
\begin{equation}\label{eq:period}
 r(\theta)=r(\theta+2\pi/n),
\end{equation}
the the domain $D$ is said to be $n$-symmetric. Clearly, the domain in Fig.~\ref{figbc} is 1-symmetric and the domain in Fig.~\ref{figeb} is 2-symmetric. 
An $n$-symmetric domain possesses $n$ high-curvature points. 

The major numerical discoveries can be summarized as follows: 

\begin{enumerate}
\item Suppose $D$ is convex with $\partial D$ satisfying \eqref{eq:boundcon}, and $\lambda\in\sigma(K_{\partial D}^*)$. 
If $\lambda$ is positive and simple, then the absolute values of both its eigenfunction and the corresponding single-layer potential blow up at the 
high-curvature point(s) on $\partial D$ as the corresponding curvature goest to infinity; whereas if $\lambda$ is positive and multiple, 
then there exists at least one of the eigenfunctions such that the absolute values of both the eigenfunction and the associated single-layer potential blow up 
at the high-curvature point(s) on $\partial D$ as the corresponding curvature goest to infinity. If $\lambda$ is negative, then similar 
conclusions hold at the high-curvature point(s), but for the conormal derivatives of the eigenfunction and the associated single-layer potential. 

\item Suppose $D$ is concave at the high-curvature point(s) with $\partial D$ satisfying \eqref{eq:boundcon}, 
and $\lambda\in\sigma(K_{\partial D}^*)$. If $\lambda$ is negative and simple, then the absolute values of both its eigenfunction and 
the associated single-layer potential blow up at the high-curvature point(s) on $\partial D$ as the corresponding 
curvature goest to infinity; whereas if $\lambda$ is positive and multiple, then there exists at one of the eigenfunctions 
such that the absolute values of both the eigenfunction and the associated single-layer potential blow up at the high-curvature point(s) on $\partial D$ 
as the corresponding curvature goest to infinity. If $\lambda$ is positive, then similar conclusions hold at the high-curvature point(s), but for the 
conormal derivatives of the eigenfunction and the associated single-layer potential. 

\item If $D$ is non-symmetric, then the NP eigenfunction or its conormal derivative as well as the corresponding single-layer potential may still possess the 
blow-up behaviour at a high-curvature point, but the situation is more complicated, 
and there is no definite conclusion about it. 

\end{enumerate} 


\subsection{Numerical method}

We first introduce the numerical method used to calculate the spectral system of the NP operator defined in \eqref{K0} and the corresponding single layer potential.  
Assume that the boundary of $D$, i.e. $\partial D$, is parameterized by $r(t)$, $t\in(0,2\pi)$. Then the NP operator can be expressed as follows
\begin{equation}\label{opk1}
 K^*[\varphi](x)=\frac{1}{2\pi} \int_{0}^{2\pi} \frac{\langle r(s)-r(t), \nu_s\rangle }{|r(s)-r(t)|^2} \varphi(r(t))|r^{\prime}(t)| dt.
\end{equation}
To numerically calculate this integral, we first discretize the integral line into $n$ panels and on each panel, we utilize the 16-point Gauss-Legendre quadrature formula. 
We point out that from the expression in \eqref{opk1}, there is the singularity when $t=s$, namely $y=x$. However, noting that $r(t)\in C^2(\partial D)$, the singularity can be removed
by using the following identity, 
\[
  \lim_{t\rightarrow s} \frac{\langle r(s)-r(t), \nu_s\rangle }{|r(s)-r(t)|^2}=-\frac{\langle r^{\prime\prime}(s), \nu_s\rangle} {2|r^{\prime}(s)|^2}, 
\]
where $\nu_s$ signifies the exterior unit normal vector to $\partial D$ at $r(s)$.

\subsection{A convex 1-symmetric domain}

Let us first consider a domain $D$ with one high-curvature point, denoted as $x_*$, as shown in Fig.~ \ref{fig1}, and the largest curvature is $500$.
\begin{figure}[t]
\includegraphics[width=3cm] {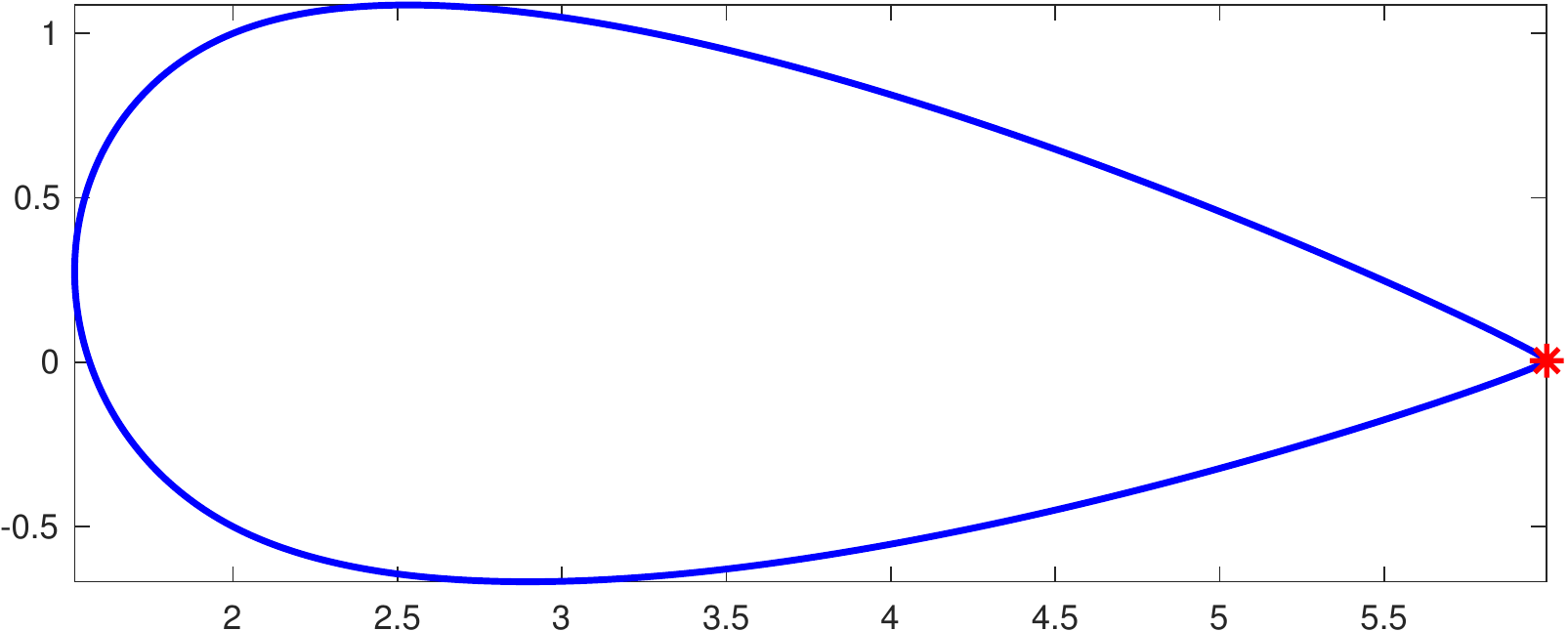}
\caption{\label{fig1} A convex 1-symmetric domain.}
\end{figure}
The first seven largest NP eigenvalues (in terms of the absolute value) are numerically found to be
\begin{equation}\label{eq:egg1}
\begin{split}
& \lambda_0=0.5,\ \  \lambda_1=0.2575,\ \  \lambda_2=-0.2575,\ \ \lambda_3=0.1365,\\
&\qquad\ \lambda_4=-0.1365,\ \  \lambda_5=0.0685,\ \ \lambda_6=-0.0685.
 \end{split}
\end{equation}
It is remarked that all of the eigenvalues are simple. 

Fig.~\ref{fig2} plots the eigenfunctions as well as the associated single-layer potentials, respectively, for the positive eigenvalues  
$\lambda_1=0.2575$ and $\lambda_3=0.1365$. The numerical results clearly support our assertion about the 
NP eigenfunctions associated to simple positive eigenvalues. 


\begin{figure}[t]
\centering
\subfigure[]{
\includegraphics[width=0.2\textwidth]{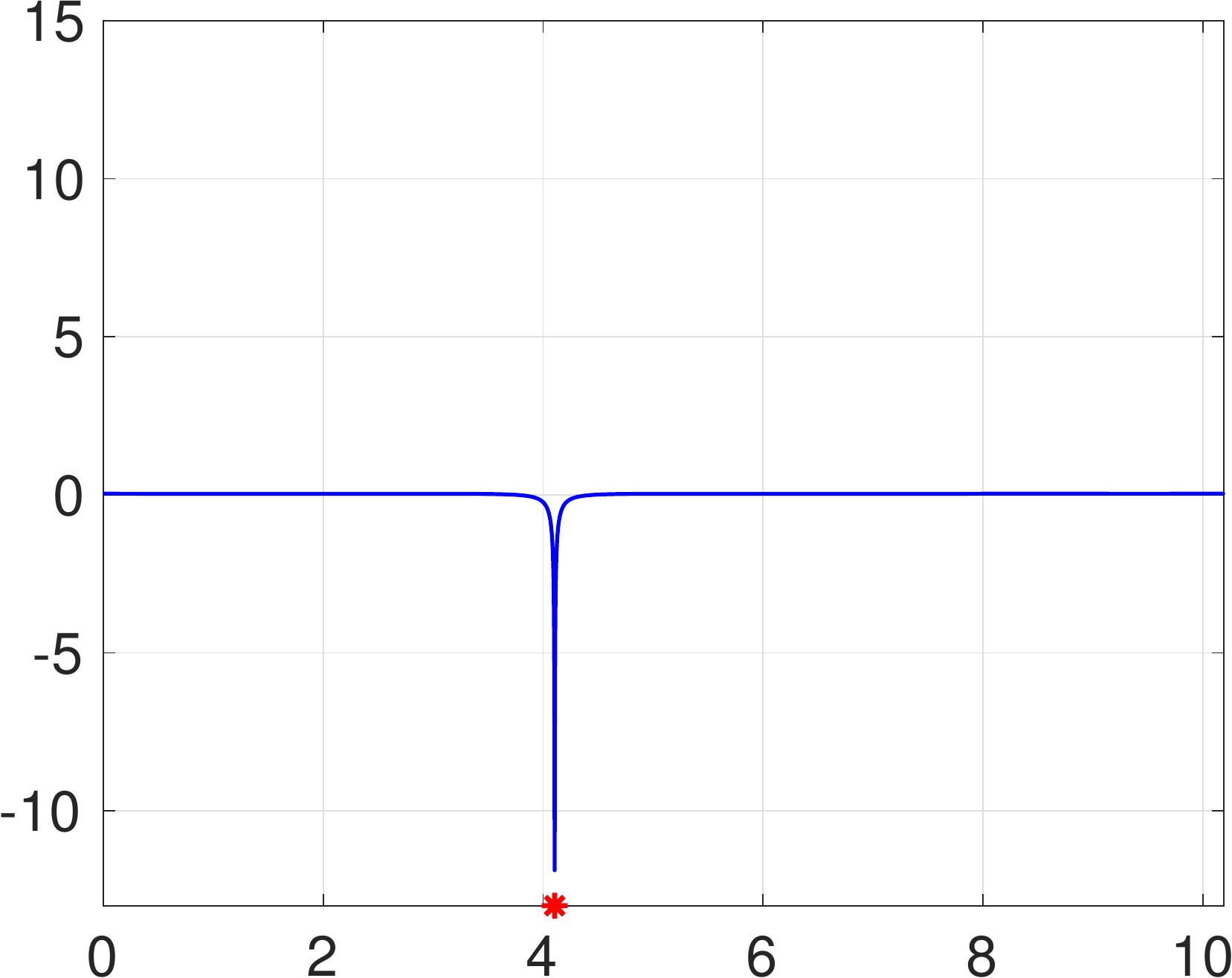}}
\subfigure[]{
\includegraphics[width=0.2\textwidth]{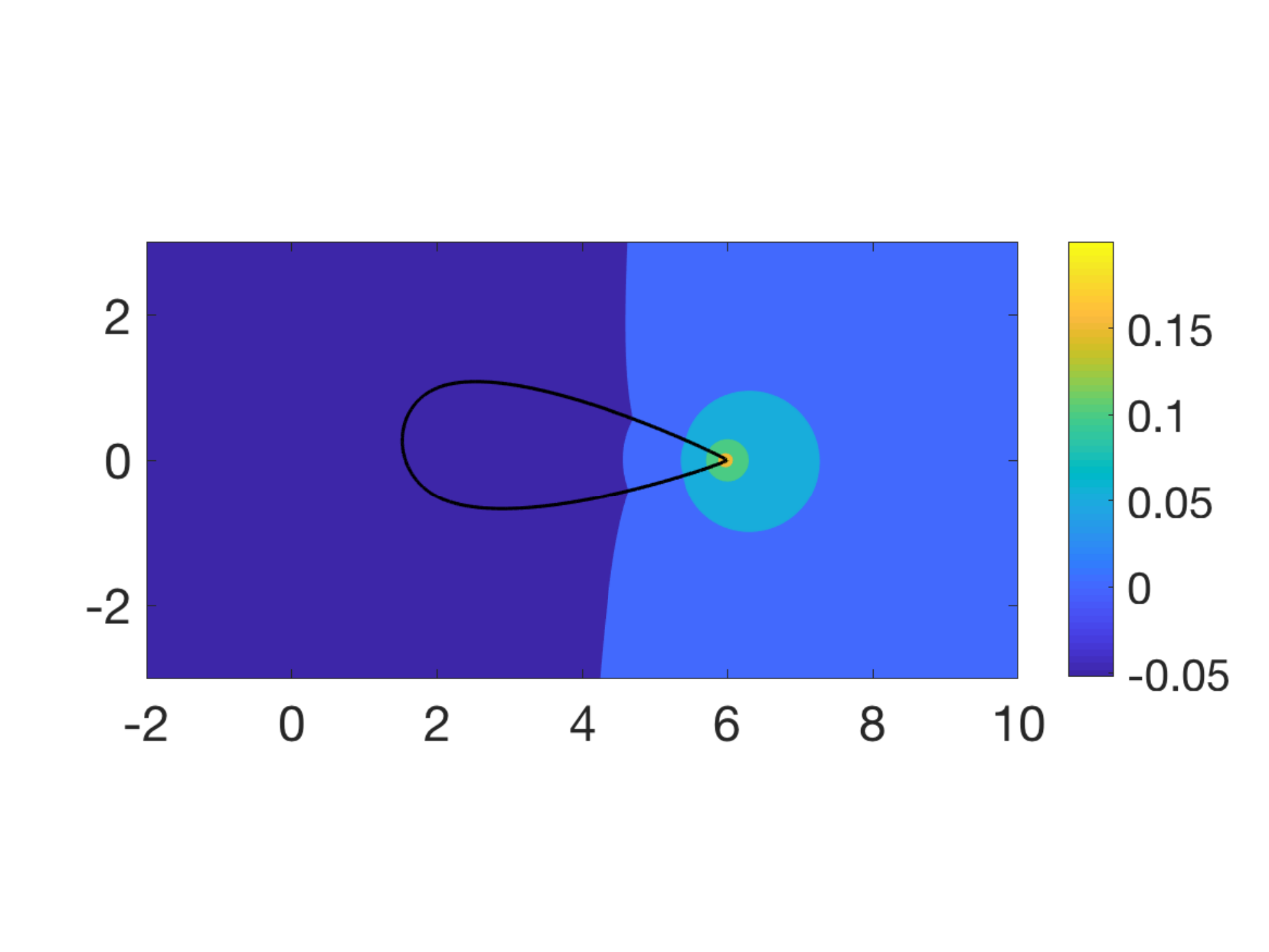}}
\subfigure[]{
\includegraphics[width=0.2\textwidth]{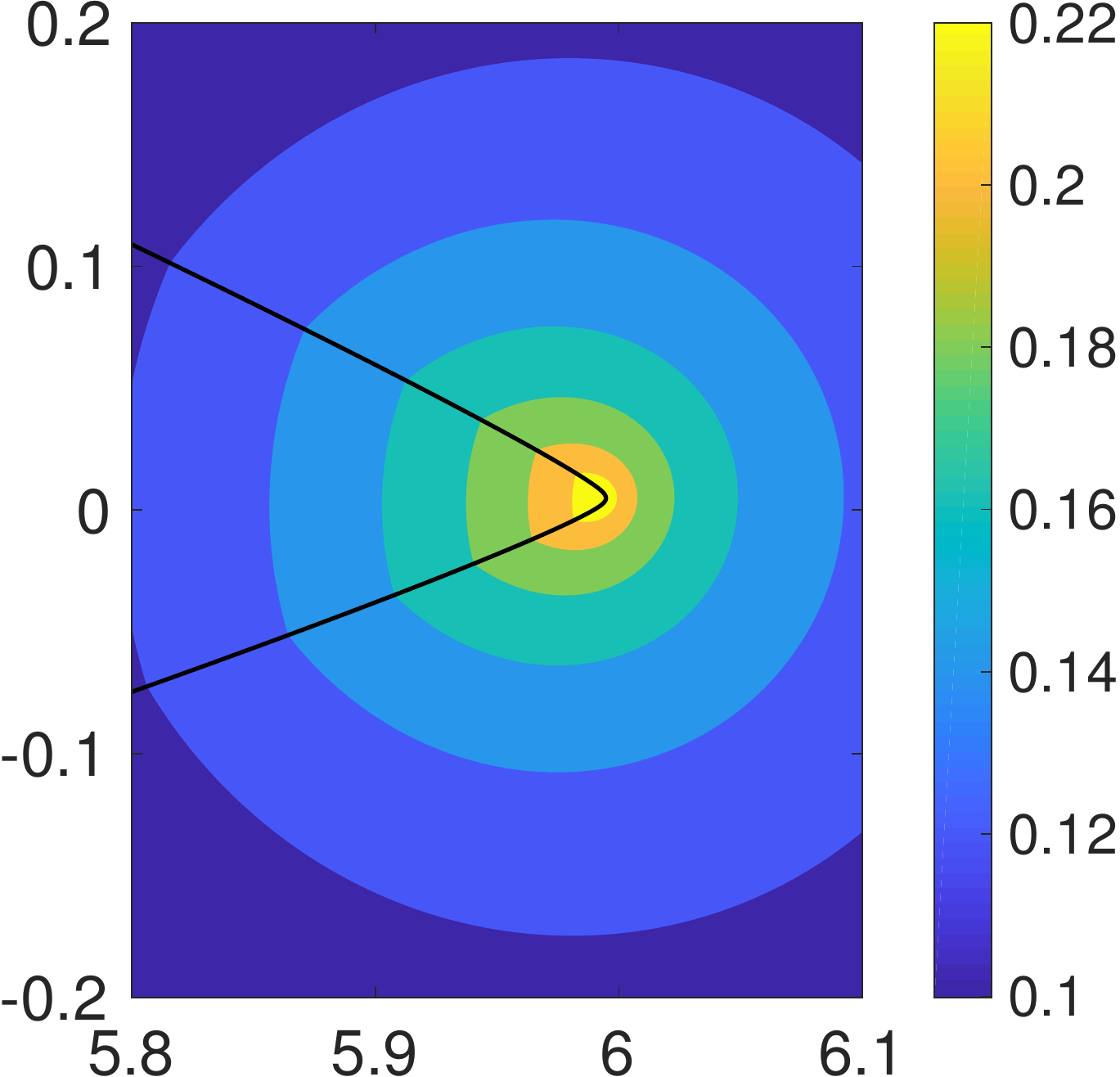}}\\
\subfigure[]{
\includegraphics[width=0.2\textwidth]{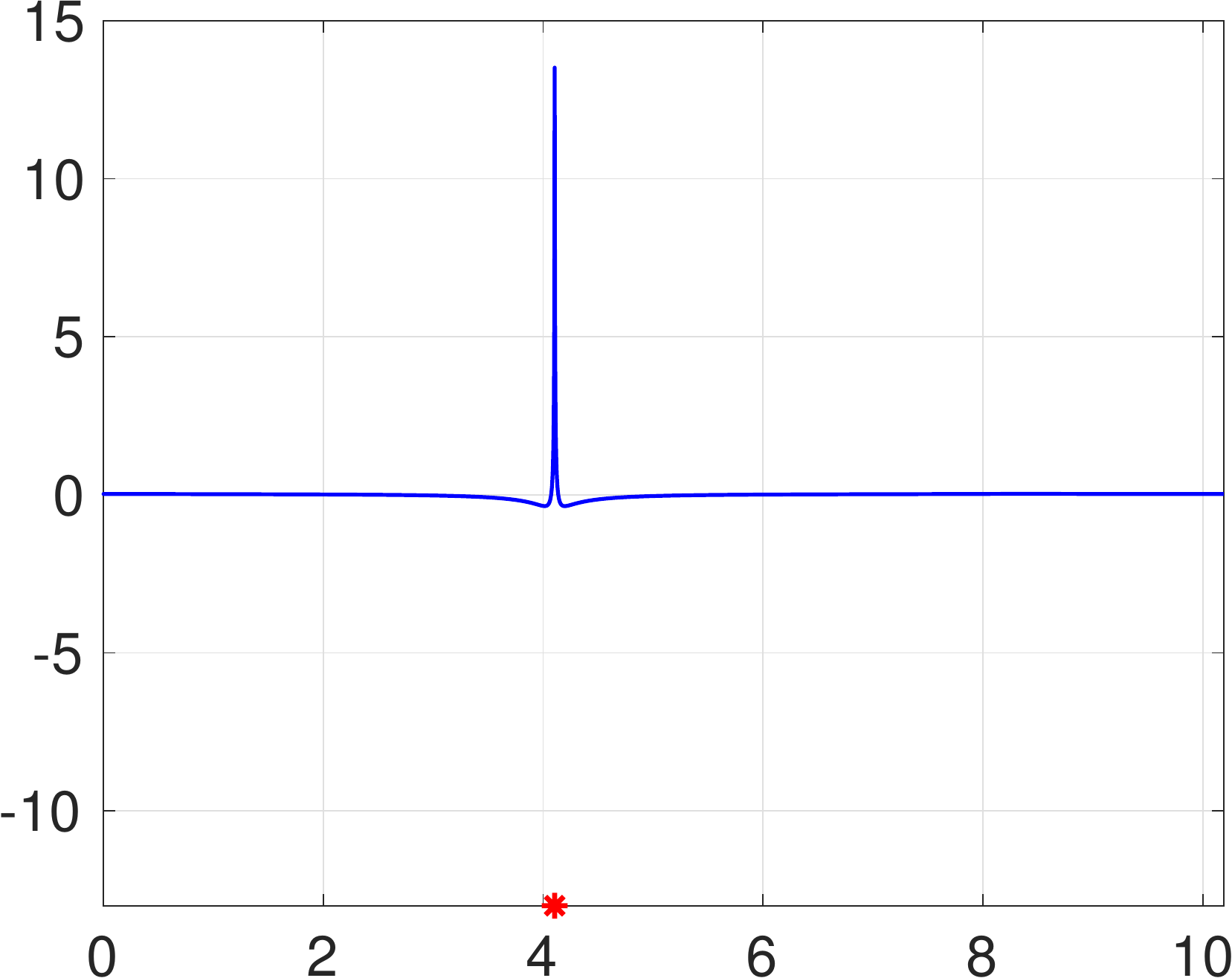}}
\subfigure[]{
\includegraphics[width=0.2\textwidth]{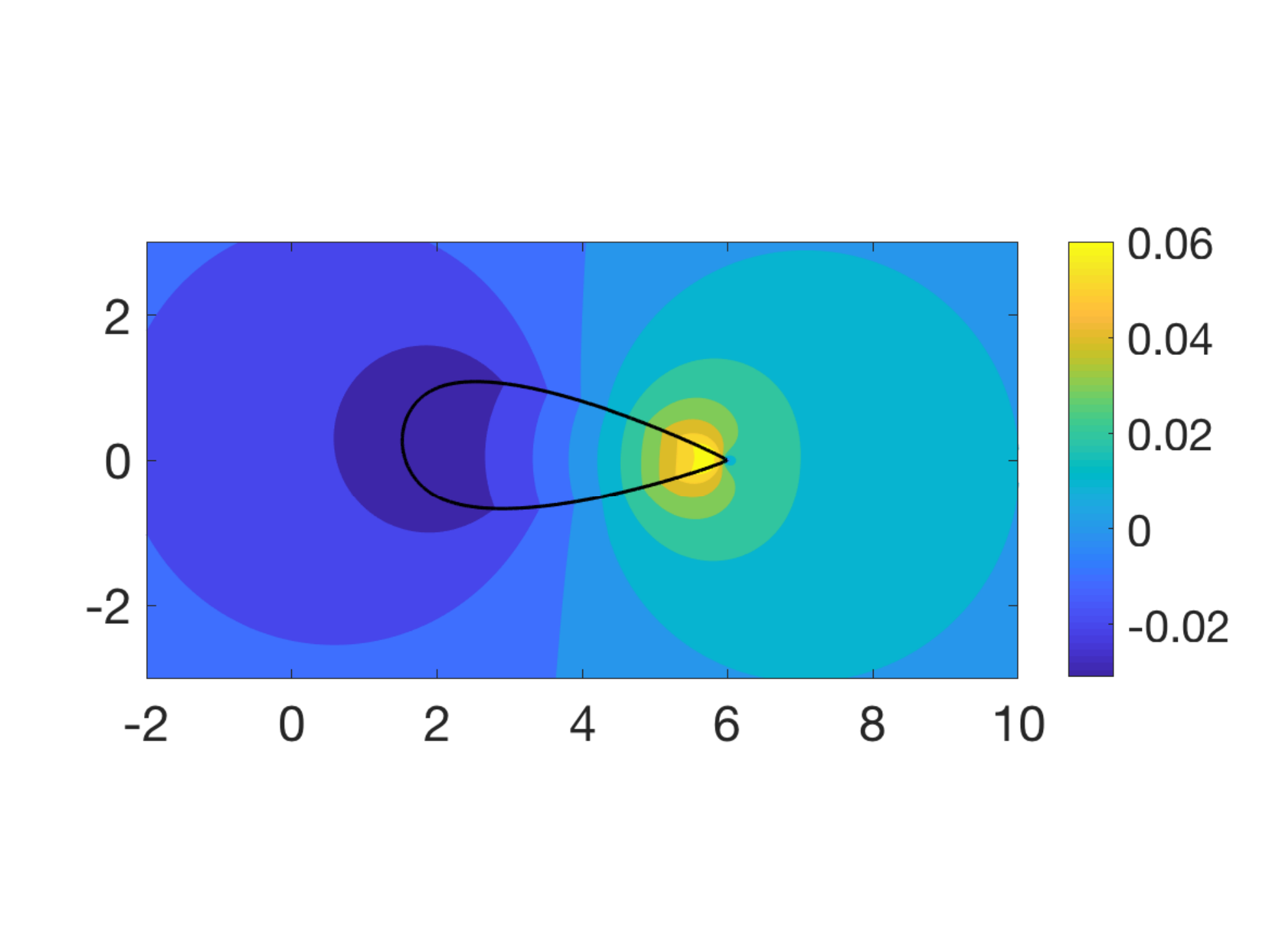}}
\subfigure[]{
\includegraphics[width=0.2\textwidth]{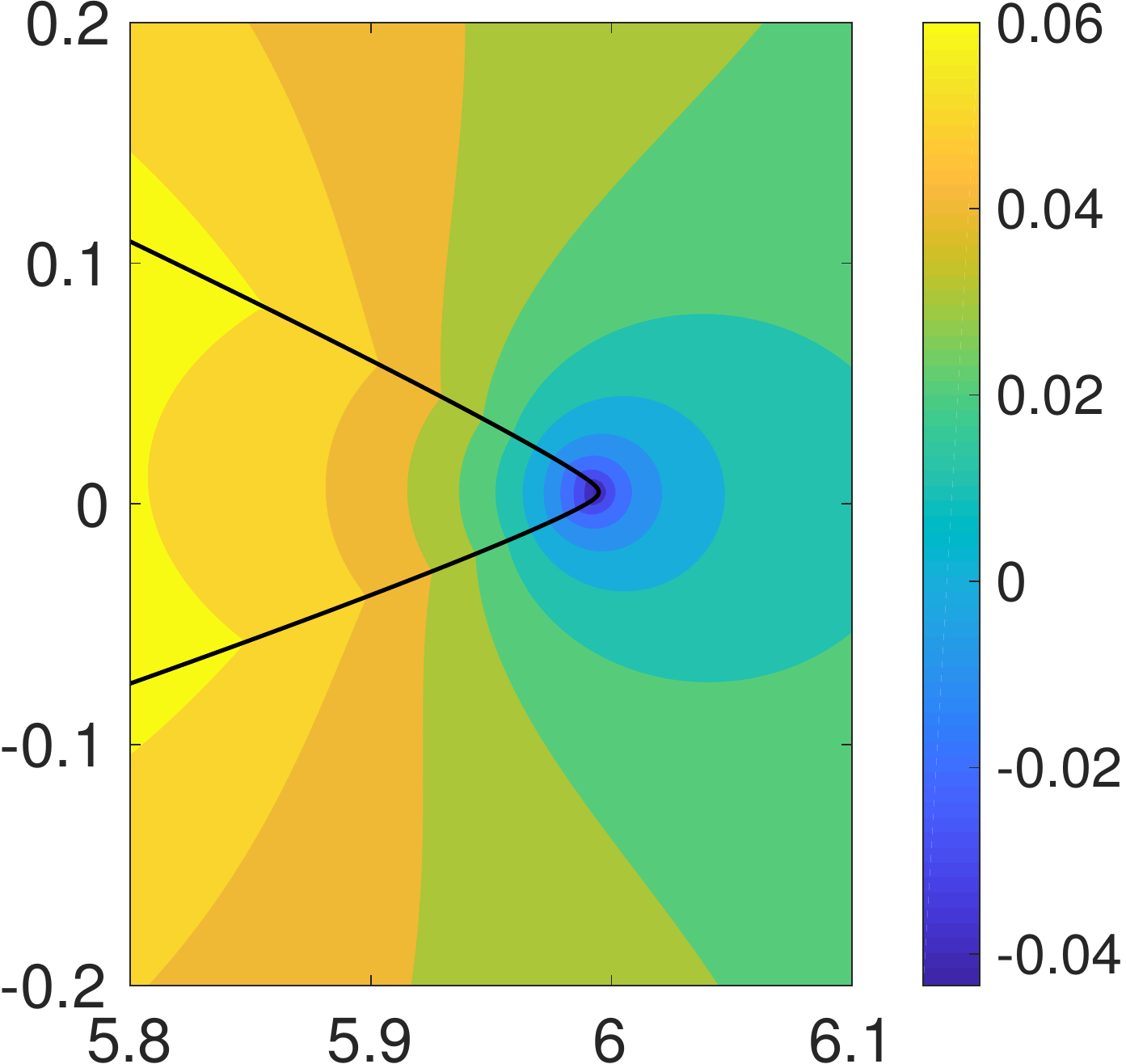}}\\
\caption{\label{fig2} (a). Plotting of the eigenfunction for $\lambda_1=0.2575$ with respect to the arc length; (b).
The associated single-layer potential for $\lambda_1=0.2575$; (c). The single-layer potential around the high-curvature point; 
(d), (e), (f). The corresponding items for $\lambda_3=0.1365$. }
\end{figure}

Fig.~\ref{fig3} plots the eigenfunctions as well as the corresponding conormal derivatives and single-layer potentials for the negative eigenvalues 
$\lambda_2=-0.2575$ and $\lambda_4=-0.1365$, respectively. The numerical results clearly support our assertion about the 
NP eigenfunctions associated to simple negative eigenvalues.

\begin{figure}
\centering
\subfigure[]{
\includegraphics[width=0.2\textwidth]{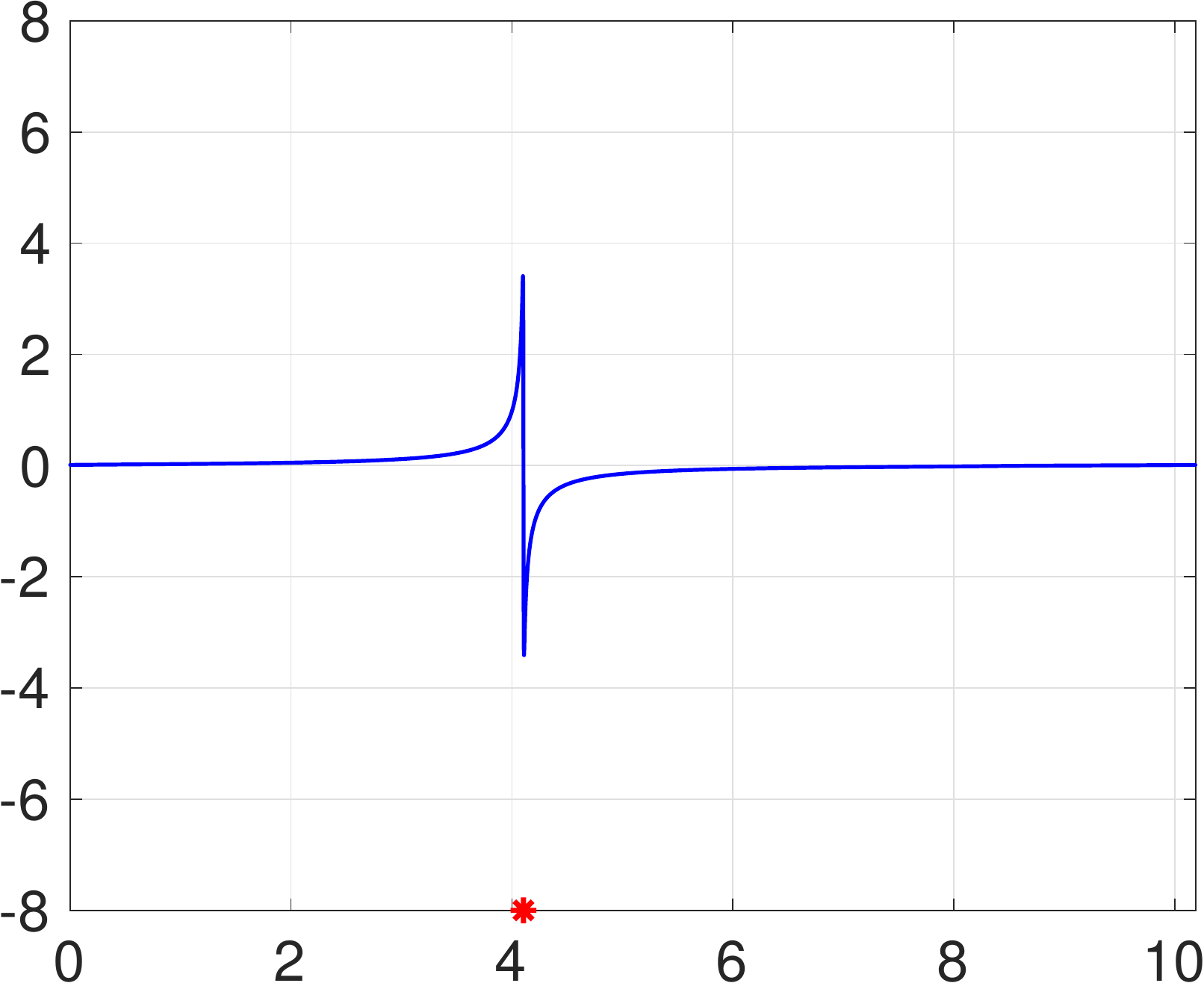}}
\subfigure[]{
\includegraphics[width=0.2\textwidth]{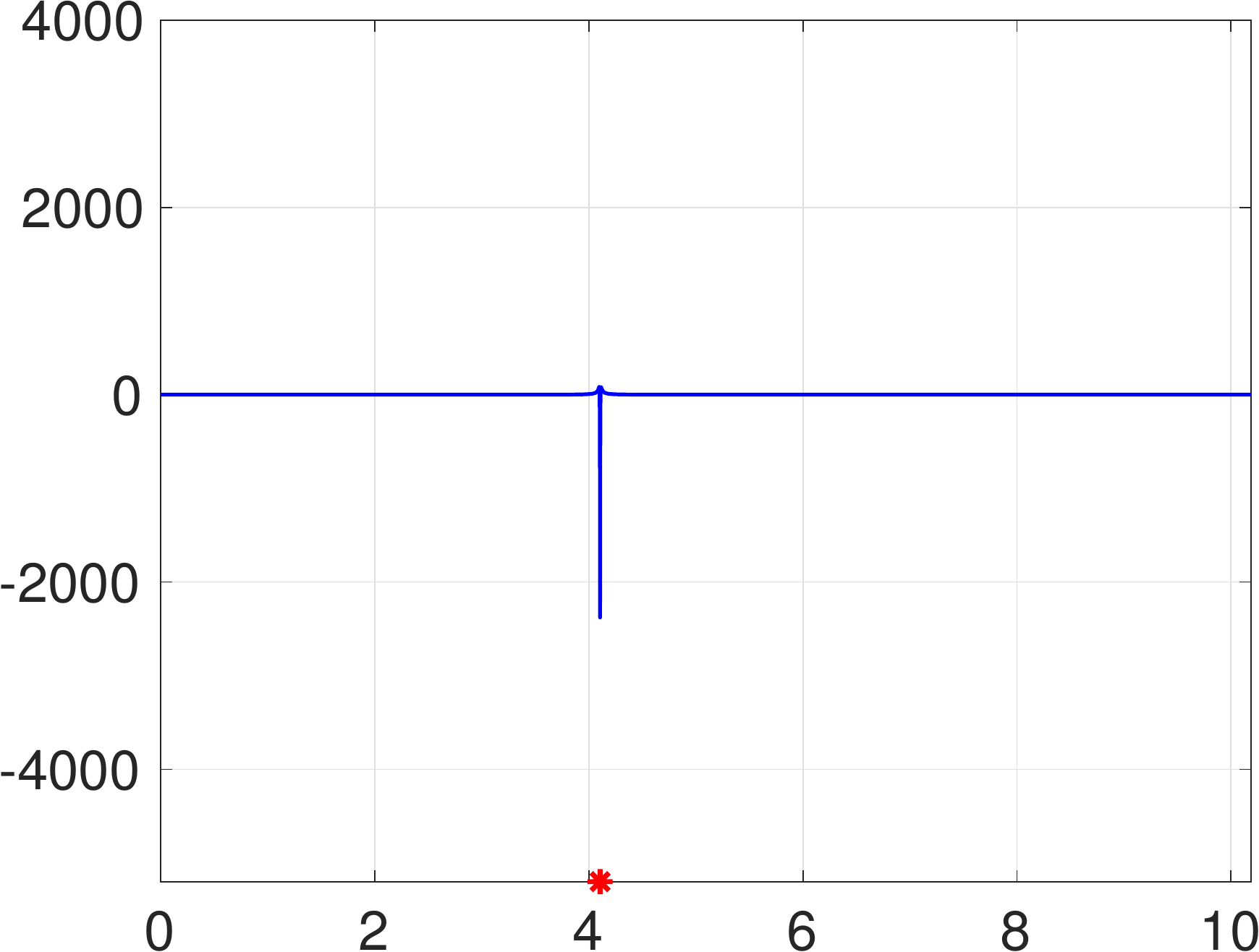}}
\subfigure[]{
\includegraphics[width=0.2\textwidth]{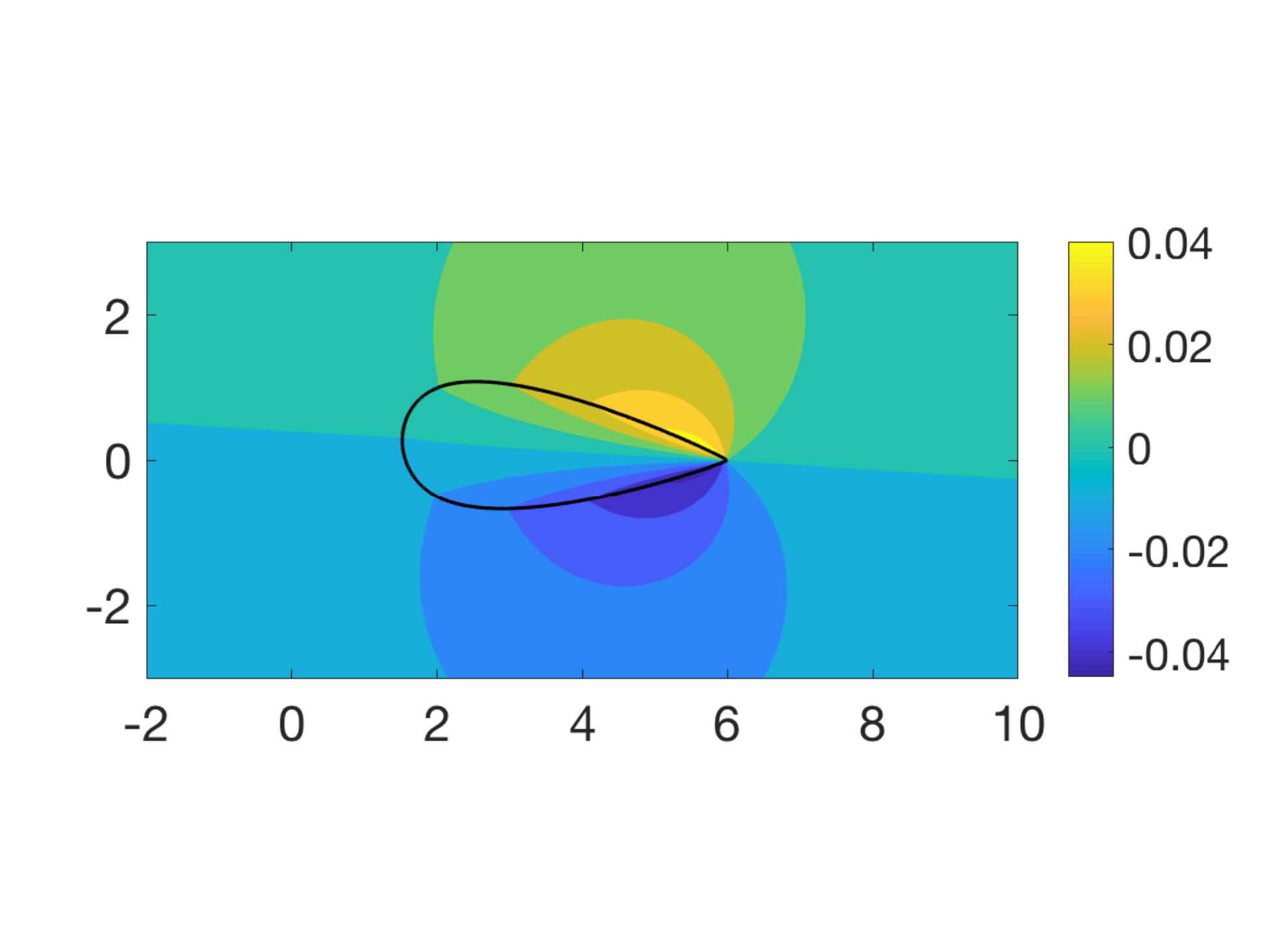}}
\subfigure[]{
\includegraphics[width=0.2\textwidth]{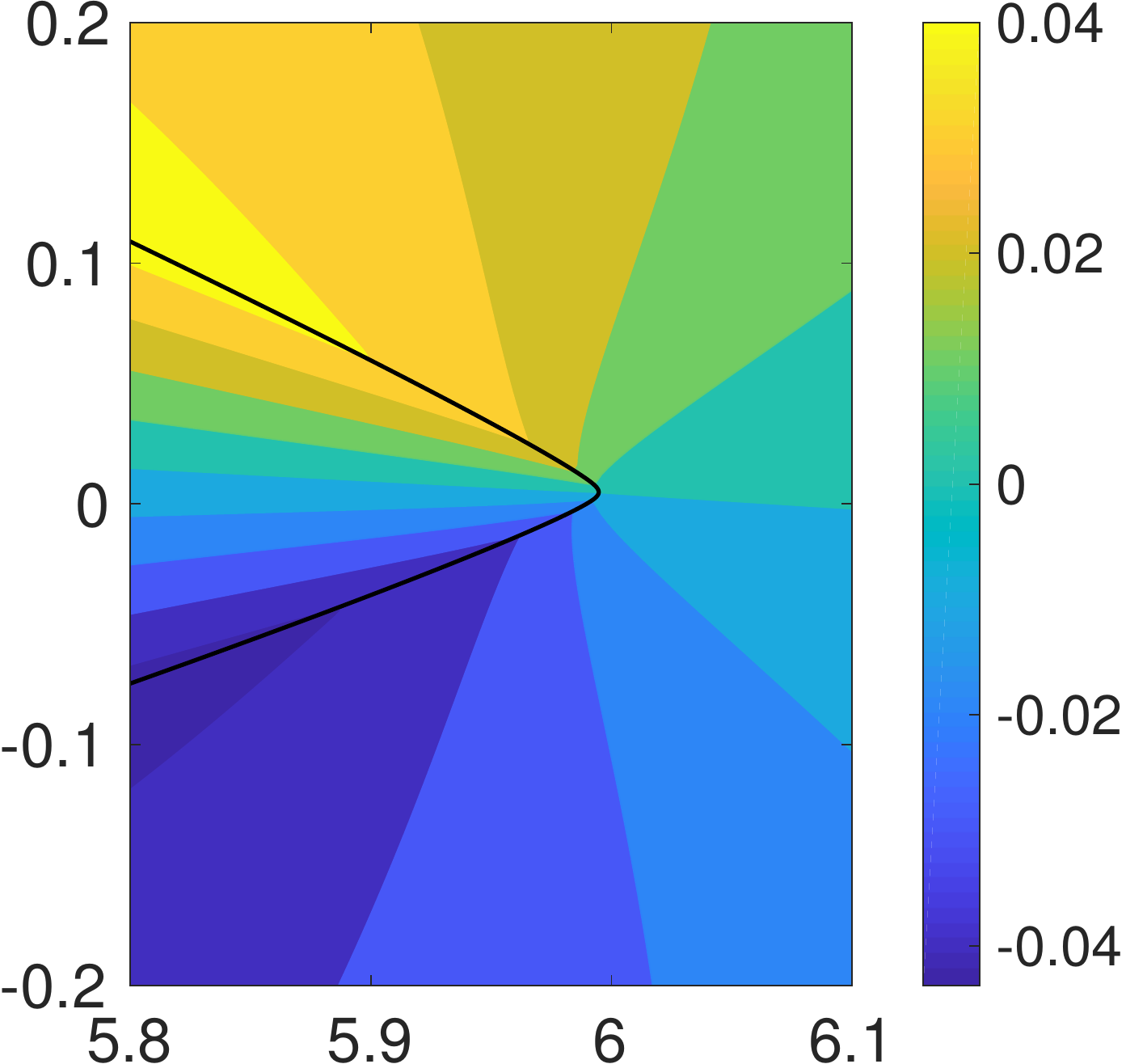}}\\
\subfigure[]{
\includegraphics[width=0.2\textwidth]{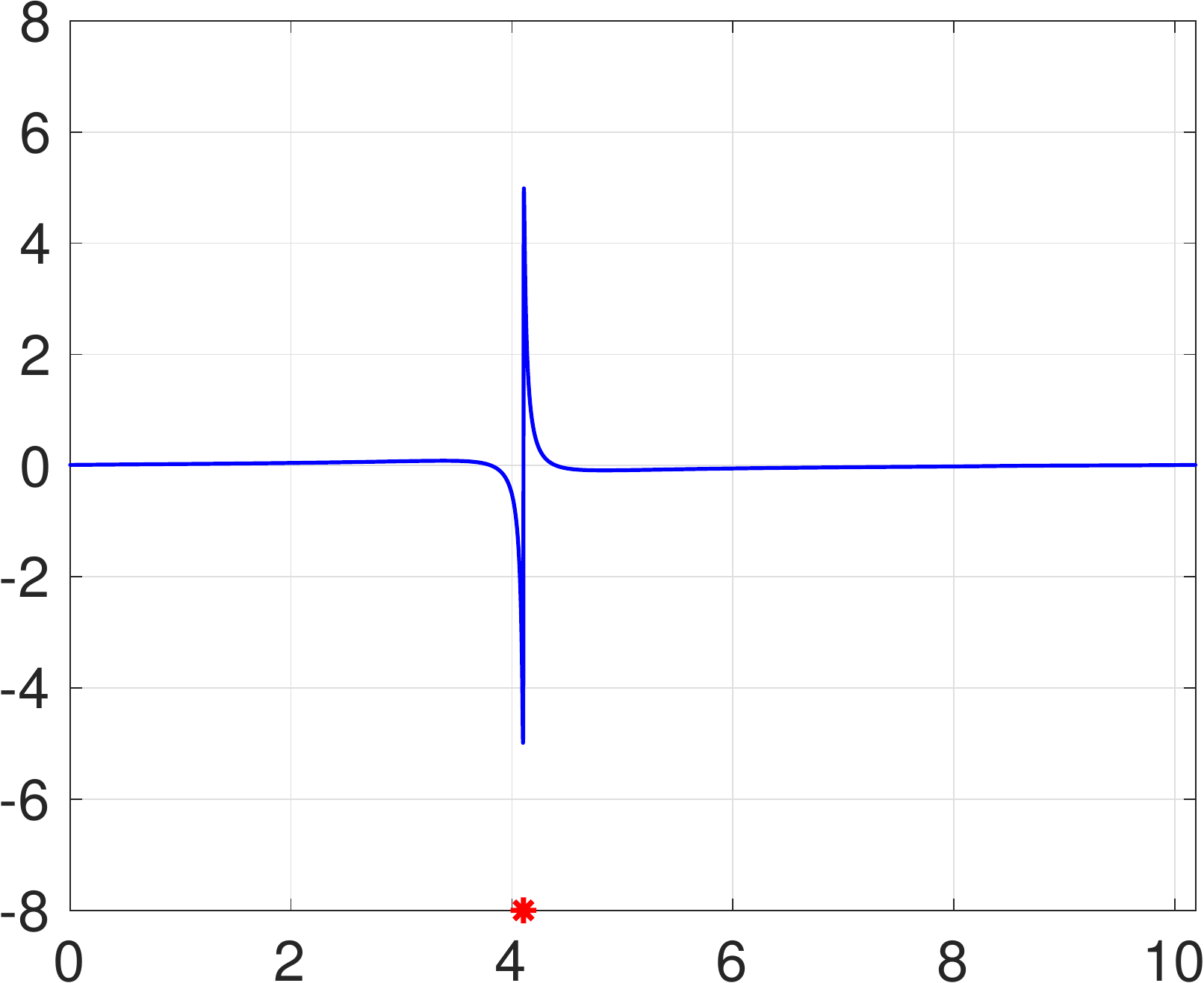}}
\subfigure[]{
\includegraphics[width=0.2\textwidth]{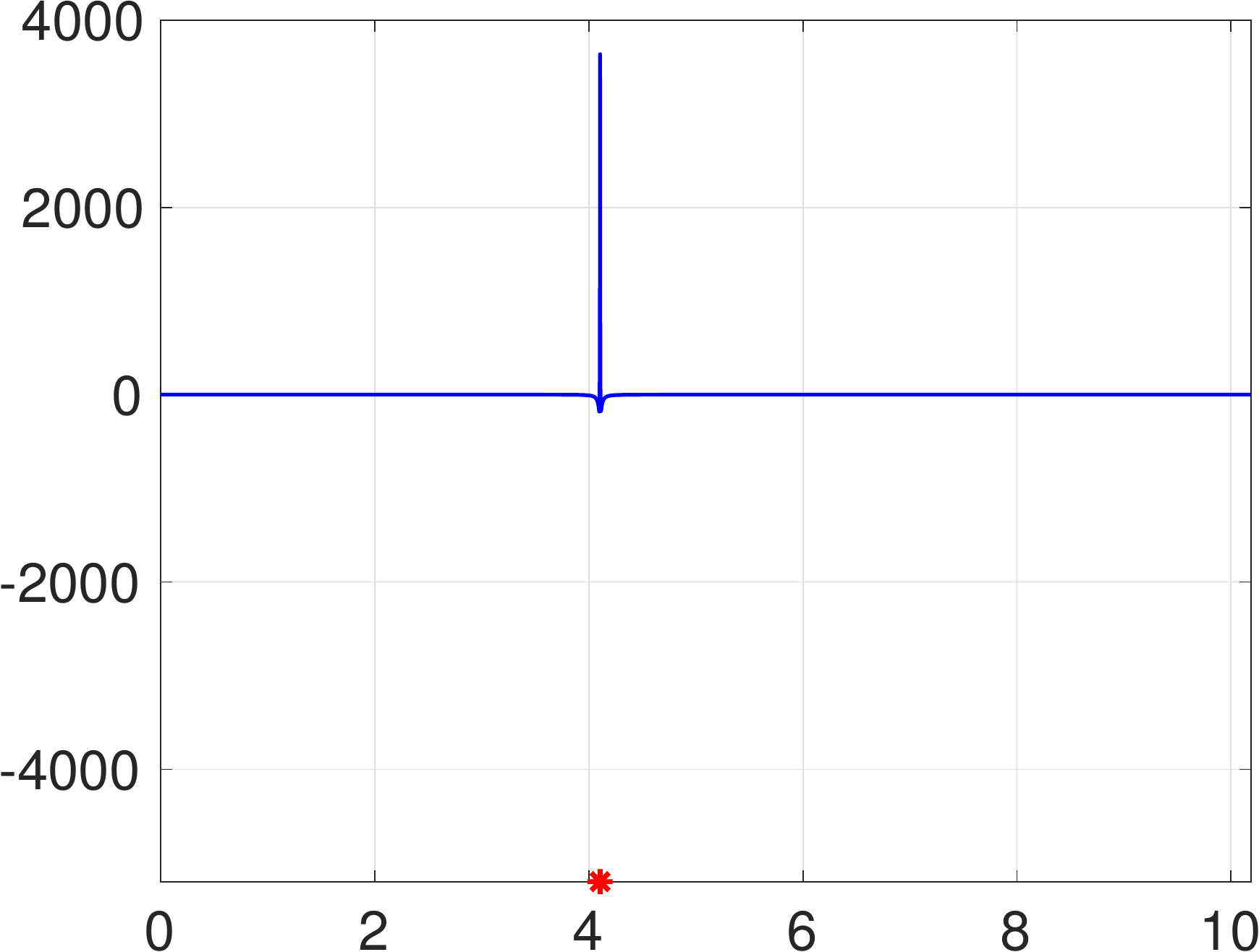}}
\subfigure[]{
\includegraphics[width=0.2\textwidth]{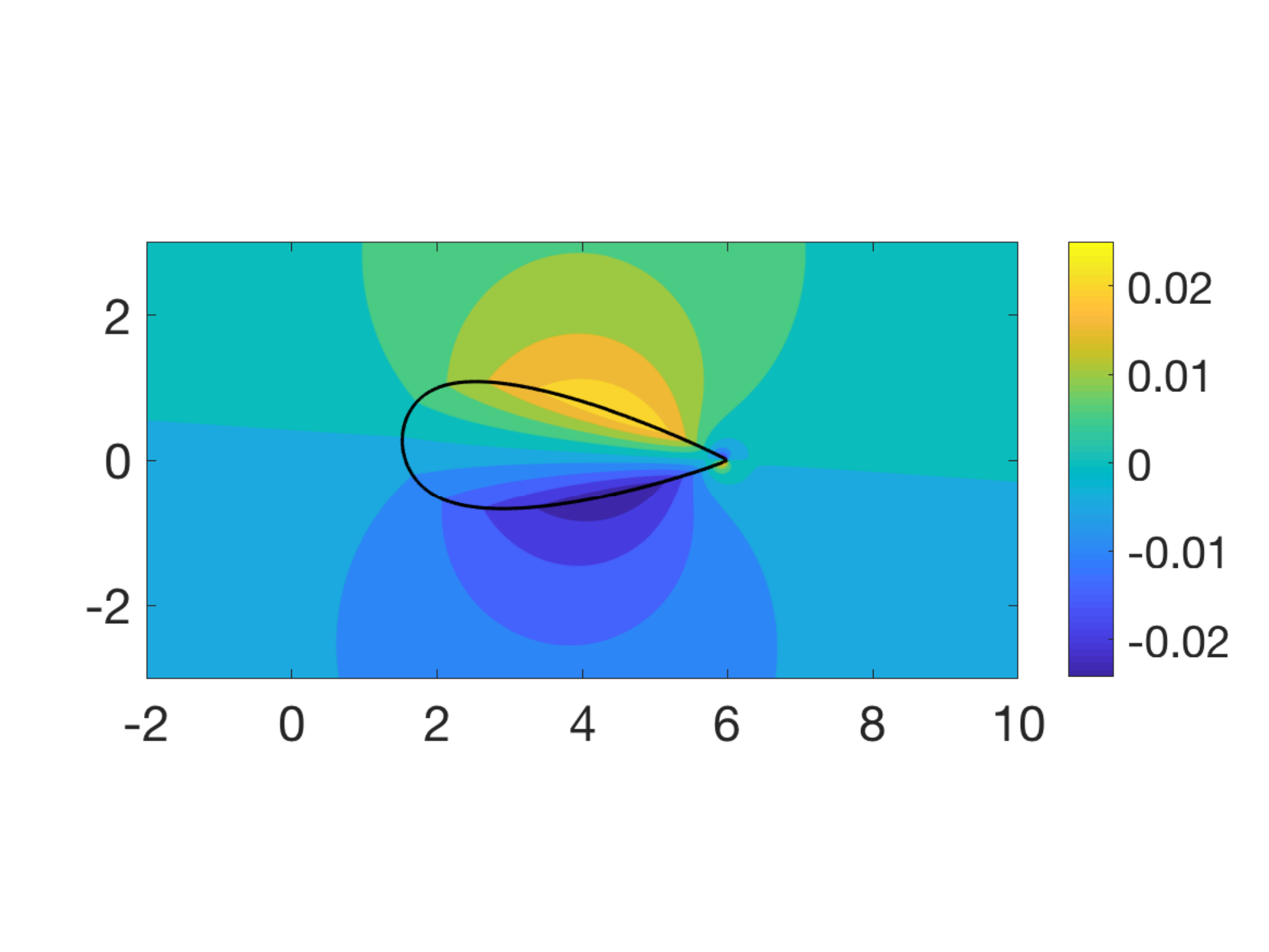}}
\subfigure[]{
\includegraphics[width=0.2\textwidth]{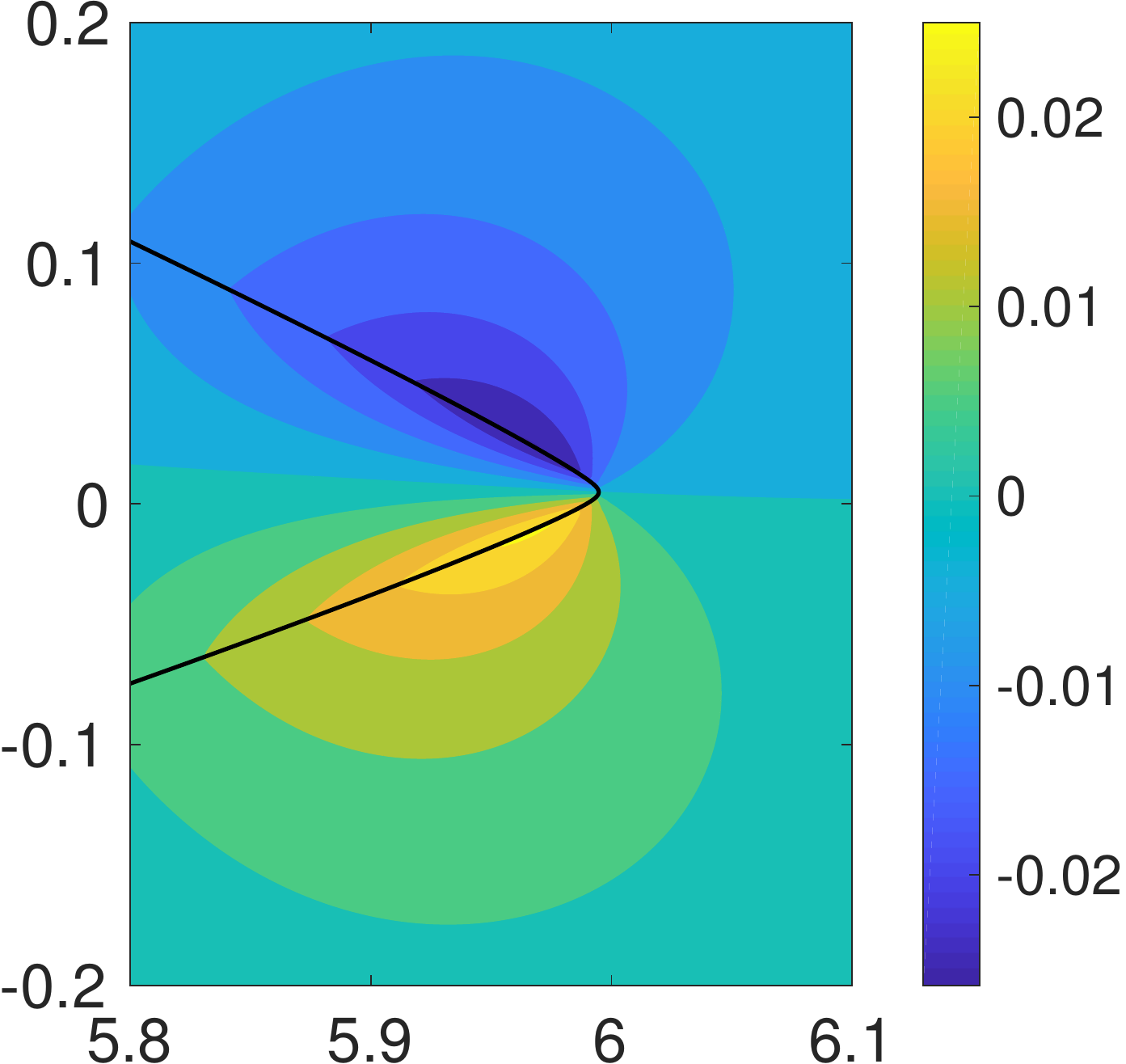}}\\

\caption{\label{fig3} (a), (b). Plotting of the eigenfunction and its conormal derivative for $\lambda_2=-0.2575$; 
(c), (d). The associated single-layer potential for $\lambda_2=-0.2575$; (e), (f), (g), (h). 
The corresponding items for the negative eigenvalues $\lambda_4=-0.1365$.}
\end{figure}

Fig.~\ref{fig4} plots the eigenfunctions with respect to arc length for the eigenvalues 
$\lambda_1=0.2575$ and $\lambda_2=-0.2575$ with different maximum curvature $500$, $1000$ and $1500$.

\begin{figure}
\centering
\subfigure[]{
\includegraphics[width=0.2\textwidth]{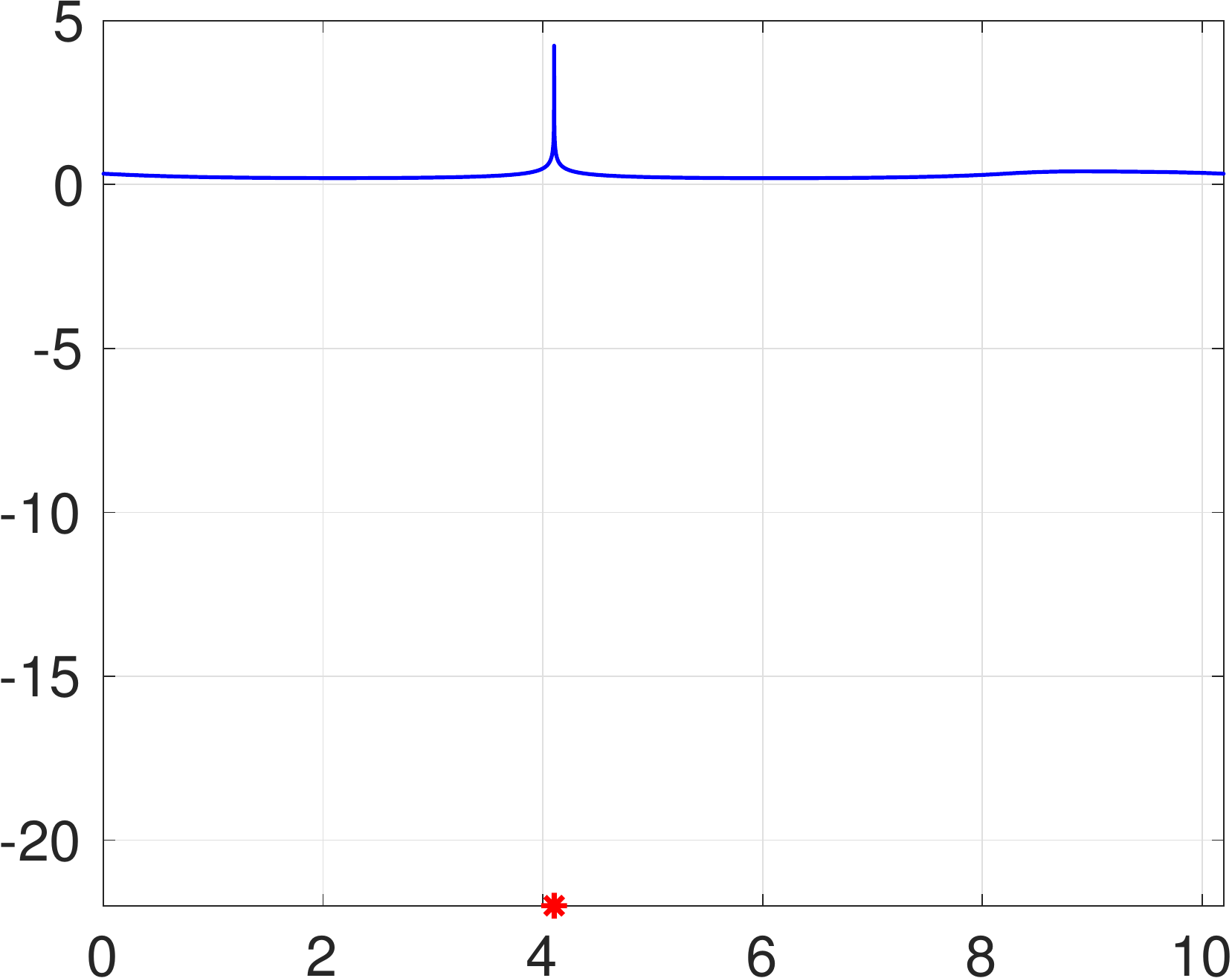}}
\subfigure[]{
\includegraphics[width=0.2\textwidth]{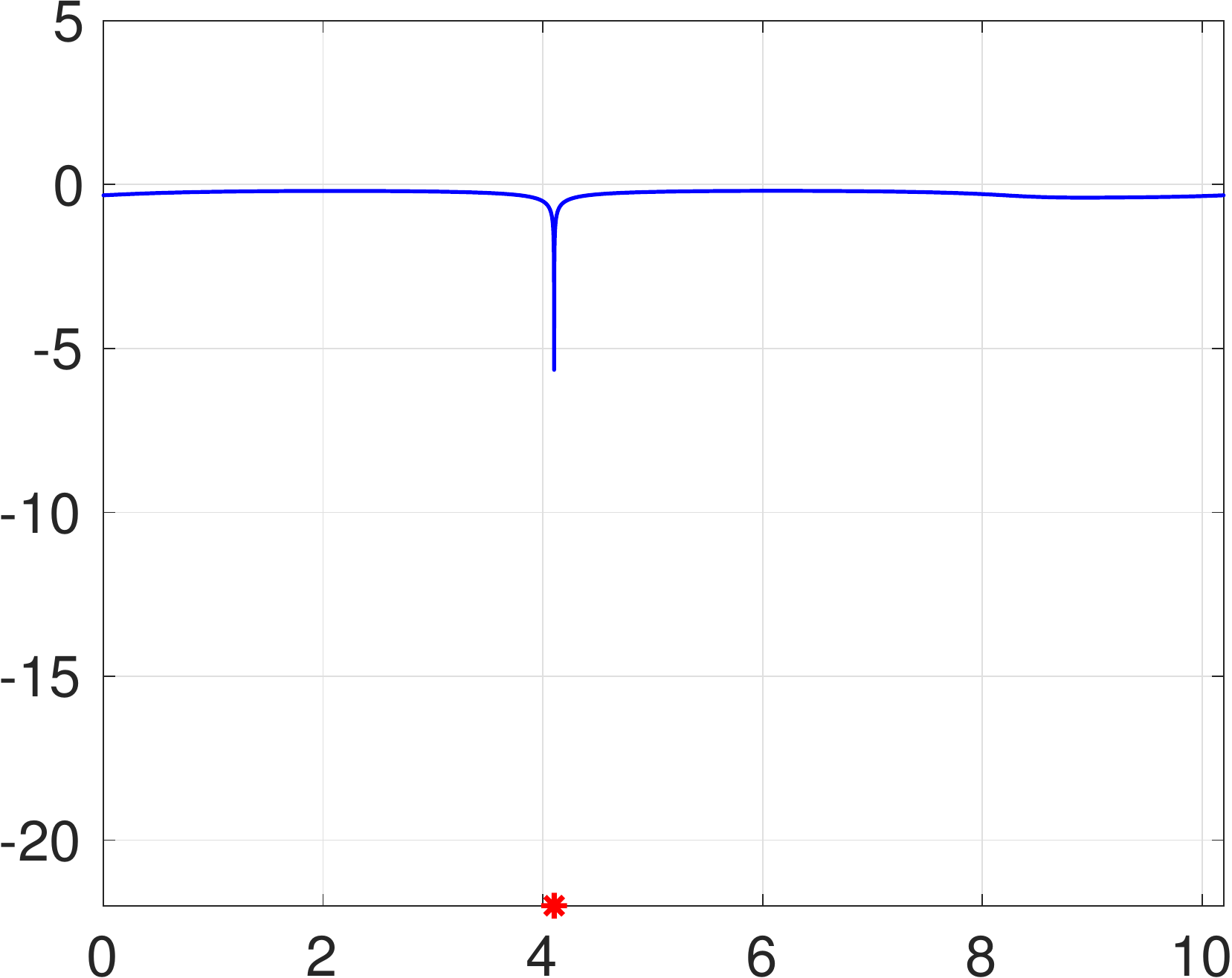}}
\subfigure[]{
\includegraphics[width=0.2\textwidth]{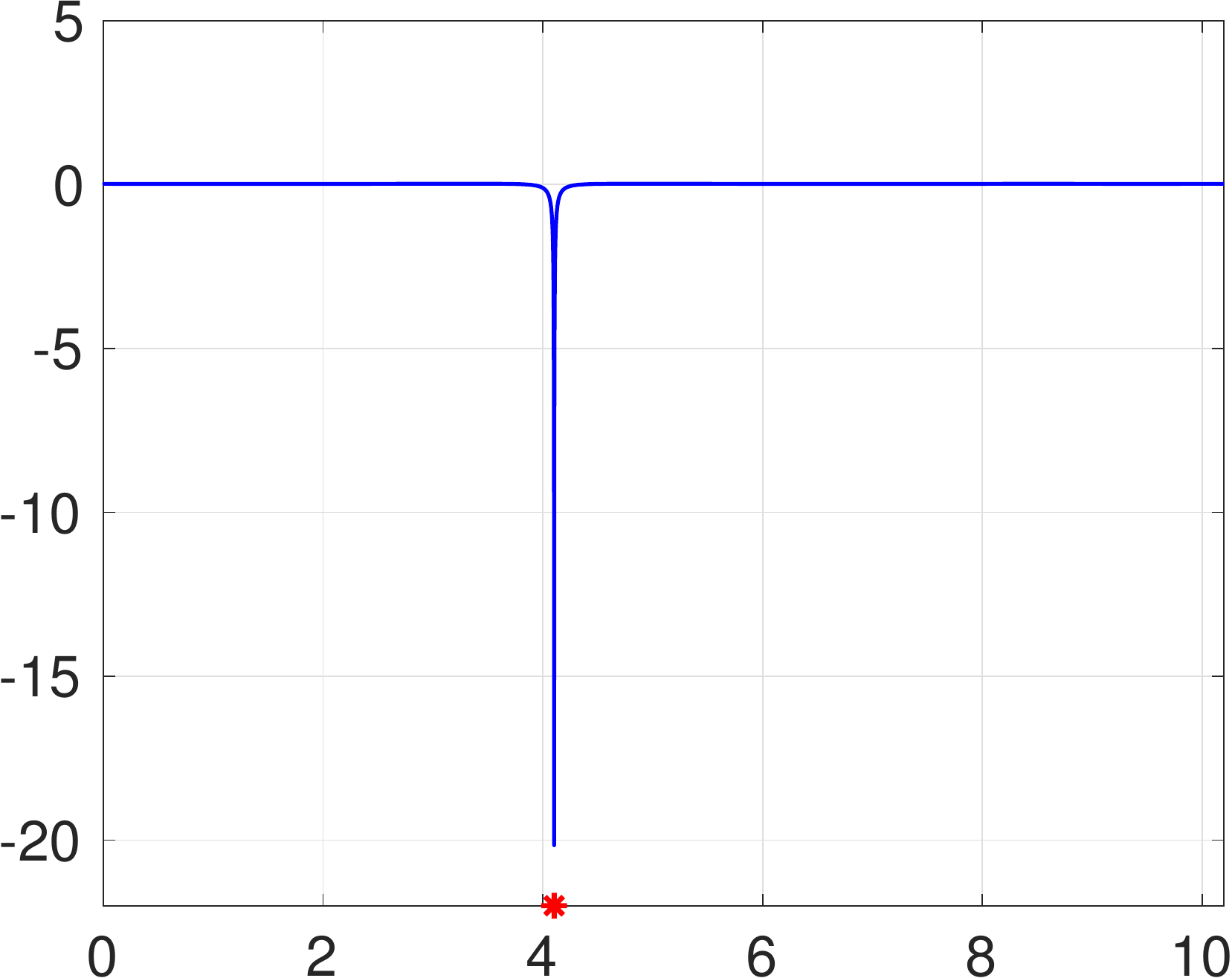}}\\
\subfigure[]{
\includegraphics[width=0.2\textwidth]{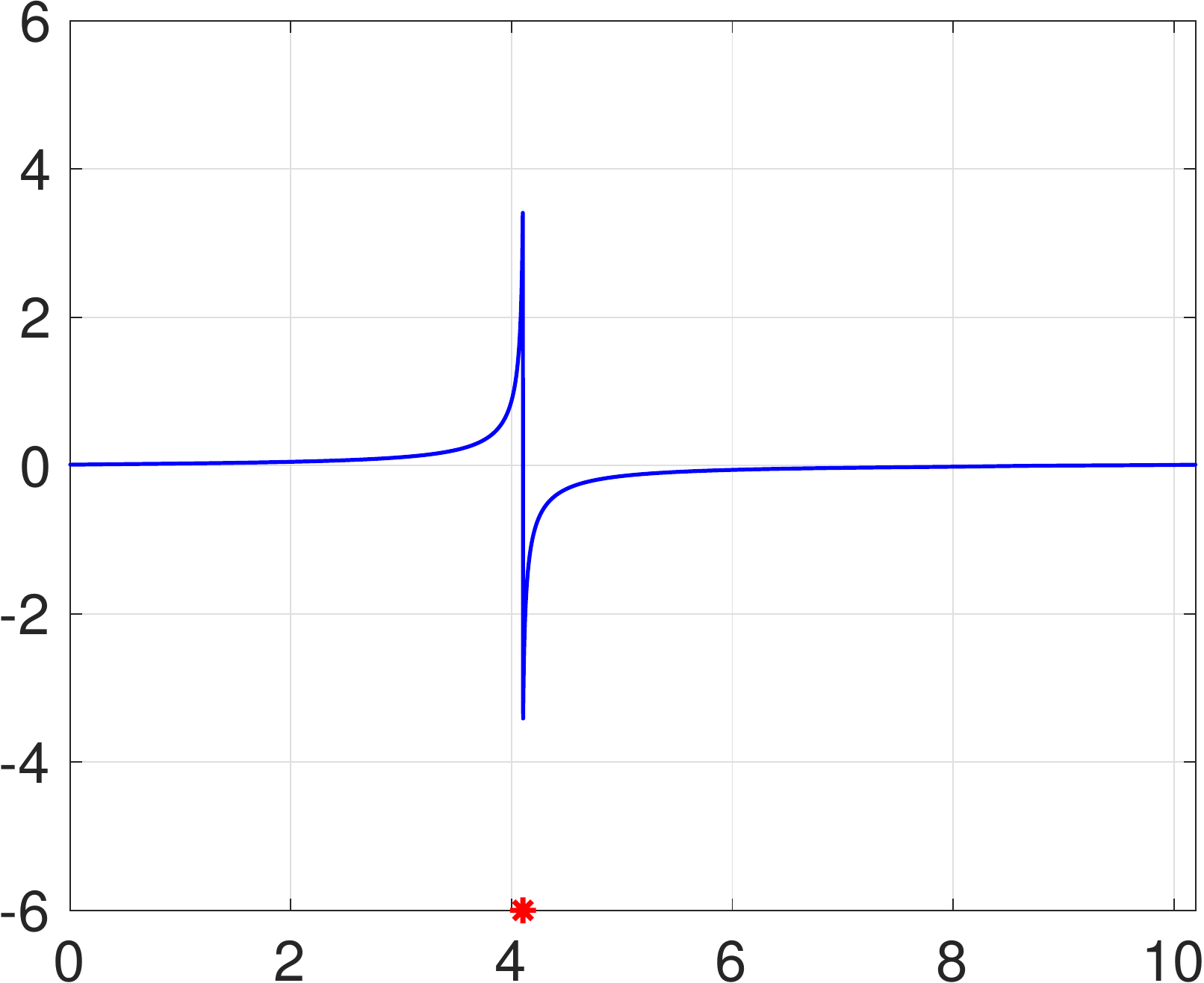}}
\subfigure[]{
\includegraphics[width=0.2\textwidth]{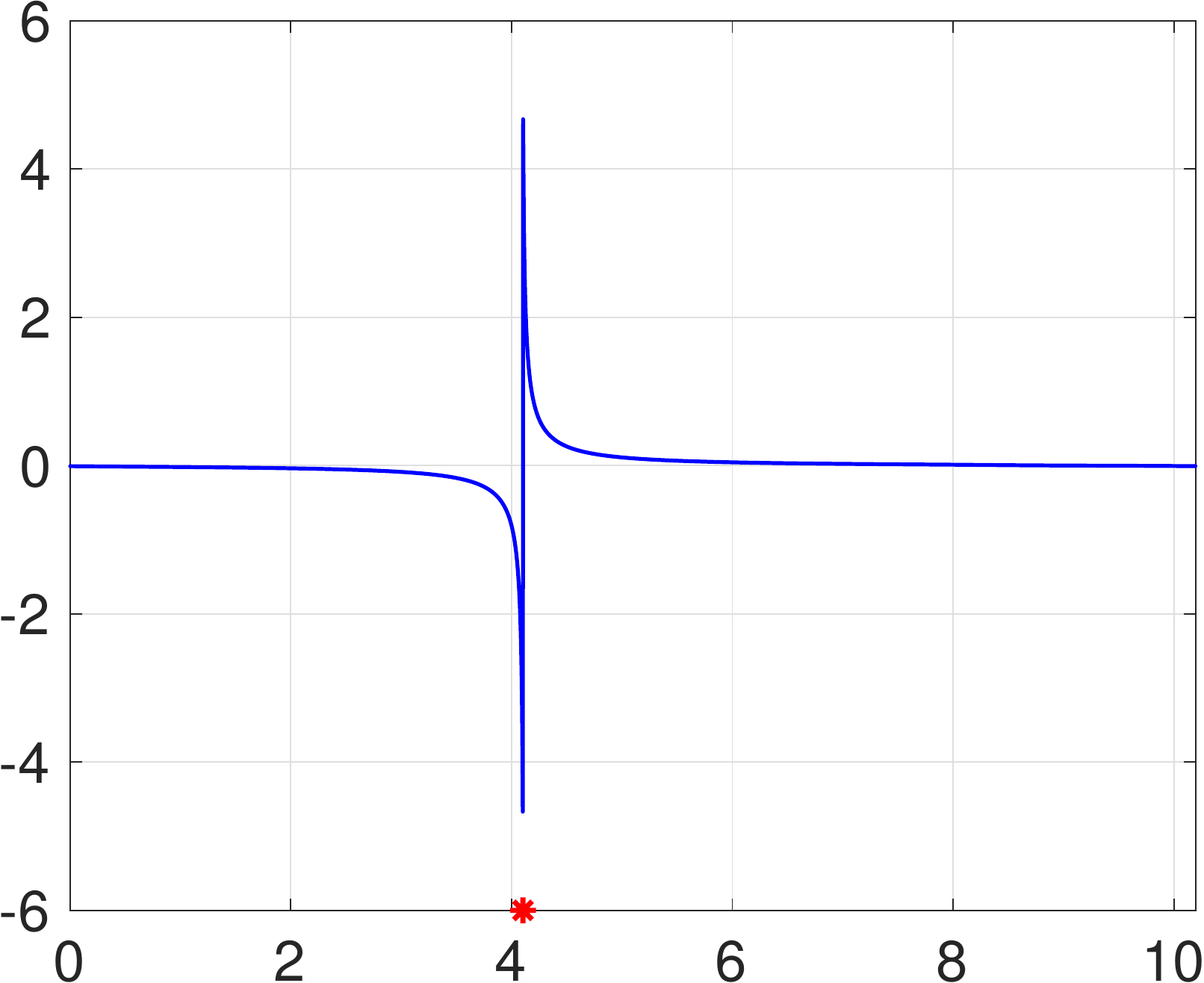}}
\subfigure[]{
\includegraphics[width=0.2\textwidth]{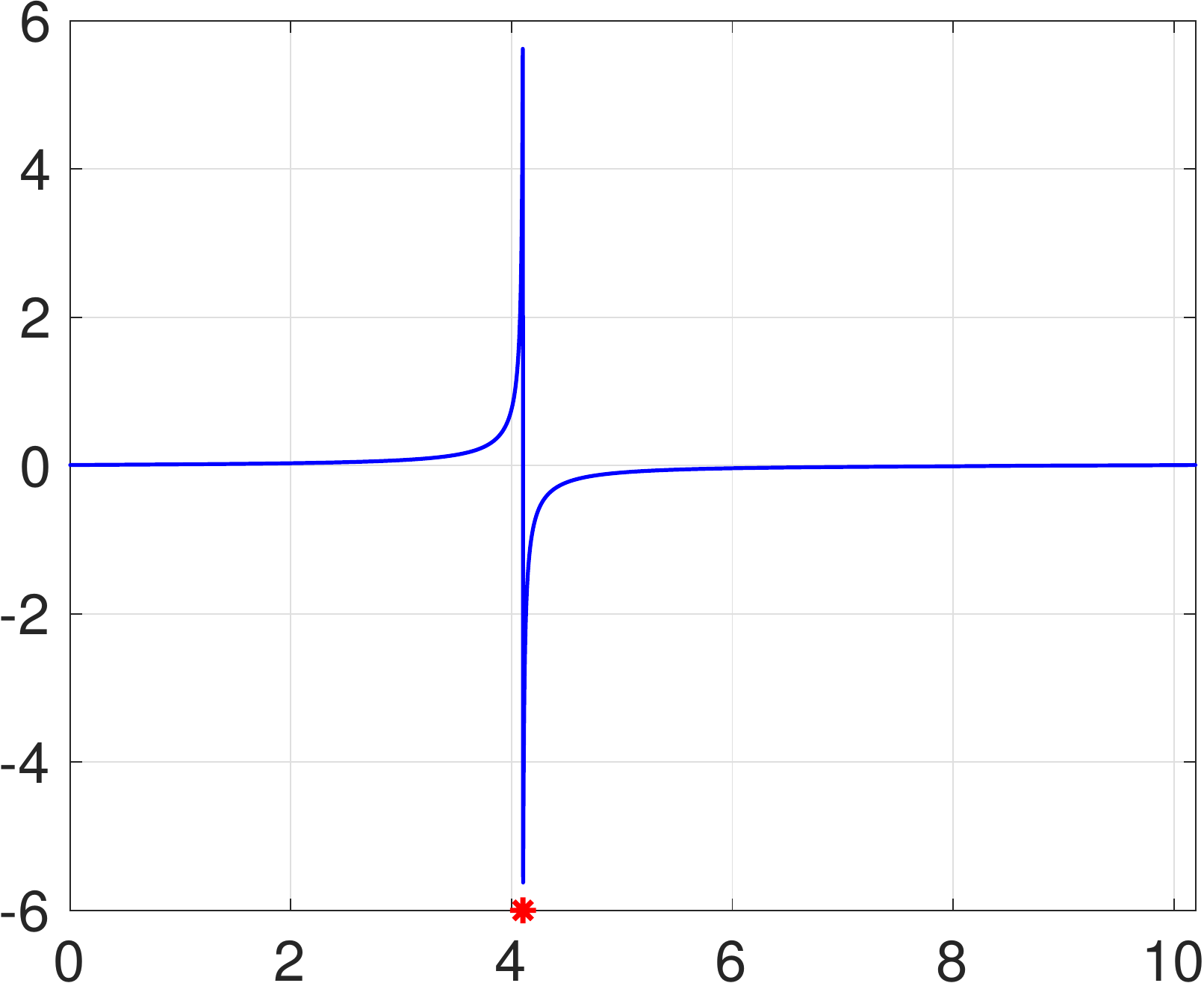}}\\
\caption{\label{fig4} (a), (b), (c).  Plotting the eigenfunctions for the positive eigenvalues $\lambda_1=0.2575$ with different maximum curvature $500$, $1000$ and $1500$. 
(d), (e), (f). The corresponding items for the negative eigenvalue $\lambda_2=-0.2575$.}
\end{figure}

We also numerically investigate the blow-up rate of the eigenfunction or its conormal derivative at a high-curvature point and plot the logarithm of the absolute value of the eigenfunctions at the high-curvature point for the positive eigenvalues $\lambda_1$, $\lambda_3$ and $\lambda_5$, and the logarithm of the absolute value of the derivative of the eigenfunctions at the high-curvature point for the negative eigenvalues $\lambda_2$, $\lambda_4$ and $\lambda_6$ with respect to different curvature in Fig.~\ref{fig5}. We find that they always 
follow the following rule
\begin{equation}\label{eq:growthrate}
 \psi_{\max} \sim a \kappa_{\max}^p\quad\mbox{as}\ \ \kappa_{\max}\rightarrow+\infty, 
\end{equation}
where $\alpha, p\in\mathbb{R}_+$ and $\psi_{\max}$ signifies the absolute value at the high-curvature point for the eigenfunction if the corresponding eigenvalue 
is positive, and for the conormal derivative of the eigenfunction if the eigenvalue is negative. In fact, by increasing the curvature at the point $x_*$and using
a standard regression, we can find the blow-up rates for different eigenvalues in \eqref{eq:egg1}. The parameters from the
regression are listed in Table~\ref{tab1}. 

%
\begin{figure}
\includegraphics[width=0.2\textwidth] {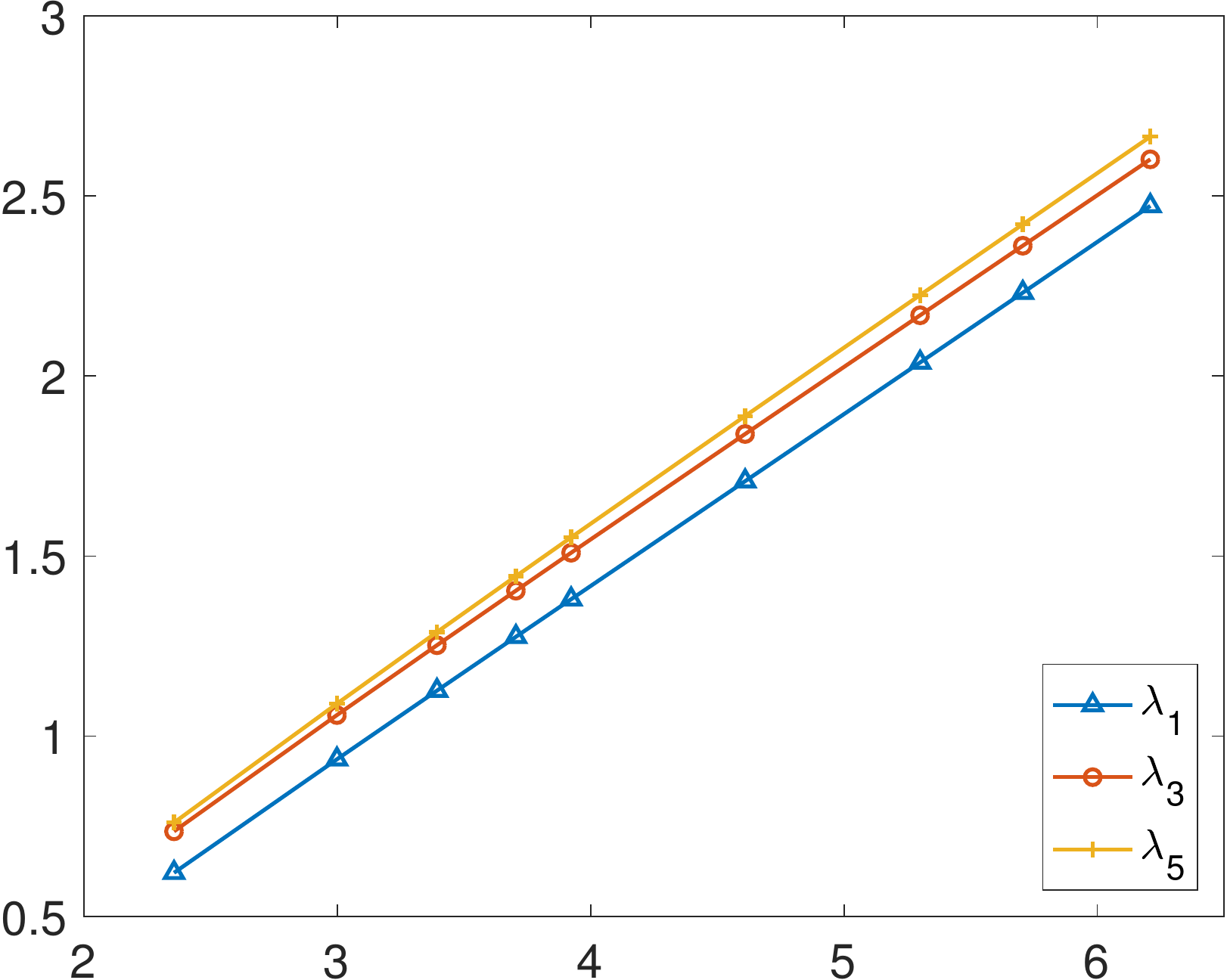}
\hspace{0.8cm}
\includegraphics[width=0.2\textwidth] {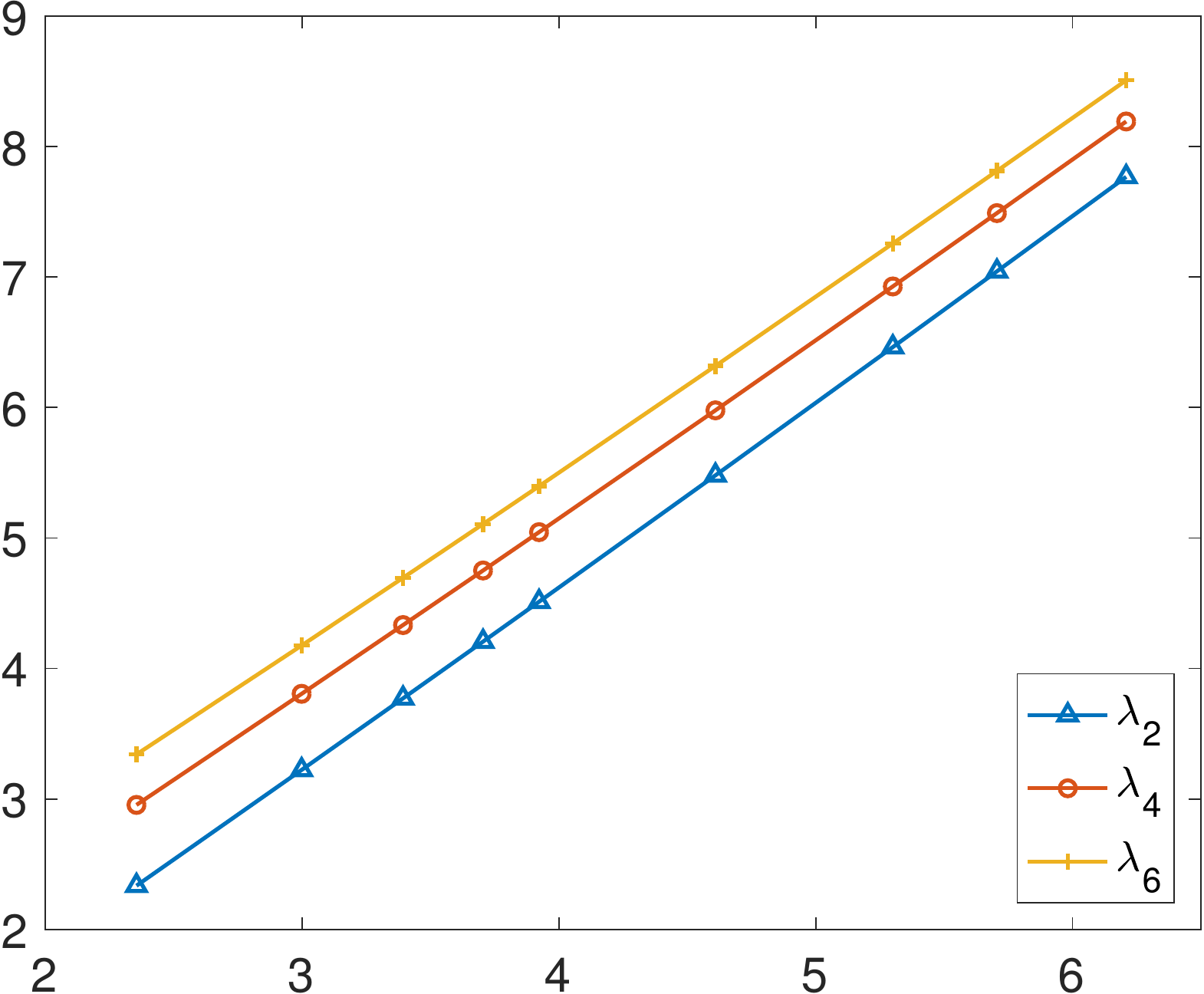}
\caption{\label{fig5} The left one plots logarithm of the eigenfunction at the high-curvature point for the positive eigenvalues $\lambda_1$, $\lambda_{3}$ and $\lambda_{5}$, and the right one plots logarithm of the derivative of the eigenfunction at the high-curvature point for the simple negative eigenvalues $\lambda_2$, $\lambda_{4}$ and $\lambda_{6}$ with respect to different curvature.}
\end{figure}
\begin{table}[t]
  \centering
  \subtable[]{
    \centering
    \begin{tabular}{cccc}
      \toprule
      & $\lambda_1$ & $\lambda_3$ & $\lambda_5$ \\
      \midrule
      $p$ & 0.4793 & 0.4824 & 0.4925 \\[5pt]
      $\ln(\alpha)$ & -0.5022 & -0.3886 & -0.3867 \\
      \bottomrule
    \end{tabular}}
  \hspace{1.5cm}
\subtable[]{
    \centering
    \begin{tabular}{cccc}
      \toprule
      & $\lambda_2$ & $\lambda_4$ & $\lambda_6$ \\
      \midrule
      $p$ & 1.4108 & 1.3602 & 1.3423 \\[5pt]
      $\ln(\alpha)$ & -1.0102 & -0.2763 & -0.1483 \\
      \bottomrule
    \end{tabular}
    }
  \caption{The parameters of the form \eqref{eq:growthrate} from the regression associated with the 
  eigenvalues in \eqref{eq:egg1}: (a) $\lambda_j, j=1, 3, 5$; (b) $\lambda_j, j=2,4,6$.}
  \label{tab1}
\end{table}


\subsection{A convex 3-symmetric domain}

In this subsection, we consider a convex 3-symmetric domain as shown in Fig.~\ref{fig11}, which possesses three high-curvature points that are 
denoted by $x_{*}$, $x_{\triangle}$ and $x_{o}$ as shown in Fig.~ \ref{fig11}. The largest curvature is 
\begin{equation}\label{eq:lc1}
 \kappa_{x_{*}}=\kappa_{x_{\triangle}}=\kappa_{x_{o}}=500.
\end{equation}
\begin{figure}[t]
\includegraphics[width=3cm] {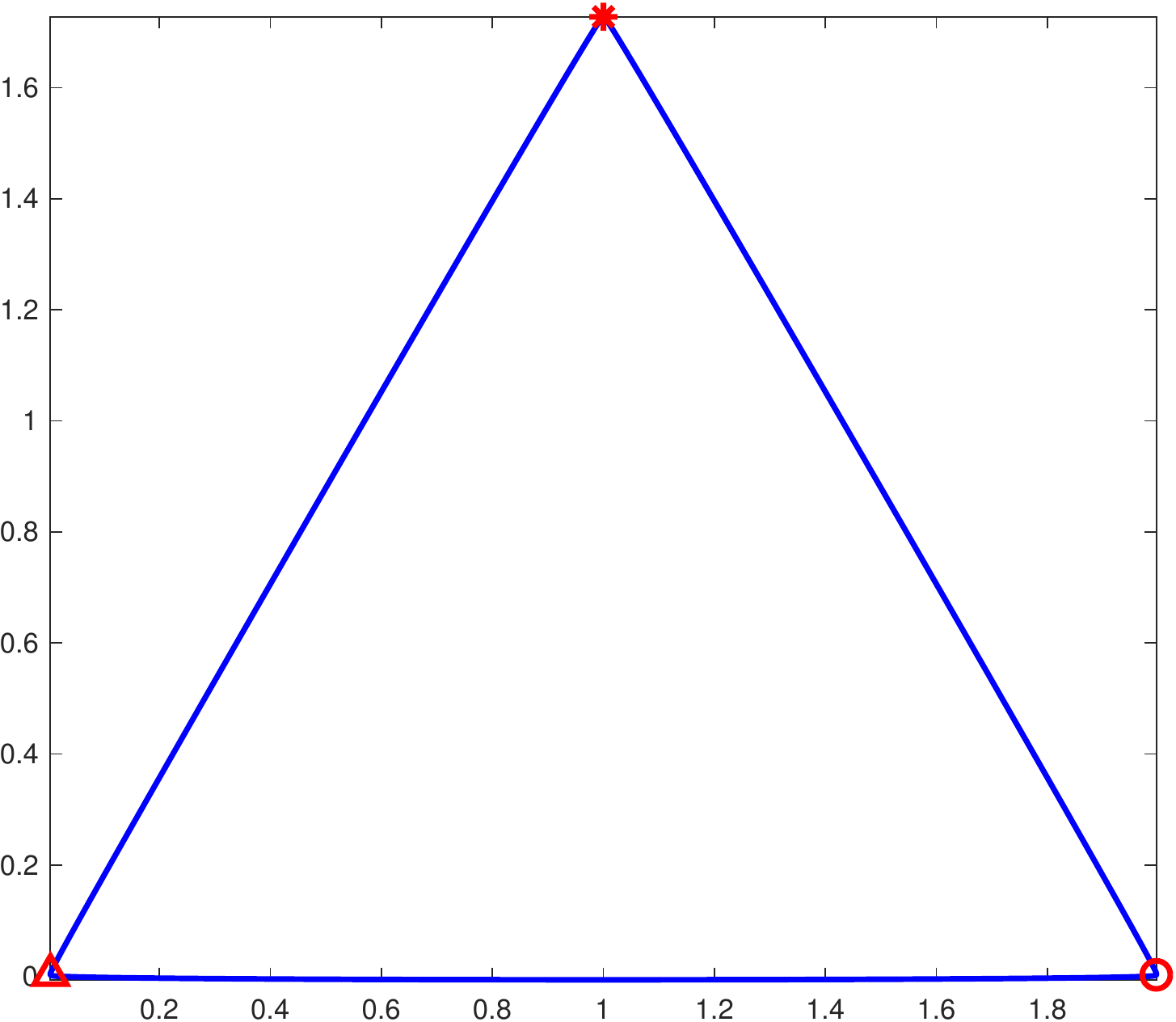}
\caption{\label{fig11} A convex 3-symmetric domain.}
\end{figure}
The first seven largest eigenvalues (in terms of the absolute value) are numerically found to be
\begin{equation}\label{eq:egg3_1}
\begin{split}
\lambda_0&=0.5, \  \lambda_1= \lambda_2=0.2850,\  \lambda_3= \lambda_4=-0.2850, \ \lambda_5=0.2583,\ \lambda_6=-0.2583, \\
   & \  \lambda_7= \lambda_3=0.1906,\  \lambda_9= \lambda_10=-0.1906, \ \lambda_{11}=0.1568,\ \lambda_{12}=-0.1568, \\
   & \  \lambda_{13}= \lambda_{14}=0.1111,\  \lambda_{15}= \lambda_{16}=-0.1111, \ \lambda_{17}=0.0875,\ \lambda_{18}=-0.0875.
\end{split}
\end{equation}
Compared to the study in the previous subsection, there are multiple NP eigenvalues occurring for the 3-symmetric domain. Hence, we can verify our assertion
about the NP eigenfunction associated to a multiple NP eigenvalue. In the following, we first show the case for the simple eigenvalue and then the case for the multiple eigenvalue.

Fig.~\ref{fig12} plots the eigenfunctions as well as the associated single-layer potentials, respectively, for the positive eigenvalues  
$\lambda_5=0.2583$. The numerical results clearly support our assertion about the 
NP eigenfunctions associated to simple positive eigenvalues. 

\begin{figure}
\centering
\subfigure[]{
\includegraphics[width=0.2\textwidth]{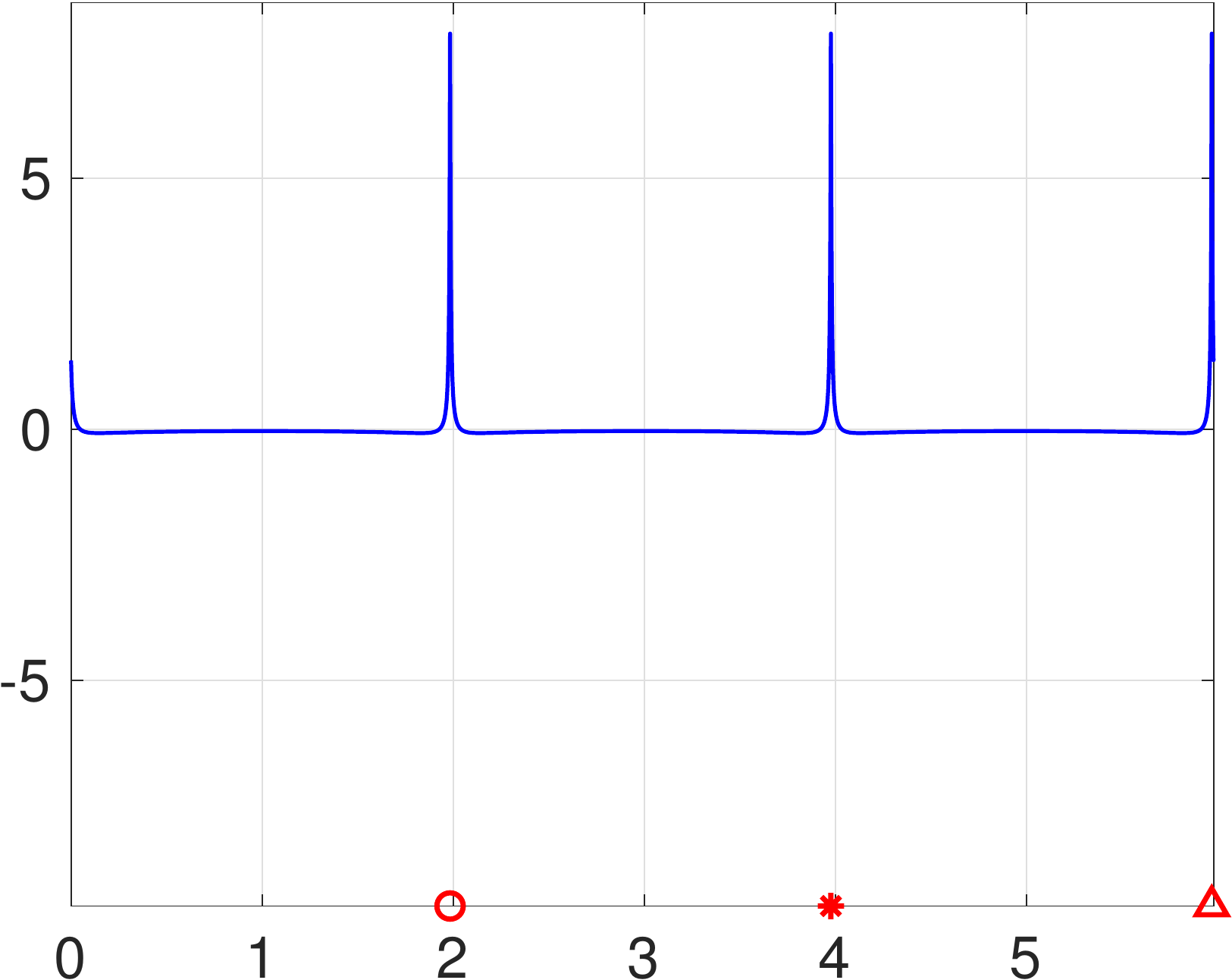}}
\subfigure[]{
\includegraphics[width=0.2\textwidth]{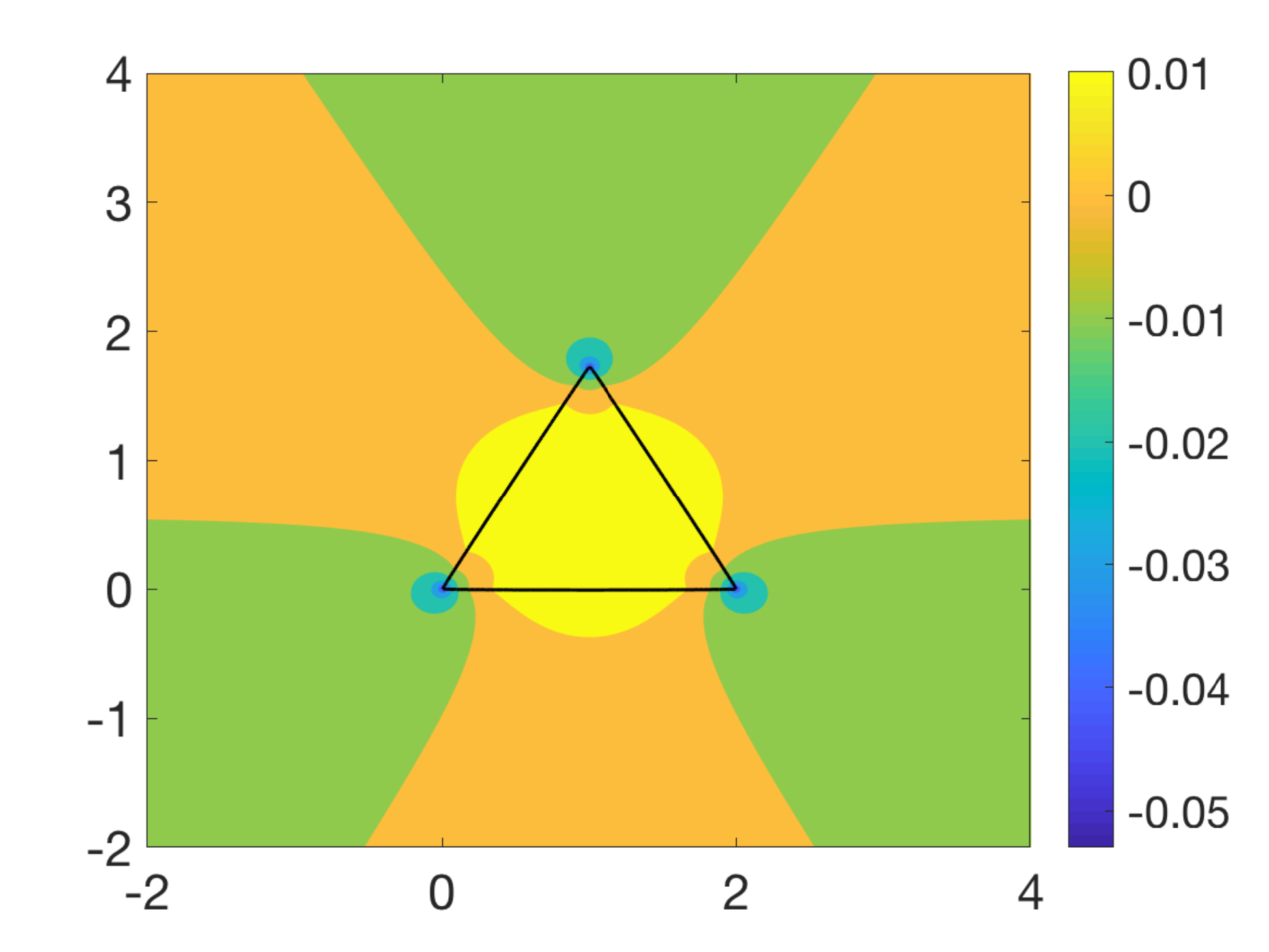}}
\subfigure[]{
\includegraphics[width=0.2\textwidth]{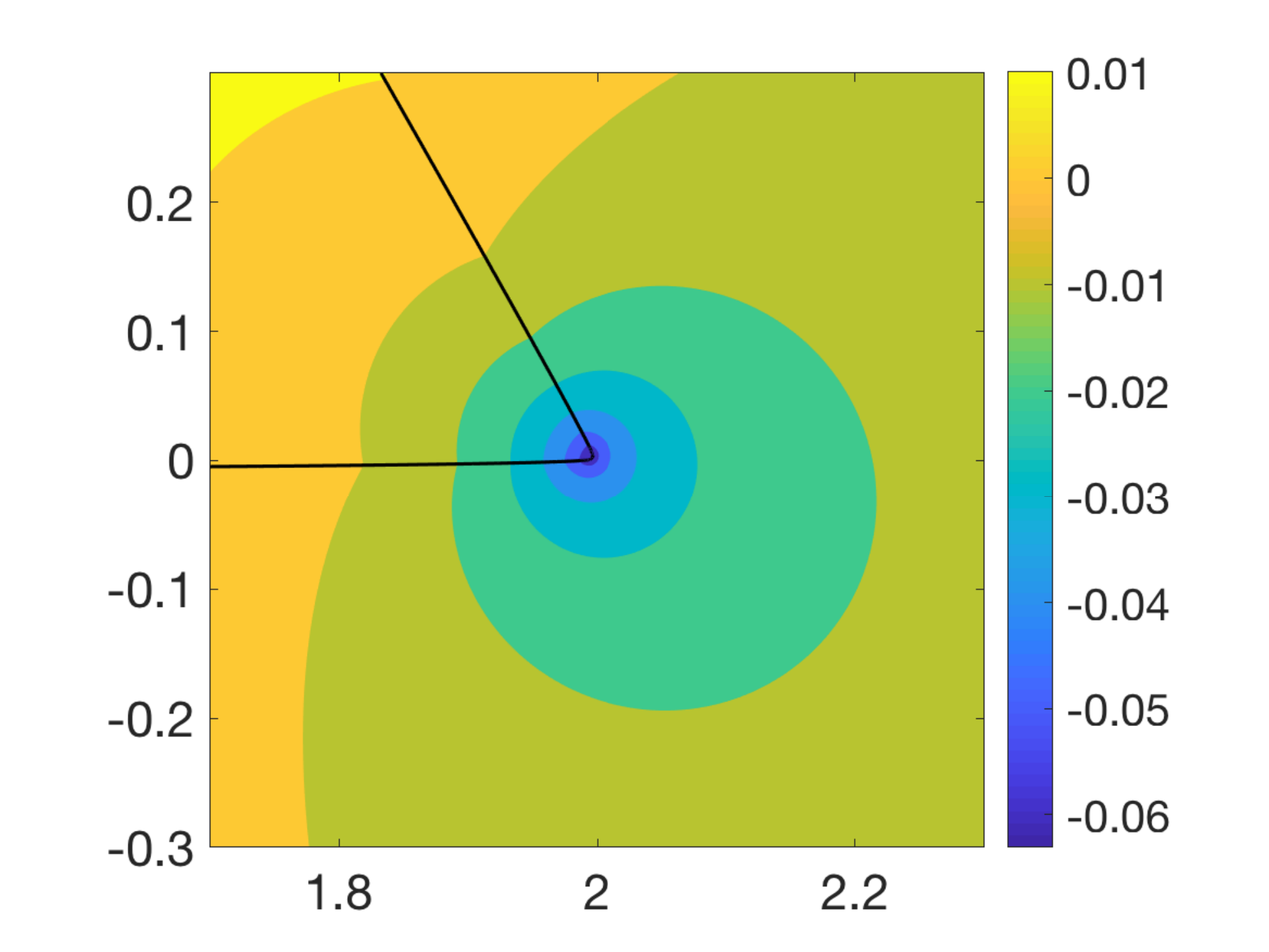}}
\caption{\label{fig12} (a). Plotting of the eigenfunction for $\lambda_5=0.2583$ with respect to the arc length; (b).
The associated single-layer potential for $\lambda_5=0.2583$; (c). The single-layer potential around the high-curvature point;  }
\end{figure}

Fig.~\ref{fig13} plots the eigenfunctions as well as the corresponding conormal derivatives and single-layer potentials for the negative eigenvalues 
$\lambda_6=-0.2583$. The numerical results clearly support our assertion about the NP eigenfunctions associated to simple negative eigenvalues.

\begin{figure}
\centering
\subfigure[]{
\includegraphics[width=0.2\textwidth]{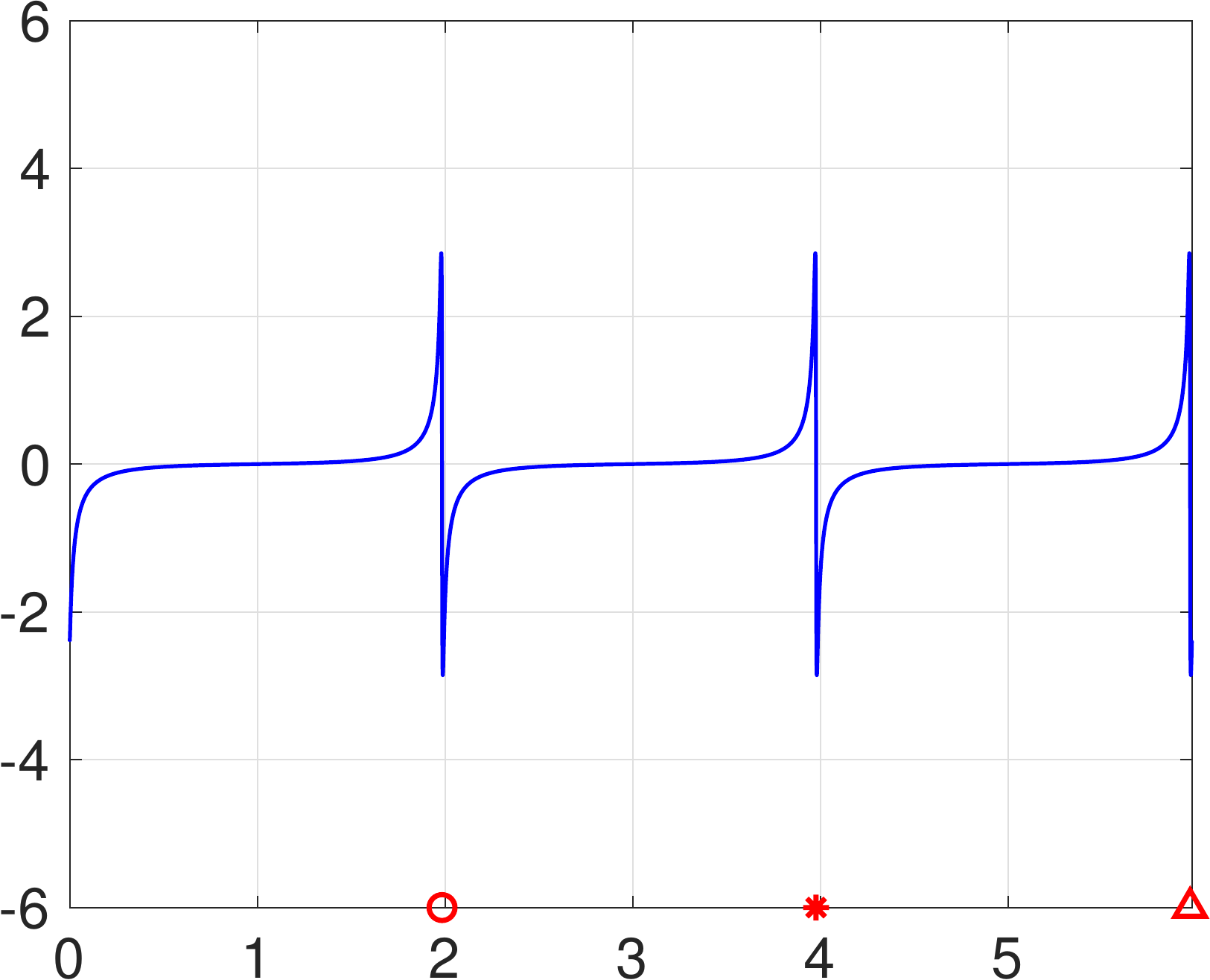}}
\subfigure[]{
\includegraphics[width=0.2\textwidth]{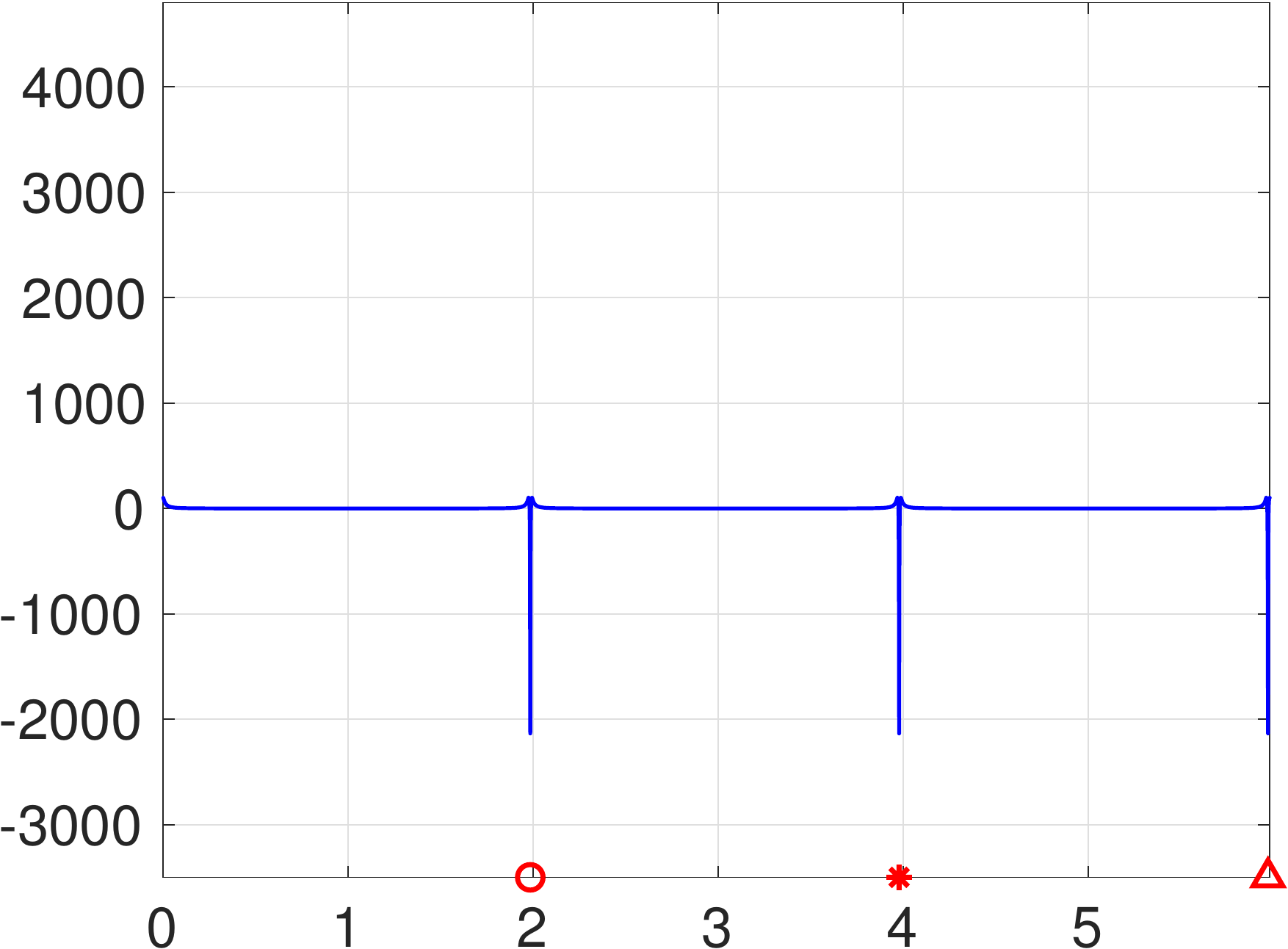}}
\subfigure[]{
\includegraphics[width=0.2\textwidth]{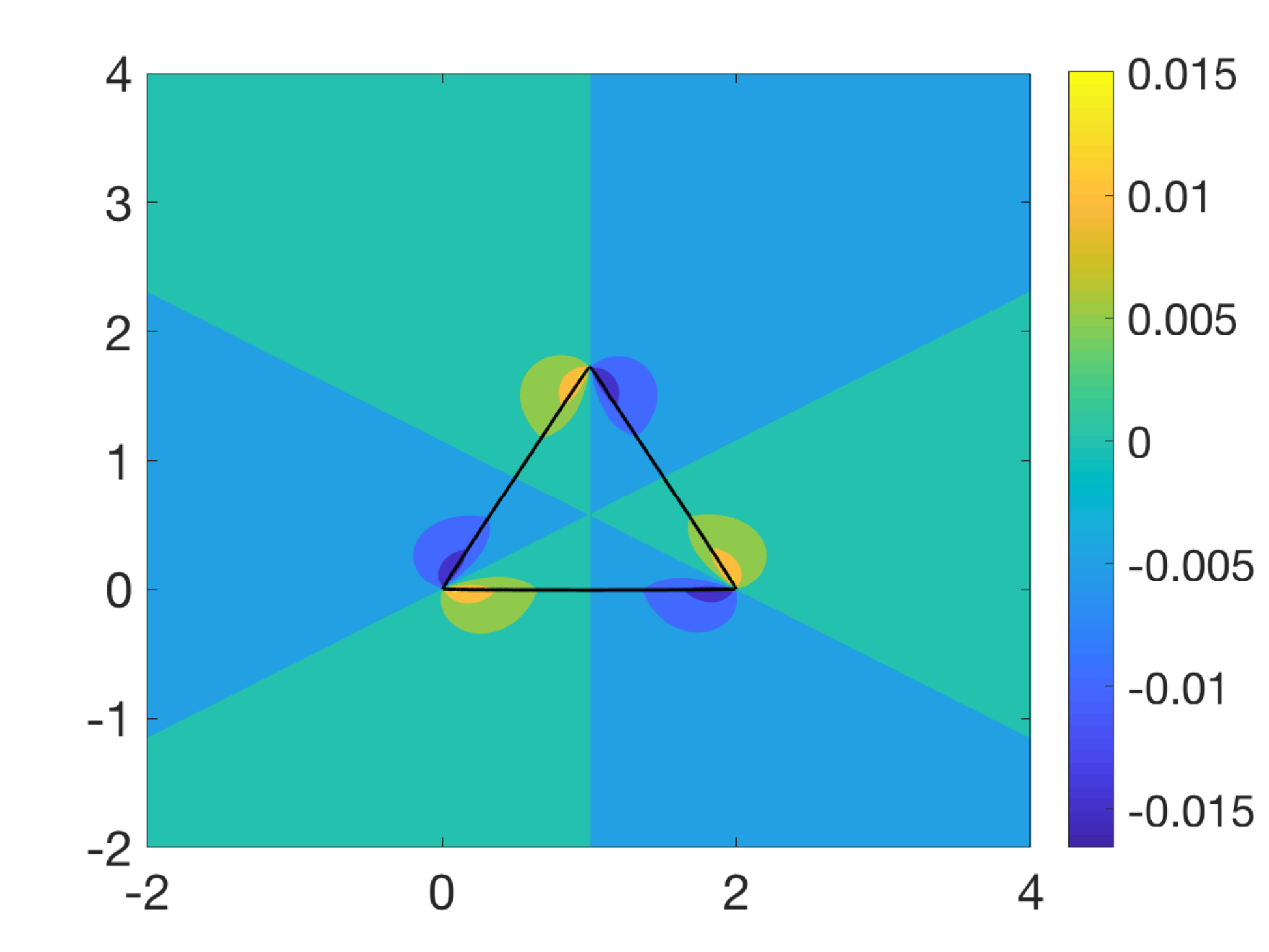}}
\subfigure[]{
\includegraphics[width=0.2\textwidth]{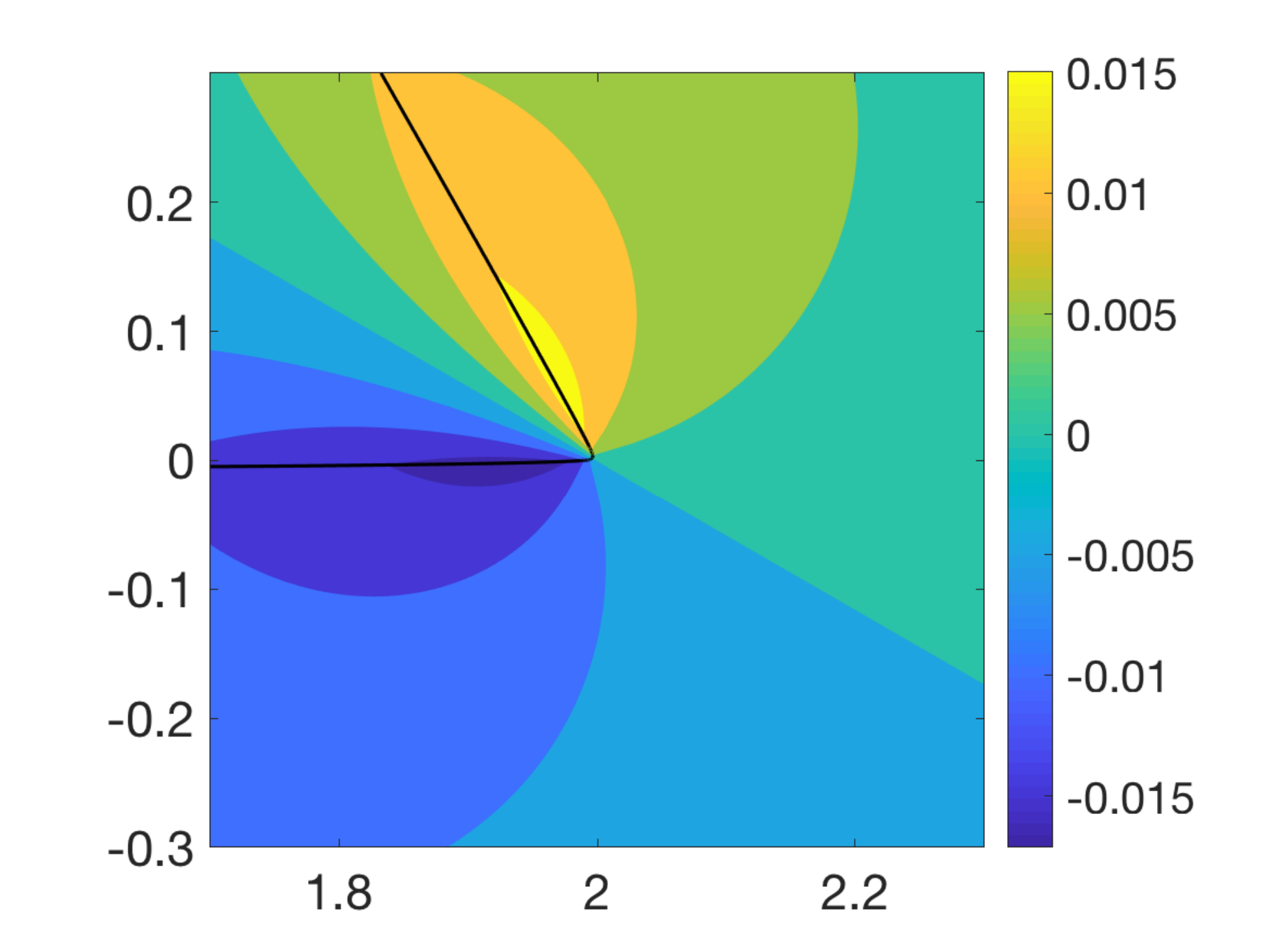}}
\caption{\label{fig13}  (a), (b). Plotting of the eigenfunction and its conormal derivative for $\lambda_6=-0.2583$; 
(c), (d). The associated single-layer potential for $\lambda_6=-0.2583$. }
\end{figure}

Fig.~\ref{fig14} plots the eigenfunctions with respect to arc length for the eigenvalues $\lambda_5=0.2583$ and $\lambda_6=-0.2583$ with different maximum curvature $500$, $1000$ and $1500$.

\begin{figure}
\centering
\subfigure[]{
\includegraphics[width=0.2\textwidth]{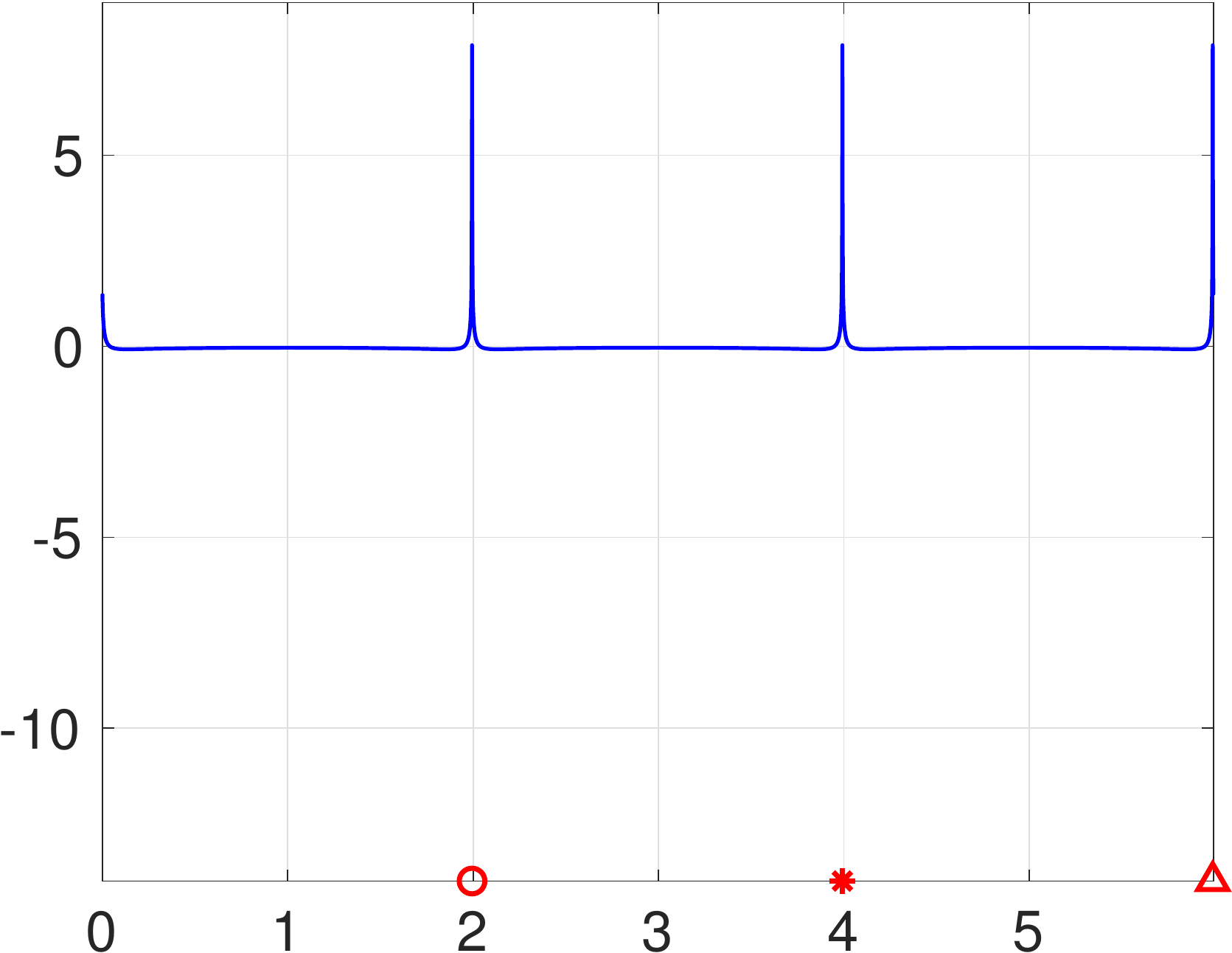}}
\subfigure[]{
\includegraphics[width=0.2\textwidth]{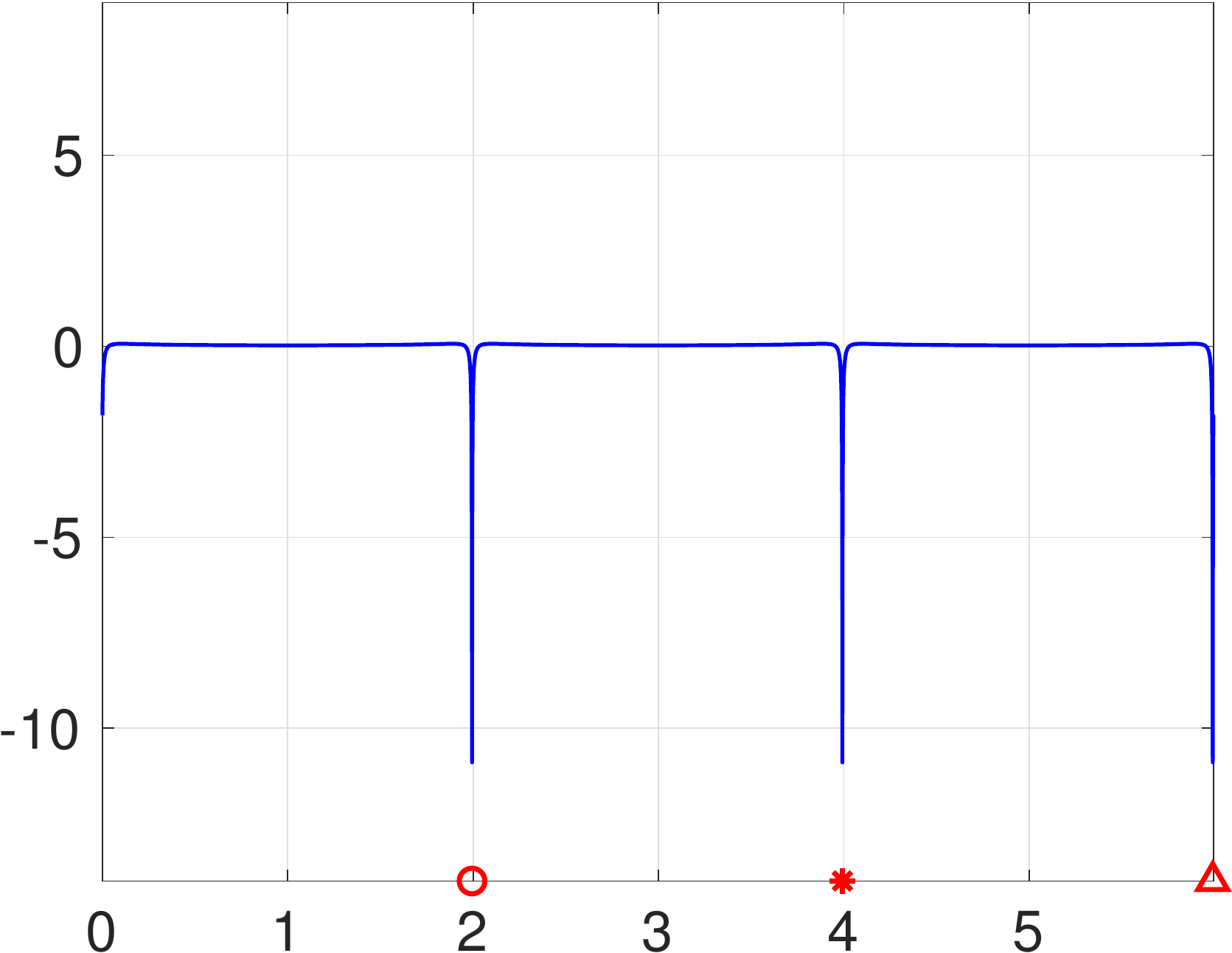}}
\subfigure[]{
\includegraphics[width=0.2\textwidth]{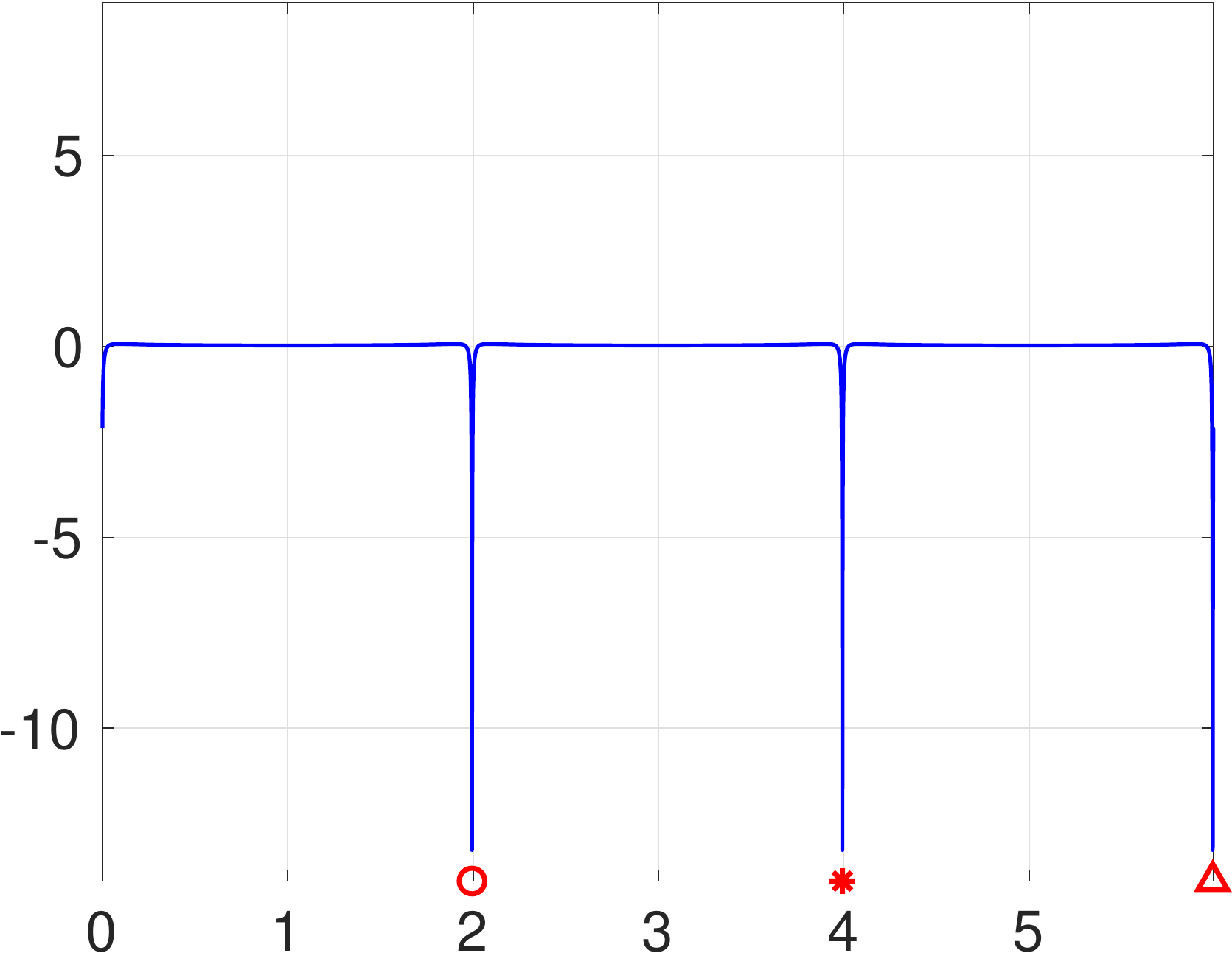}}\\
\subfigure[]{
\includegraphics[width=0.2\textwidth]{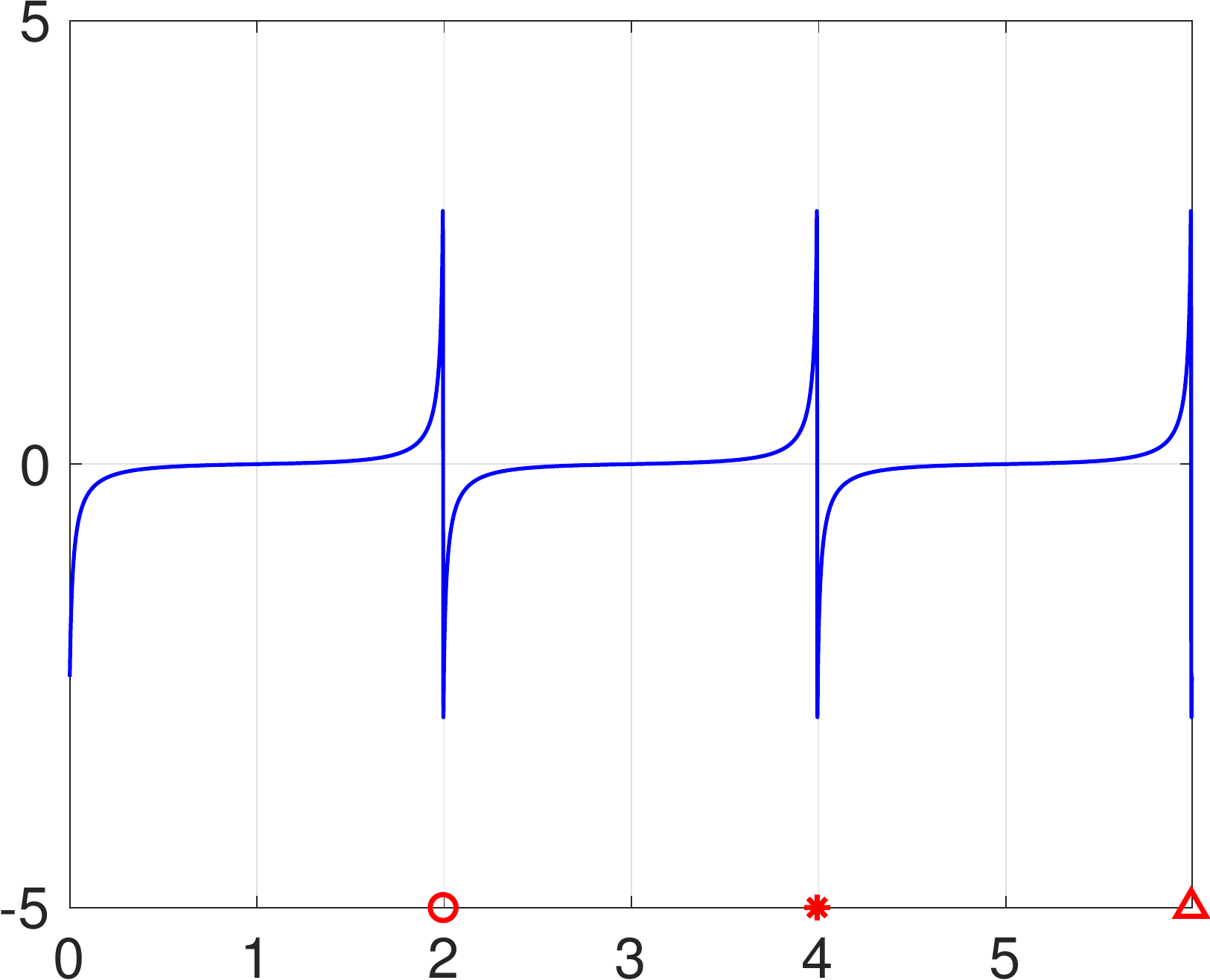}}
\subfigure[]{
\includegraphics[width=0.2\textwidth]{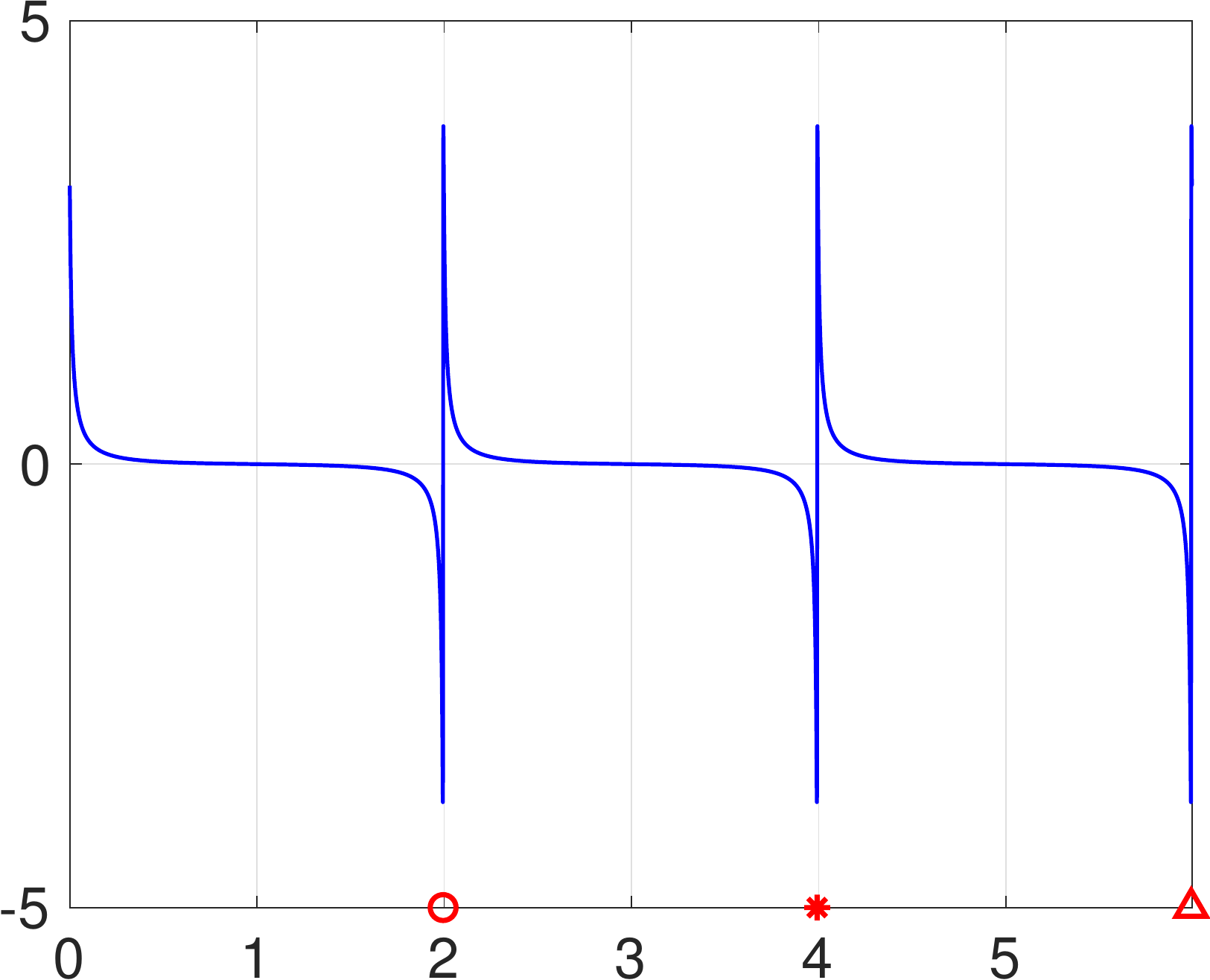}}
\subfigure[]{
\includegraphics[width=0.2\textwidth]{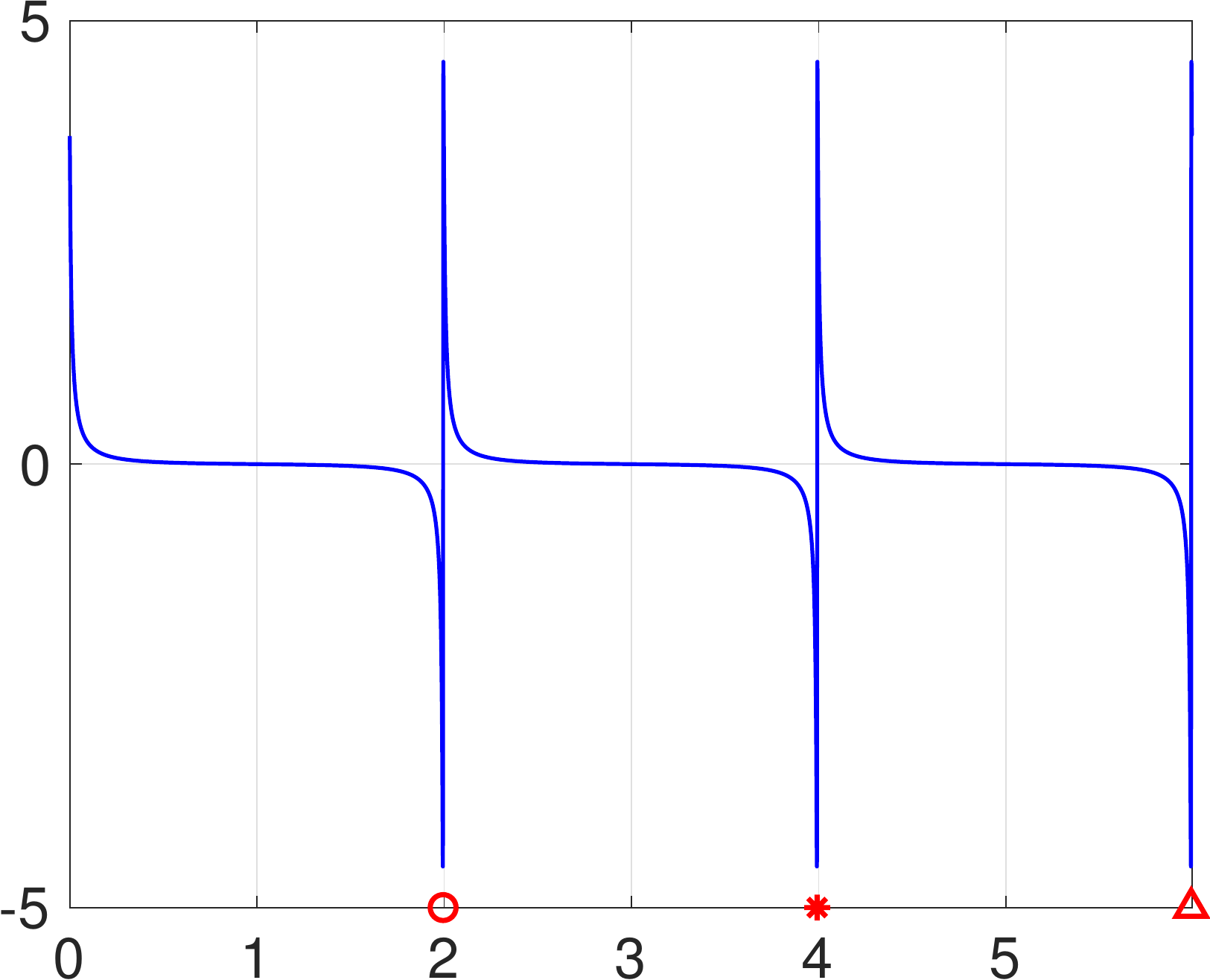}}\\
\caption{\label{fig14} (a), (b), (c).  Plotting the eigenfunctions for the positive eigenvalues $\lambda_5=0.2583$ with different maximum curvature $500$, $1000$ and $1500$. 
(d), (e), (f). The corresponding items for the negative eigenvalue $\lambda_6=-0.2583$.}
\end{figure}

We next investigate the blow-up rate of the NP eigenfunction or its conormal derivative with respect to the curvature. Therefore we plot the logarithm of the absolute value of the eigenfunctions at the high-curvature point for the positive eigenvalues $\lambda_5$, $\lambda_{11}$ and $\lambda_{17}$, and the logarithm of the absolute value of the derivative of the eigenfunctions at the high-curvature point for the negative eigenvalues $\lambda_6$, $\lambda_{12}$ and $\lambda_{18}$ with respect to different curvature in Fig.~\ref{fig15}. It turns out that blow-up rate also follows the 
rule in \eqref{eq:growthrate}. By regression, we numerically determine the corresponding parameters for those different eigenvalues 
in \eqref{eq:egg3_1}, and they are listed in Table~\ref{tab3}. 

\begin{figure}
\includegraphics[width=0.2\textwidth] {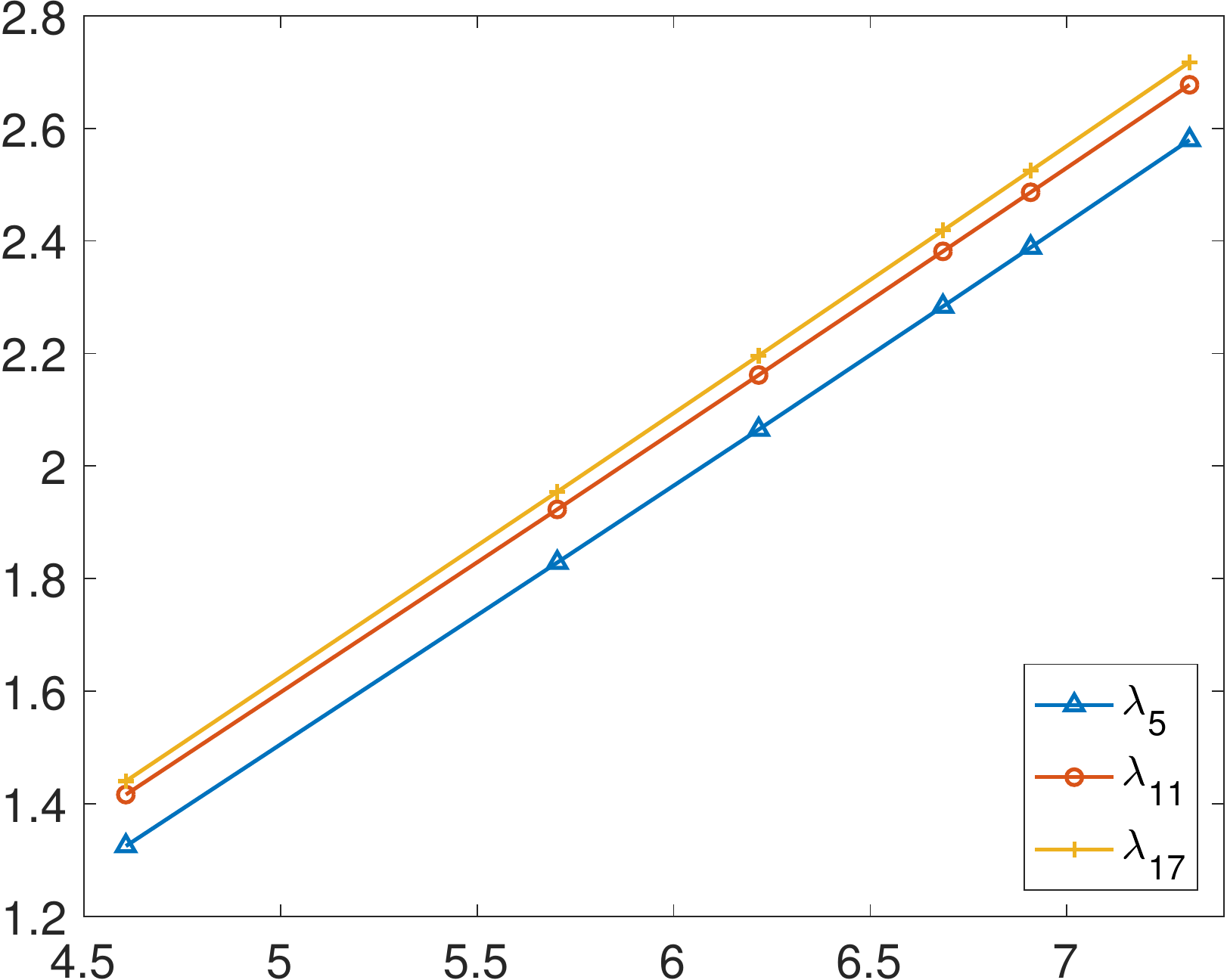}
\hspace{0.8cm}
\includegraphics[width=0.2\textwidth] {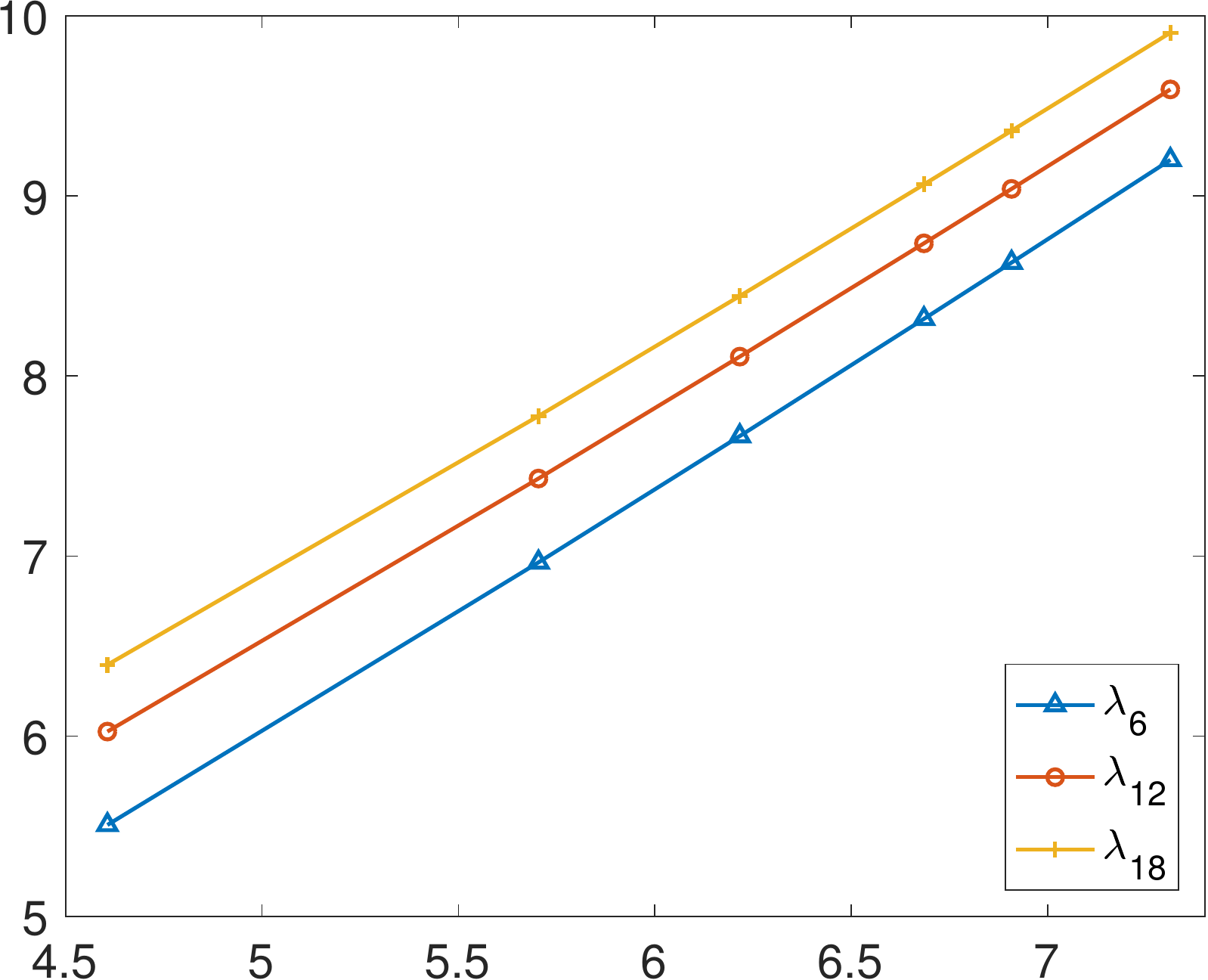}
\caption{\label{fig15} The left one plots logarithm of the eigenfunction at the high-curvature point for the simple positive eigenvalues $\lambda_5$, $\lambda_{11}$ and $\lambda_{17}$, and the right one plots logarithm of the derivative of the eigenfunction at the high-curvature point for the simple negative eigenvalues $\lambda_6$, $\lambda_{12}$ and $\lambda_{18}$ with respect to different curvature.}
\end{figure}
\begin{table}[t]
  \centering
  \subtable[]{
    \centering
    \begin{tabular}{cccc}
      \toprule
      & $\lambda_5$ & $\lambda_{11}$ & $\lambda_{17}$ \\
      \midrule
      $p$ & 0.4632 & 0.4659 & 0.720 \\[5pt]
      $\ln(a)$ & -0.8117 & -0.7322 & -0.7360 \\
      \bottomrule
    \end{tabular}}
  \hspace{1.5cm}
\subtable[]{
    \centering
    \begin{tabular}{cccc}
      \toprule
      & $\lambda_6$ & $\lambda_{12}$ & $\lambda_{18}$ \\
      \midrule
      $p$ & 1.3641 & 1.3170 & 1.2963 \\[5pt]
      $\ln(a)$ & -0.7958 & -0.0611 & 0.4046 \\
      \bottomrule
    \end{tabular}
    }
  \caption{The coefficients of the regression; (a) $\lambda_j, j=5, 11, 17$; (b) $\lambda_j, j=6, 12, 18$.}
  \label{tab3}
\end{table}
%
%
Next we show the corresponding properties for the multiple eigenvalues.
Fig.~\ref{figsm1} plots the two linearly independent eigenfunctions associated with the multiple eigenvalue  
$\lambda_1=\lambda_2=0.2850$, as well as the corresponding single-layer potentials. The numerical results
clearly support our assertion about the NP eigenfunctions associated with multiple positive NP eigenvalues. 


\begin{figure}[h]
\centering
\subfigure[]{
\includegraphics[width=0.2\textwidth]{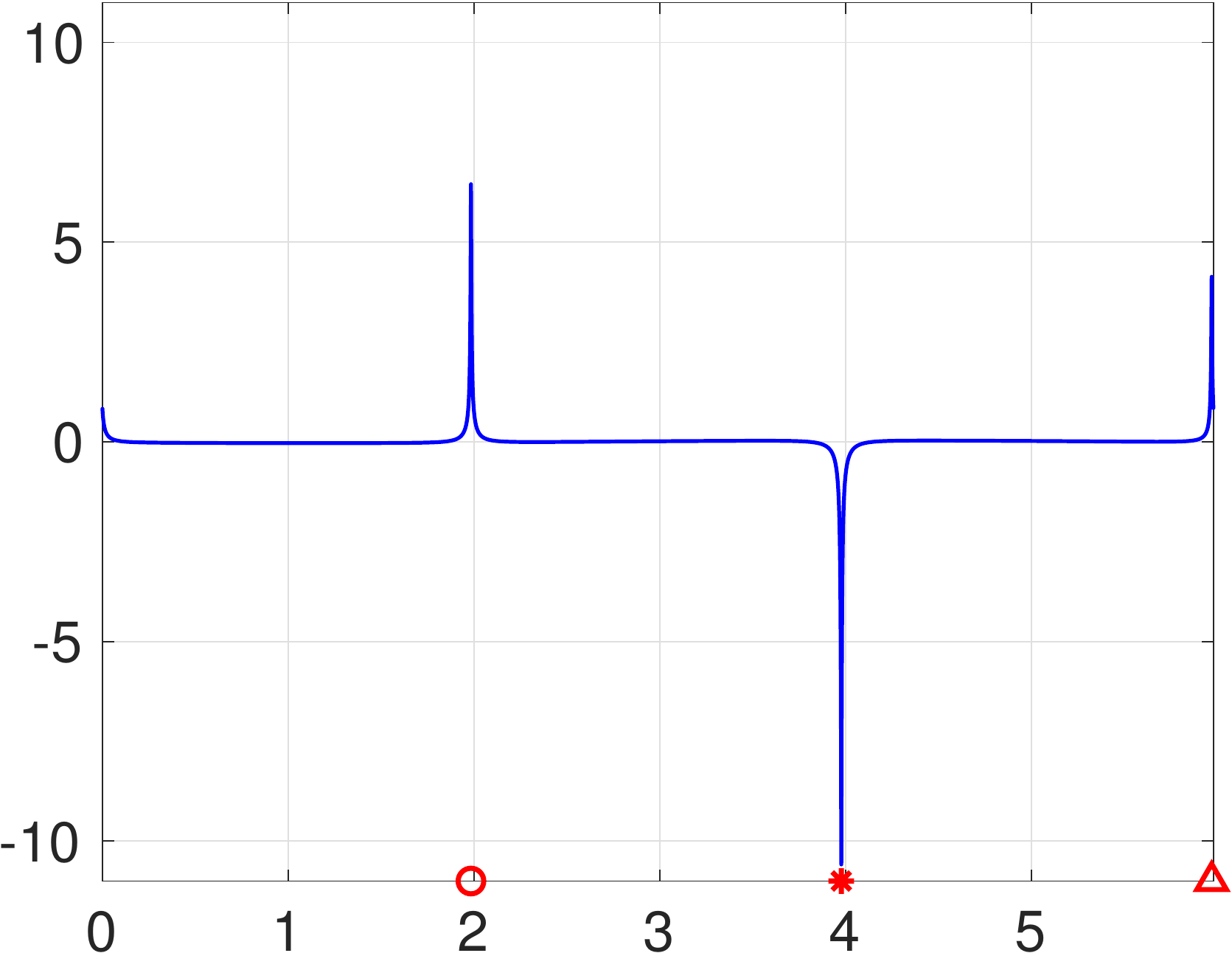}}
\subfigure[]{
\includegraphics[width=0.225\textwidth]{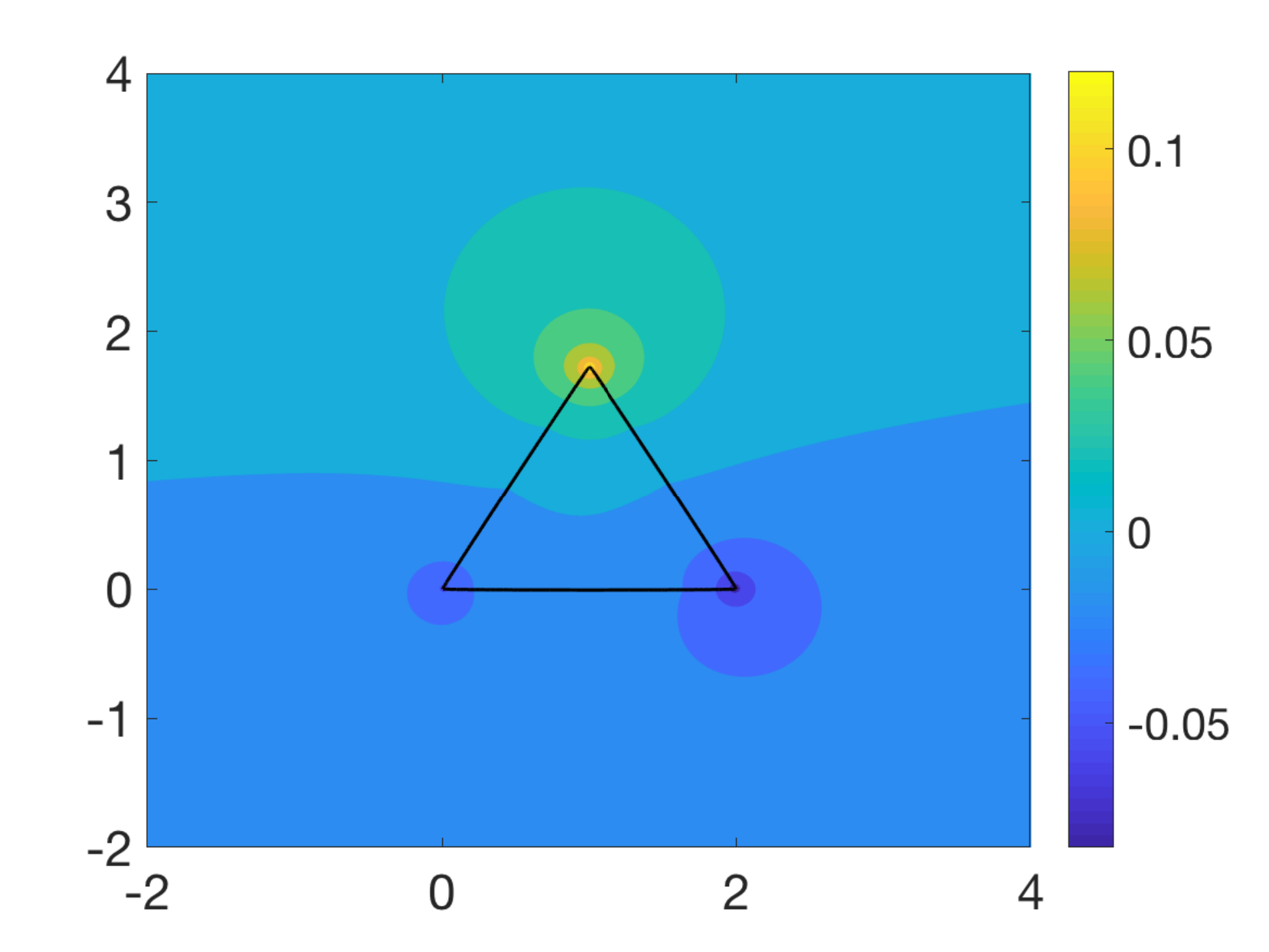}}
\subfigure[]{
\includegraphics[width=0.225\textwidth]{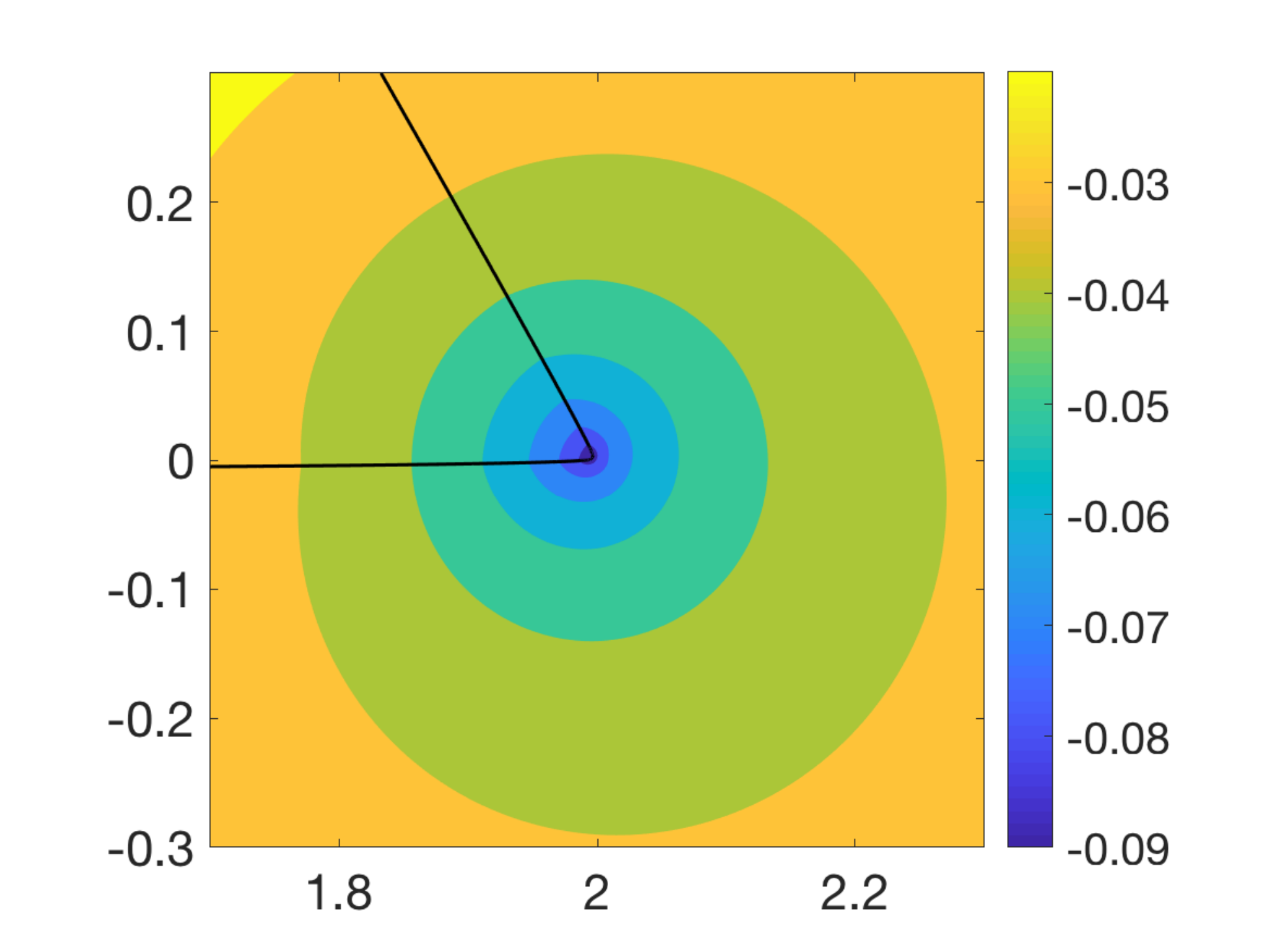}}\\
\subfigure[]{
\includegraphics[width=0.2\textwidth]{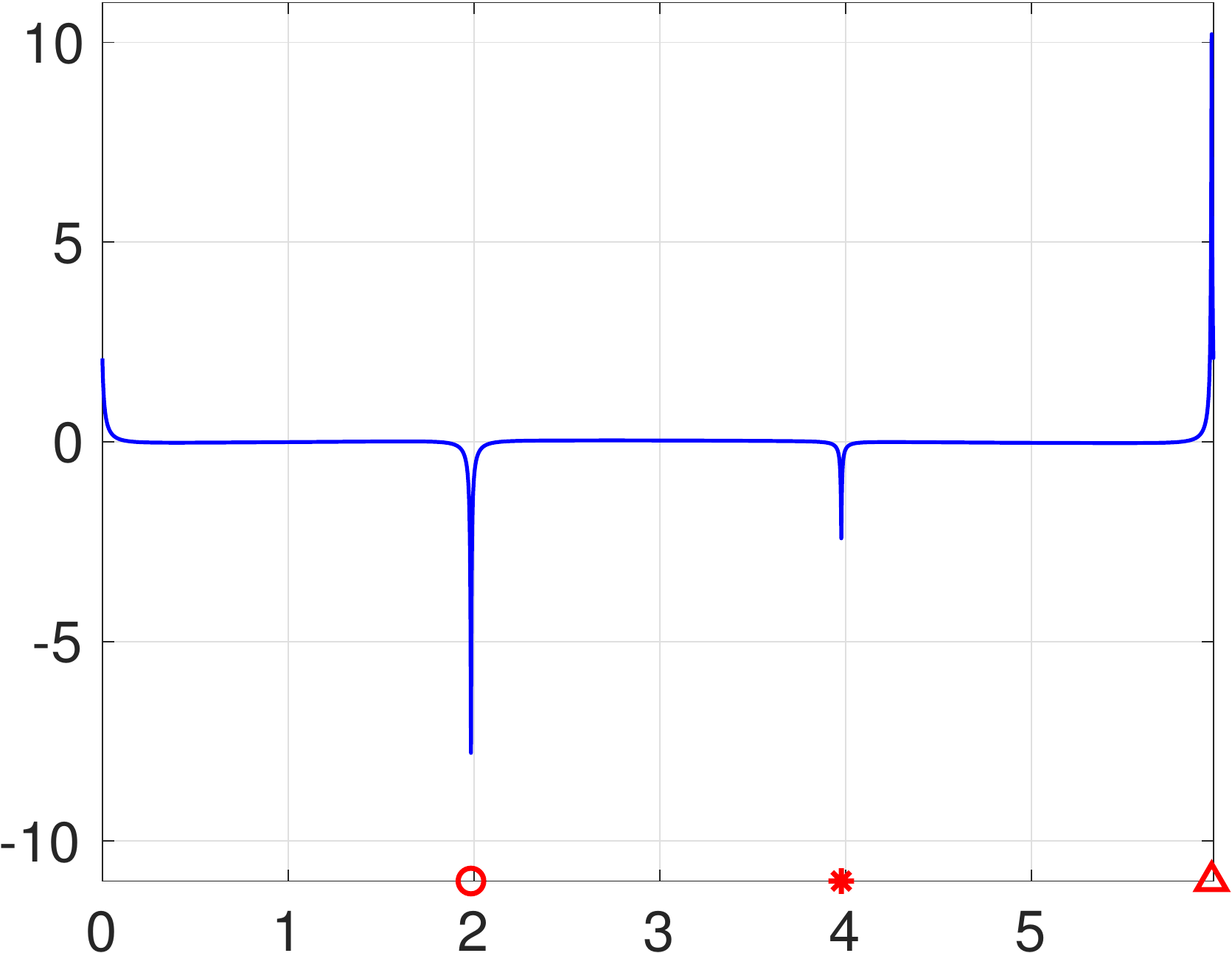}}
\subfigure[]{
\includegraphics[width=0.225\textwidth]{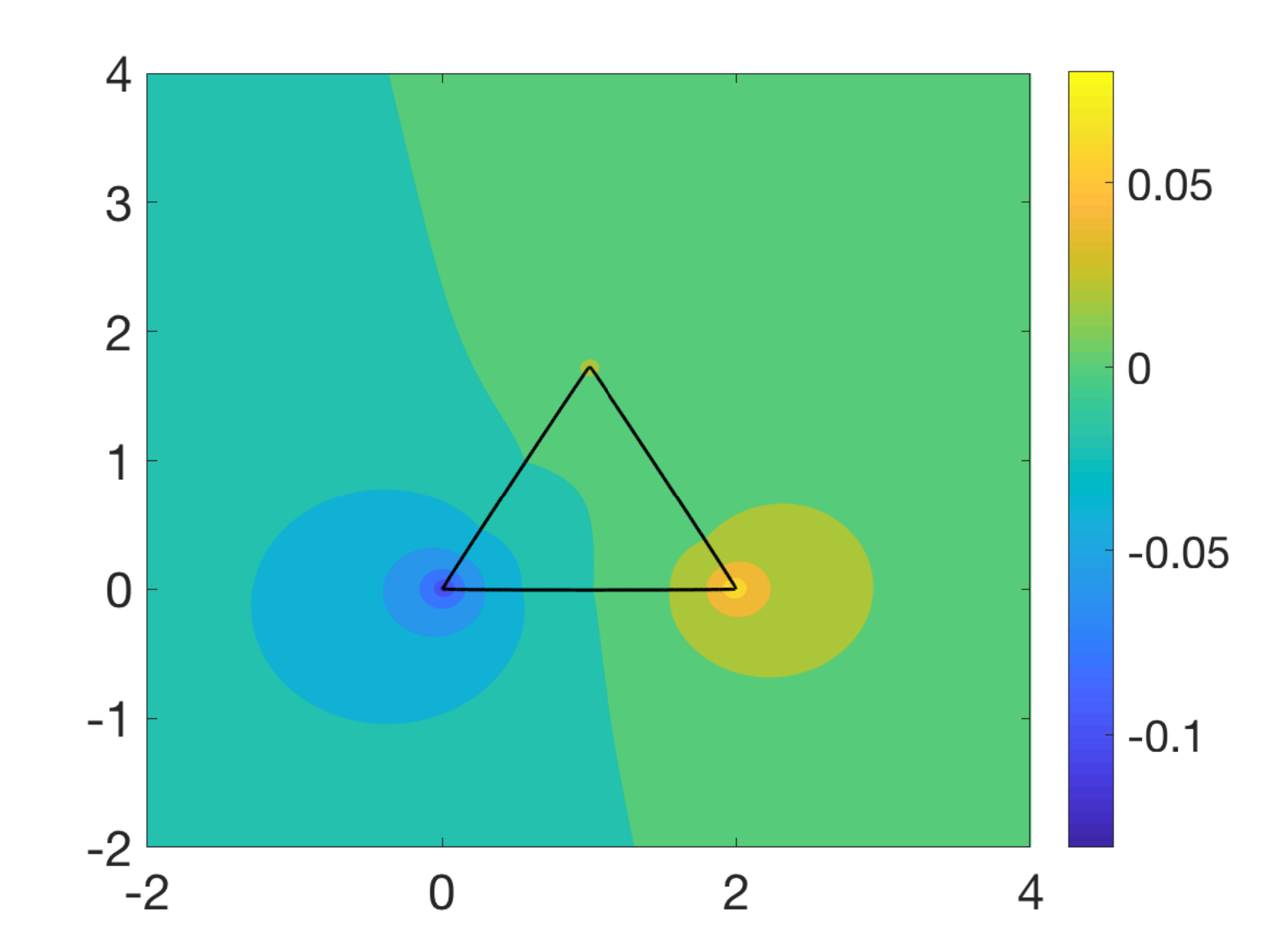}}
\subfigure[]{
\includegraphics[width=0.225\textwidth]{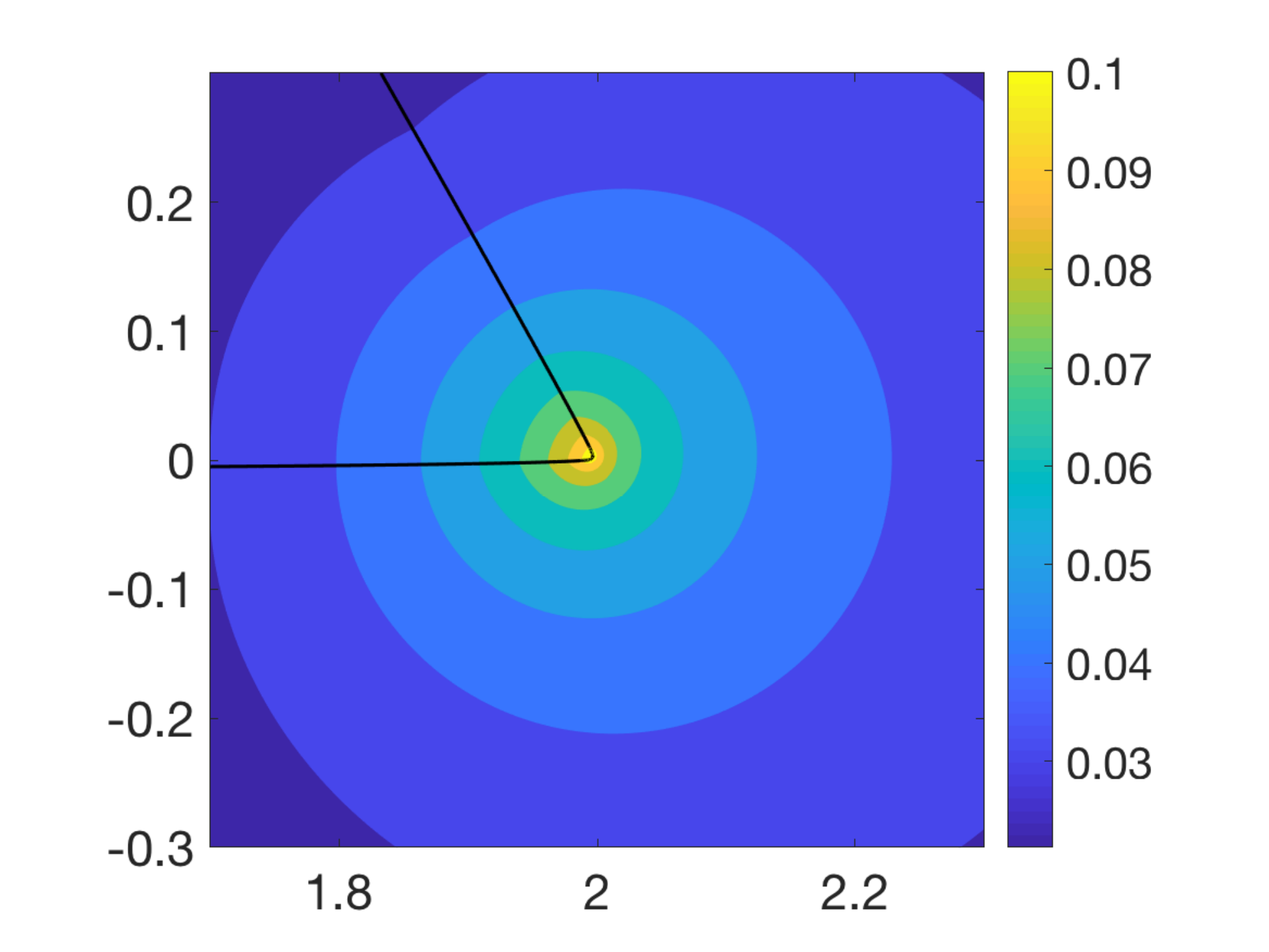}}\\
\caption{\label{figsm1} 
(a), (b), (c). The first eigenfunction associated with $\lambda_1=\lambda_2=0.2850$ as well as the corresponding single-layer potential; 
(d), (e), (f). The second eigenfunction associated with $\lambda_1=\lambda_2=0.2850$ as well as the corresponding single-layer potential. }
\end{figure}

Fig.~\ref{figsm2} plots the two linearly independent eigenfunctions associated with the multiple eigenvalue  
$\lambda_3=\lambda_4=-0.2850$, as well as the corresponding conormal derivatives and the corresponding single-layer potentials. The numerical results
clearly support our assertion about the NP eigenfunctions associated with multiple negative NP eigenvalues. 

\begin{figure}
\centering
\subfigure[]{
\includegraphics[width=0.205\textwidth]{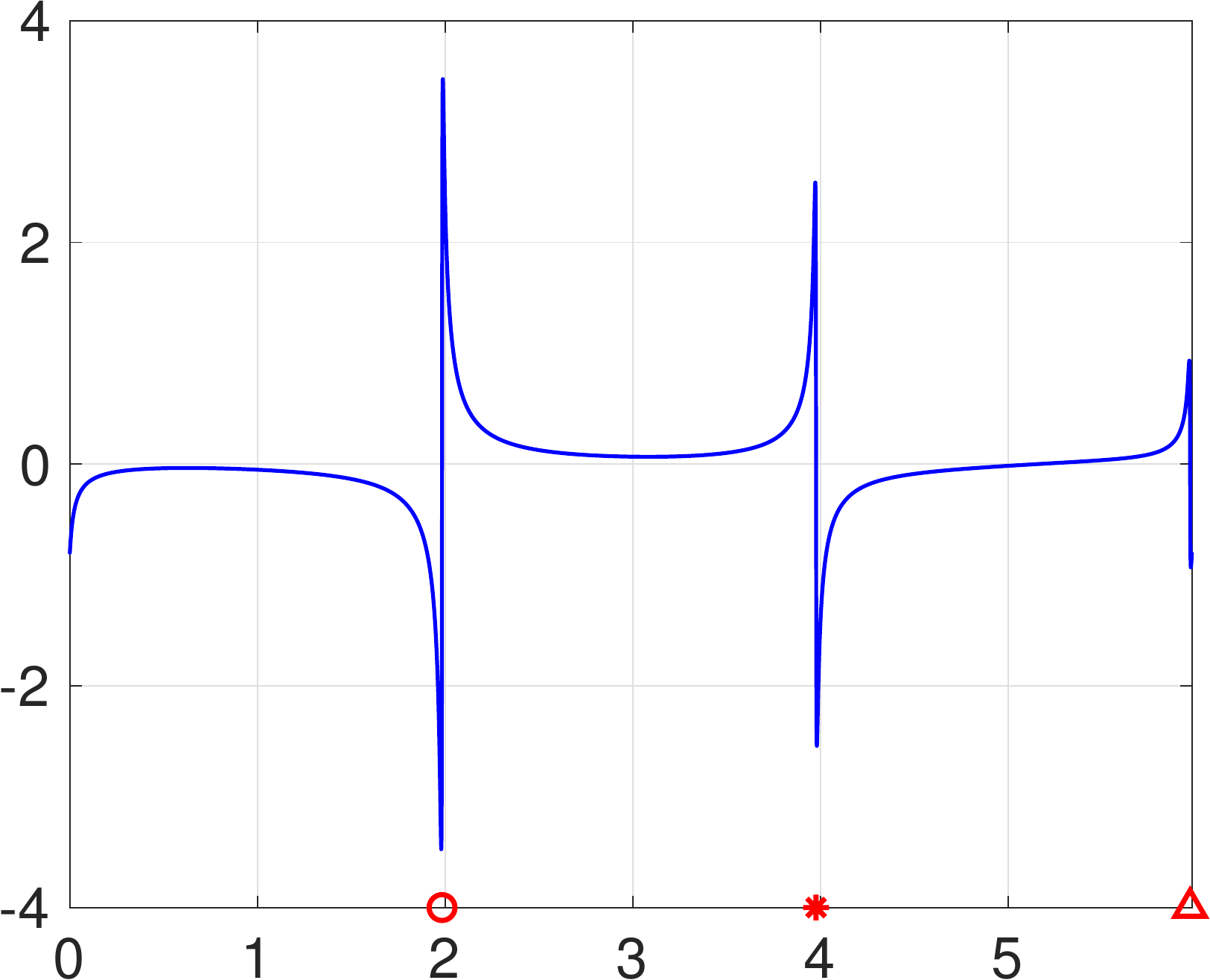}}
\subfigure[]{
\includegraphics[width=0.22\textwidth]{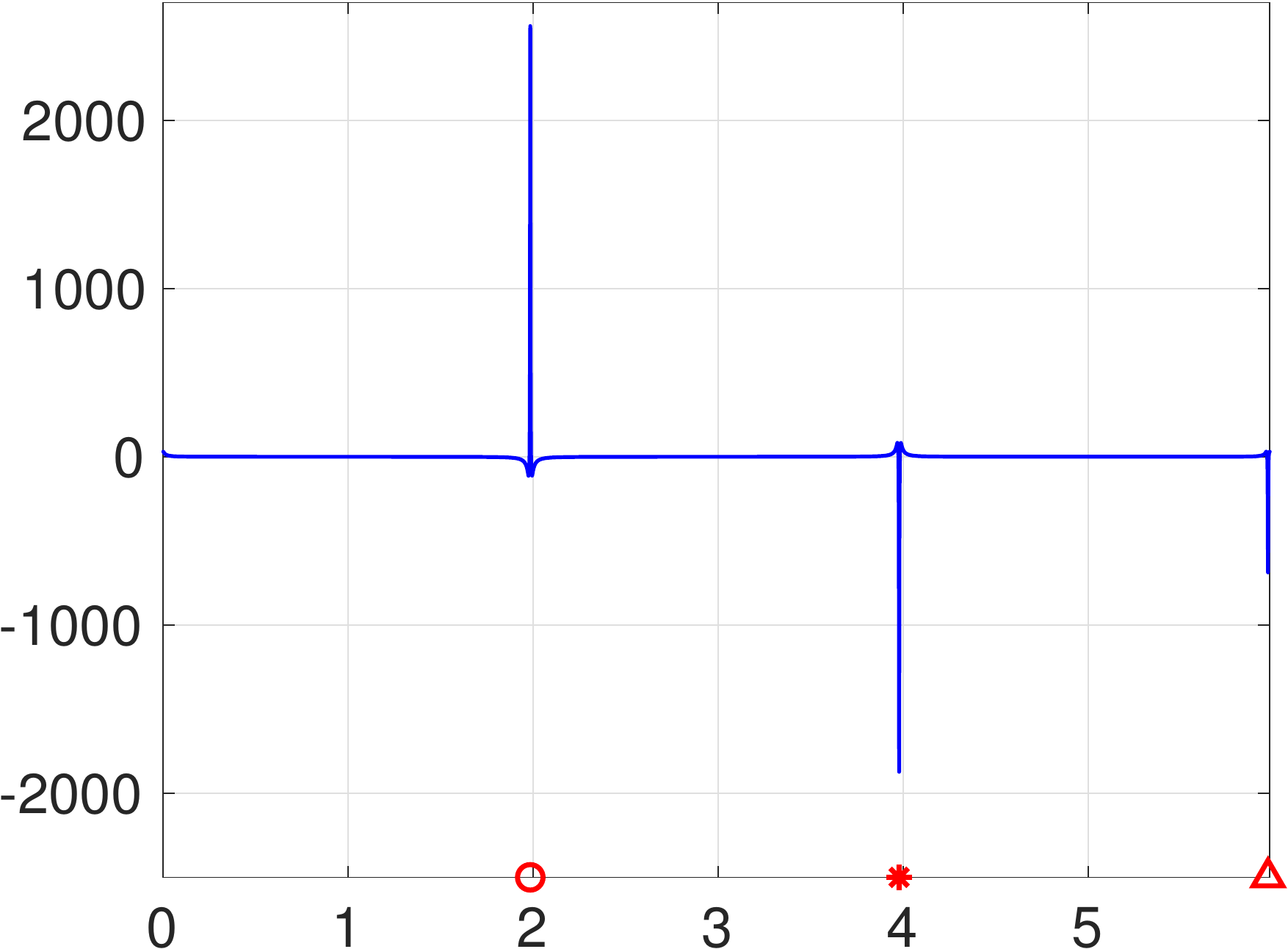}}
\subfigure[]{
\includegraphics[width=0.23\textwidth]{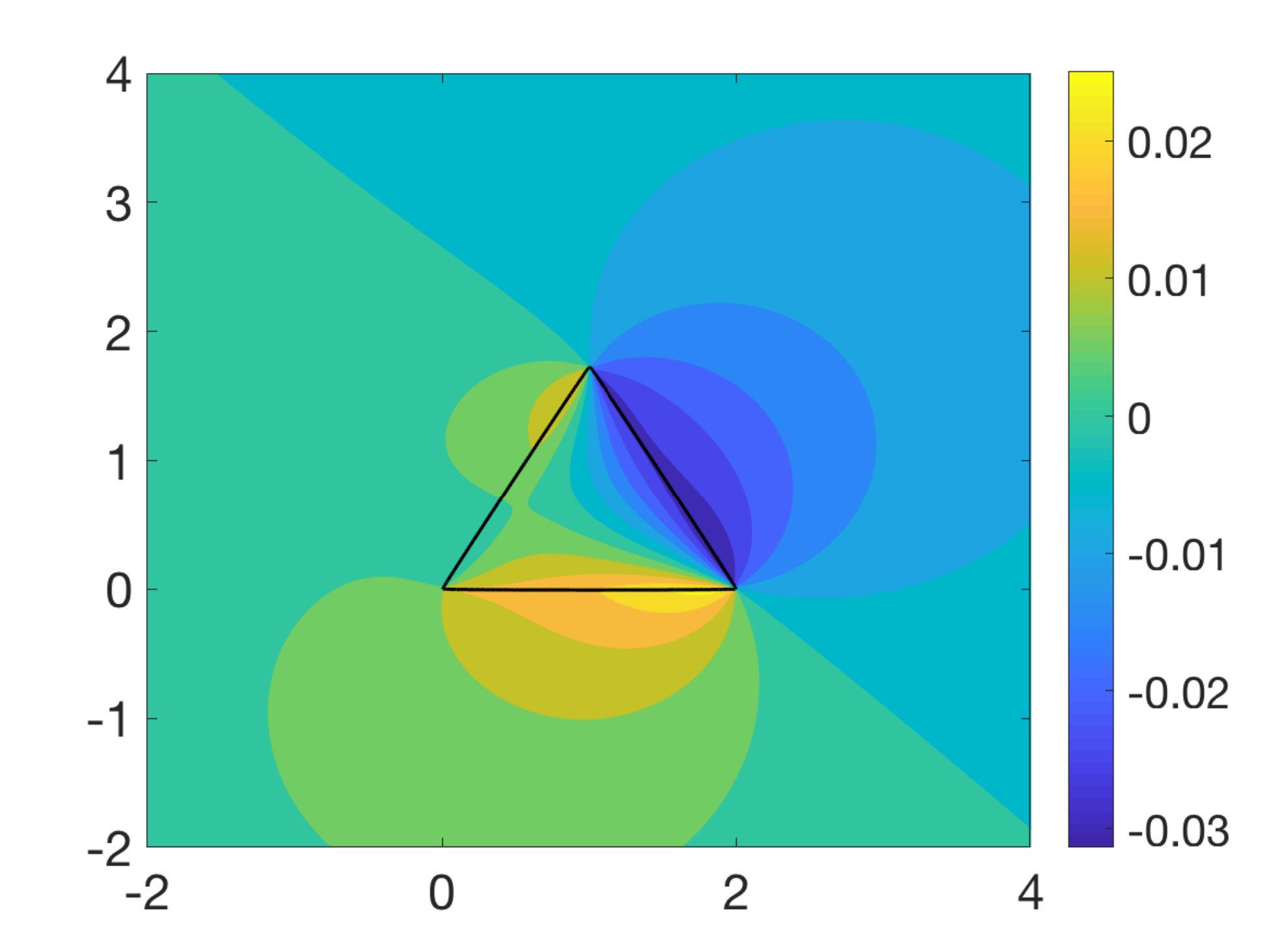}}
\subfigure[]{
\includegraphics[width=0.23\textwidth]{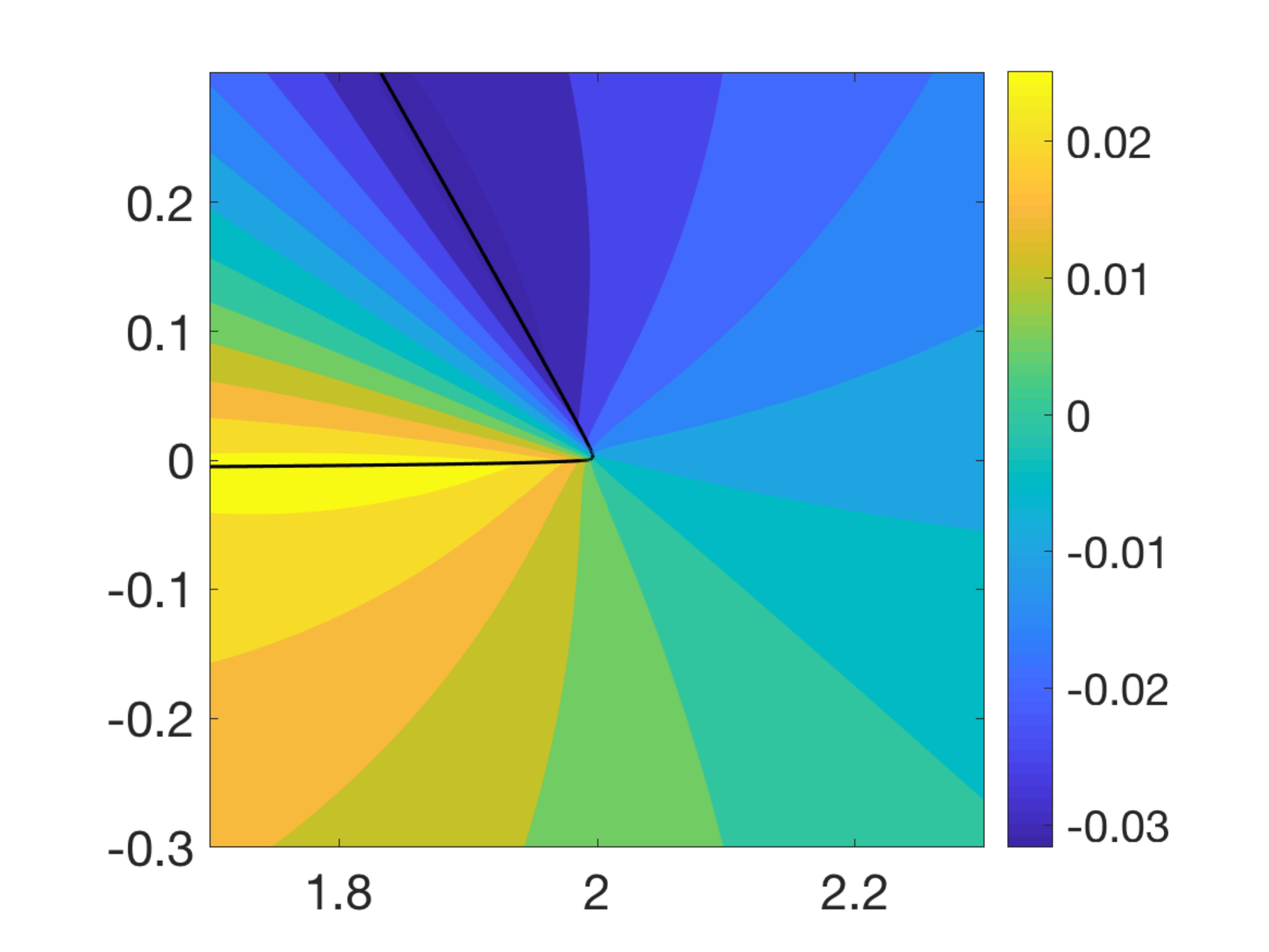}}\\
\subfigure[]{
\includegraphics[width=0.205\textwidth]{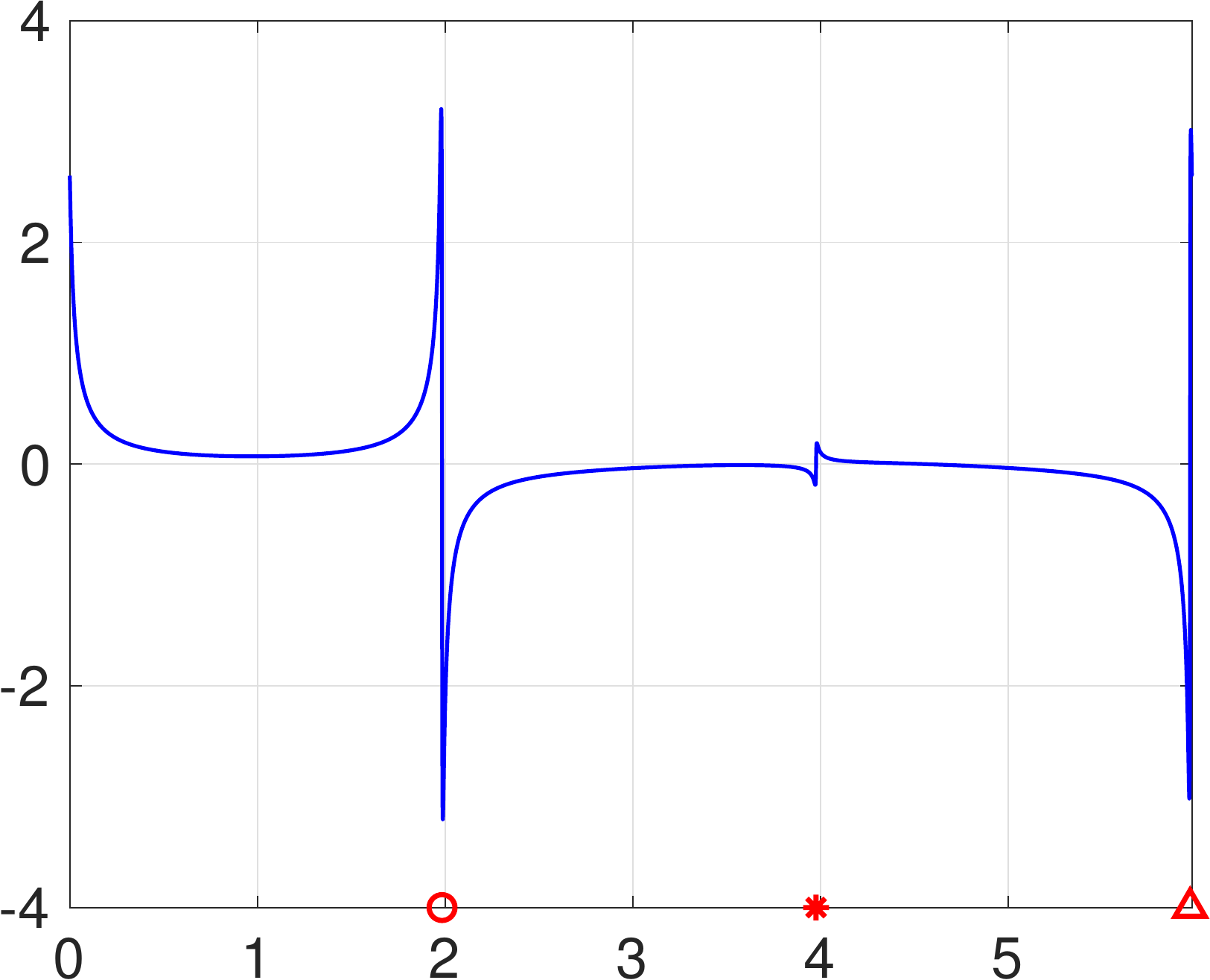}}
\subfigure[]{
\includegraphics[width=0.22\textwidth]{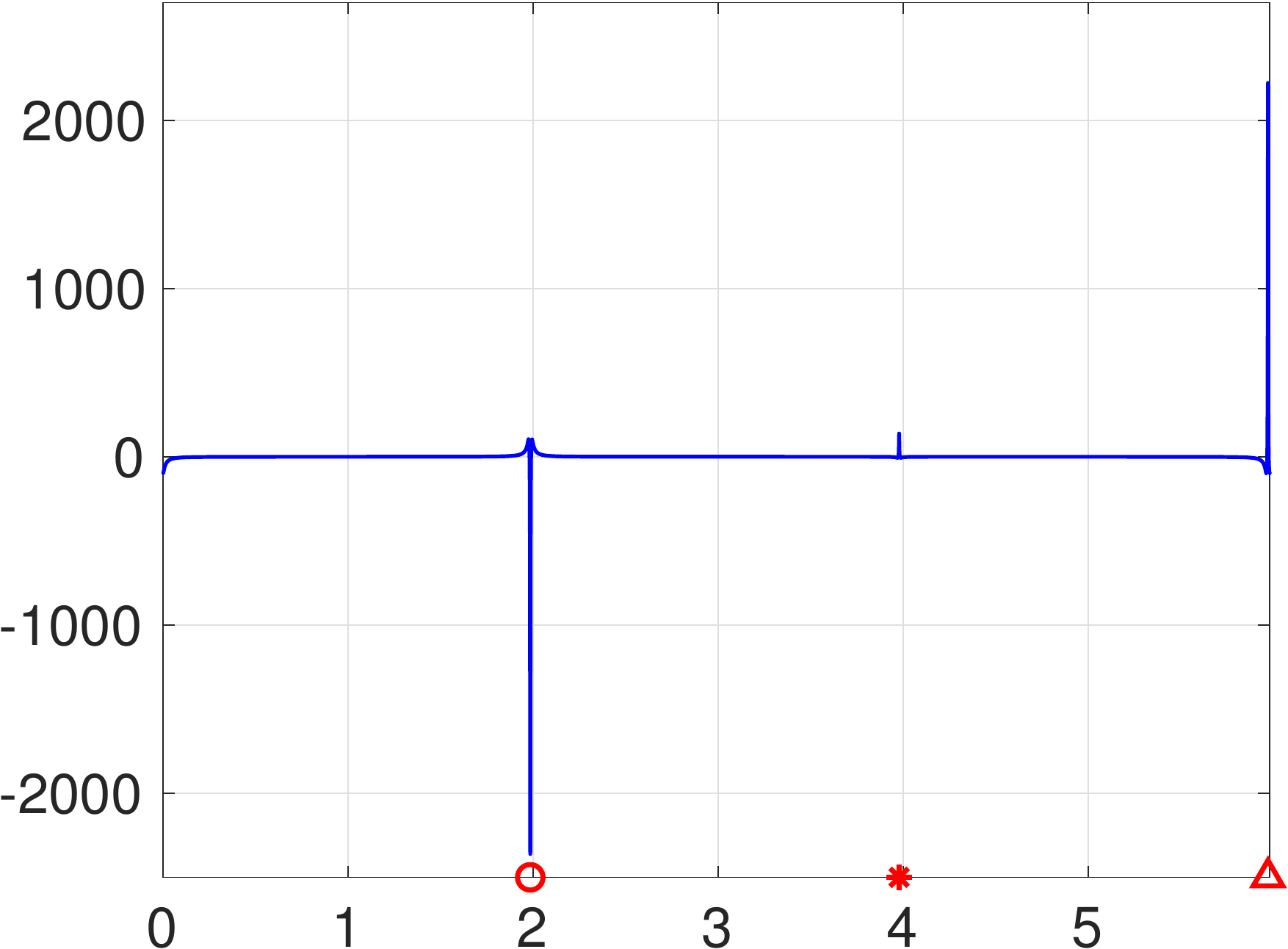}}
\subfigure[]{
\includegraphics[width=0.23\textwidth]{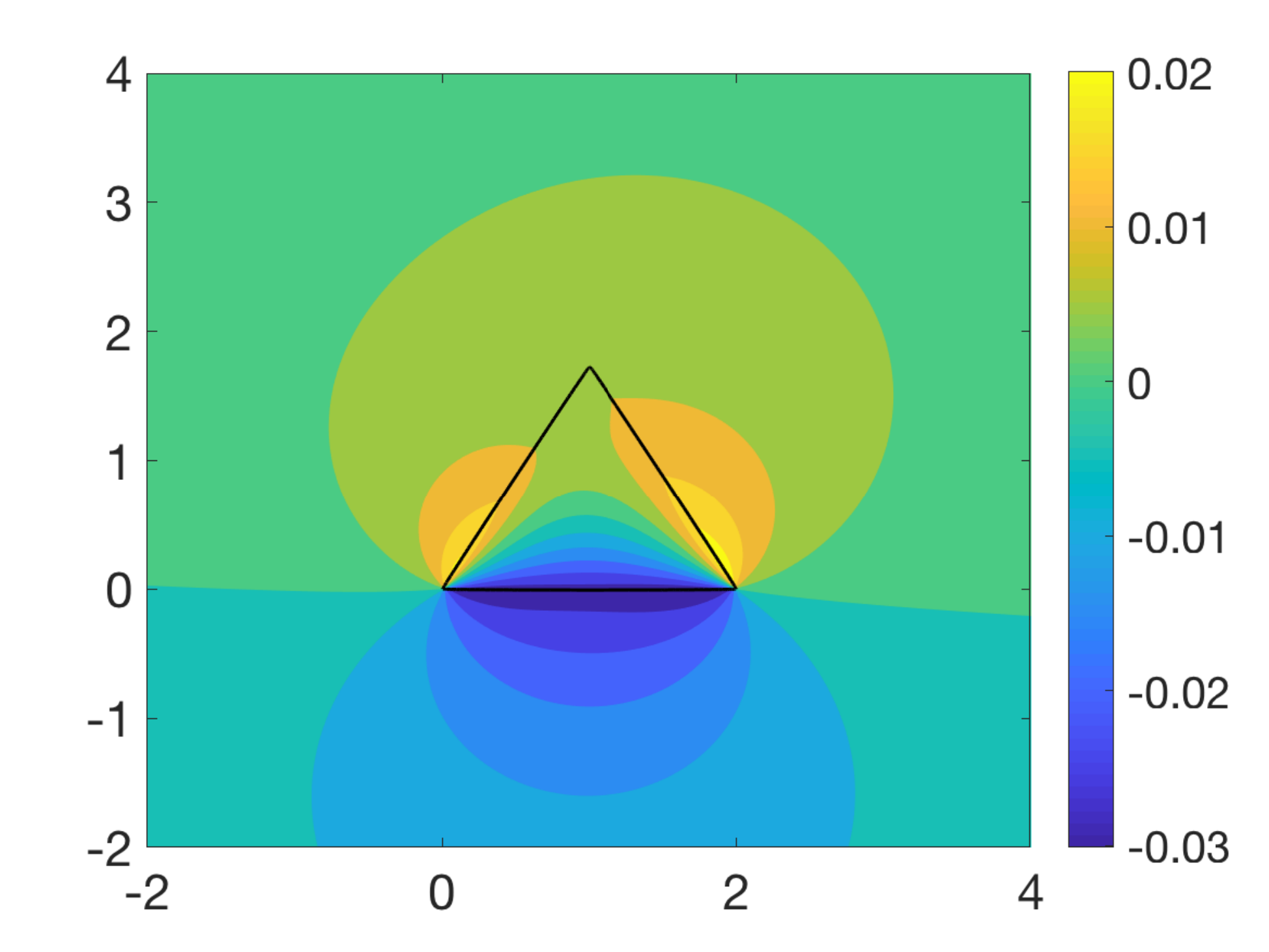}}
\subfigure[]{
\includegraphics[width=0.23\textwidth]{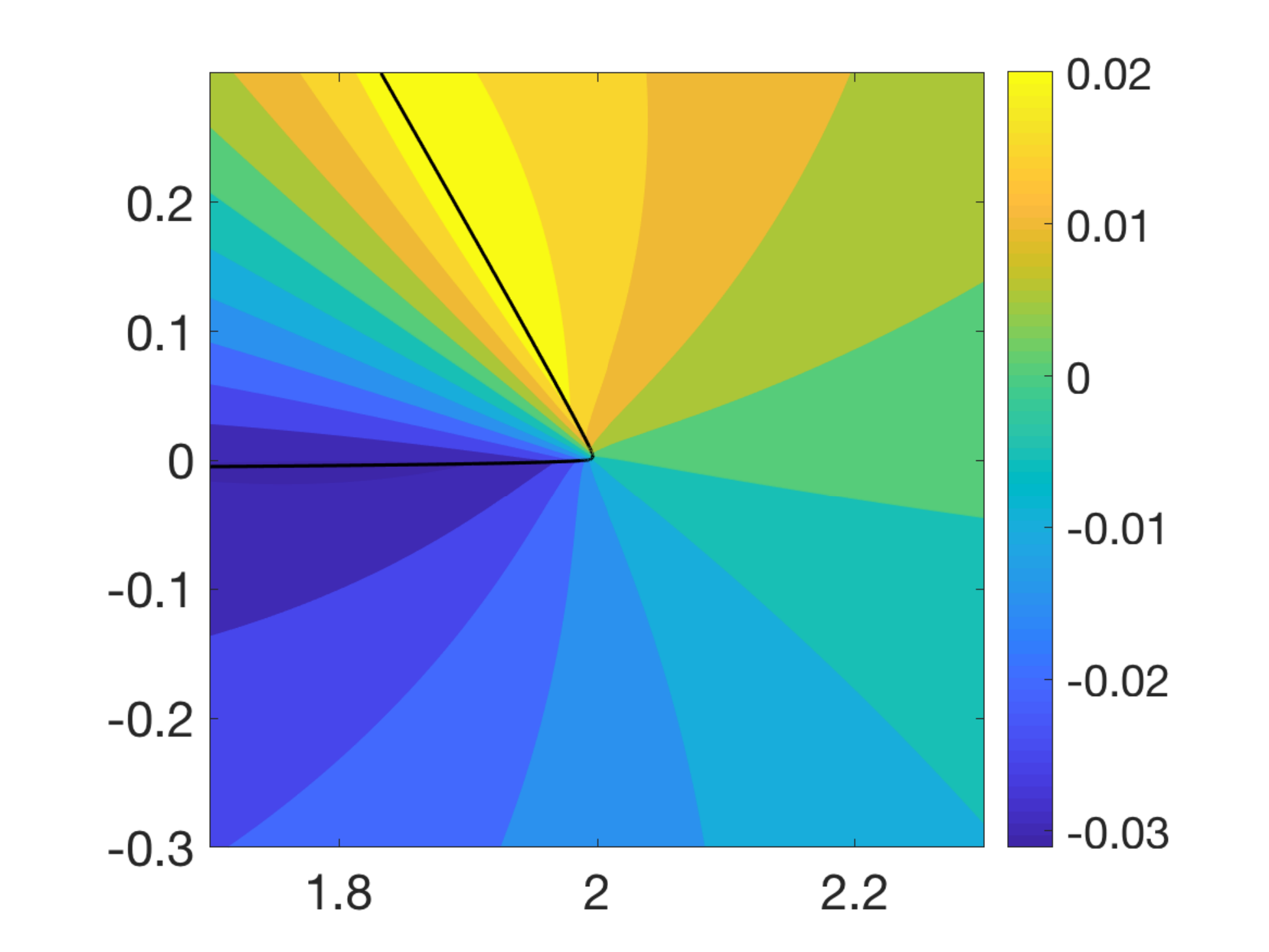}}\\
\caption{\label{figsm2}
 (a), (b), (c), (d). The first eigenfunction associated with $\lambda_3=\lambda_4=-0.28500$ as well as the corresponding conormal derivatives and the corresponding single-layer potential; 
(e), (f), (g), (h). The second eigenfunction associated with $\lambda_3=\lambda_4=-0.2850$ as well as the corresponding conormal derivatives and the corresponding single-layer potential. }
\end{figure}

%

%
%
\subsection{A convex 4-symmetric domain}

In this subsection, we consider a convex 4-symmetric domain as shown in Fig.~\ref{fig16}, which possesses four high-curvature points that are 
denoted by $x_{\times}$ $x_{o}$, $x_{*}$ and $x_{\triangle}$ as shown in Fig.~ \ref{fig16}. The largest curvature is 
\begin{equation}
 \kappa_{x_{\times}}=\kappa_{x_{o}}=\kappa_{x_{*}}=\kappa_{x_{\triangle}}=500.
\end{equation}
\begin{figure}
\includegraphics[width=3cm] {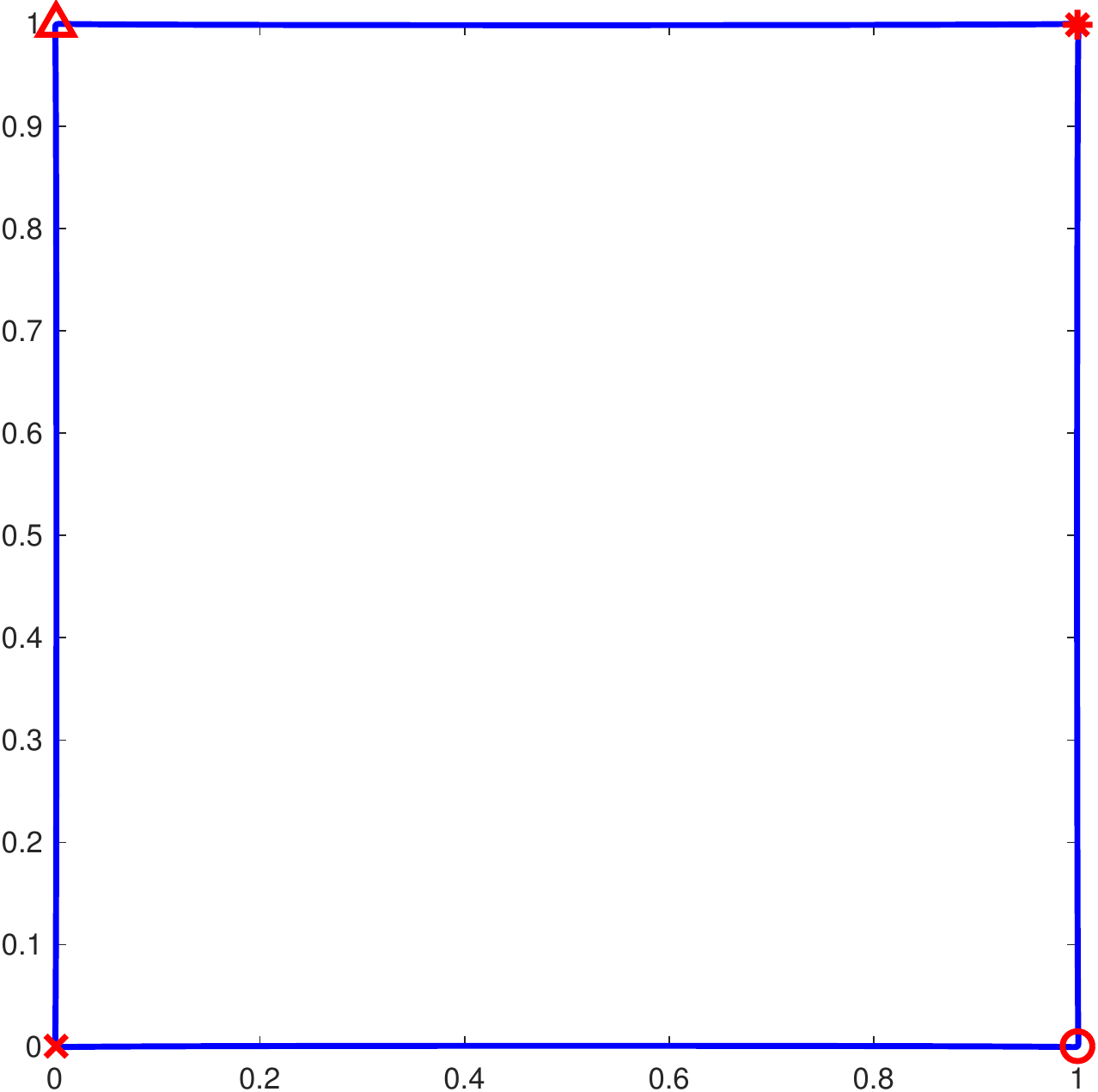}
\caption{\label{fig16} The boundary with four high-curvature points.}
\end{figure}
The first eleven largest eigenvalues (in terms of the absolute value) are numerically found to be
\begin{equation}\label{eq:egg4}
\begin{split}
\lambda_0=0.5, & \  \lambda_1=0.2183,\  \lambda_2=-0.2183, \  \lambda_3= \lambda_4=0.2113,\ \lambda_5= \lambda_6=-0.2113,\\
  &   \lambda_7=0.1934,\quad  \lambda_8=-0.1934, \quad  \lambda_9=0.1313,\quad \lambda_{10}=-0.1313.
  \end{split}
\end{equation}
 There are multiple NP eigenvalues occurring for the 4-symmetric domain. Hence, we can verify our assertion
about the NP eigenfunction associated to a multiple NP eigenvalue. In the following, we first show the case for the simple eigenvalue and then the case for the multiple eigenvalue.

Fig.~\ref{fig17} plots the eigenfunctions as well as the associated single-layer potentials, respectively, for the positive eigenvalues  
$\lambda_1=0.2183$. The numerical results clearly support our assertion about the 
NP eigenfunctions associated to simple positive eigenvalues.

\begin{figure}
\centering
\subfigure[]{
\includegraphics[width=0.2\textwidth]{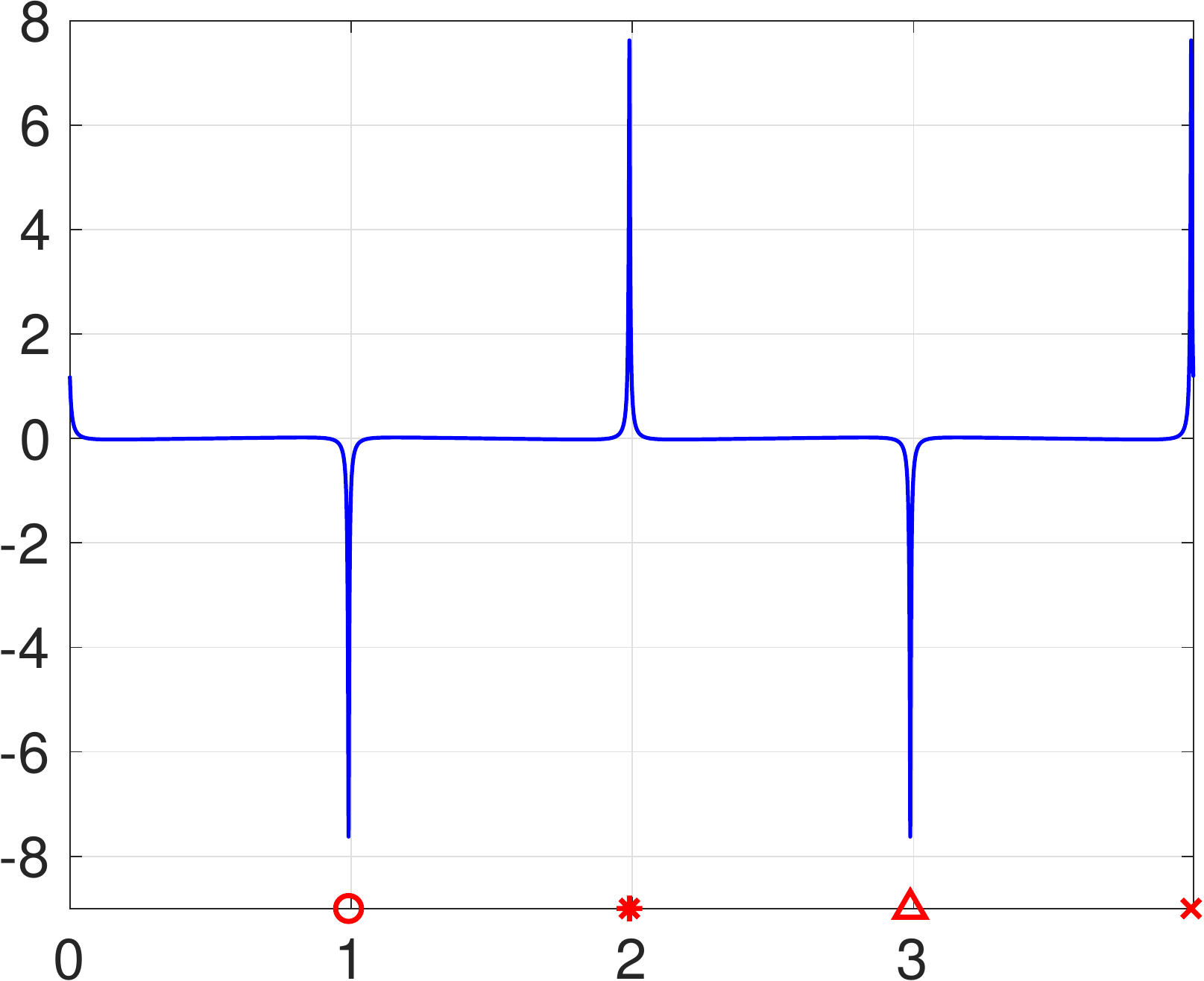}}
\subfigure[]{
\includegraphics[width=0.2\textwidth]{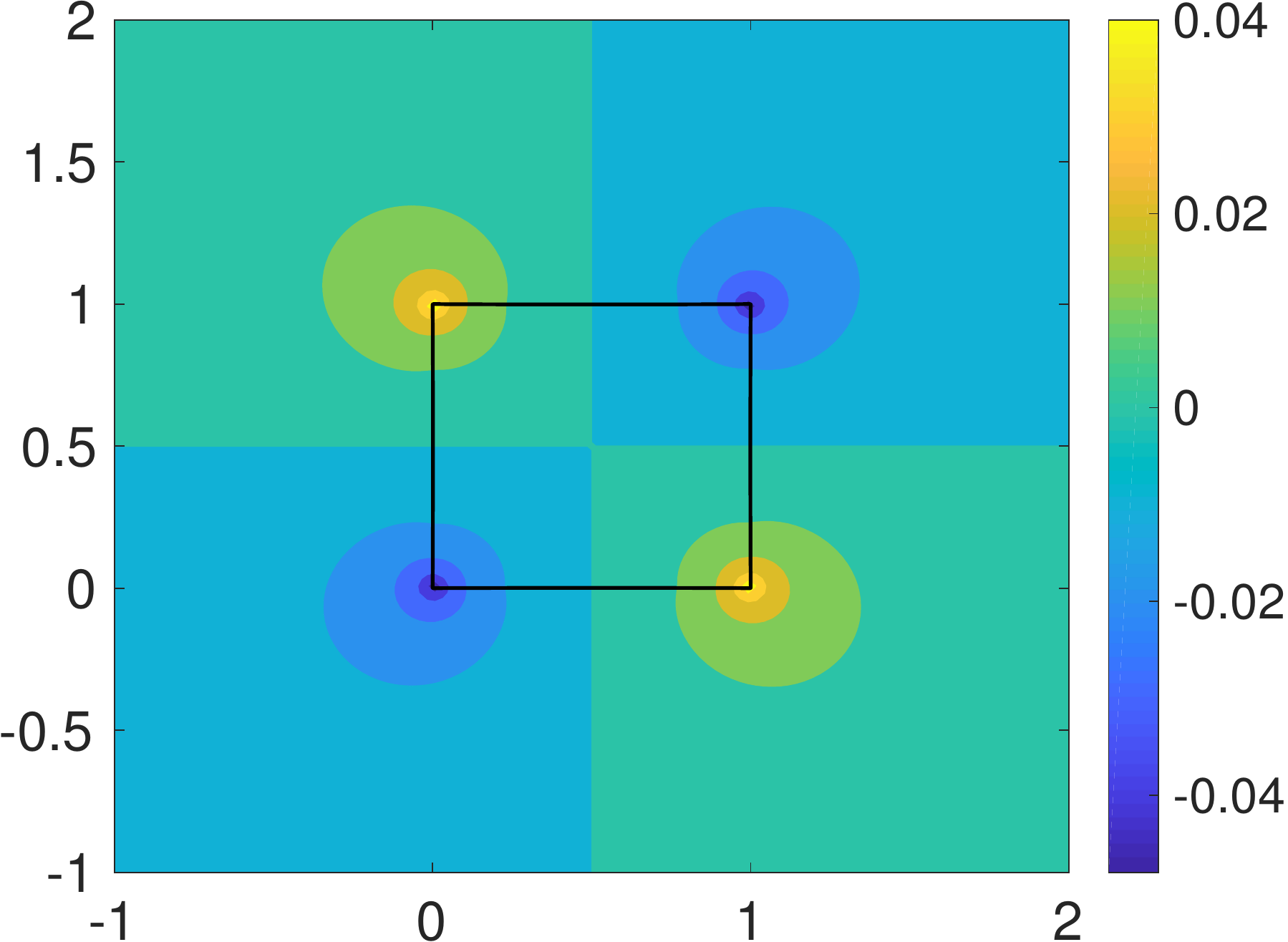}}
\subfigure[]{
\includegraphics[width=0.2\textwidth]{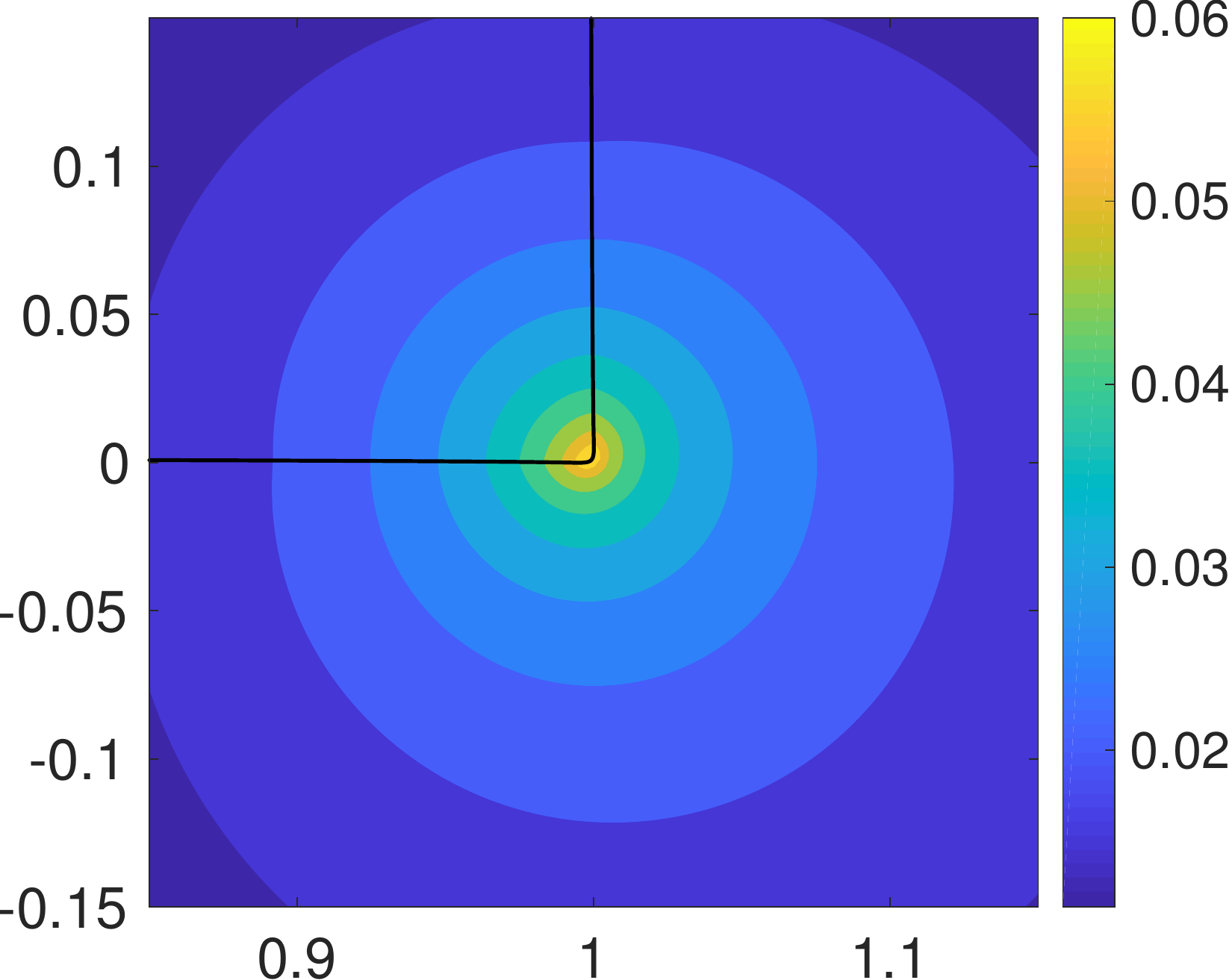}}\\
\caption{\label{fig17} (a). Plotting of the eigenfunction for $\lambda_1= 0.2183$ with respect to the arc length; (b).
The associated single-layer potential for $\lambda_1= 0.2183$; (c). The single-layer potential around the high-curvature point.}
\end{figure}

Fig.~\ref{fig18} plots the eigenfunctions as well as the corresponding conormal derivatives and single-layer potentials for the negative eigenvalues 
$\lambda_2= -0.2183$. The numerical results clearly support our assertion about the NP eigenfunctions associated to simple negative eigenvalues.

%
%
\begin{figure}
\centering
\subfigure[]{
\includegraphics[width=0.2\textwidth]{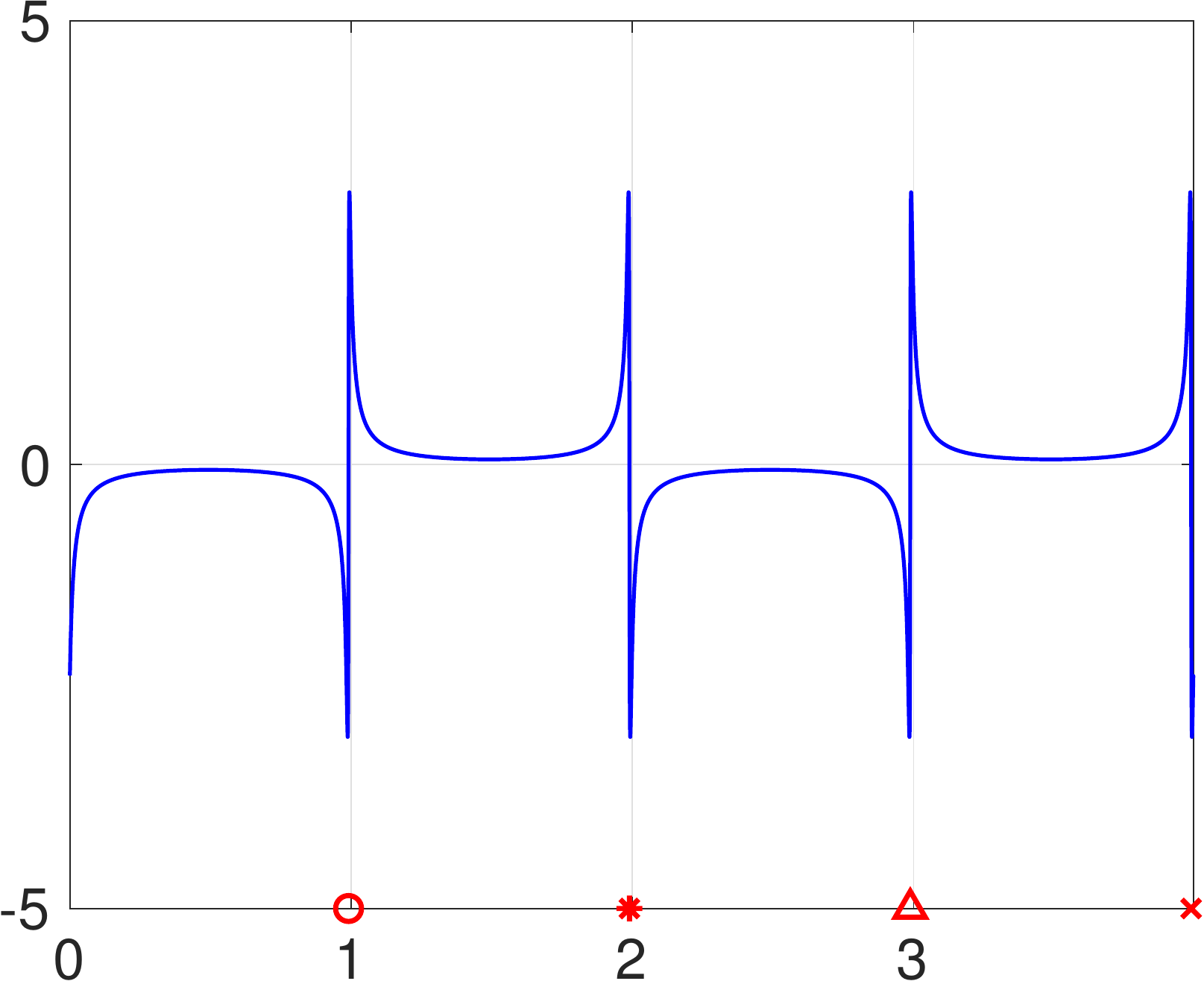}}
\subfigure[]{
\includegraphics[width=0.2\textwidth]{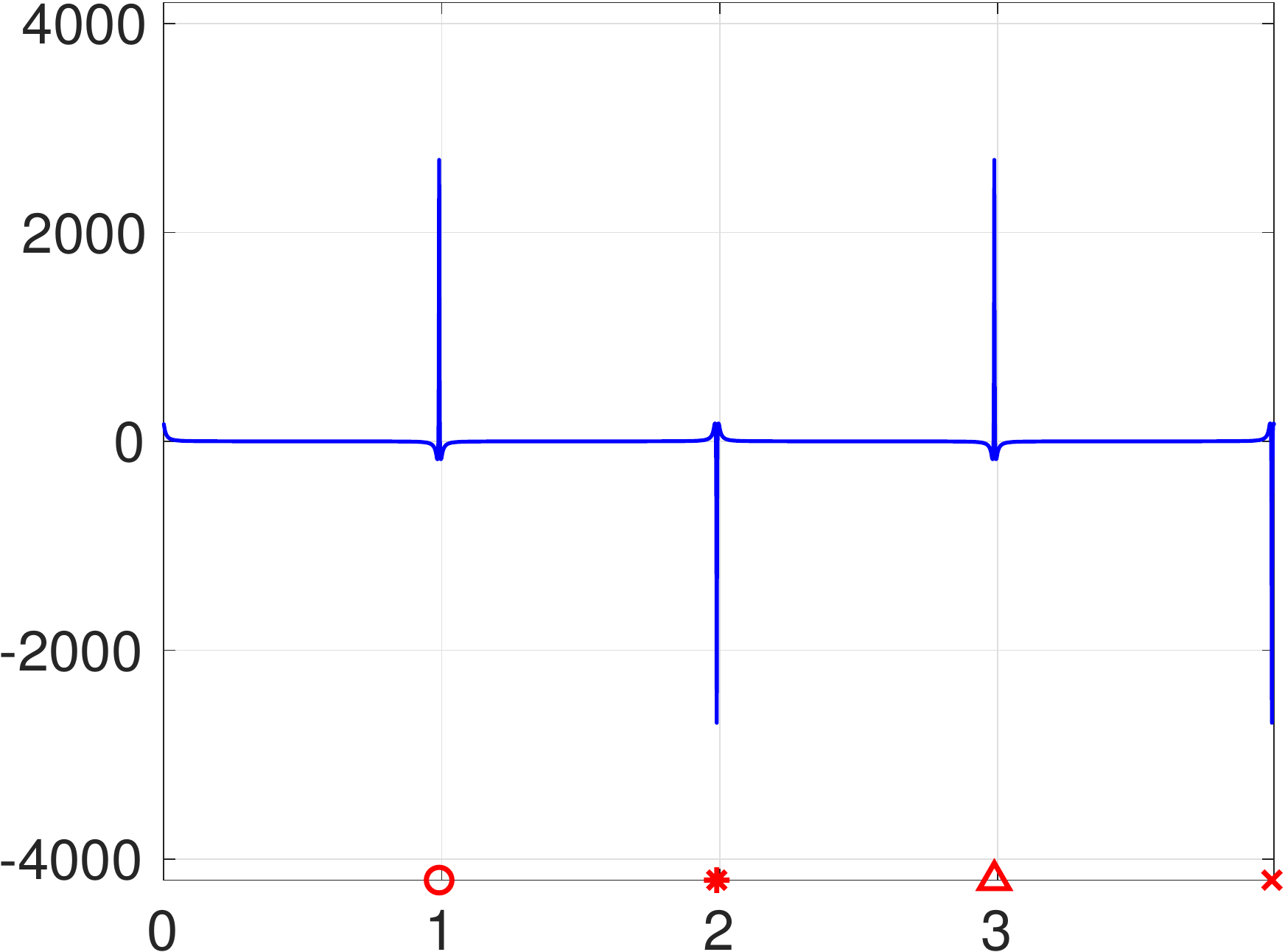}}
\subfigure[]{
\includegraphics[width=0.2\textwidth]{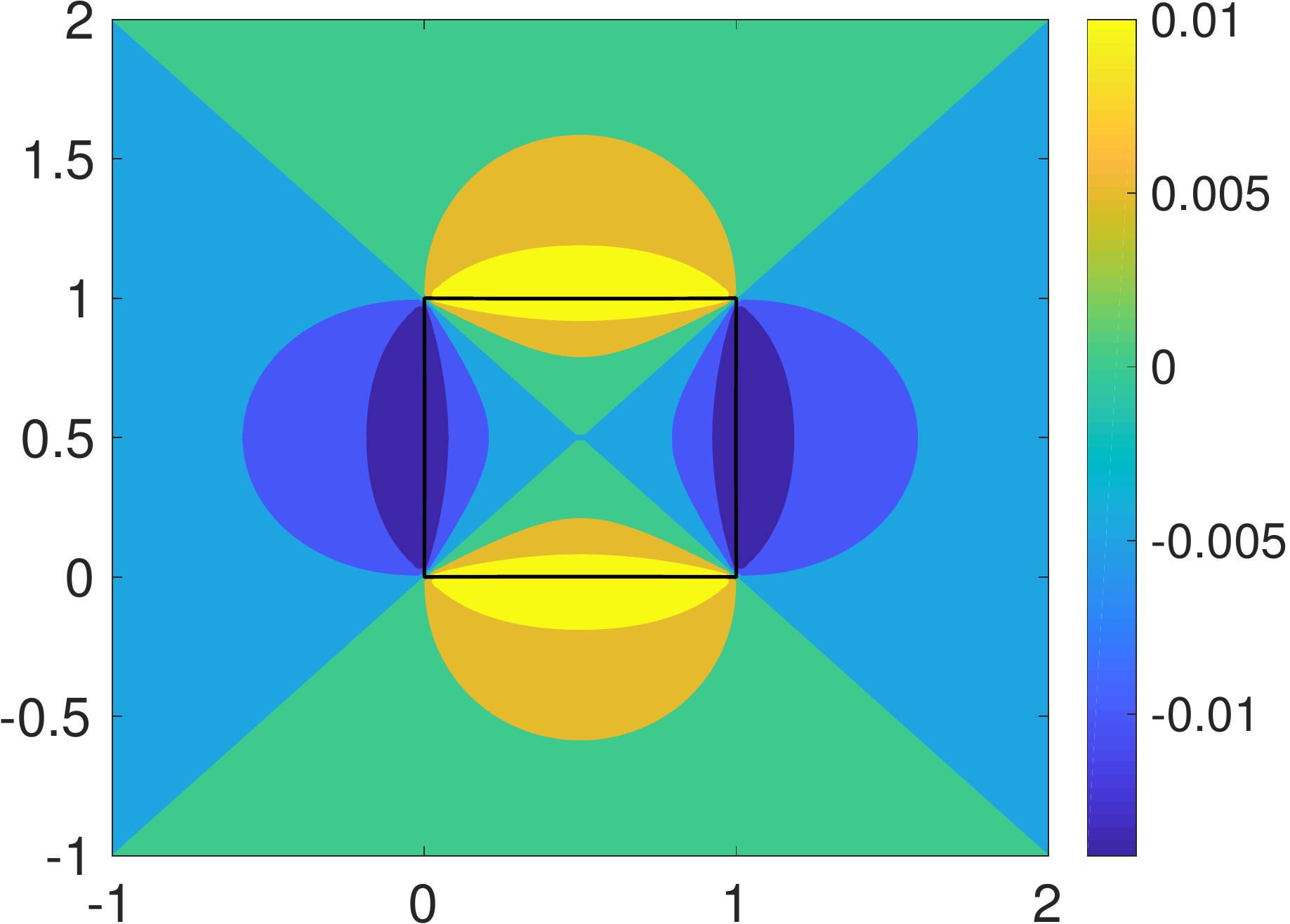}}
\subfigure[]{
\includegraphics[width=0.2\textwidth]{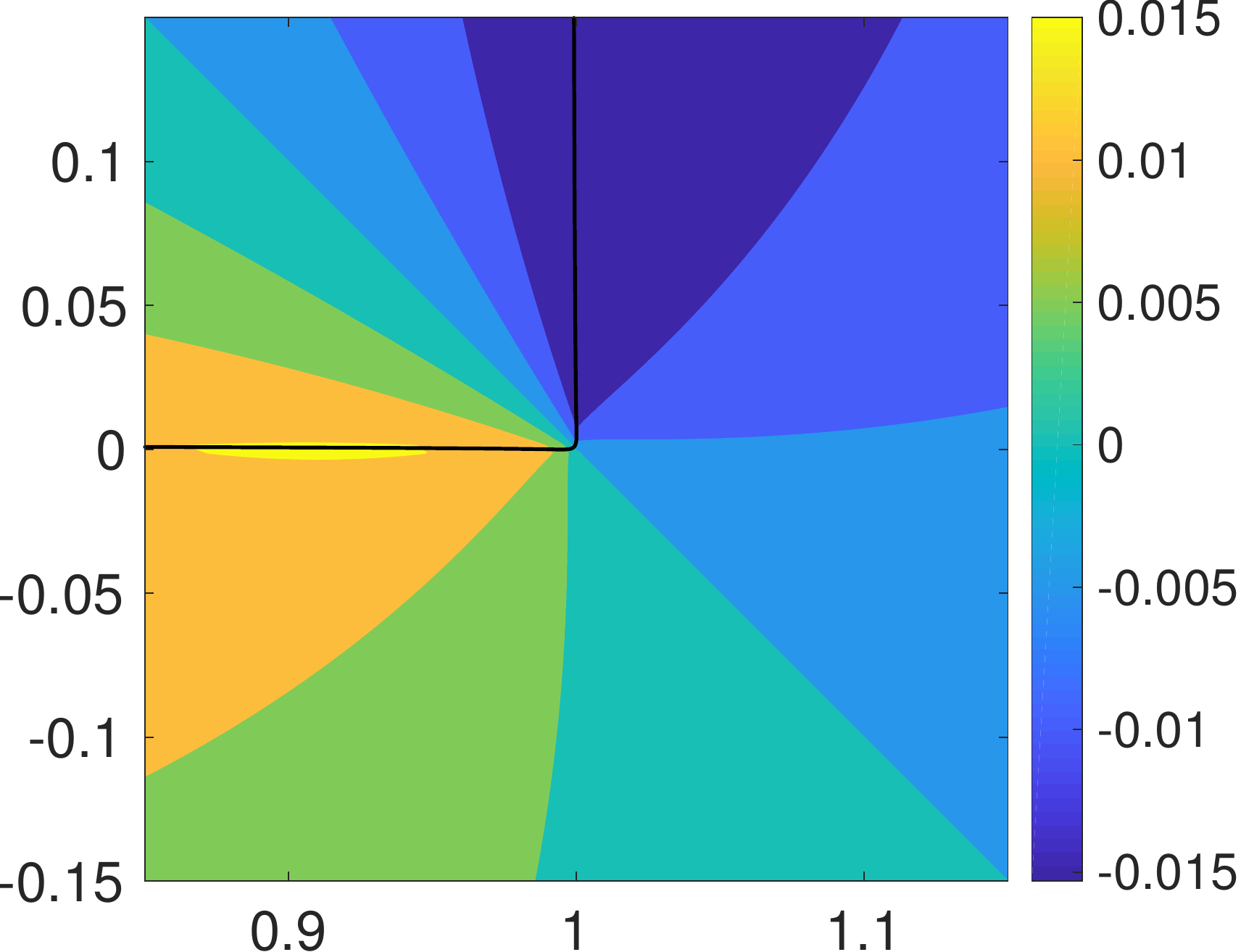}}\\
\caption{\label{fig18}  (a), (b). Plotting of the eigenfunction and its conormal derivative for $\lambda_2=-0.2183$; 
(c), (d). The associated single-layer potential for $\lambda_2=-0.2183$.}
\end{figure}

Fig.~\ref{fig19} plots the eigenfunctions with respect to arc length for the eigenvalues $\lambda_1=0.2183$ and $\lambda_2=-0.2183$ with different maximum curvature $500$, $1000$ and $1500$.

\begin{figure}
\centering
\subfigure[]{
\includegraphics[width=0.2\textwidth]{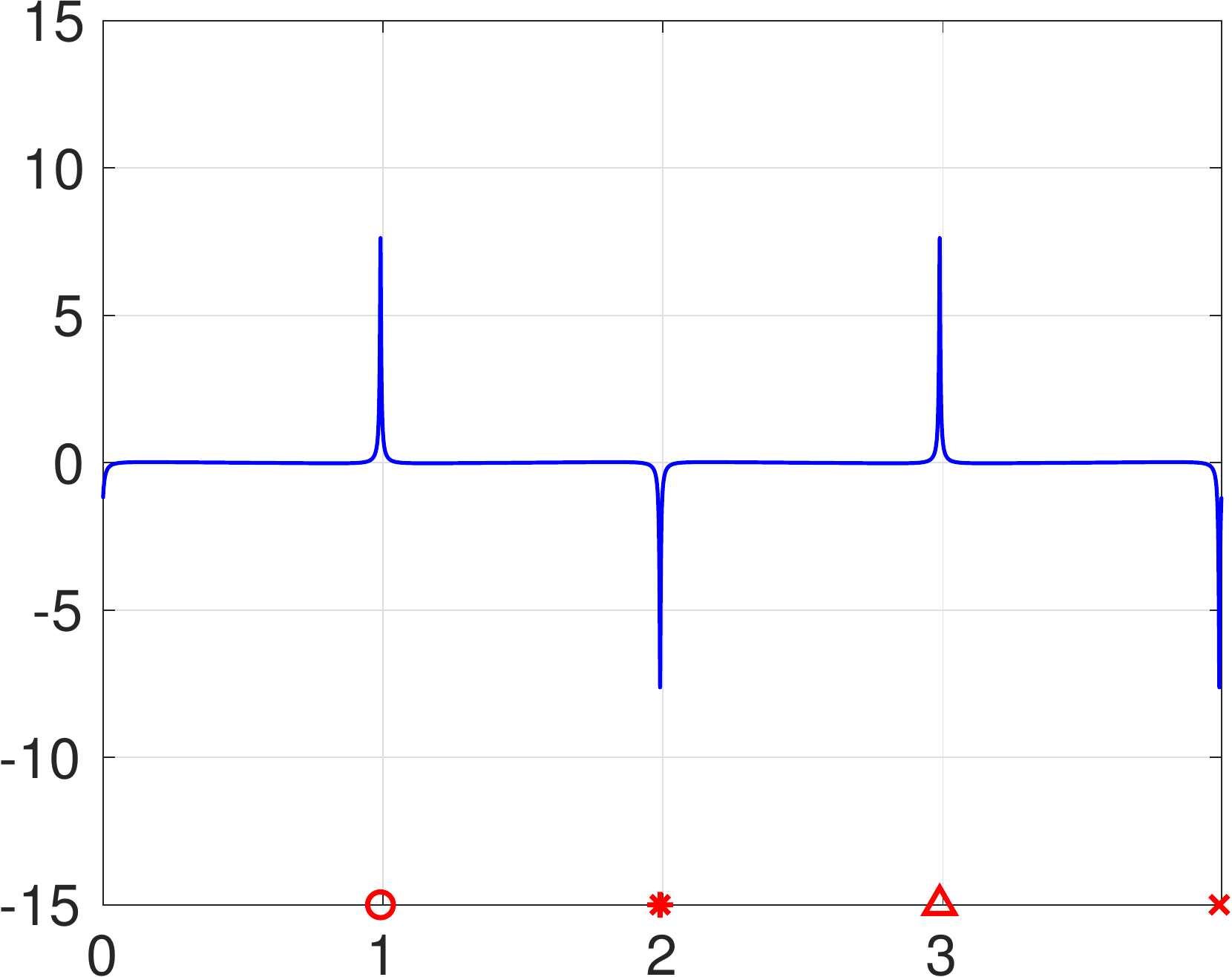}}
\subfigure[]{
\includegraphics[width=0.2\textwidth]{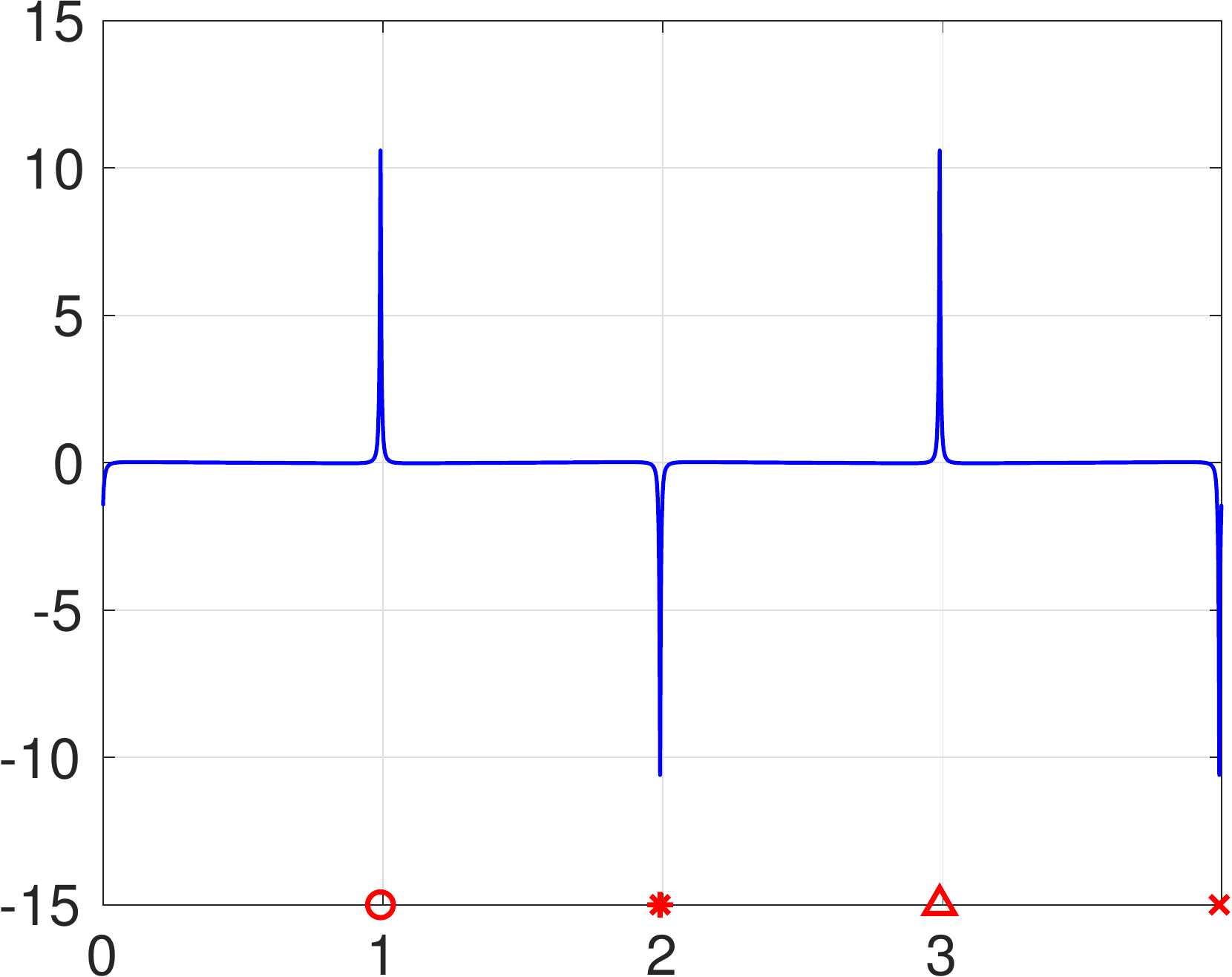}}
\subfigure[]{
\includegraphics[width=0.2\textwidth]{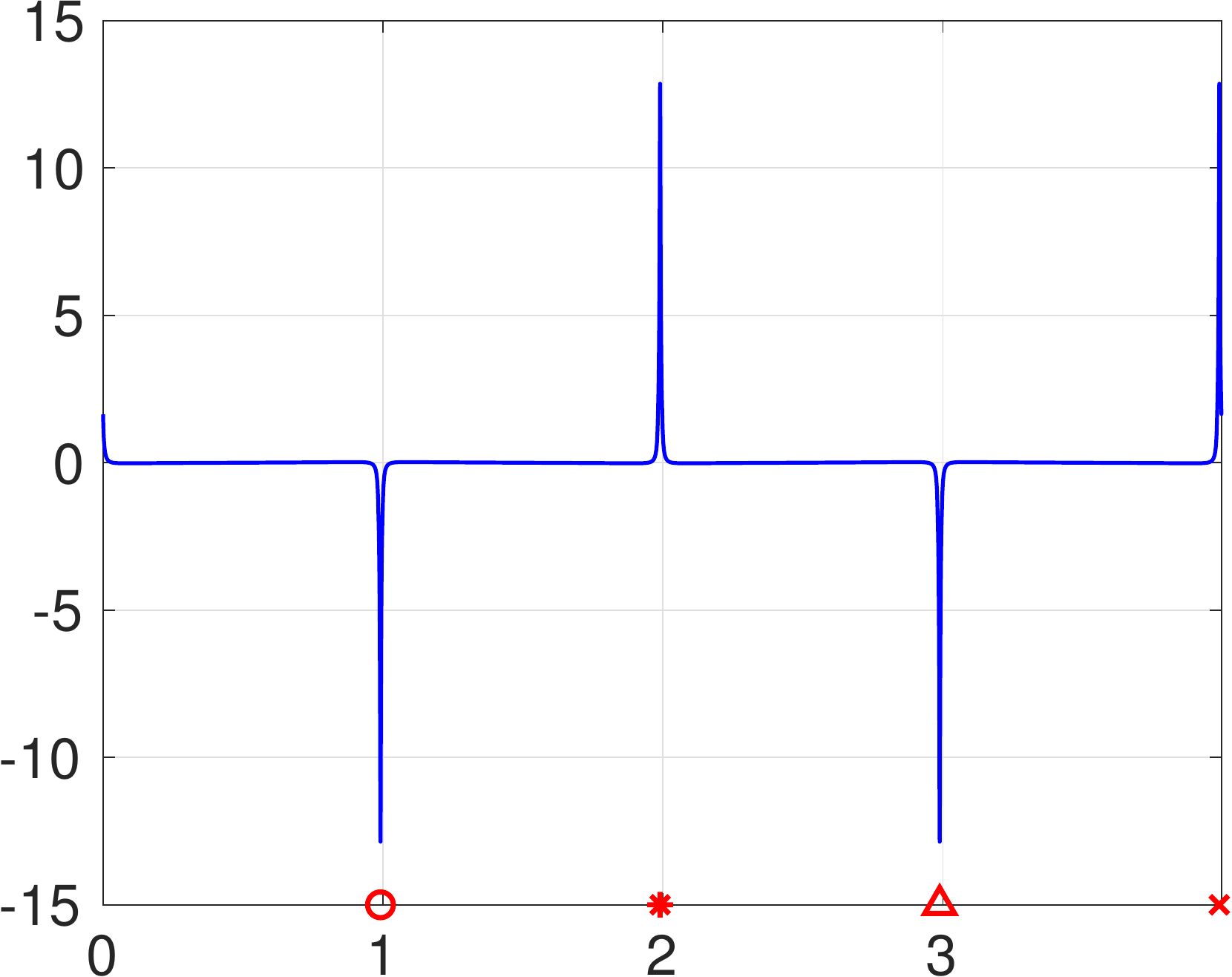}}\\
\subfigure[]{
\includegraphics[width=0.2\textwidth]{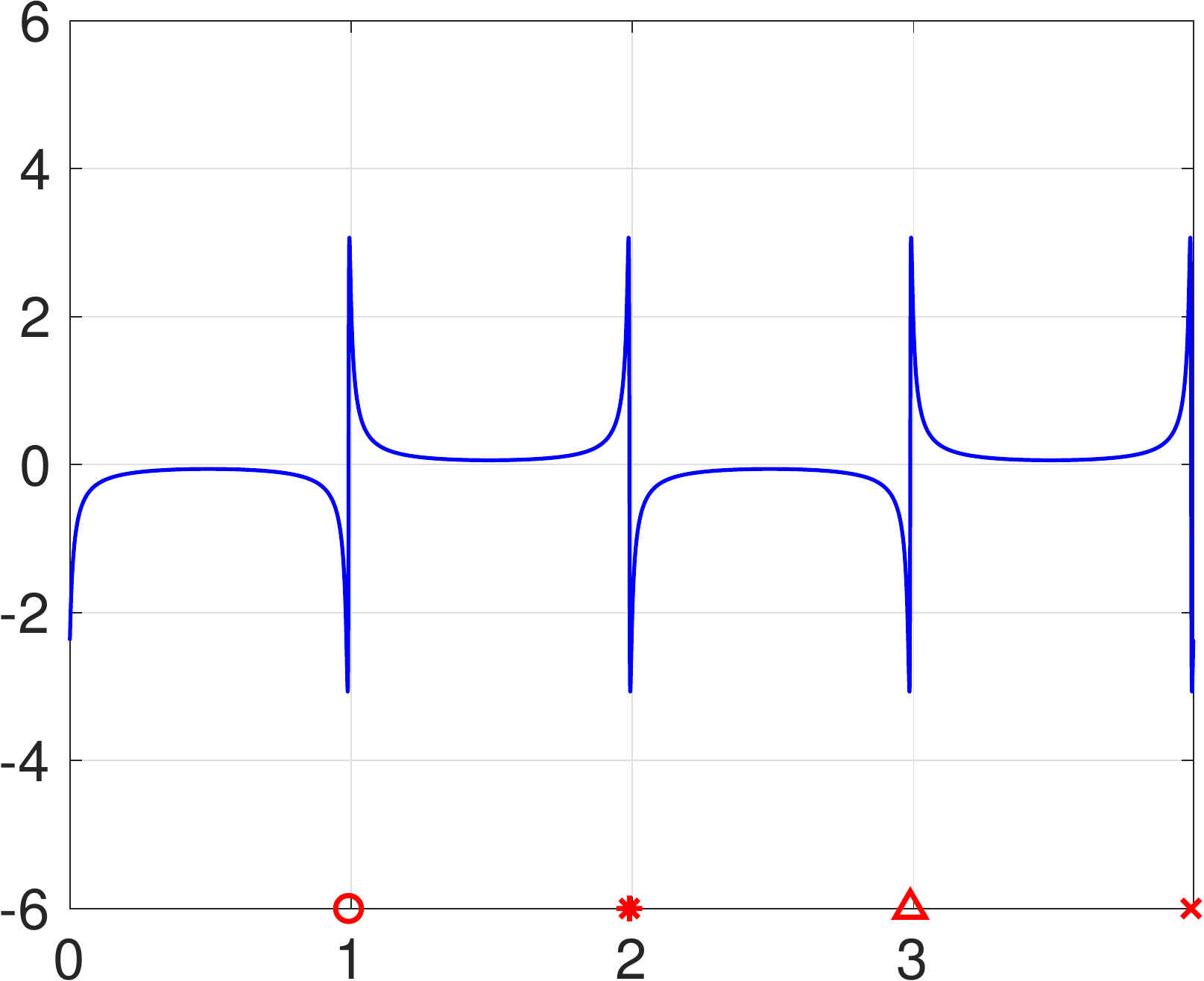}}
\subfigure[]{
\includegraphics[width=0.2\textwidth]{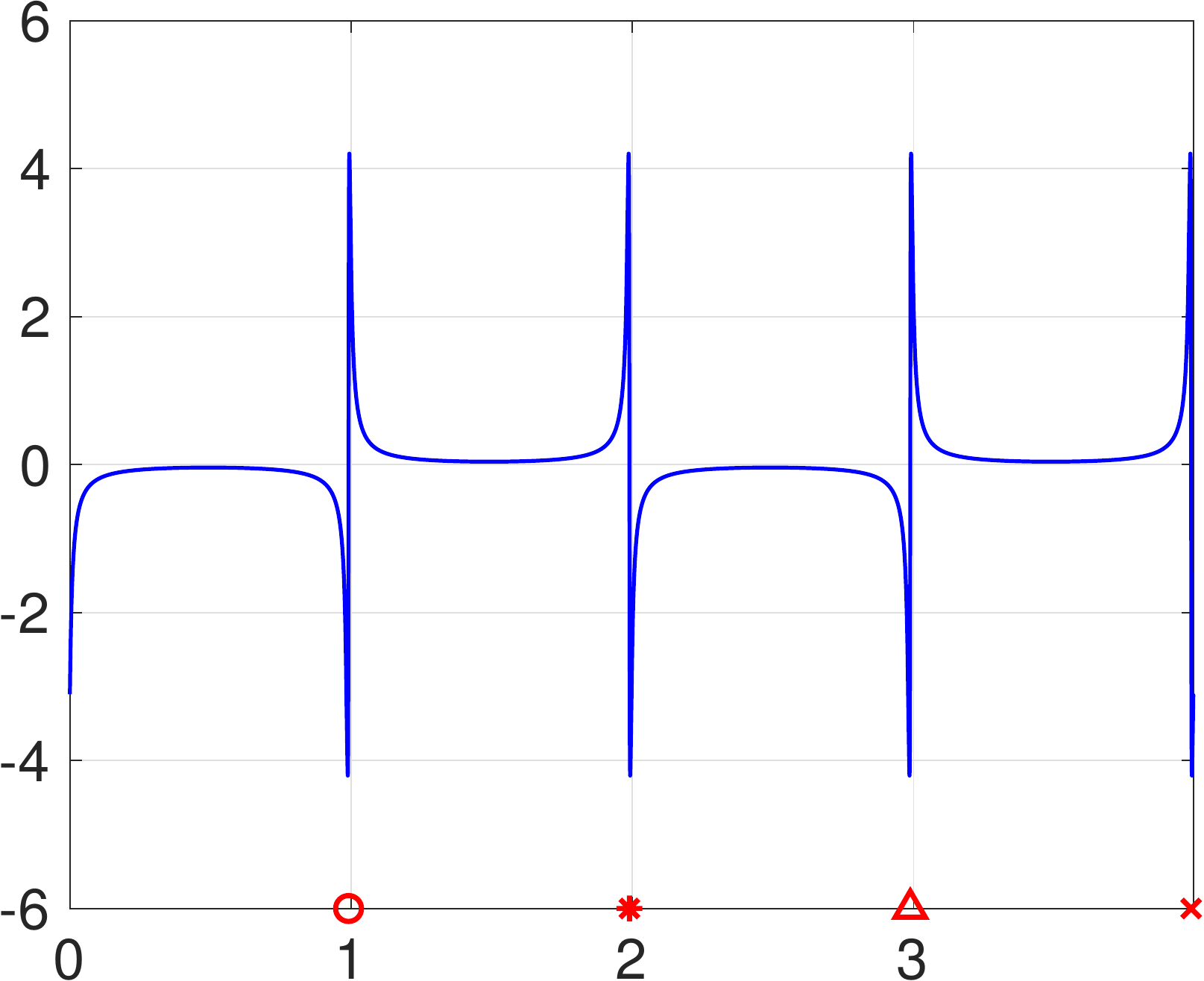}}
\subfigure[]{
\includegraphics[width=0.2\textwidth]{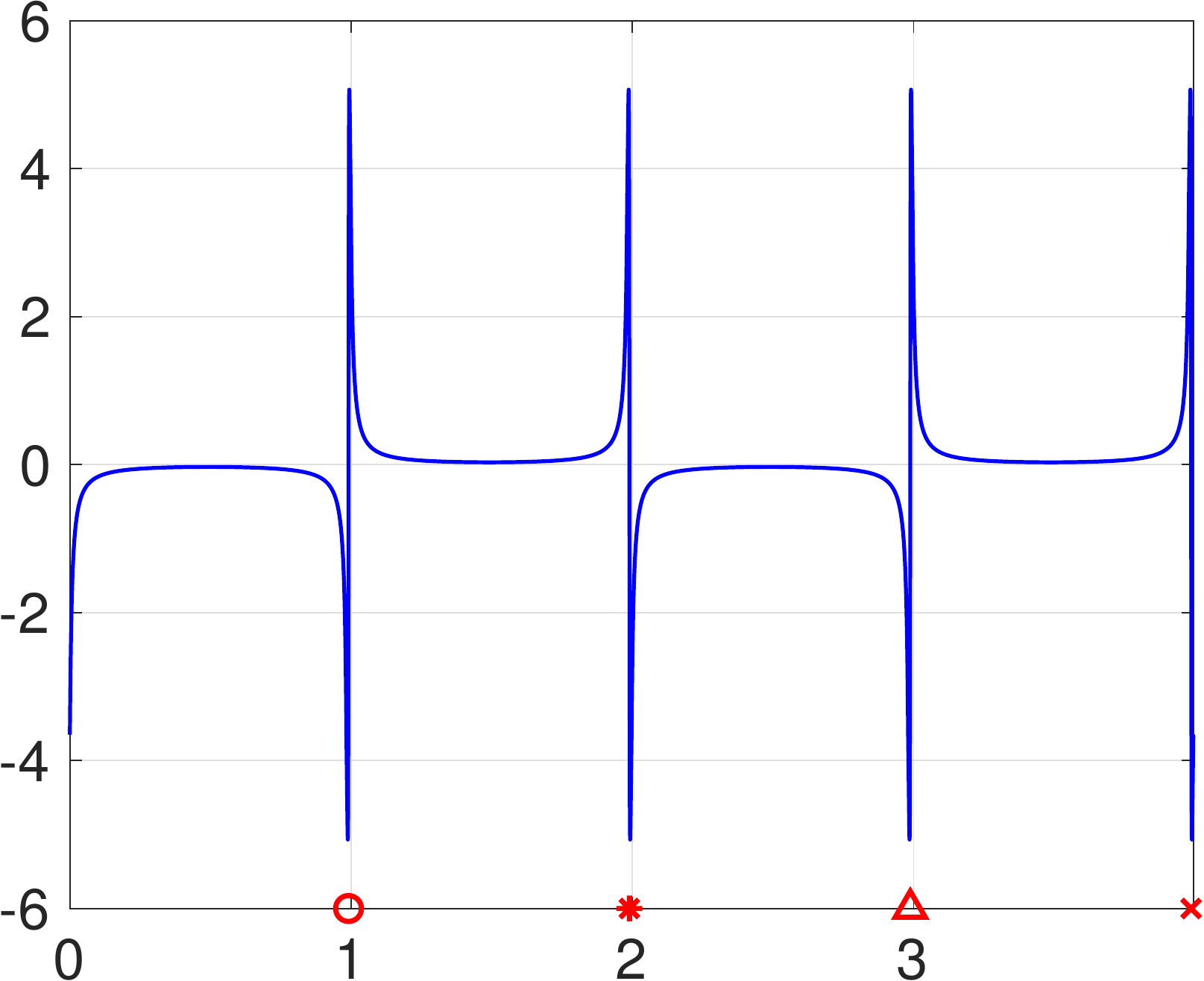}}\\
\caption{\label{fig19} (a), (b), (c).  Plotting the eigenfunctions for the positive eigenvalues $\lambda_1=0.2183$ with different maximum curvature $500$, $1000$ and $1500$. 
(d), (e), (f). The corresponding items for the negative eigenvalue $\lambda_2=-0.2183$.}
\end{figure}

We next investigate the blow-up rate of the NP eigenfunction or its conormal derivative with respect to the curvature.Therefore we plot the logarithm of the absolute value of the eigenfunctions at the high-curvature point for the positive eigenvalues $\lambda_1$, $\lambda_{7}$ and $\lambda_{9}$, and the logarithm of the absolute value of the conormal derivative of the eigenfunctions at the high-curvature point for the simple negative eigenvalues $\lambda_2$, $\lambda_{8}$ and $\lambda_{10}$ with respect to different curvature in Fig.~\ref{fig20}. It turns out that blow-up rate also follows the 
rule in \eqref{eq:growthrate}. By regression, we numerically determine the corresponding parameters for those different eigenvalues 
in \eqref{eq:egg4}, and they are listed in Table~\ref{tab4}.

\begin{figure}
\includegraphics[width=0.2\textwidth] {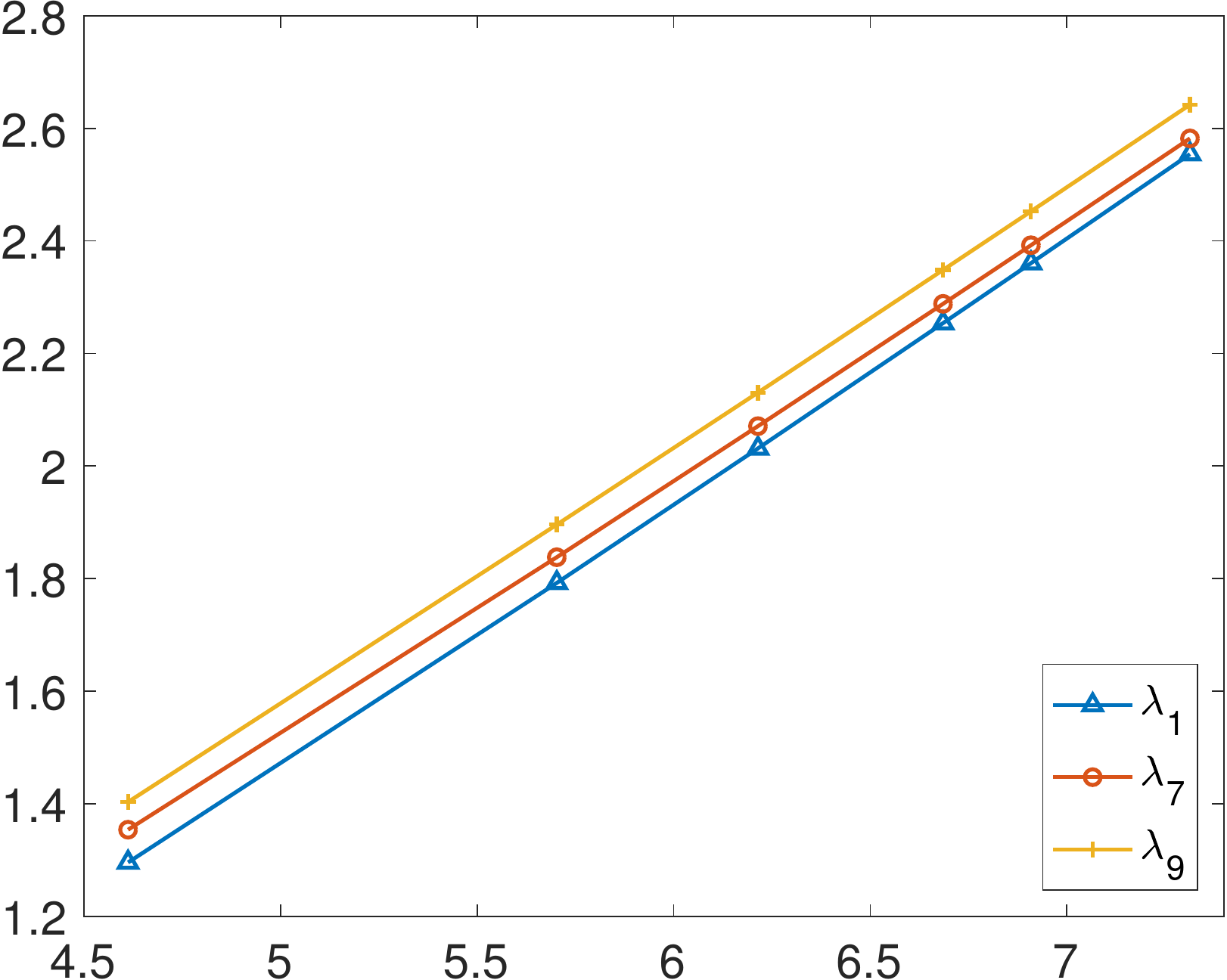}
\hspace{0.8cm}
\includegraphics[width=0.2\textwidth] {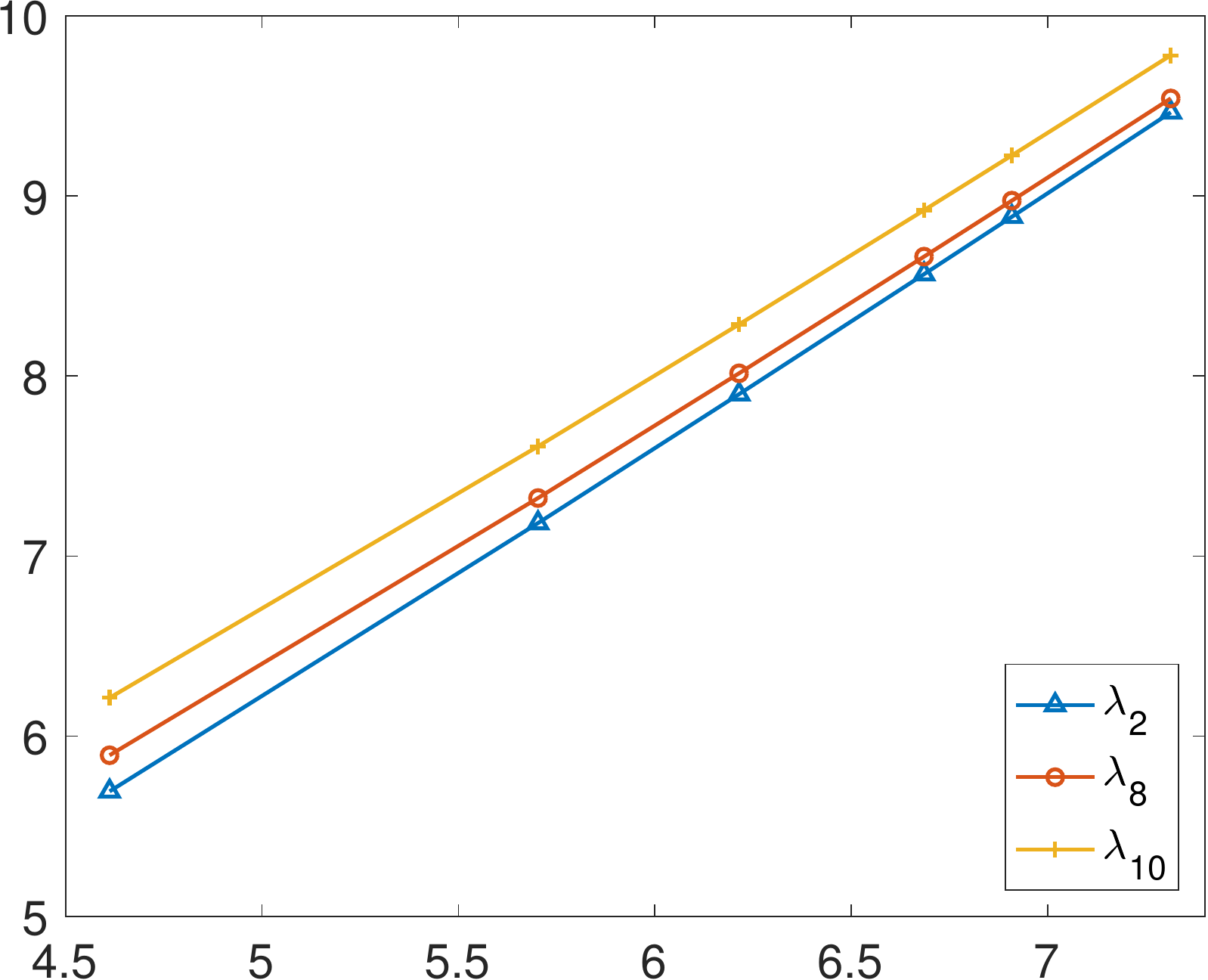}
\caption{\label{fig20} The logarithm of the eigenfunction at the high-curvature point $x_o$ for the positive eigenvalues $\lambda_1$, $\lambda_{7}$ and $\lambda_{9}$, and the negative eigenvalues $\lambda_2$, $\lambda_{8}$ and $\lambda_{10}$ with respect to different curvature.}
\end{figure}

\begin{table}[t]
  \centering
  \subtable[]{
    \centering
    \begin{tabular}{cccc}
      \toprule
      & $\lambda_1$ & $\lambda_{7}$ & $\lambda_{9}$ \\
      \midrule
      $p$ & 0.4657 & 0.4544 & 0.4584 \\[5pt]
      $\ln(a)$ & -0.8579 & -0.7477 & -0.7147 \\
      \bottomrule
    \end{tabular}}
  \hspace{1.5cm}
\subtable[]{
    \centering
    \begin{tabular}{cccc}
      \toprule
      & $\lambda_2$ & $\lambda_{8}$ & $\lambda_{10}$ \\
      \midrule
      $p$ & 1.3951 & 1.3492 & 1.3198 \\[5pt]
      $\ln(a)$ & -0.7562 & -0.3501 & 0.1044 \\
      \bottomrule
    \end{tabular}
    }
  \caption{The coefficients of the regression; (a) $\lambda_j, j=1,7,9$; (b) $\lambda_j, j=2,8,10$.}
  \label{tab4}
\end{table}

\subsection{A concave 1-symmetric domain}

In this subsection, we consider a concave 1-symmetric domain as shown in Fig.~\ref{fig21}, which possesses one high-curvature points that are
denoted by $x_*$ and $x_o$. The largest curvature is
\begin{equation}
 \kappa_{\max}=\kappa_{x_*}= \kappa_{x_o}=500.
\end{equation}
\begin{figure}
\includegraphics[width=4cm] {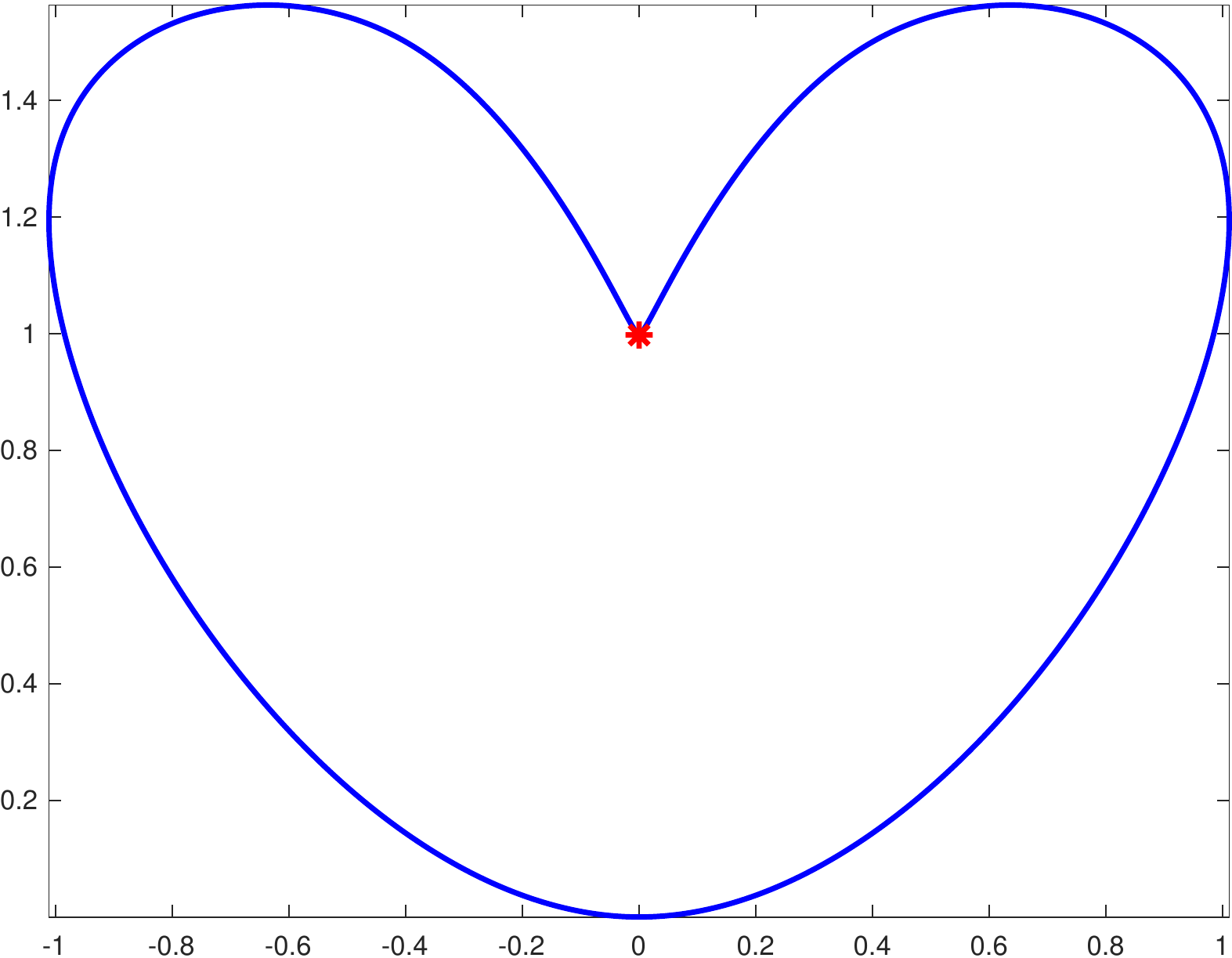}
\caption{\label{fig21} the boundary with one high-curvature point}
\end{figure}
The first five largest NP eigenvalues (in terms of the absolute value) are
\begin{equation}\label{eq:egg3_2}
\begin{split}
& \lambda_0=0.5, \quad  \lambda_1=0.3310,\quad \lambda_2=-0.3310,\quad\lambda_3=0.2142,\\
& \quad \lambda_4=-0.2142, \quad \lambda_5=0.1262 \quad \lambda_6=-0.1262.
\end{split}
\end{equation}

Fig.~\ref{fig22} plots the eigenfunctions, their conormal derivatives and the associated single-layer potentials, respectively, associated with the eigenvalues
 $\lambda_1=0.3310$ and $\lambda_3=0.2142$. The numerical results clearly support our earlier assertion about the NP eigenfunctions associated with simple
 positive eigenvalues. 
%
%
\begin{figure}
\centering
\subfigure[]{
\includegraphics[width=0.2\textwidth]{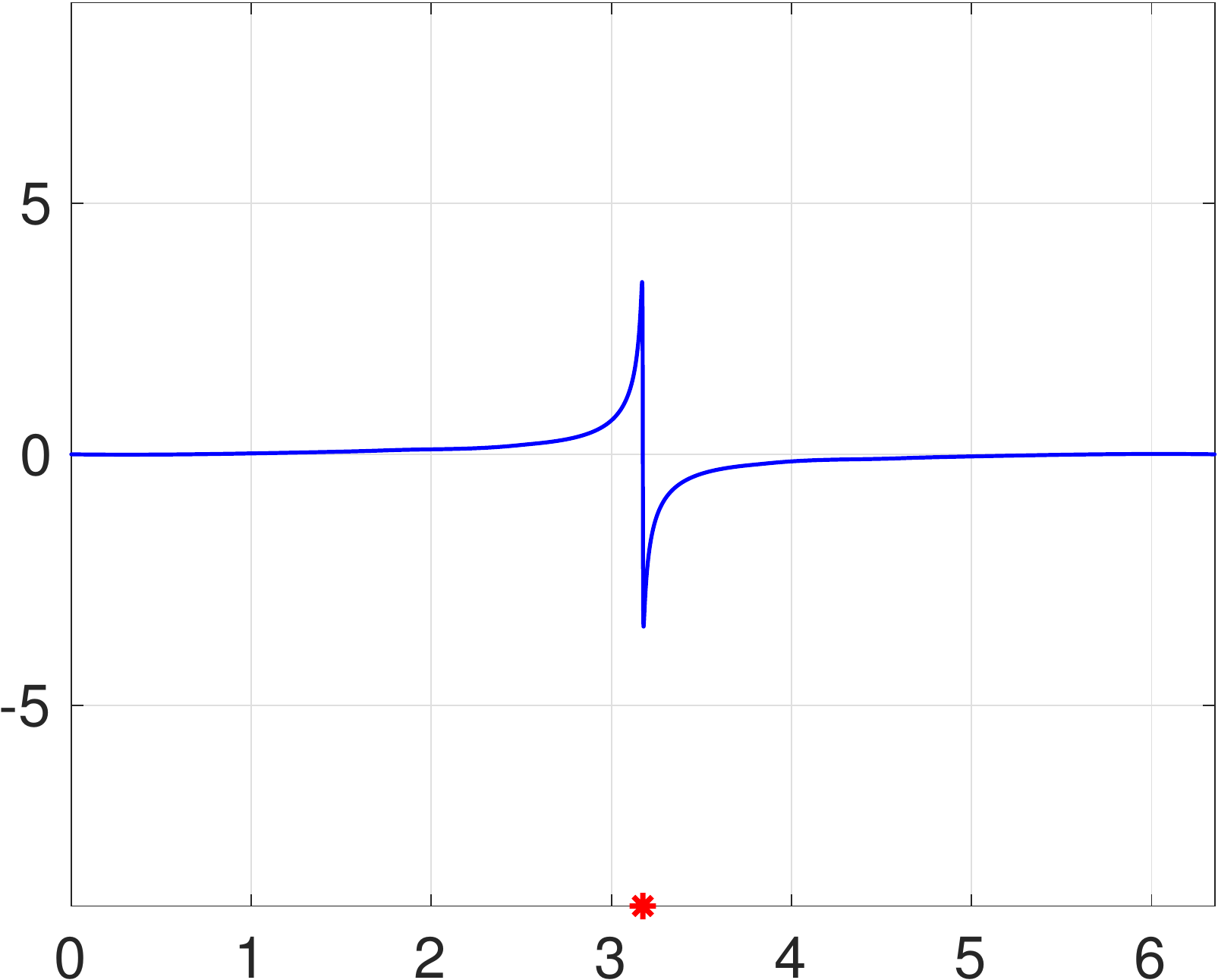}}
\subfigure[]{
\includegraphics[width=0.2\textwidth]{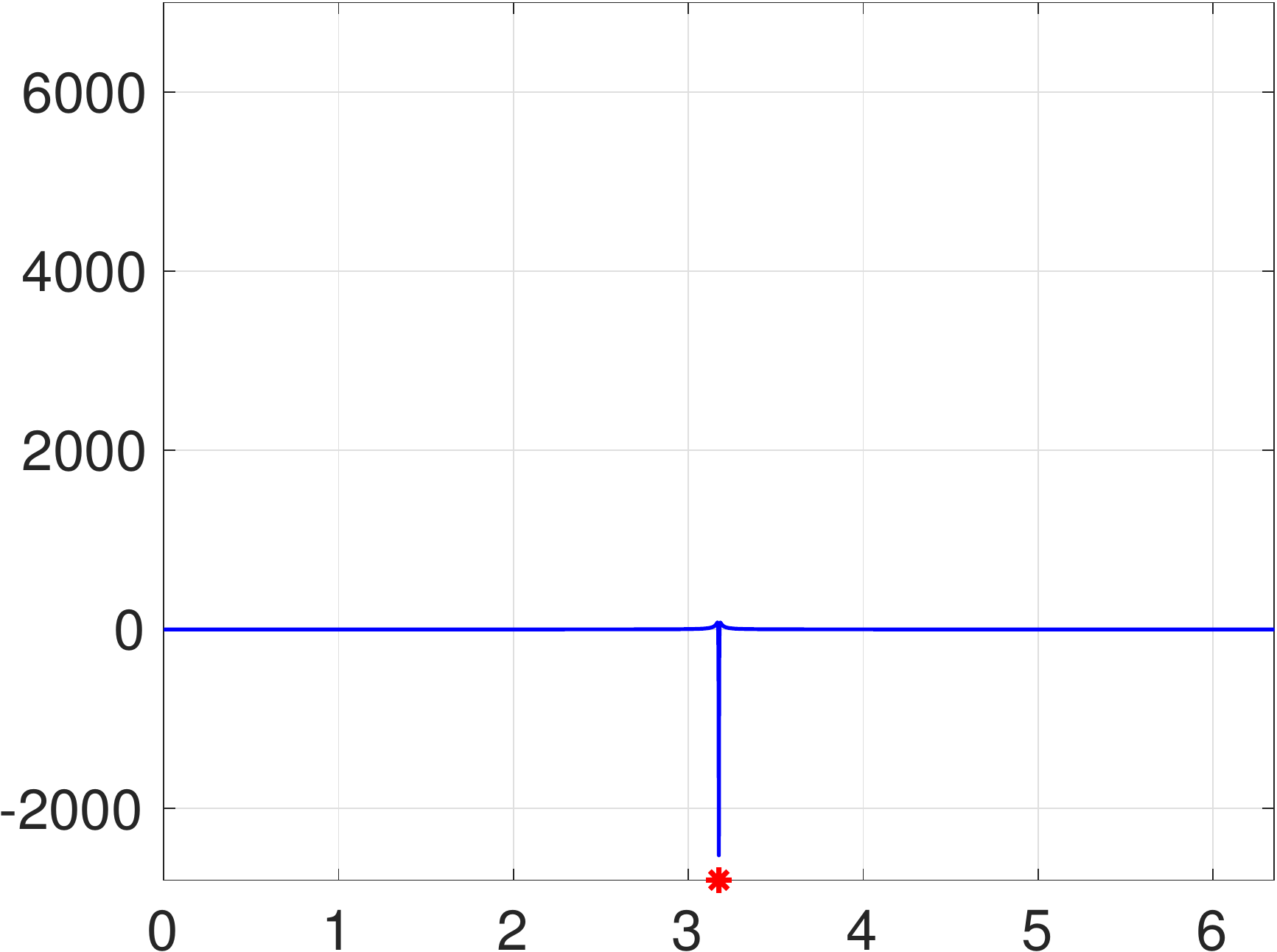}}
\subfigure[]{
\includegraphics[width=0.2\textwidth]{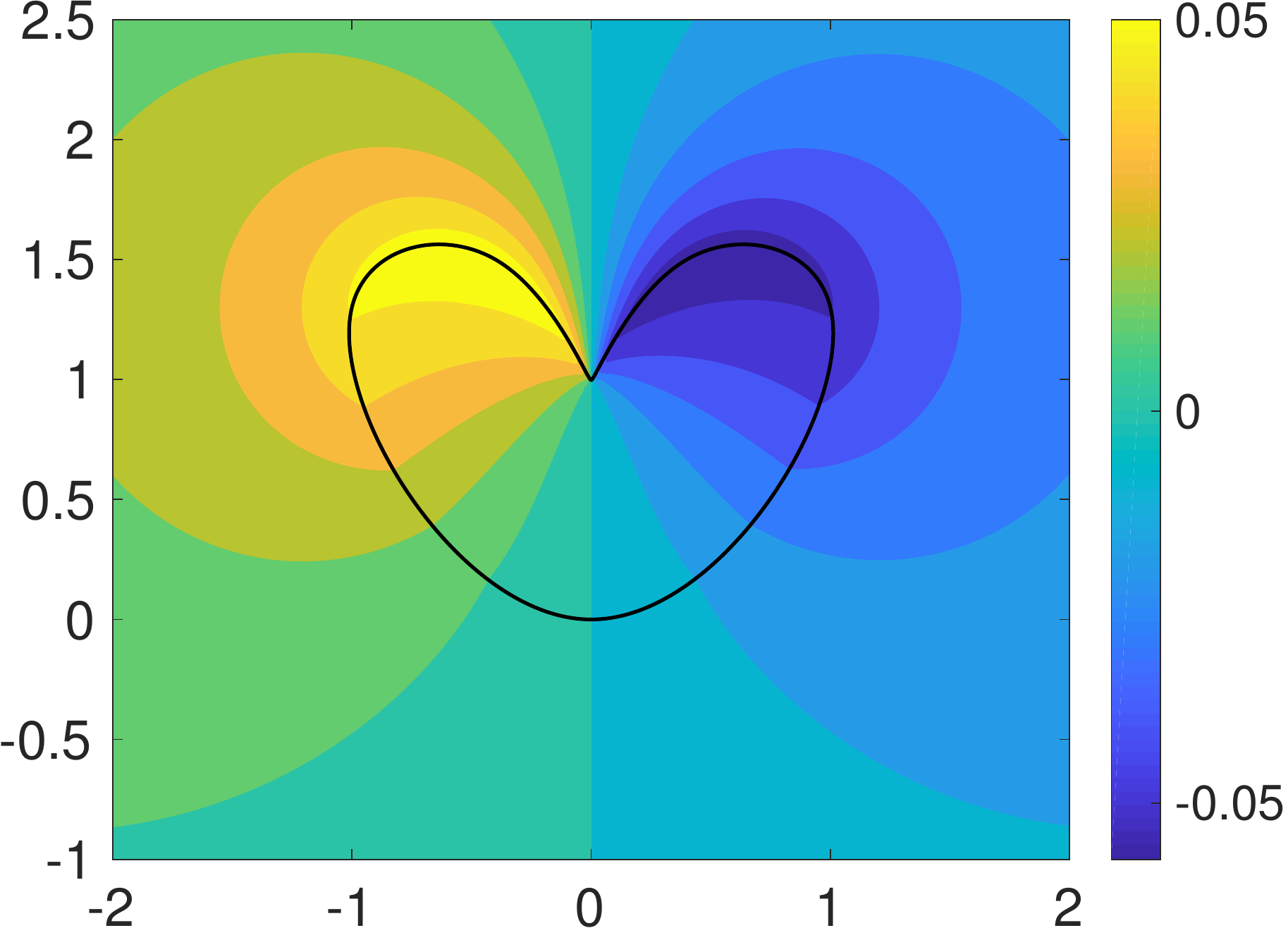}}
\subfigure[]{
\includegraphics[width=0.2\textwidth]{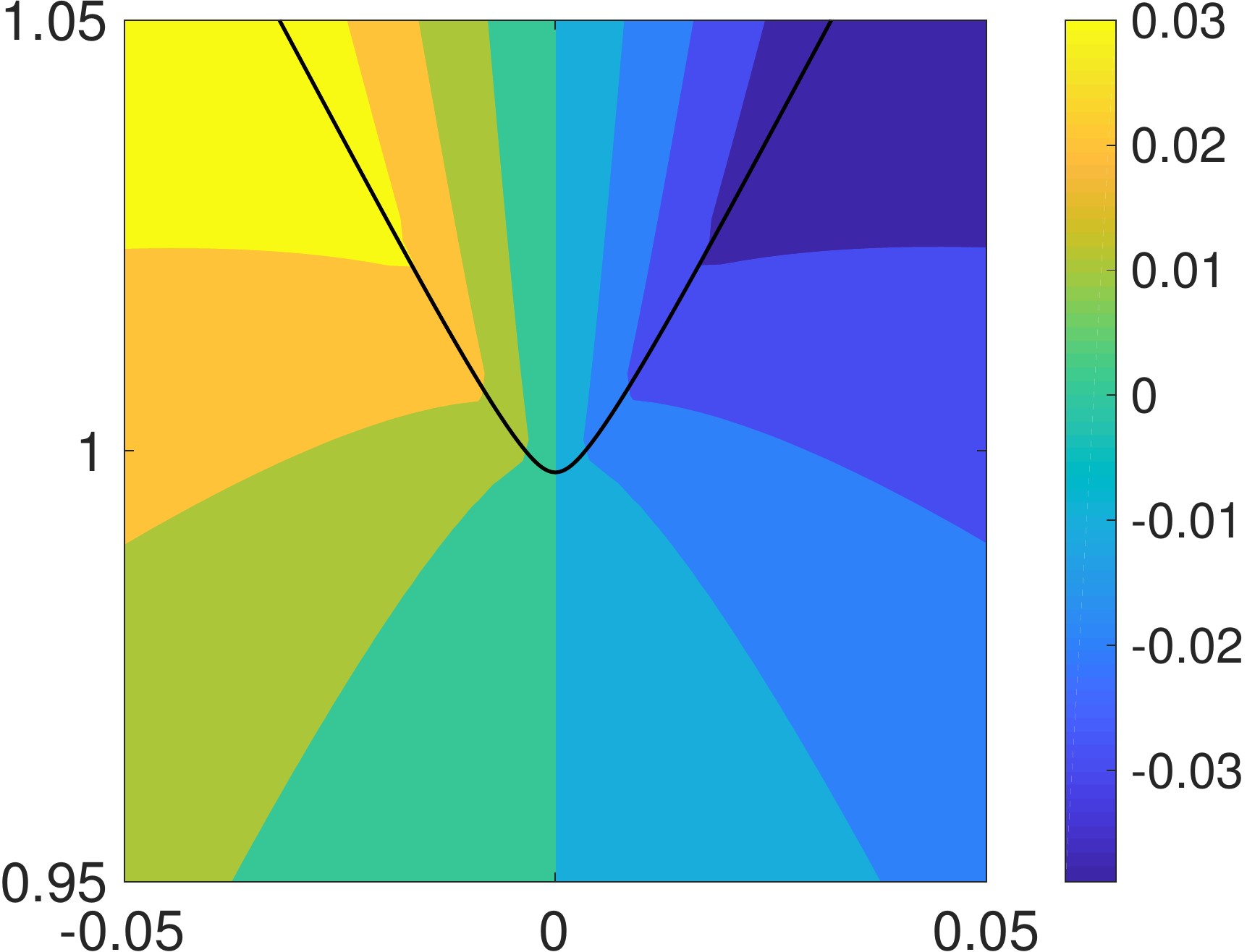}}\\
\subfigure[]{
\includegraphics[width=0.2\textwidth]{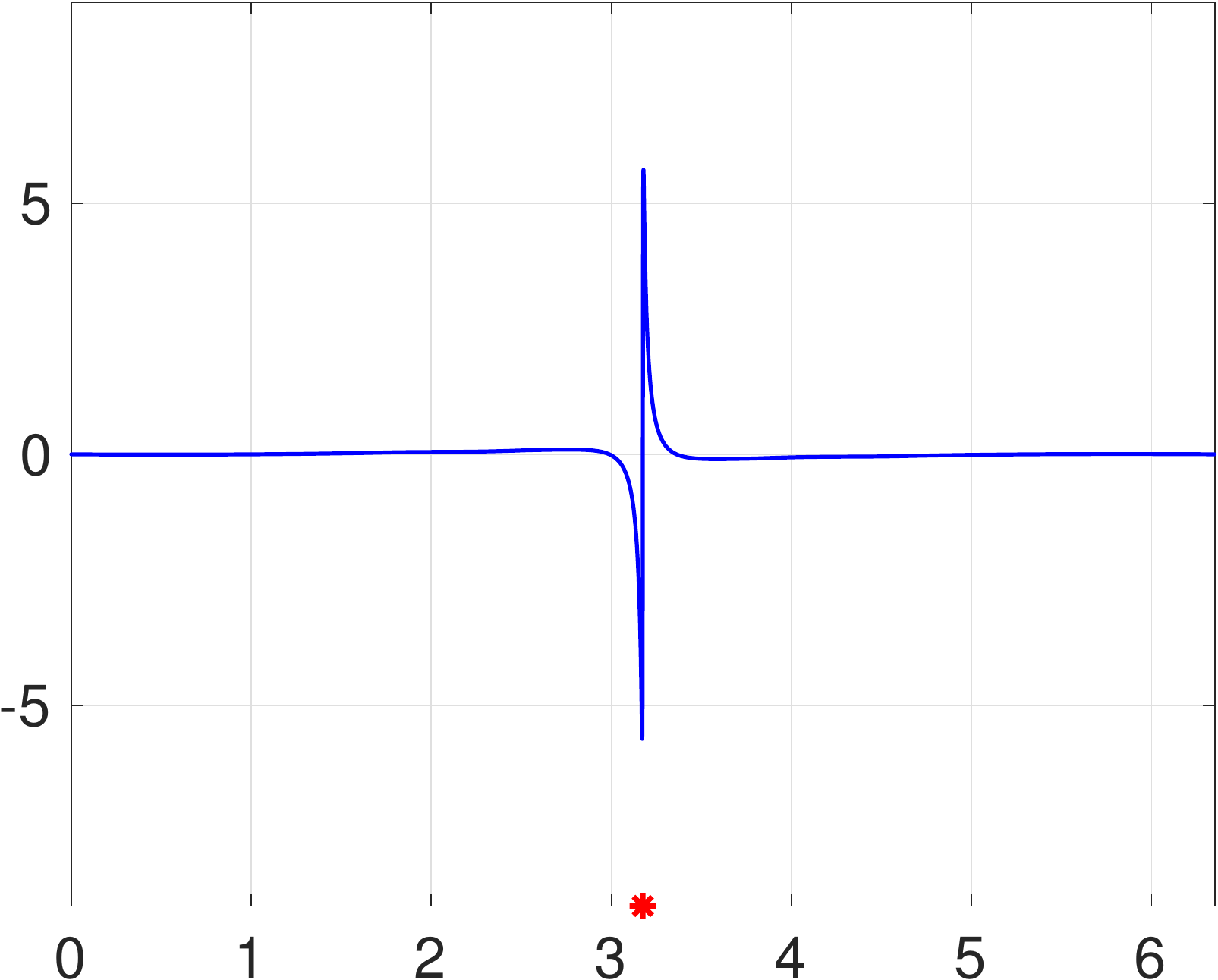}}
\subfigure[]{
\includegraphics[width=0.2\textwidth]{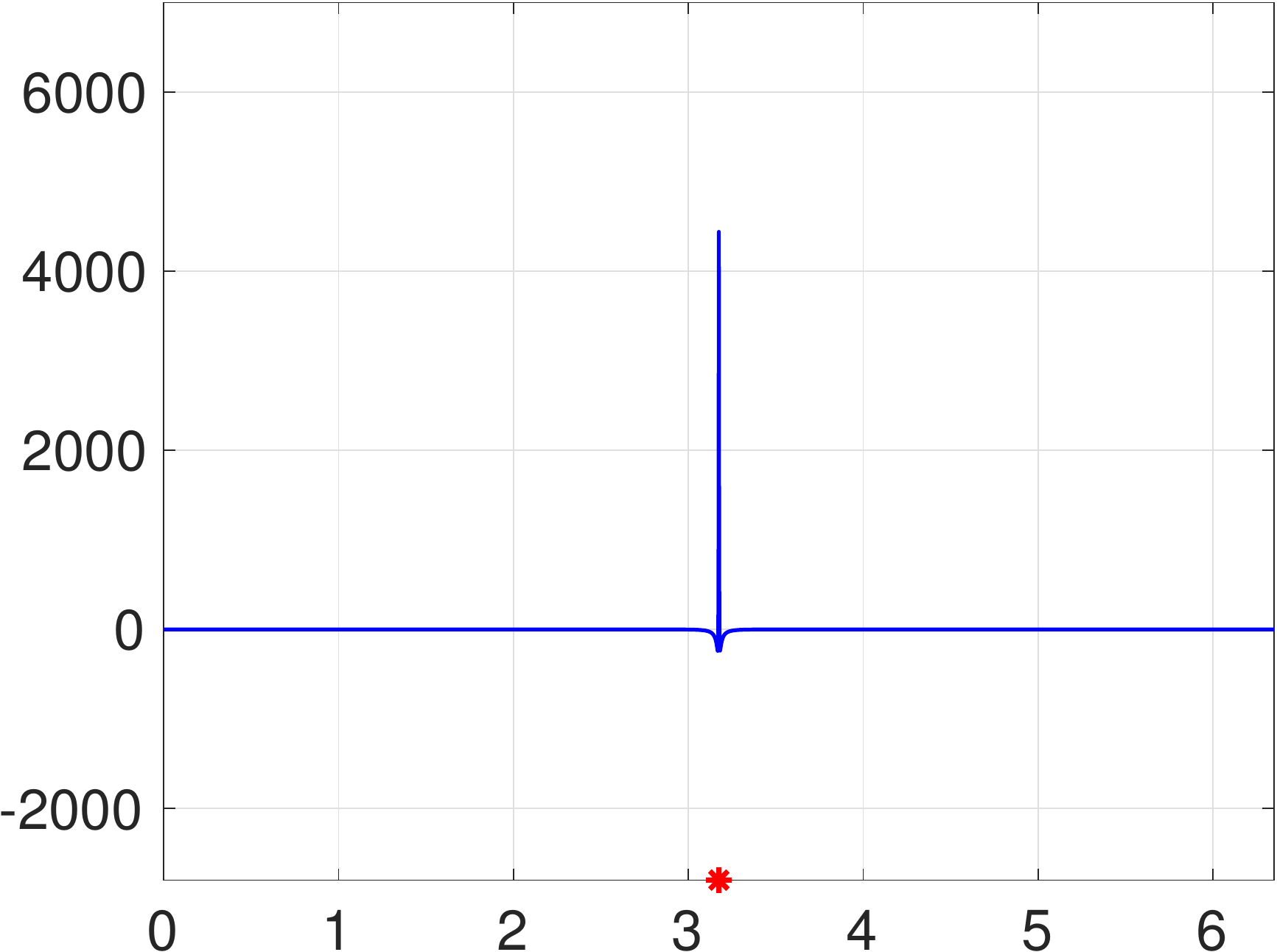}}
\subfigure[]{
\includegraphics[width=0.2\textwidth]{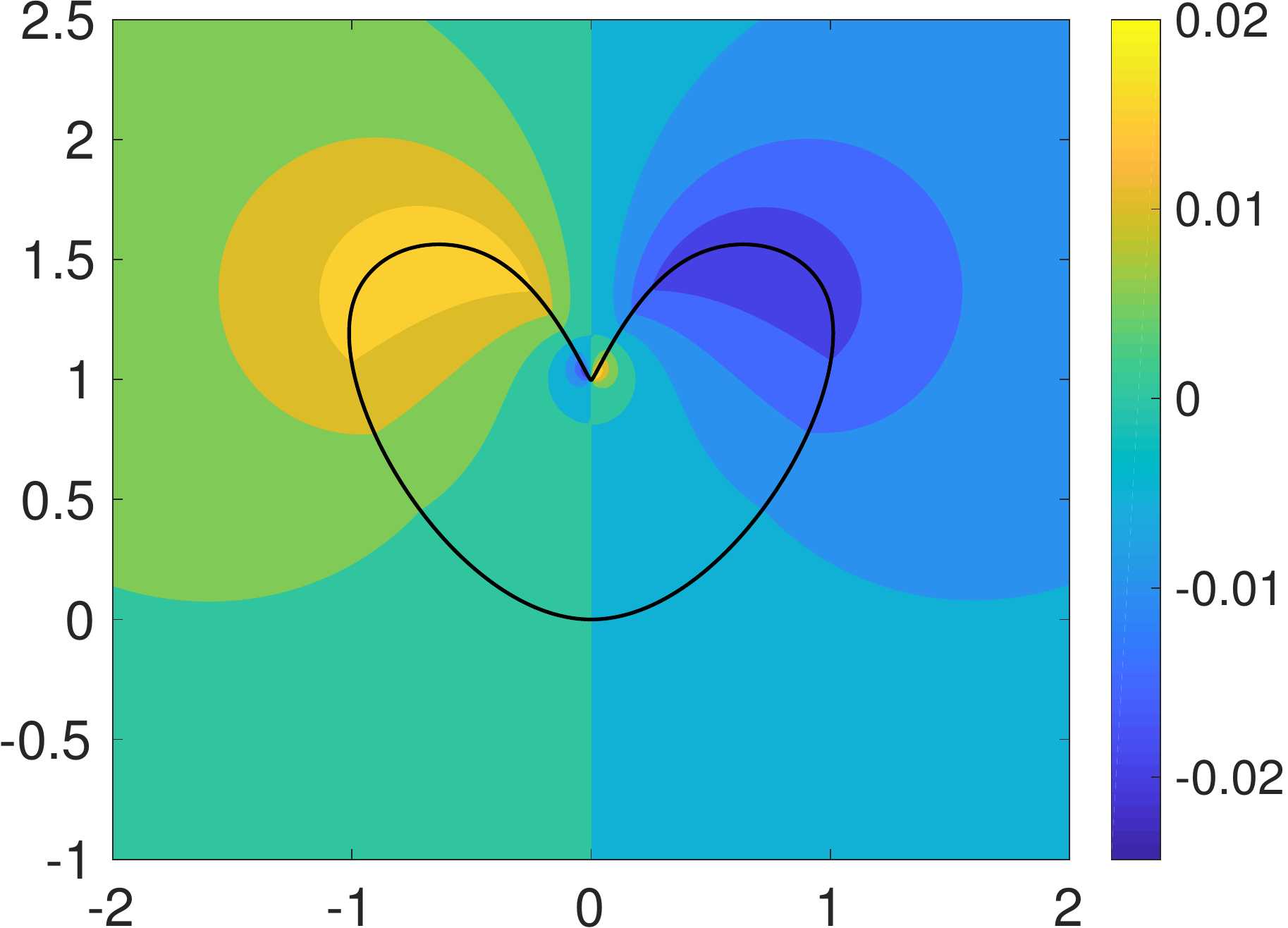}}
\subfigure[]{
\includegraphics[width=0.2\textwidth]{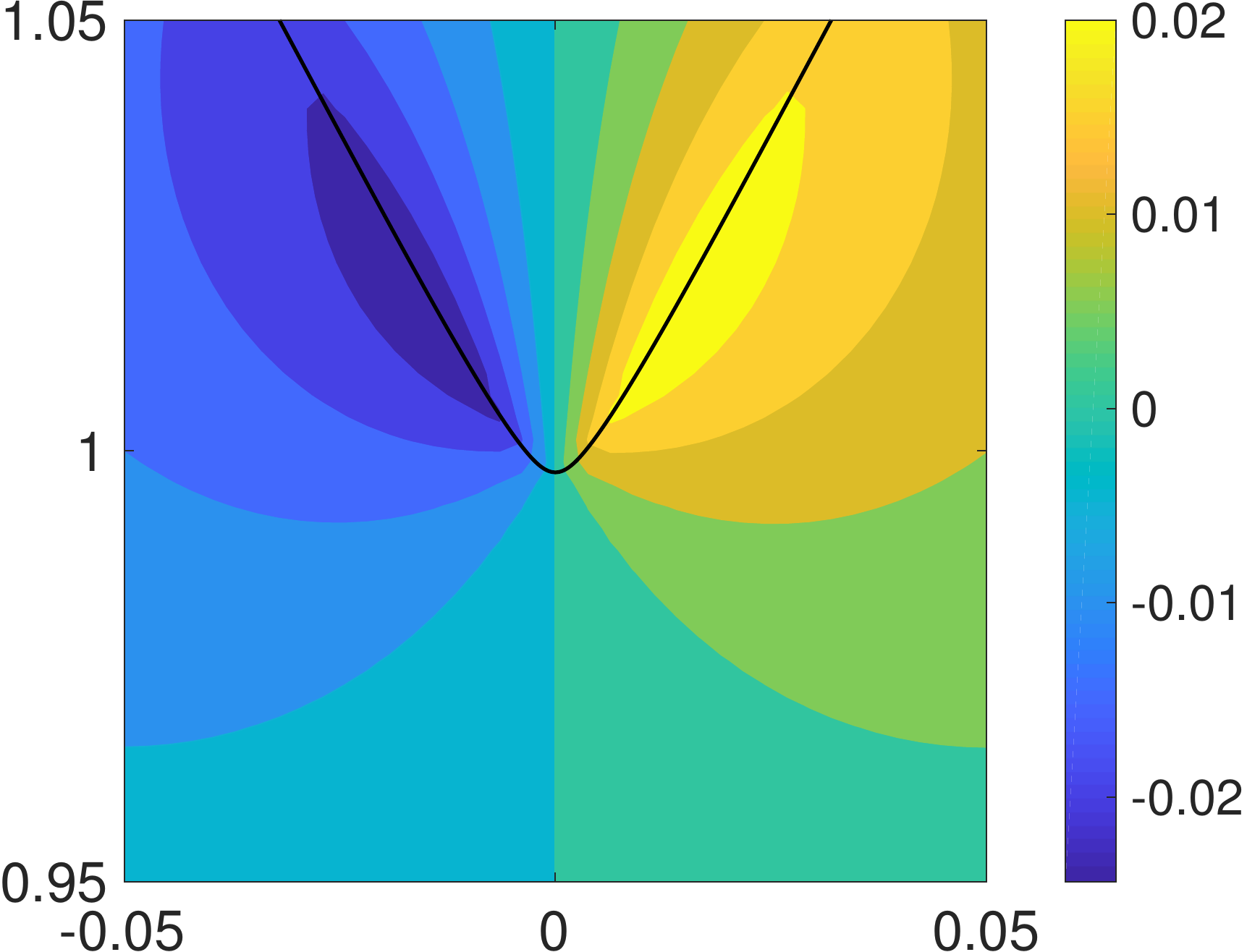}}
\caption{\label{fig22} (a), (b), (c), (d). The eigenfunction, its conormal derivative, and the corresponding
single-layer potential associated with $\lambda_1=0.3310$; (e), (f), (g), (h). The corresponding items
associated with $\lambda_3=0.2142$.}
\end{figure}

Fig.~\ref{fig23} plots the eigenfunctions, their conormal derivatives and the associated single-layer potentials, respectively, associated with the eigenvalues
 $\lambda_2=-0.3310$ and $\lambda_4=-0.2142$. The numerical results clearly support our earlier assertion about the NP eigenfunctions associated with simple
 negative eigenvalues.

%
\begin{figure}
\centering
\subfigure[]{
\includegraphics[width=0.2\textwidth]{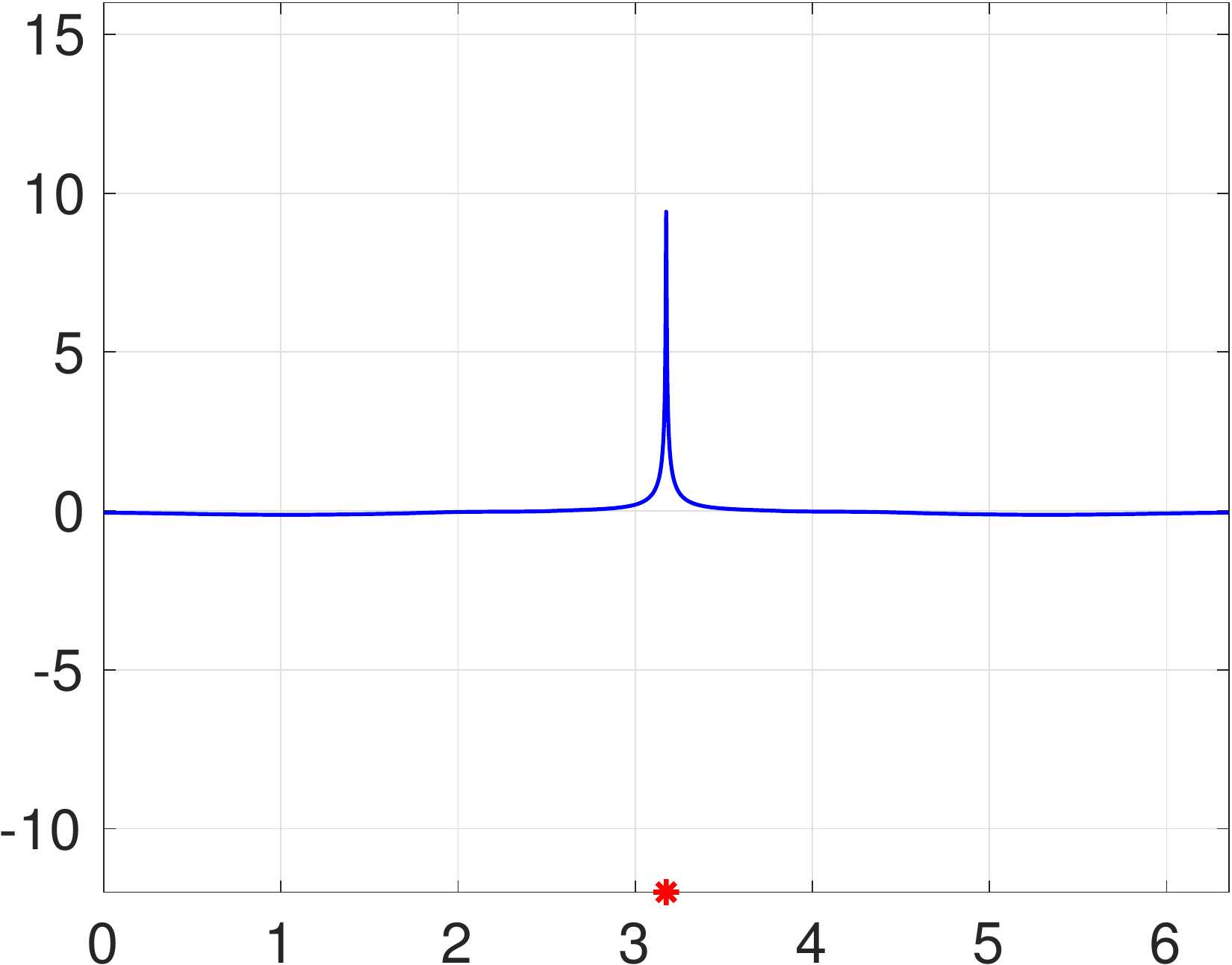}}
\subfigure[]{
\includegraphics[width=0.2\textwidth]{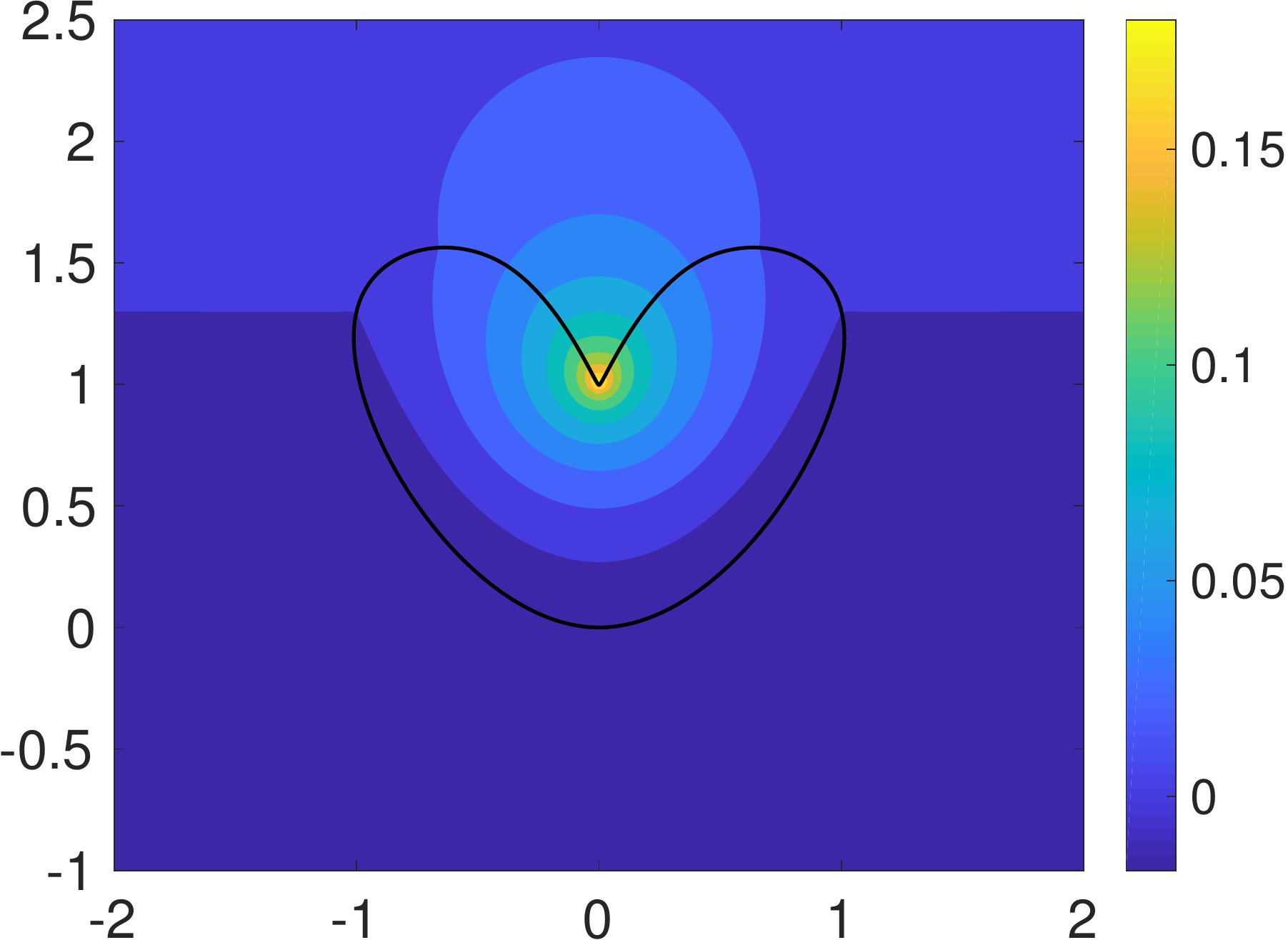}}
\subfigure[]{
\includegraphics[width=0.2\textwidth]{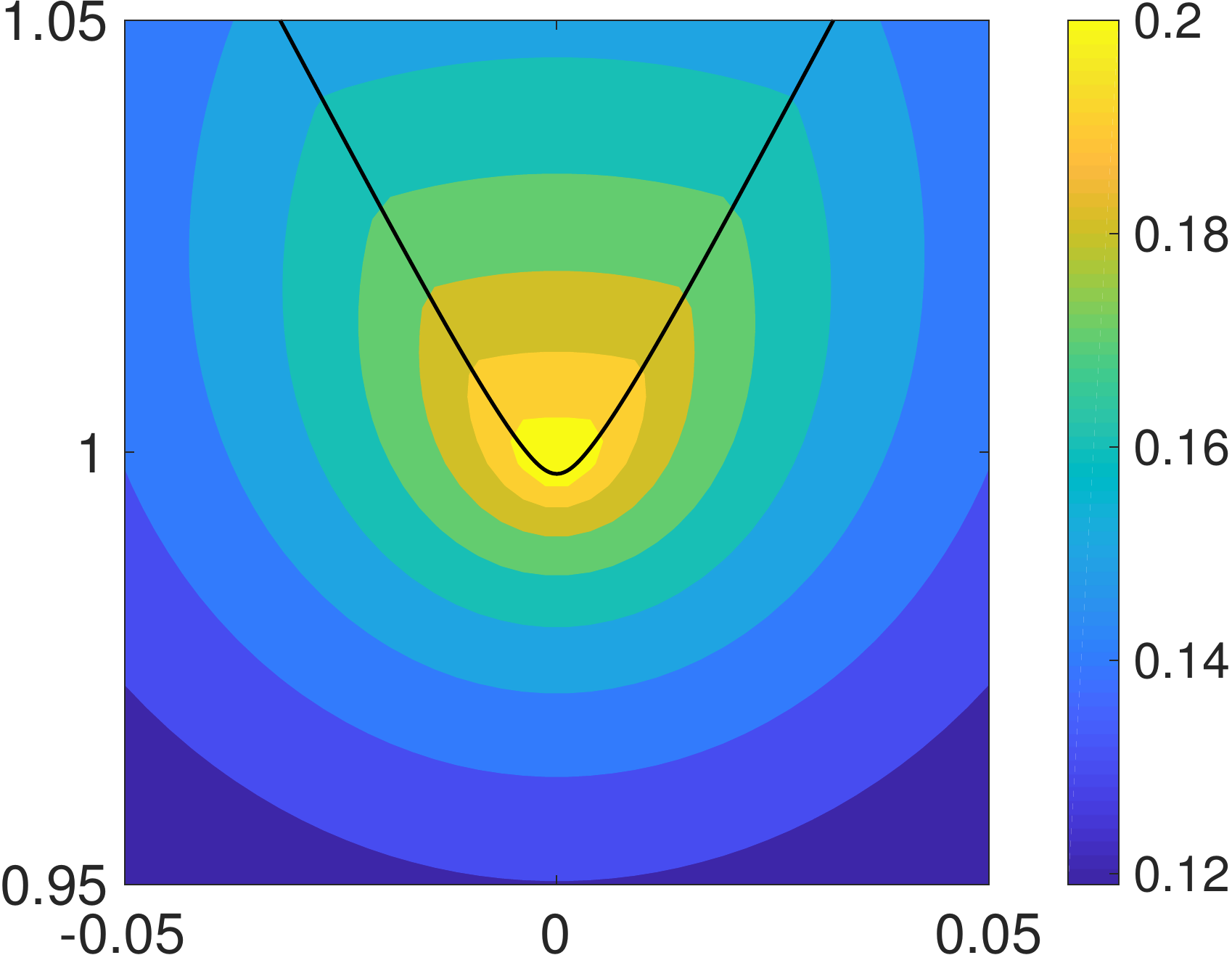}}\\
\subfigure[]{
\includegraphics[width=0.2\textwidth]{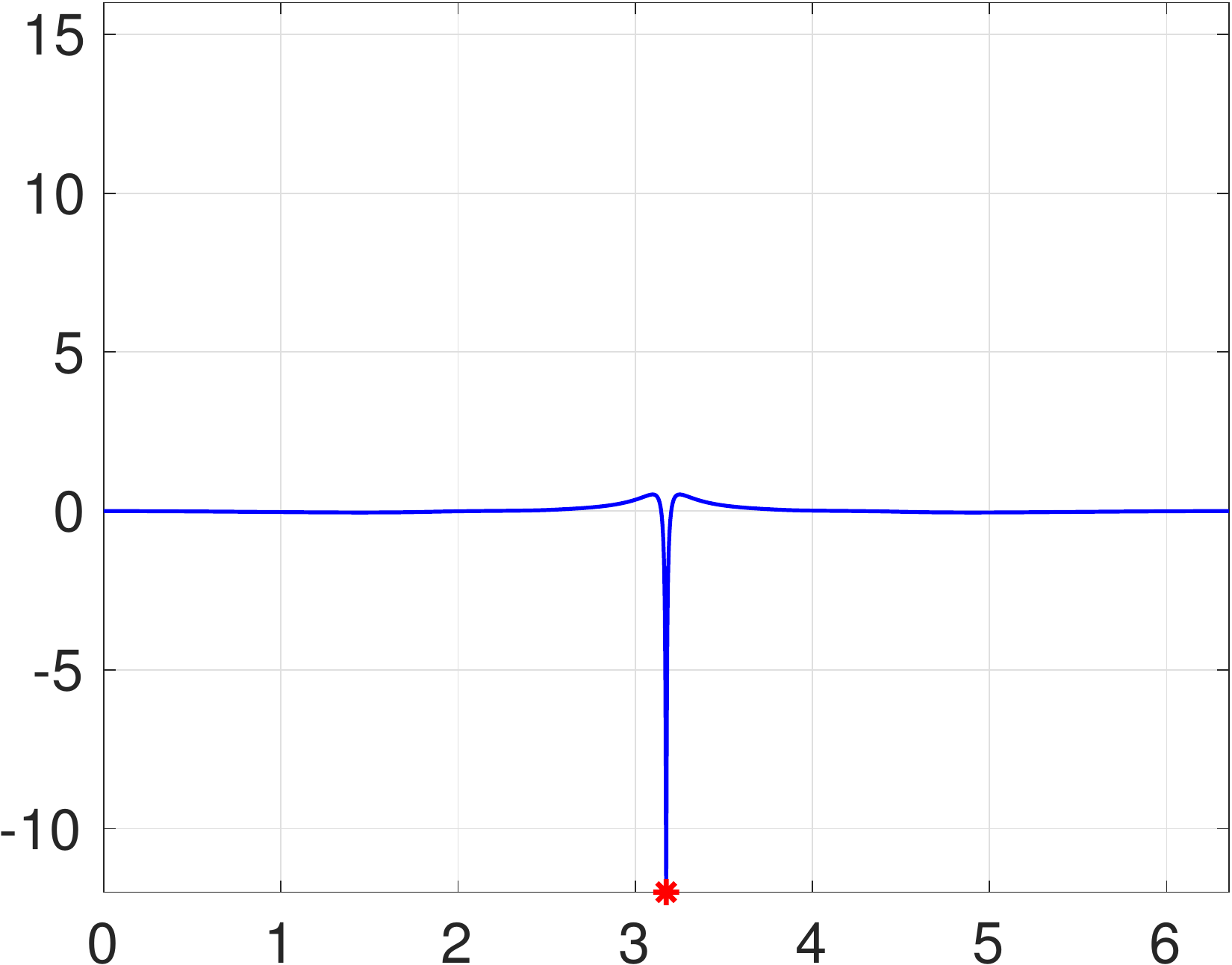}}
\subfigure[]{
\includegraphics[width=0.2\textwidth]{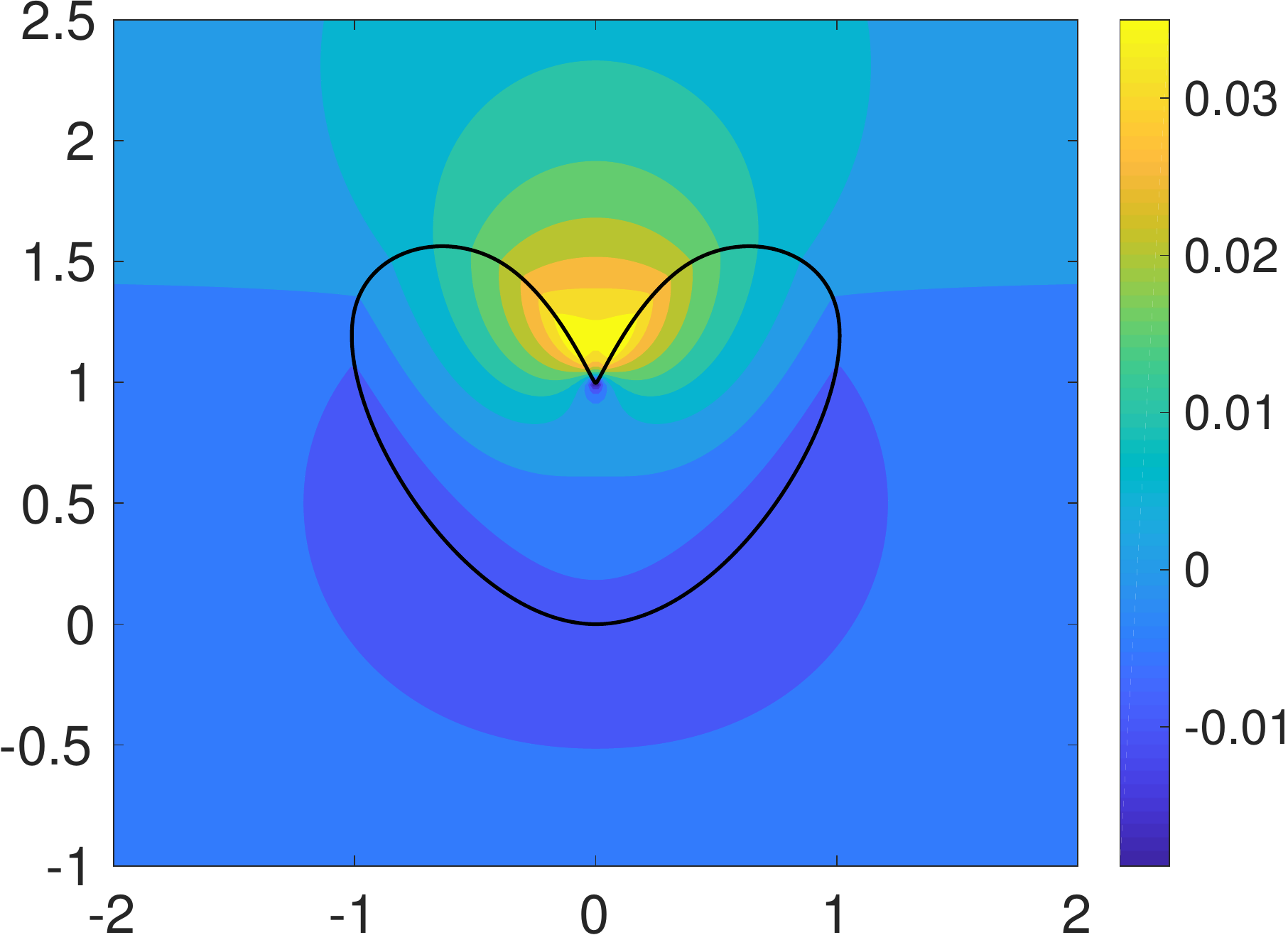}}
\subfigure[]{
\includegraphics[width=0.2\textwidth]{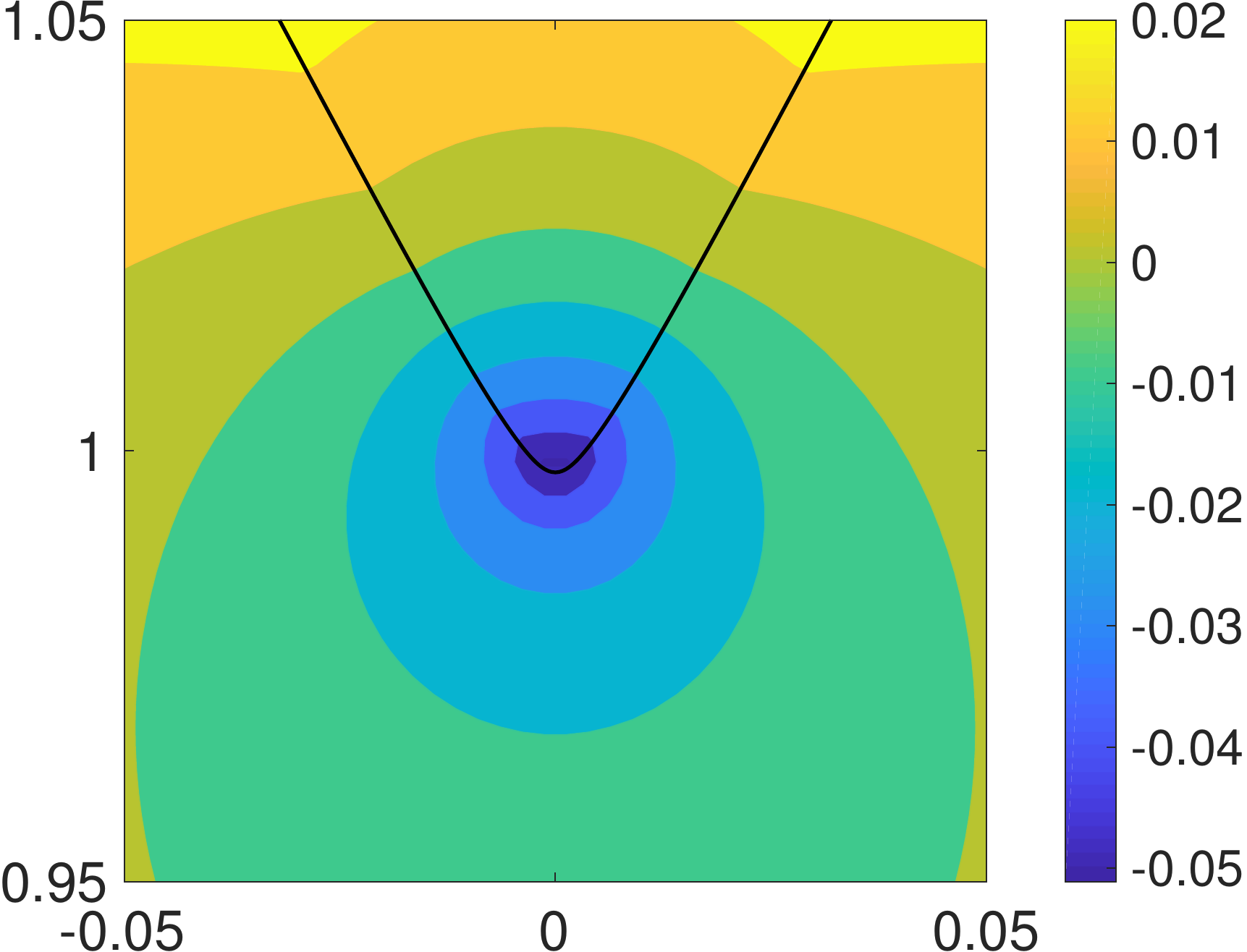}}\\
\caption{\label{fig23}  (a), (b), (c). The eigenfunction, its conormal derivative, and the corresponding
single-layer potential associated with $\lambda_2=-0.3310$; (d), (e), (f). The corresponding items
associated with $\lambda_4=-0.2142$.}
\end{figure}

Fig.~\ref{fig24} plots the eigenfunctions with respect to arc length for the eigenvalues $\lambda_1=0.3310$ and $\lambda_2=-0.3310$ with different maximum curvature $500$, $1000$ and $1500$.

\begin{figure}
\centering
\subfigure[]{
\includegraphics[width=0.2\textwidth]{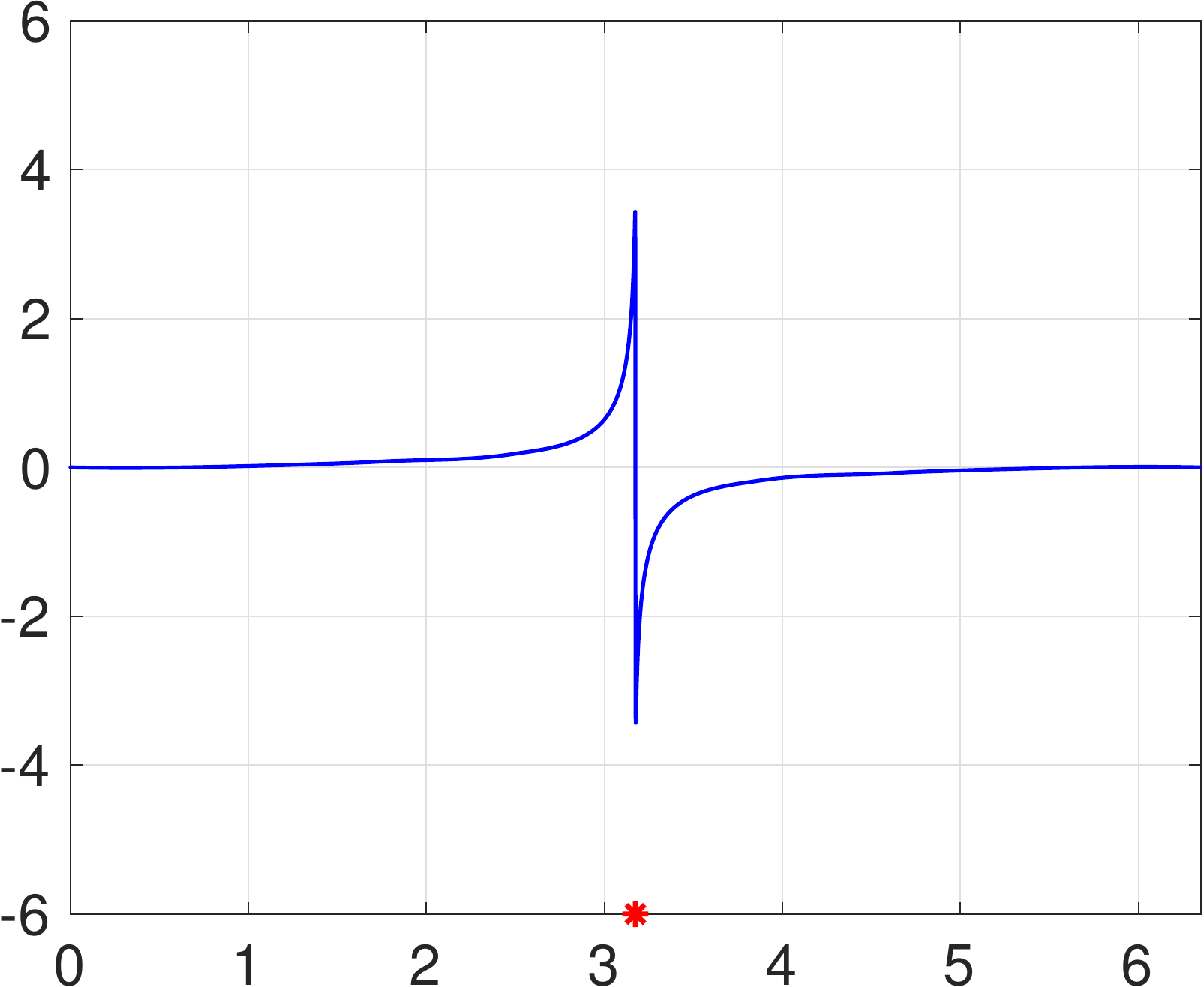}}
\subfigure[]{
\includegraphics[width=0.2\textwidth]{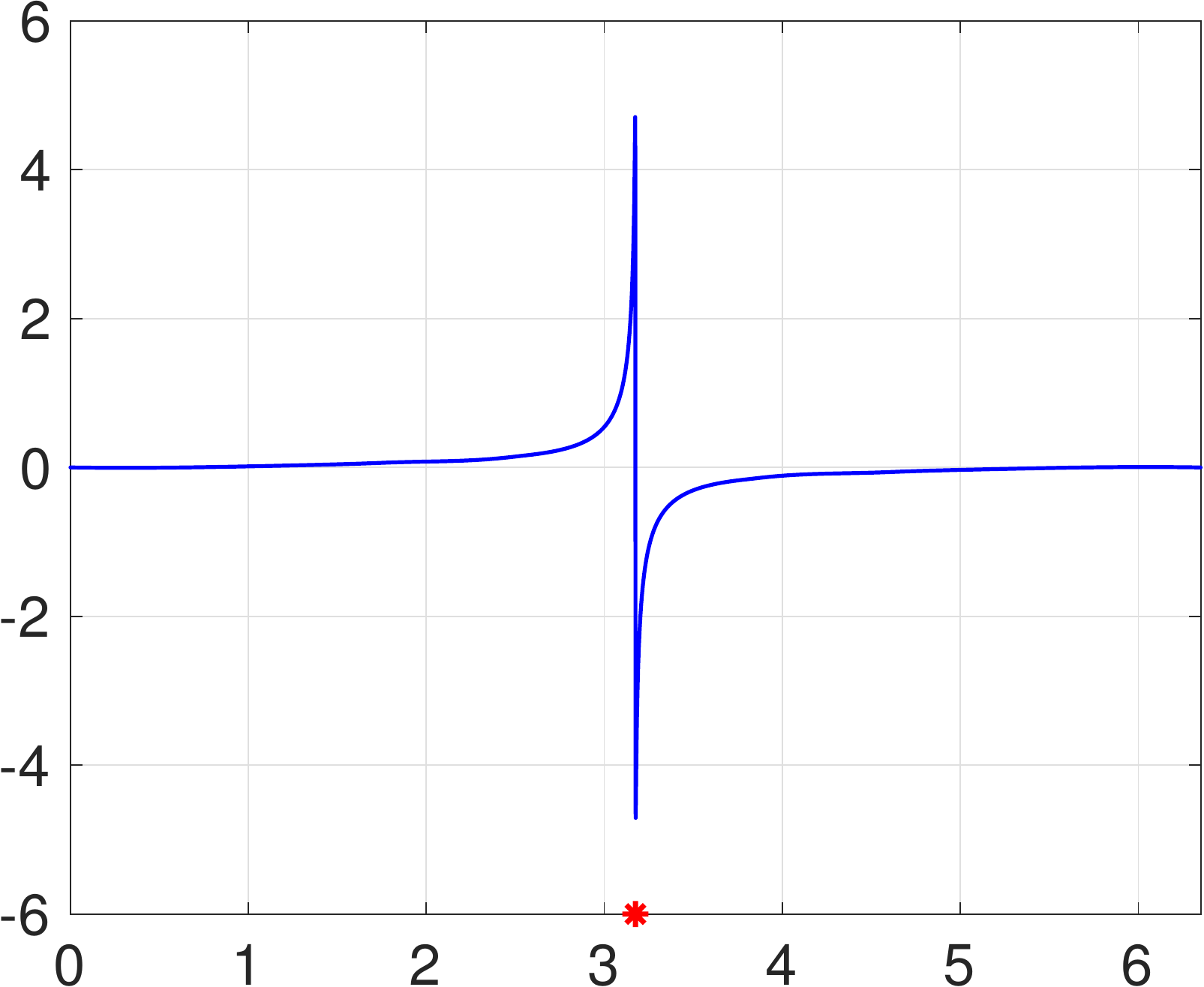}}
\subfigure[]{
\includegraphics[width=0.2\textwidth]{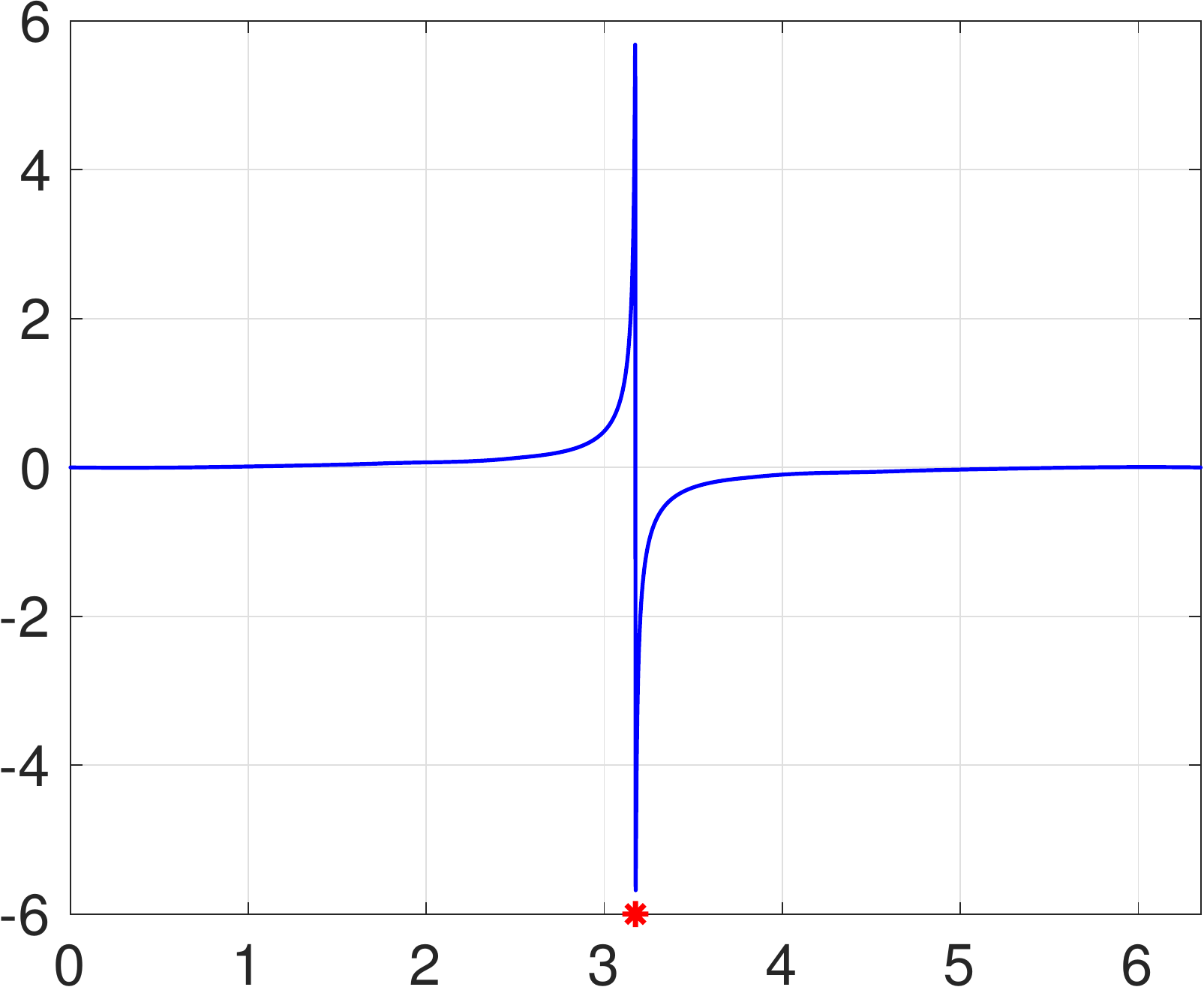}}\\
\subfigure[]{
\includegraphics[width=0.2\textwidth]{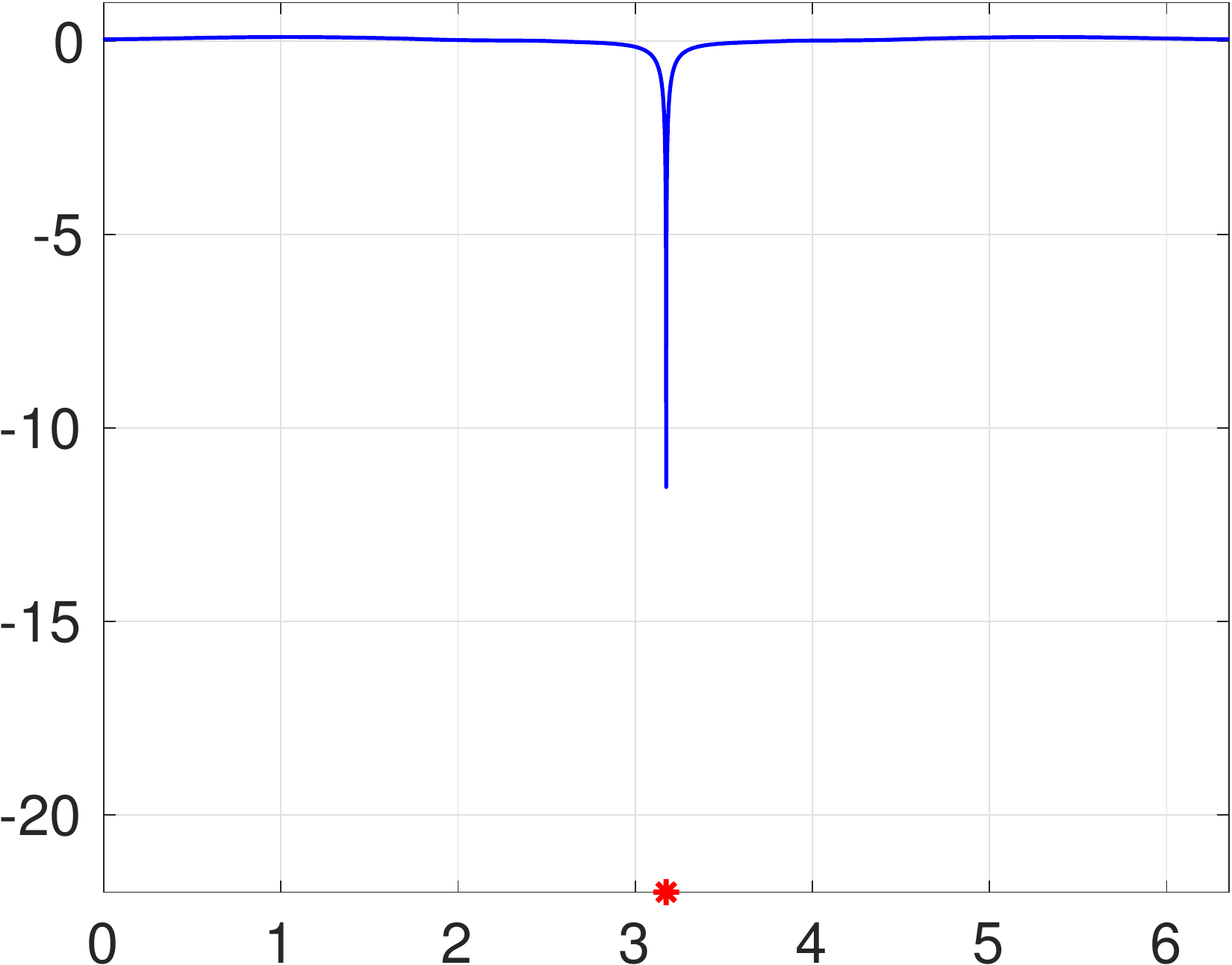}}
\subfigure[]{
\includegraphics[width=0.2\textwidth]{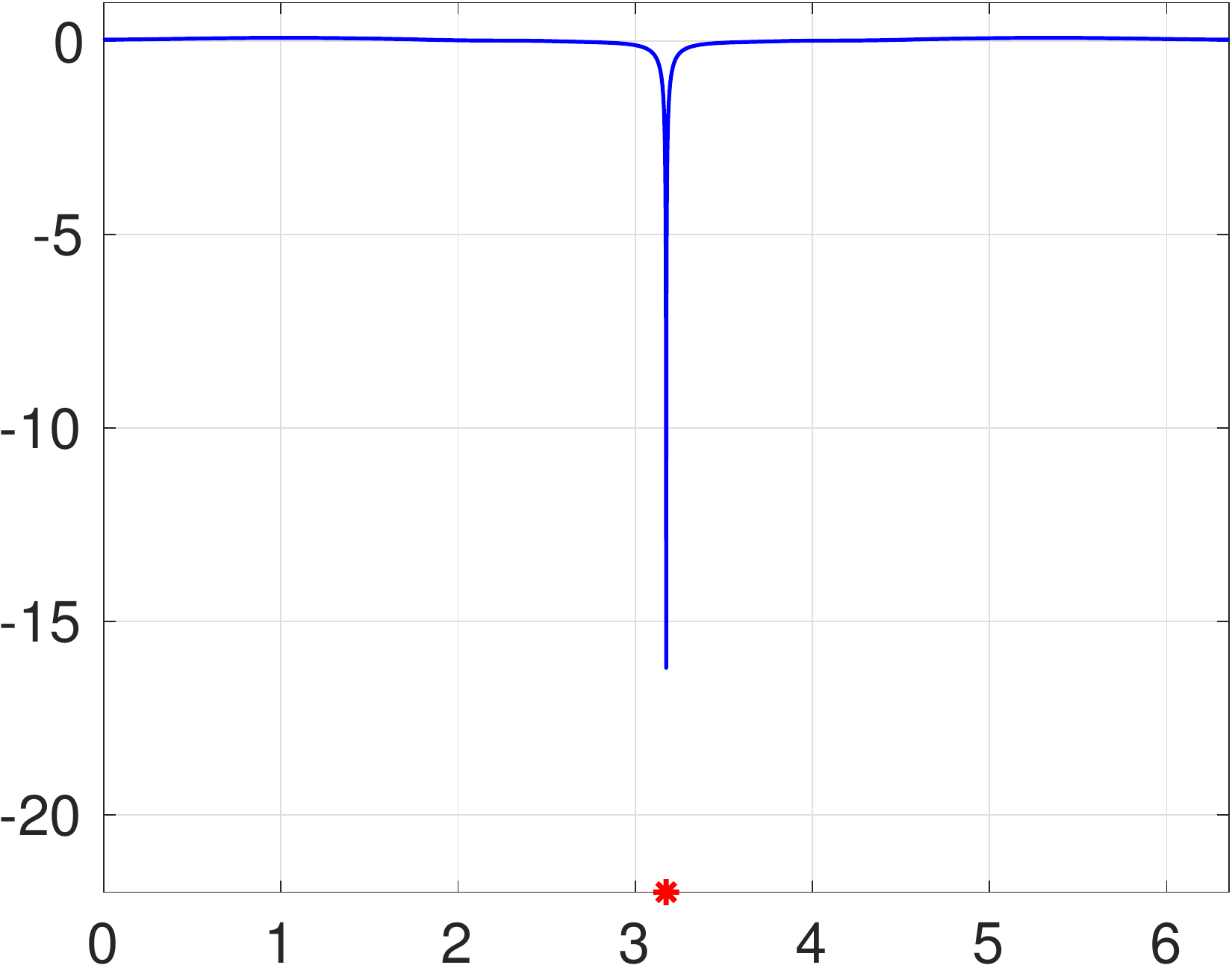}}
\subfigure[]{
\includegraphics[width=0.2\textwidth]{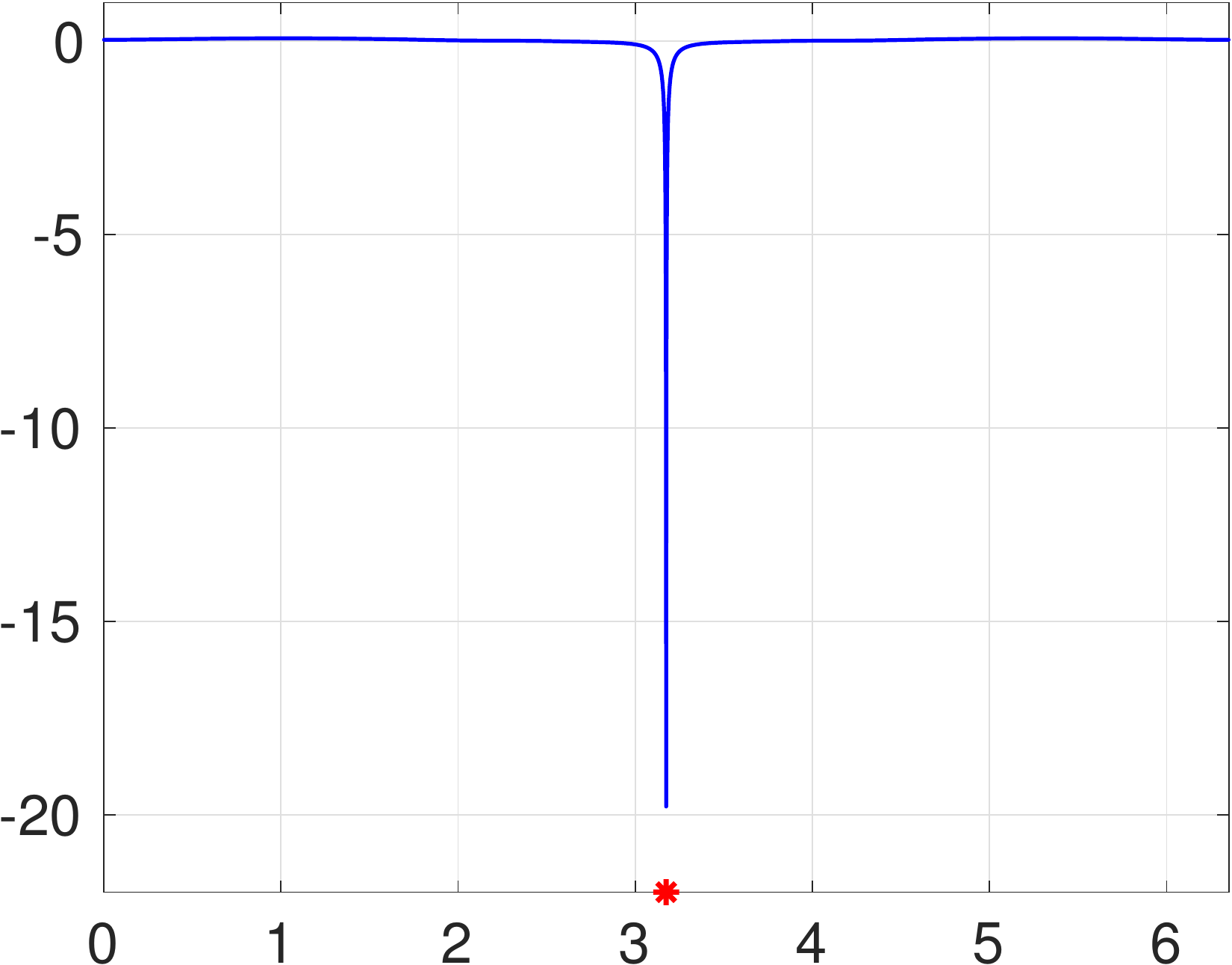}}\\
\caption{\label{fig24} (a), (b), (c).  Plotting the eigenfunctions for the positive eigenvalues $\lambda_1=0.3310$ with different maximum curvature $500$, $1000$ and $1500$. 
(d), (e), (f). The corresponding items for the negative eigenvalue $\lambda_2=-0.3310$.}
\end{figure}

%
%
\begin{figure}
\includegraphics[width=0.2\textwidth] {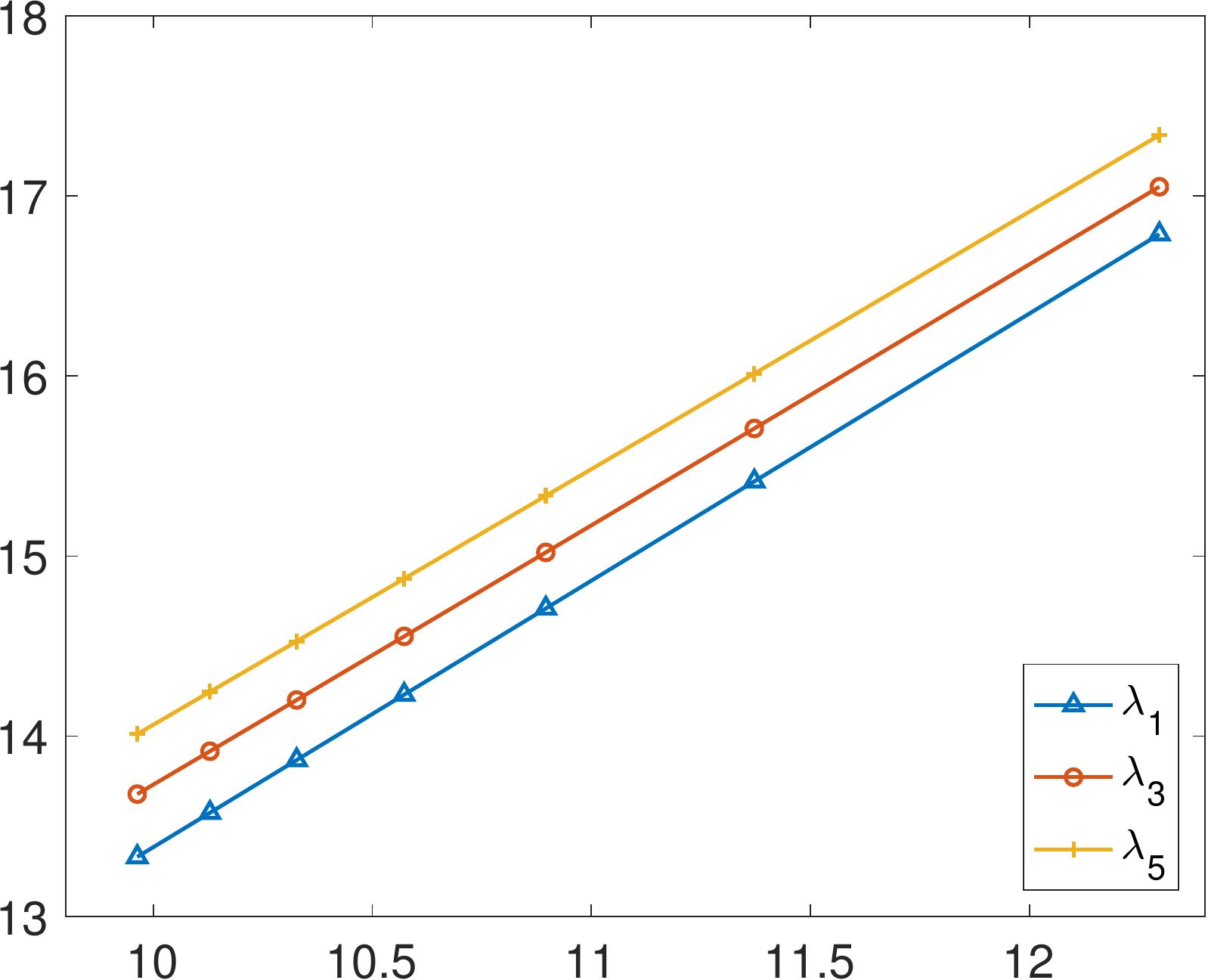}
\hspace{0.8cm}
\includegraphics[width=0.2\textwidth] {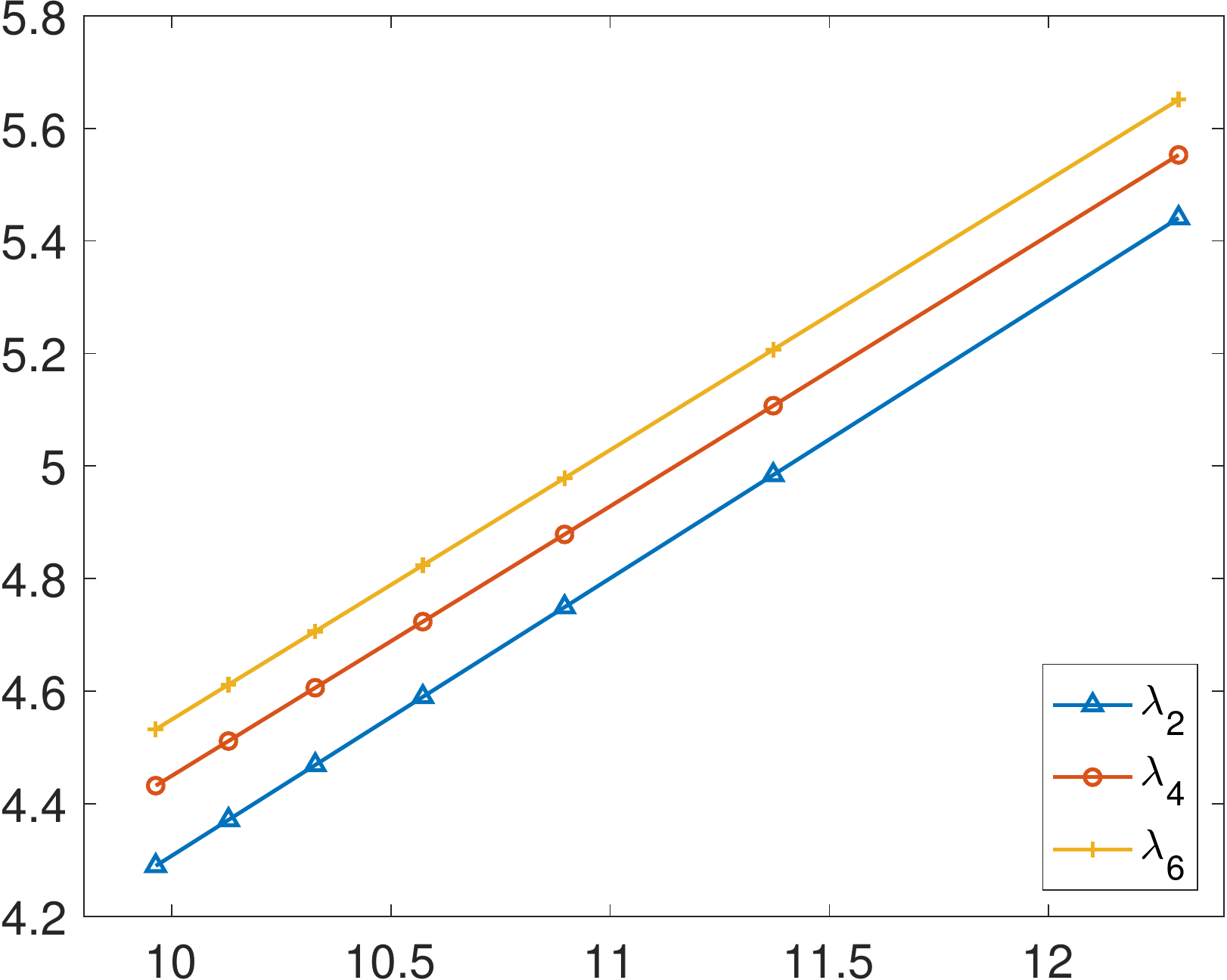}
\caption{\label{fig25} The logarithm of the absolute value of the conormal derivative of the eigenfunction at the high-curvature point for the positive eigenvalues $\lambda_1$, $\lambda_3$ and $\lambda_5$ and the logarithm of the absolute value of the eigenfunction at the high-curvature point is the largest for the negative eigenvalues $\lambda_2$, $\lambda_4$ and $\lambda_6$ with respect to different curvature.}
\end{figure}

We next investigate the blow-up rate of the NP eigenfunction or its conormal derivative with respect to the curvature. Therefore we plot the logarithm of the derivative of the absolute value of the eigenfunctions at the high-curvature point for the positive eigenvalues $\lambda_1$, $\lambda_3$ and $\lambda_5$, and the logarithm of the absolute value of the eigenfunctions at the high-curvature point for the negative eigenvalues $\lambda_2$, $\lambda_4$ and $\lambda_6$ with respect to different curvature in Fig.~\ref{fig25}. It turns out that blow-up rate also follows the 
rule in \eqref{eq:growthrate}. By regression, we numerically determine the corresponding parameters for those different eigenvalues 
in \eqref{eq:egg3_2}, and they are listed in Table~\ref{tab5}. 

\begin{table}[t]
  \centering
  \subtable[]{
    \centering
    \begin{tabular}{cccc}
      \toprule
      & $\lambda_1$ & $\lambda_3$ & $\lambda_5$ \\
      \midrule
      $p$ & 1.4814 & 1.4454 & 1.4253 \\[5pt]
      $\ln(a)$ & -1.4367 & -0.7261 & -0.1924 \\
      \bottomrule
    \end{tabular}}
  \hspace{1.5cm}
\subtable[]{
    \centering
    \begin{tabular}{cccc}
      \toprule
      & $\lambda_2$ & $\lambda_4$ & $\lambda_6$ \\
      \midrule
      $p$ & 0.4934 & 0.4803 & 0.4795 \\[5pt]
      $\ln(a)$ & -0.6267 & -0.3545 & -0.2455 \\
      \bottomrule
    \end{tabular}
    }
  \caption{The coefficients of the regression; (a) $\lambda_j, j=1, 3, 5$; (b) $\lambda_j, j=2,4,6$.}
  \label{tab5}
\end{table}

%


\subsection{A concave 2-symmetric domain}

In this subsection, we consider a concave 2-symmetric domain as shown in Fig.~\ref{fig26}, which possesses two high-curvature points that are
denoted by $x_*$ and $x_o$. The largest curvature is
\begin{equation}\label{eq:egg2}
 \kappa_{\max}=\kappa_{x_*}=500.
\end{equation}
\begin{figure}[t]
\includegraphics[width=2.5cm] {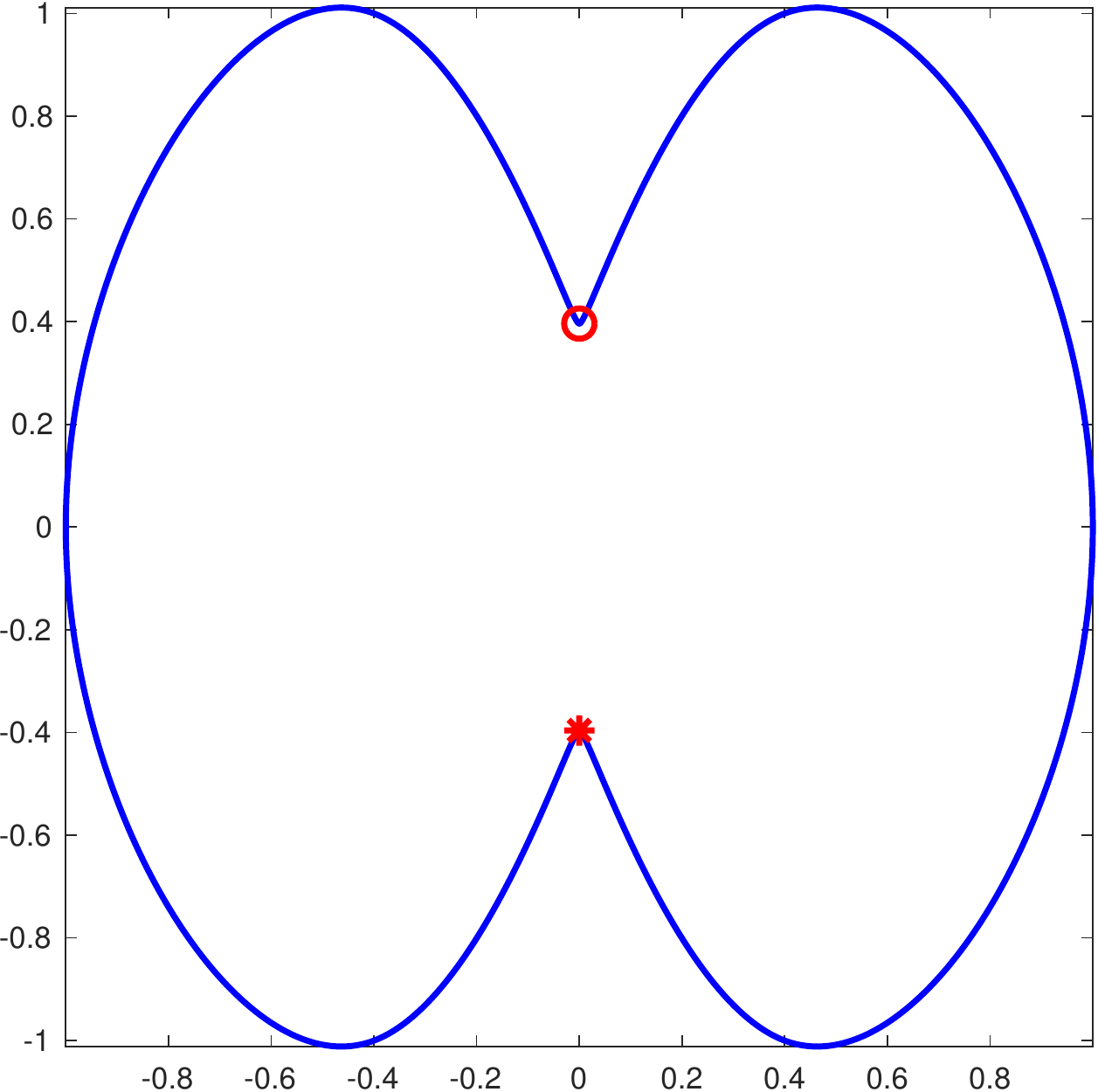}
\caption{\label{fig26} A concave 2-symmetric domain.}
\end{figure}
The first five largest NP eigenvalues (in terms of the absolute value) are
\begin{equation}\label{eq:egg3}
\begin{split}
& \lambda_0=0.5, \quad  \lambda_1=0.3676,\quad \lambda_2=-0.3676,\quad\lambda_3=0.3303,\\
& \quad \lambda_4=-0.3303, \quad \lambda_5=0.2347 \quad \lambda_6=-0.2347.
\end{split}
\end{equation}

%

Fig.~\ref{fig27} plots the eigenfunctions, their conormal derivatives and the associated single-layer potentials, respectively, associated with the eigenvalues
 $\lambda_1=0.3676$ and $\lambda_3=0.3303$. The numerical results clearly support our earlier assertion about the NP eigenfunctions associated with simple
 positive eigenvalues. 
 
\begin{figure}[t]
\centering
\subfigure[]{
\includegraphics[width=0.2\textwidth]{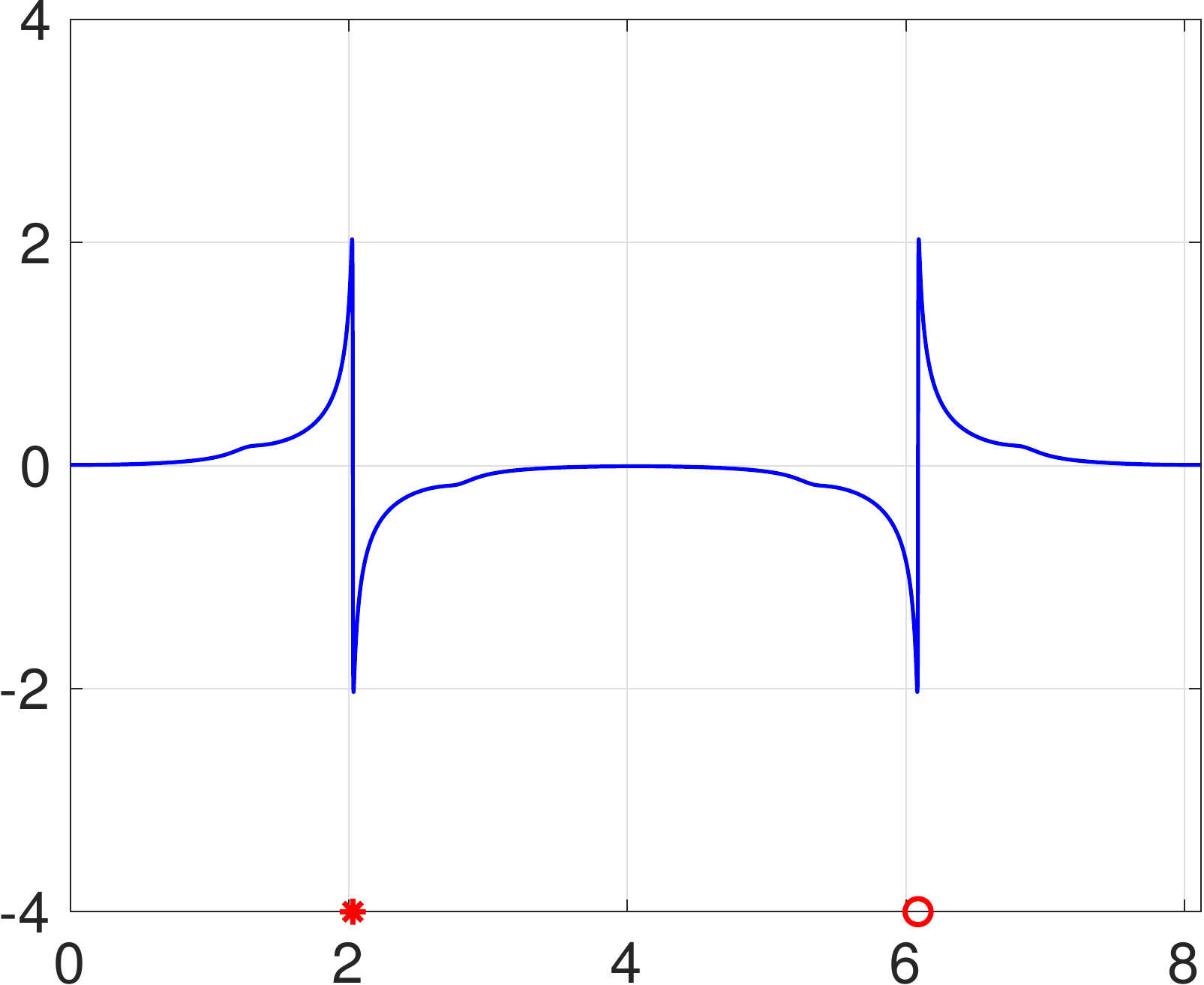}}
\subfigure[]{
\includegraphics[width=0.2\textwidth]{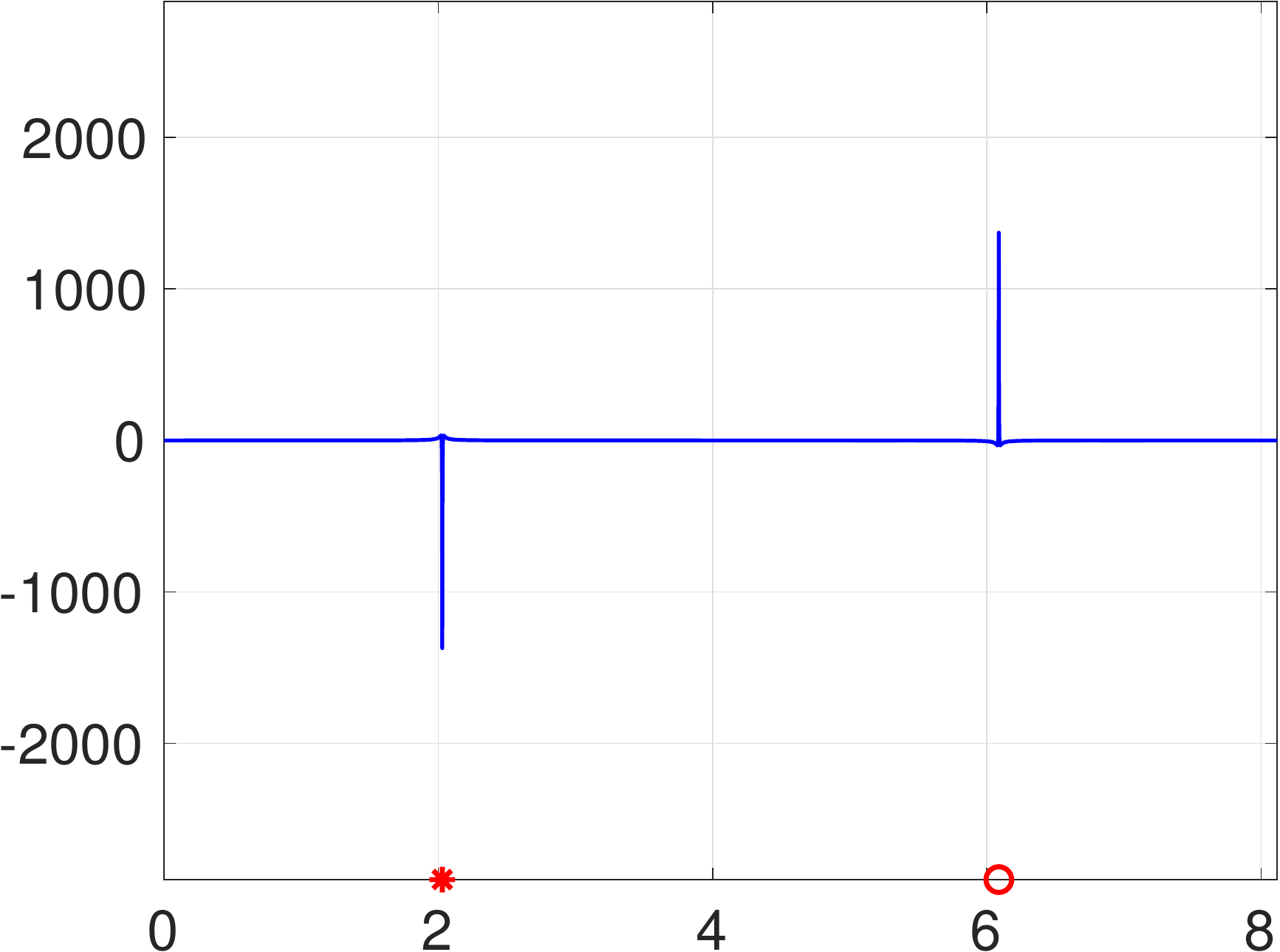}}
\subfigure[]{
\includegraphics[width=0.2\textwidth]{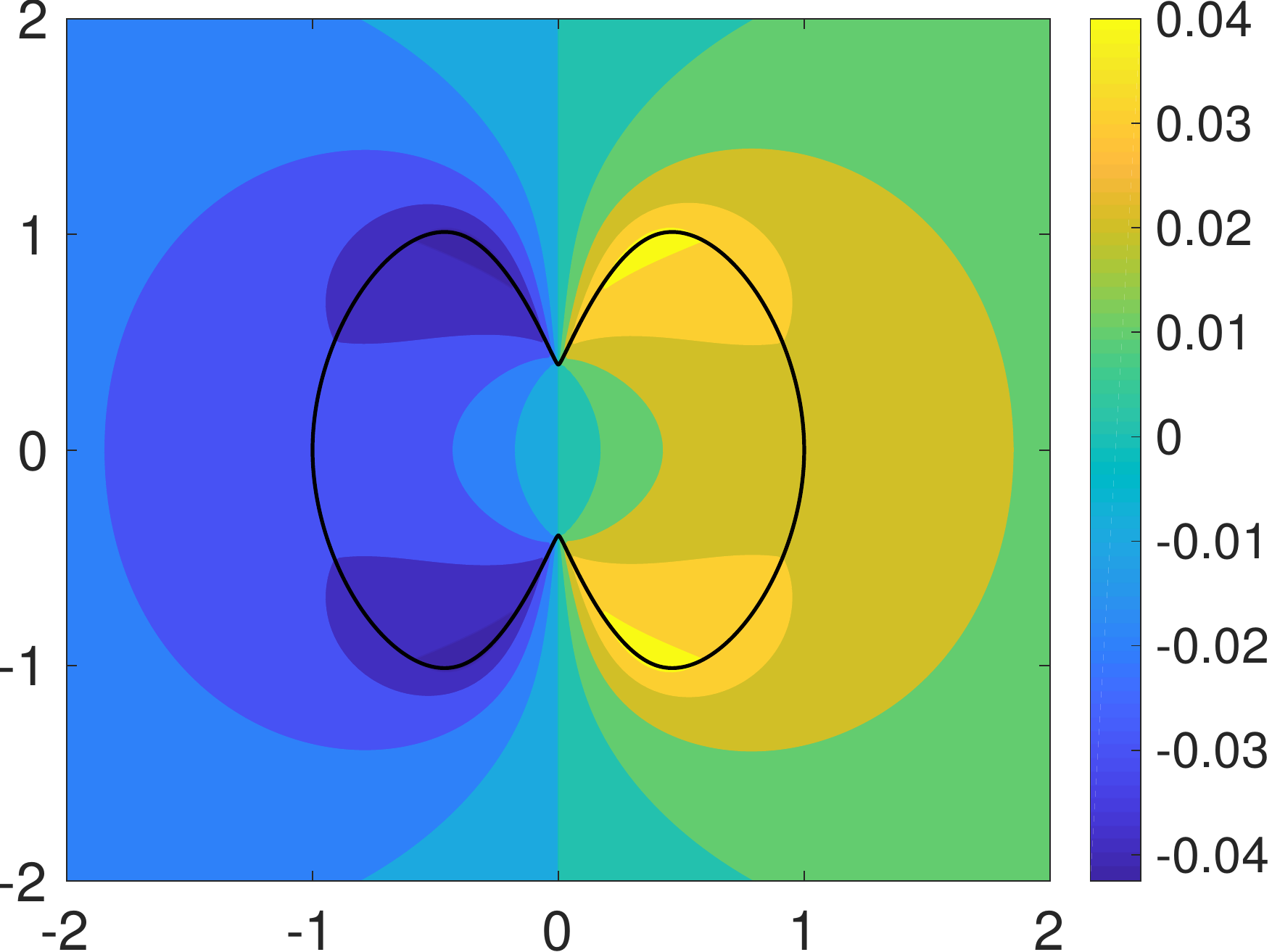}}
\subfigure[]{
\includegraphics[width=0.2\textwidth]{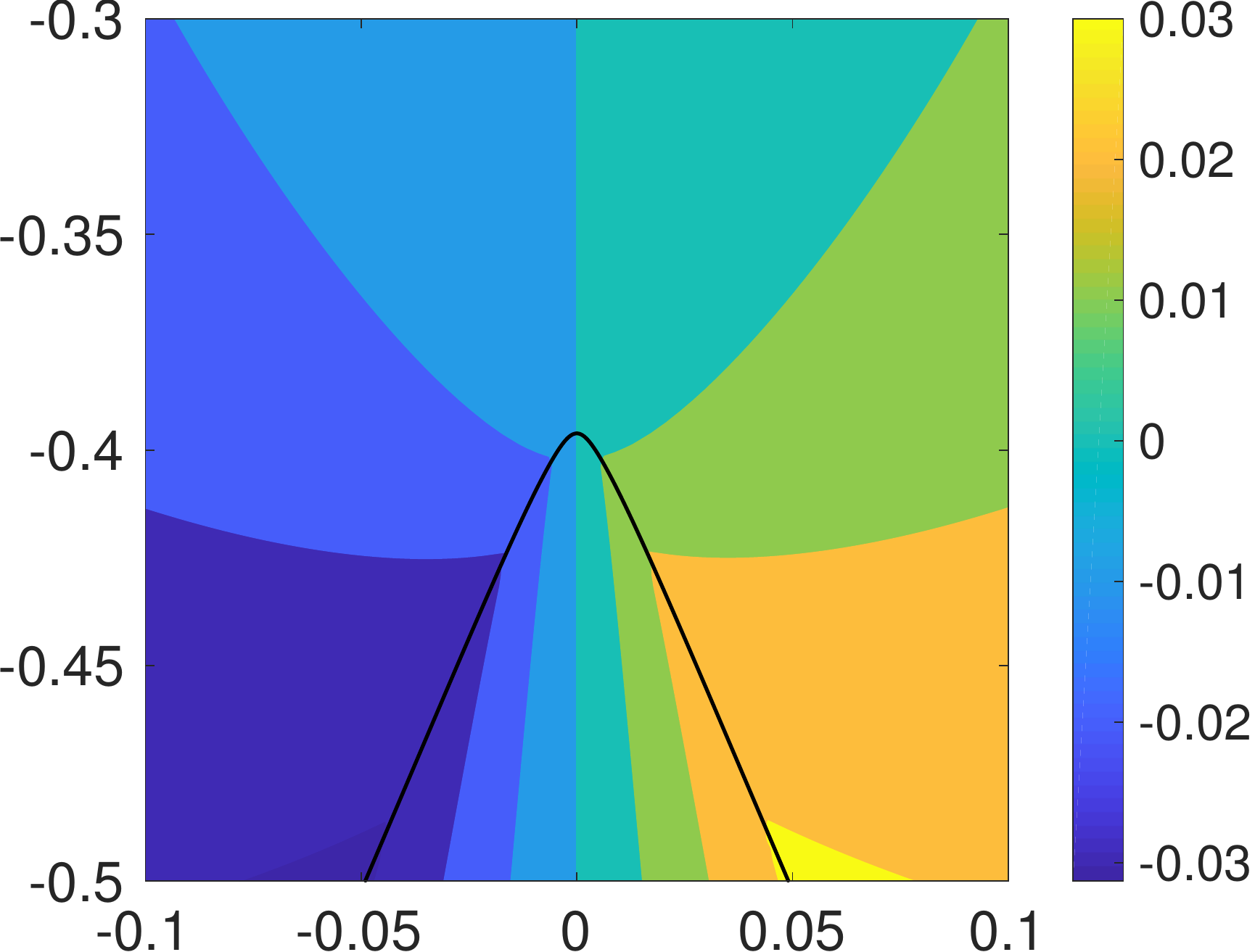}}\\
\subfigure[]{
\includegraphics[width=0.2\textwidth]{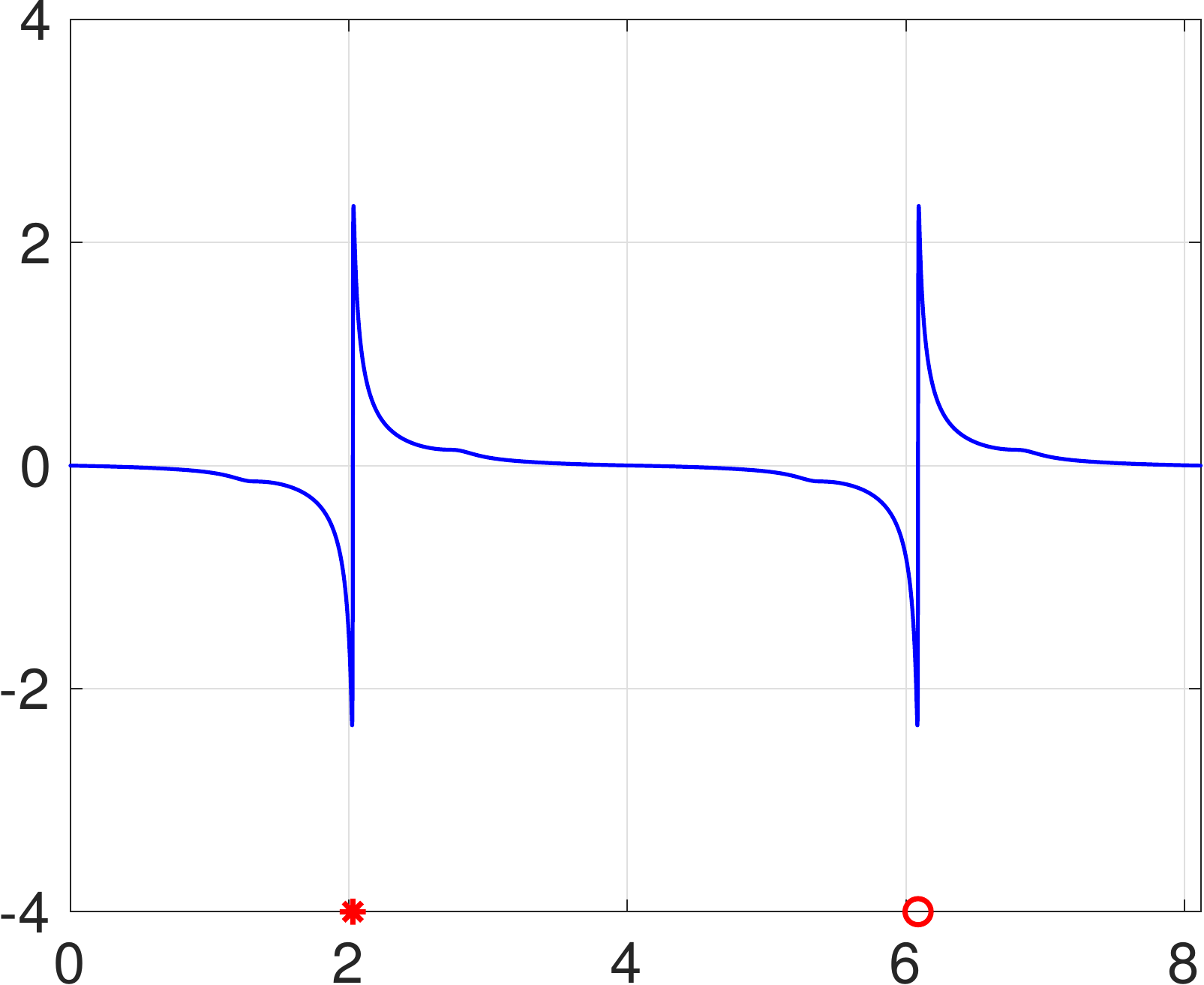}}
\subfigure[]{
\includegraphics[width=0.2\textwidth]{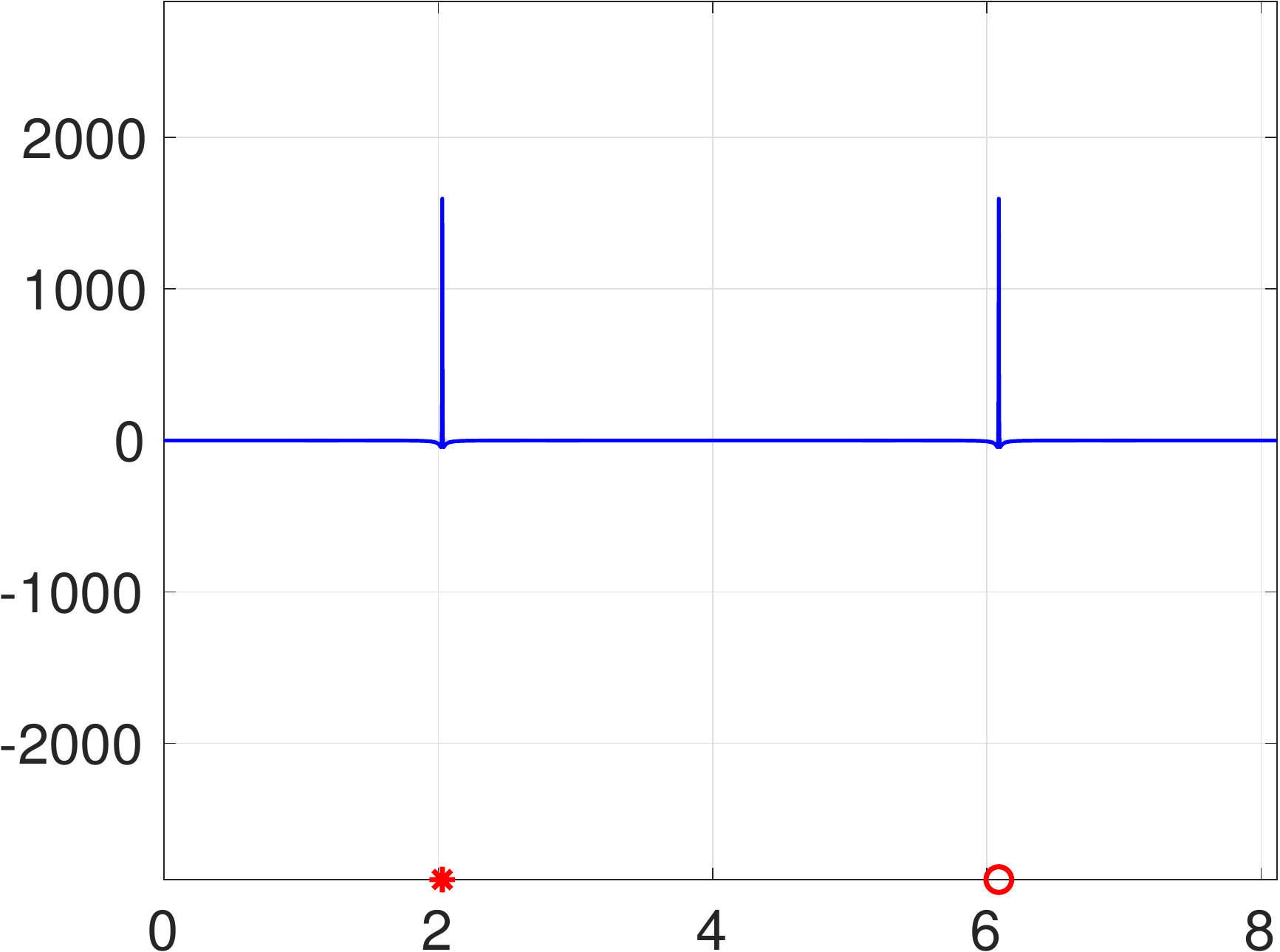}}
\subfigure[]{
\includegraphics[width=0.2\textwidth]{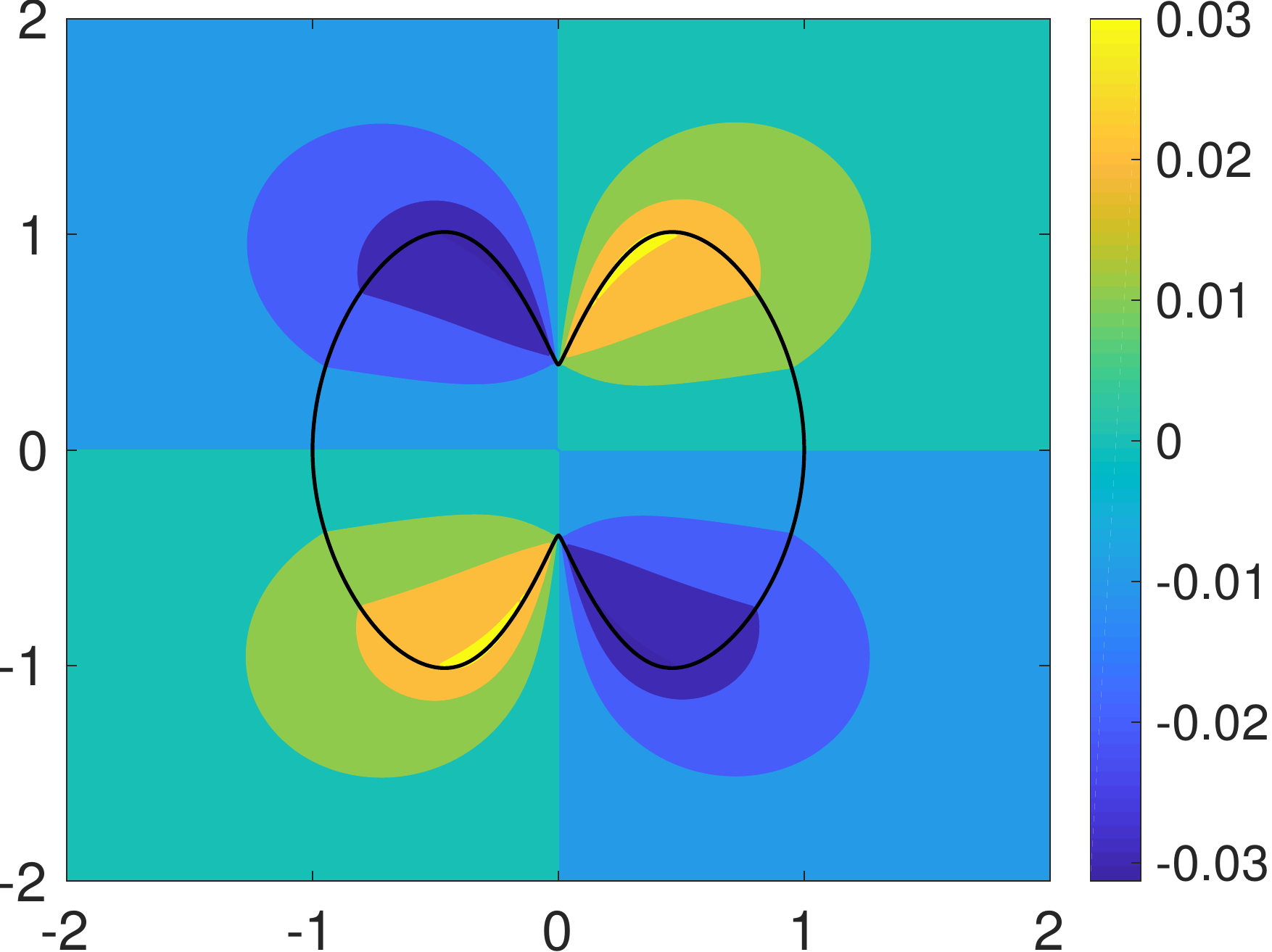}}
\subfigure[]{
\includegraphics[width=0.2\textwidth]{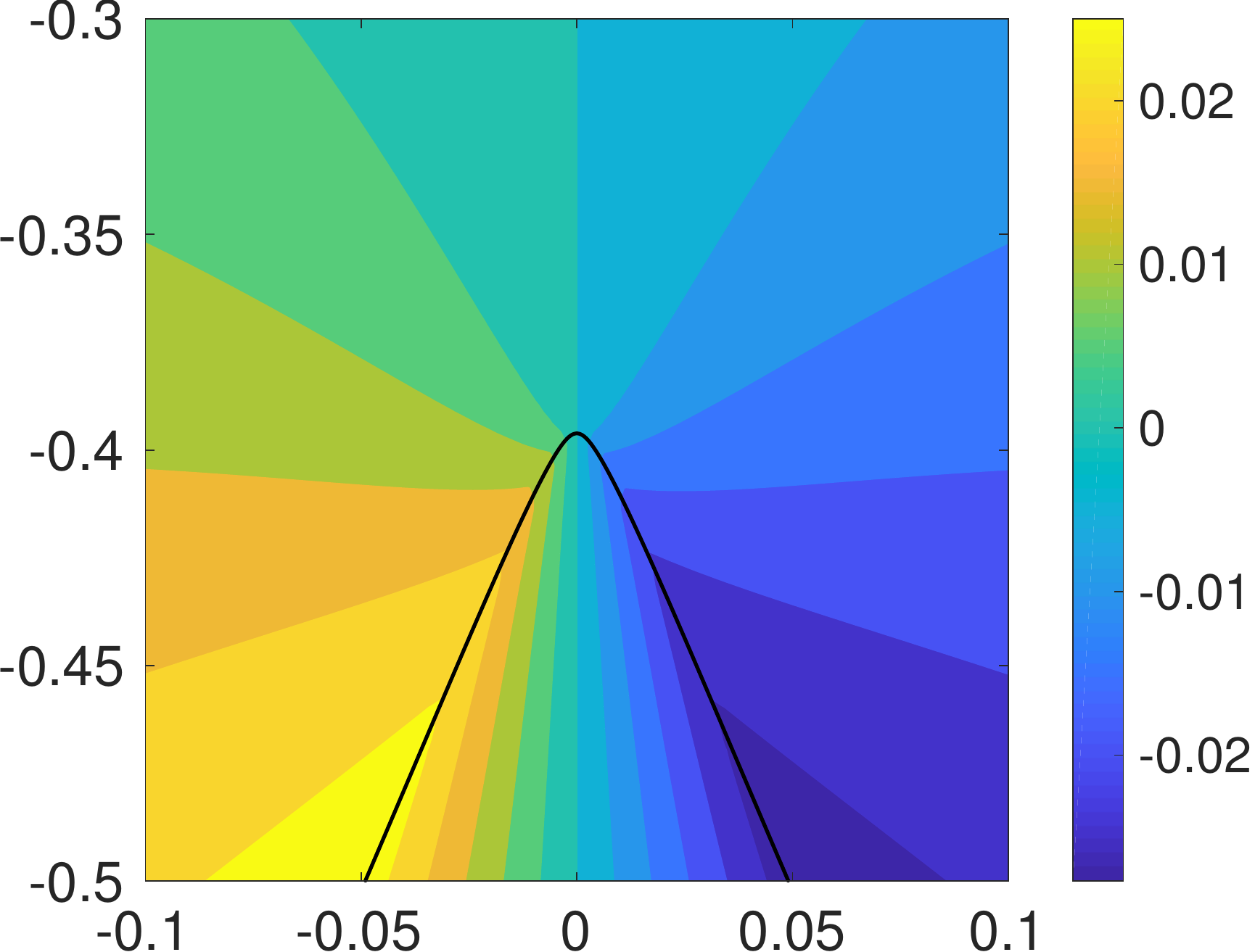}}\\
\caption{\label{fig27} (a), (b), (c), (d). The eigenfunction, its conormal derivative, and the corresponding
single-layer potential associated with $\lambda_1=0.3676$; (e), (f), (g), (h). The corresponding items
associated with $\lambda_3=0.3303$.}
\end{figure}

Fig.~\ref{fig28} plots the eigenfunctions, their conormal derivatives and the associated single-layer potentials, respectively, associated with the eigenvalues
$\lambda_2=-0.3676$ and $\lambda_4=-0.3303$. The numerical results clearly support our earlier assertion about the NP eigenfunctions associated with simple
 negative eigenvalues. 


\begin{figure}
\centering
\subfigure[]{
\includegraphics[width=0.2\textwidth]{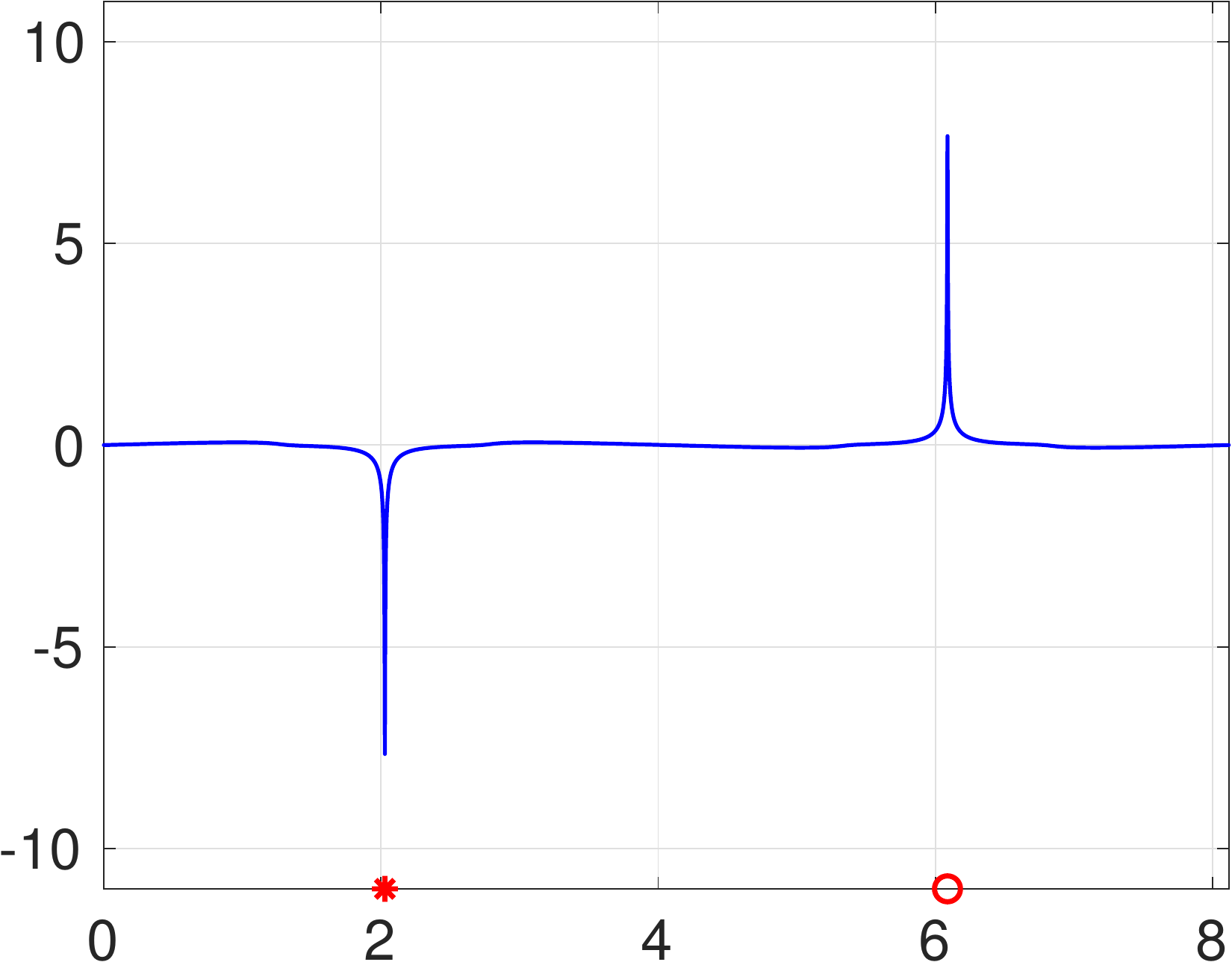}}
\subfigure[]{
\includegraphics[width=0.2\textwidth]{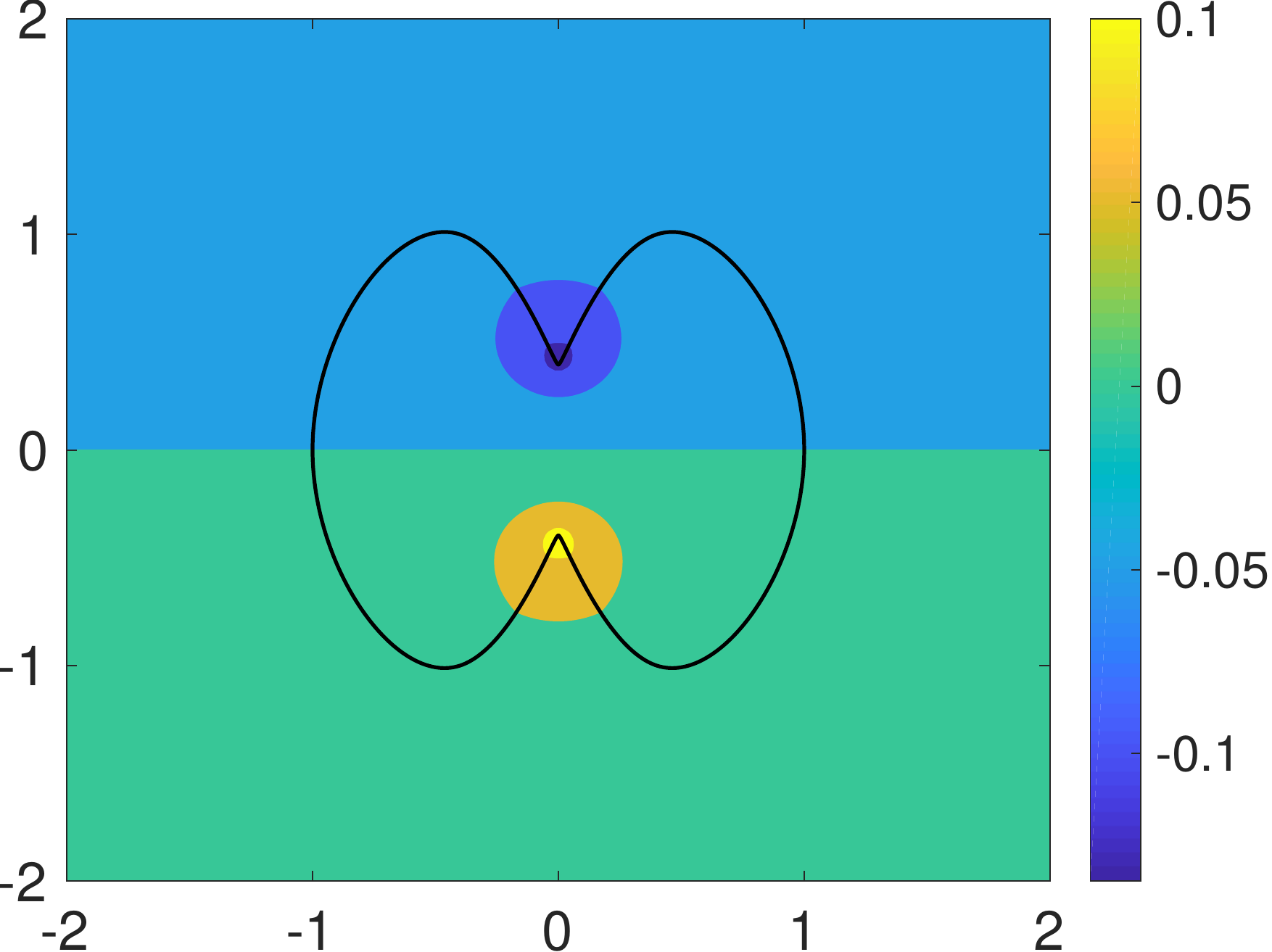}}
\subfigure[]{
\includegraphics[width=0.2\textwidth]{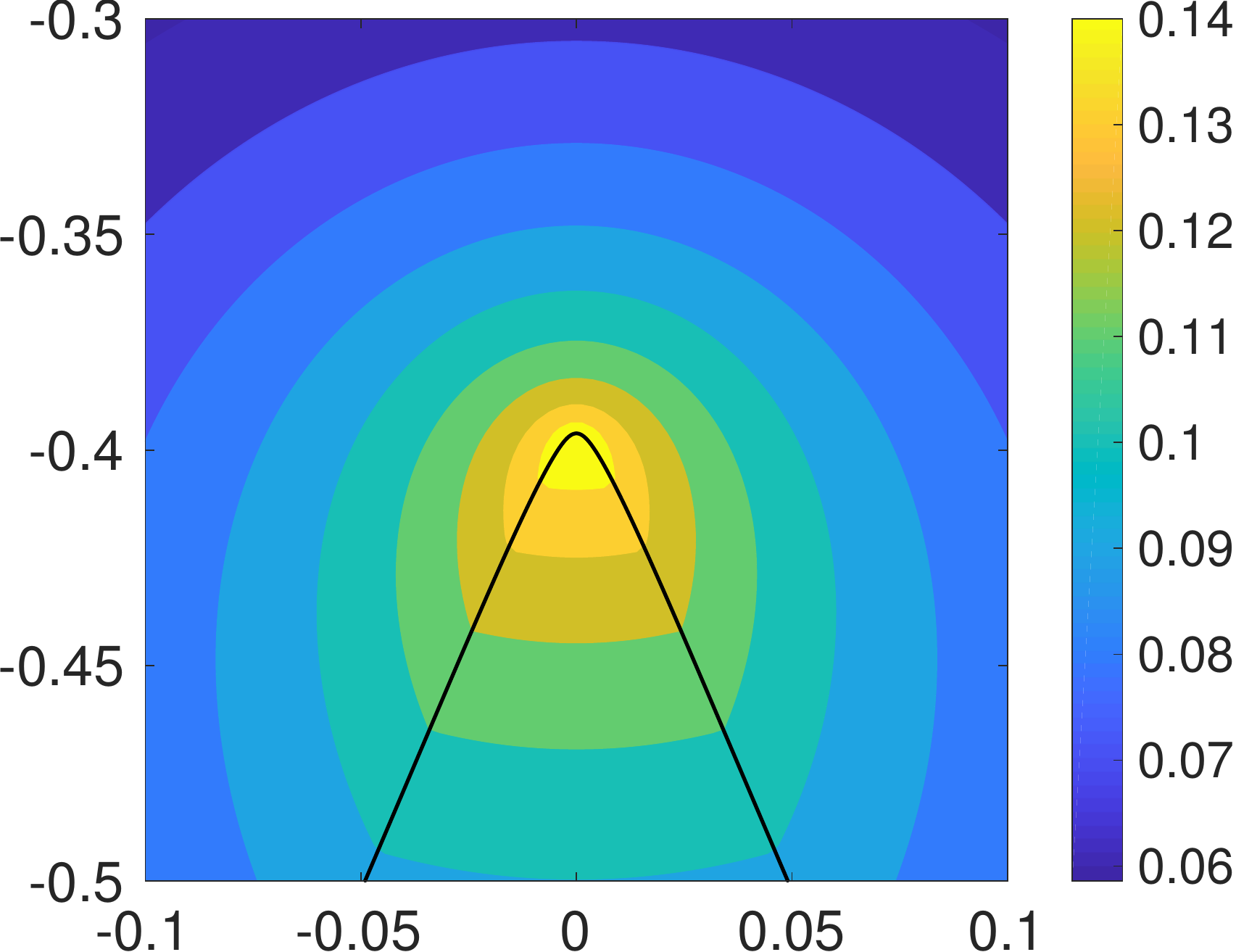}}\\
\subfigure[]{
\includegraphics[width=0.2\textwidth]{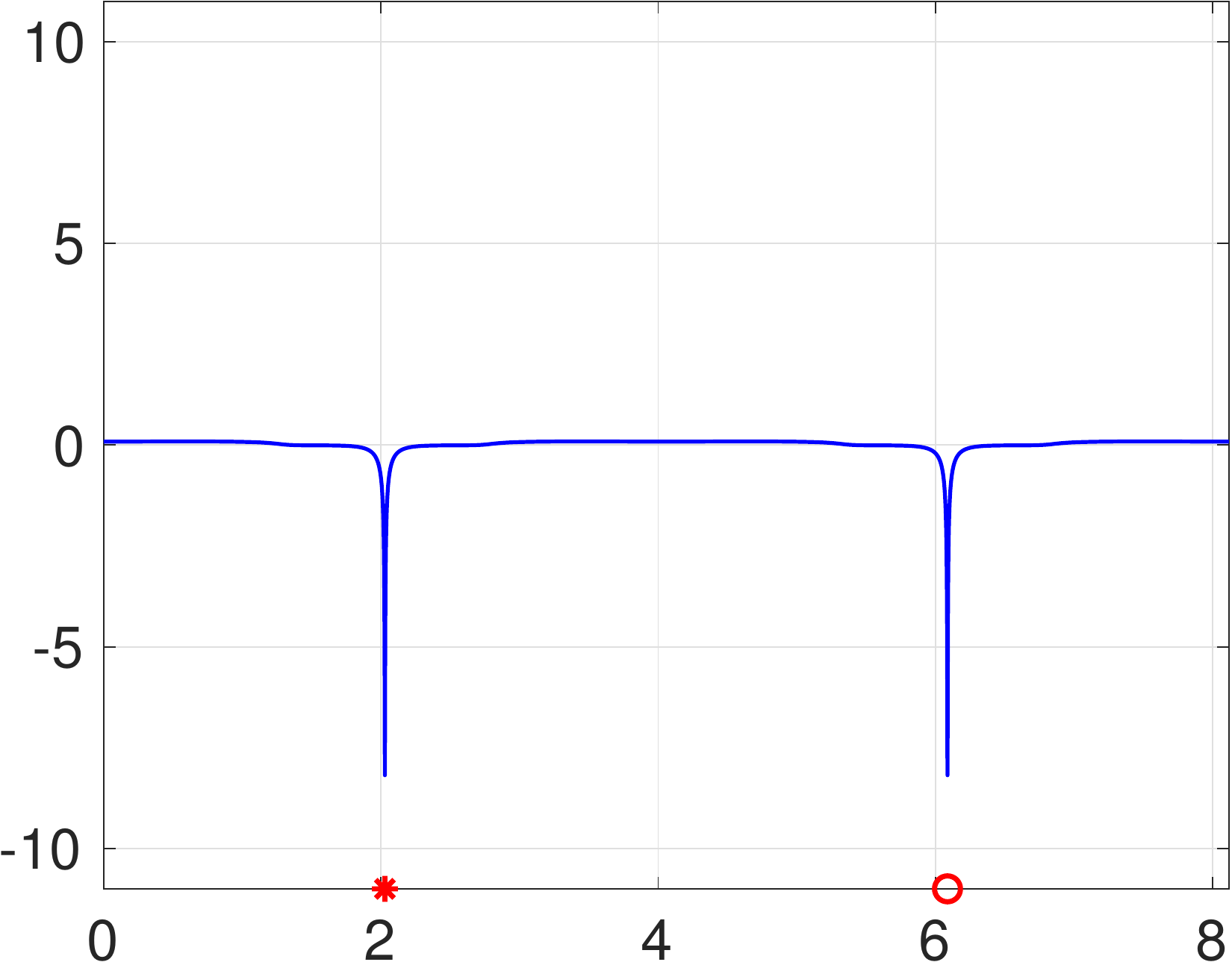}}
\subfigure[]{
\includegraphics[width=0.2\textwidth]{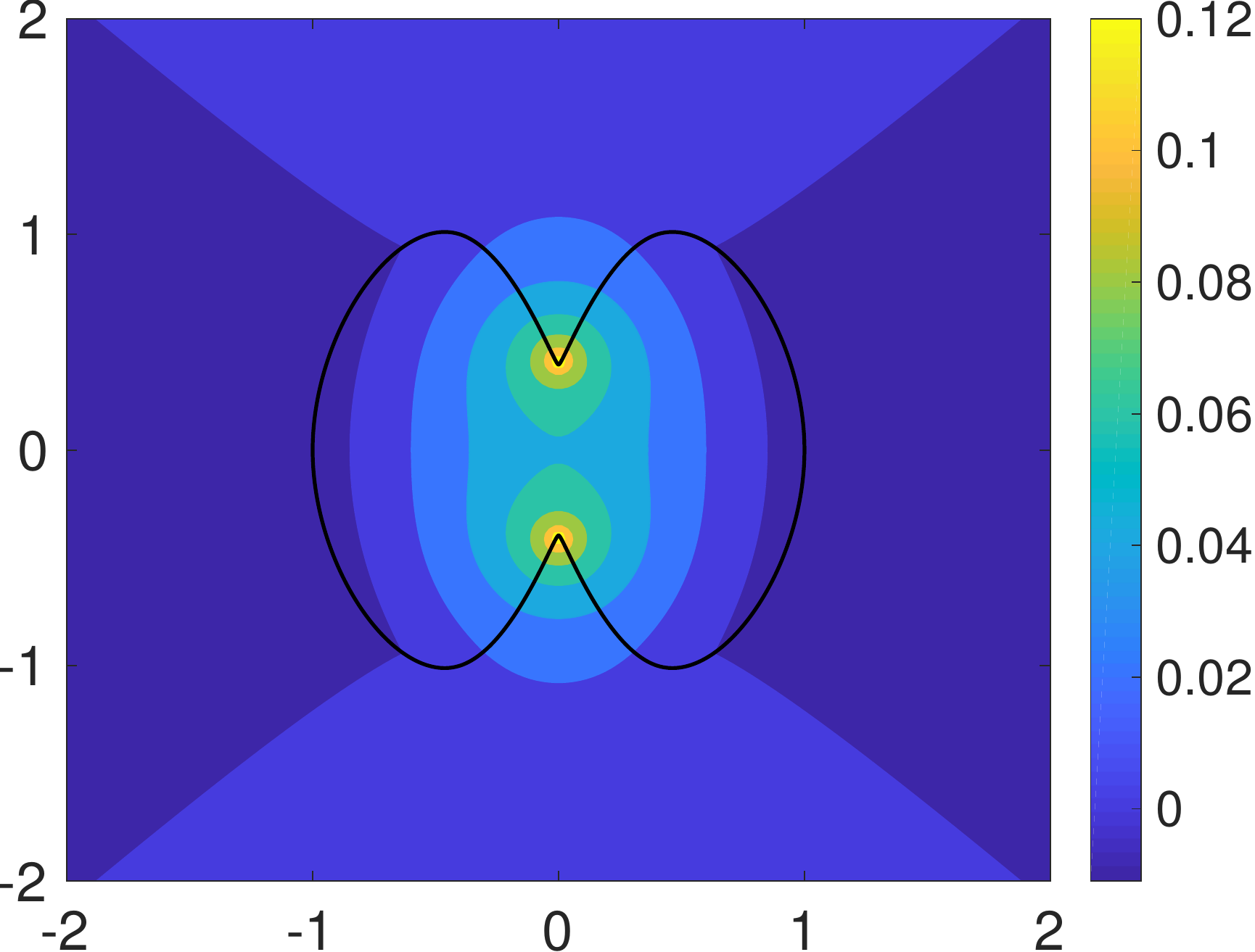}}
\subfigure[]{
\includegraphics[width=0.2\textwidth]{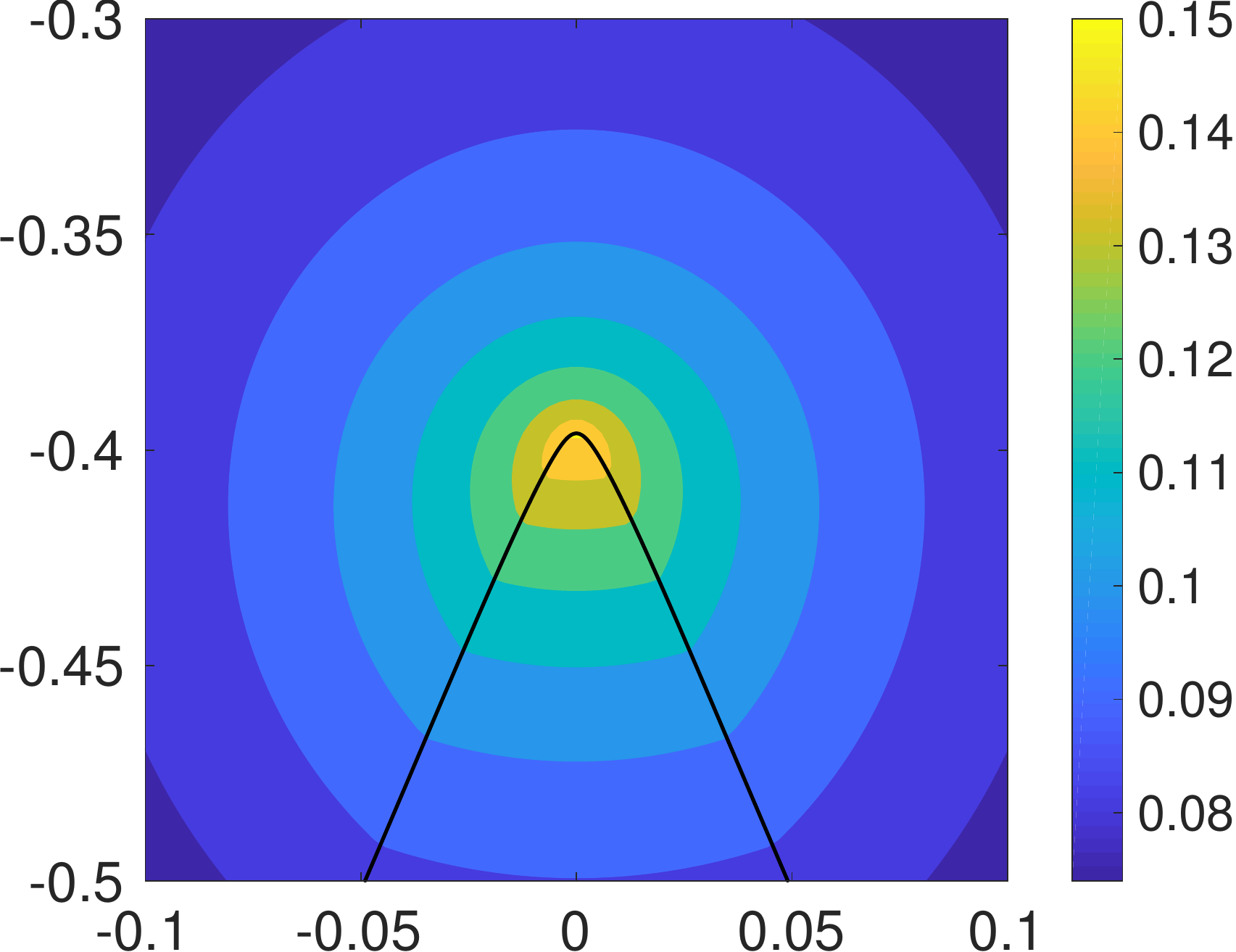}}\\
\caption{\label{fig28} (a), (b), (c). The eigenfunction, its conormal derivative, and the corresponding
single-layer potential associated with $\lambda_2=-0.3676$; (d), (e), (f). The corresponding items
associated with $\lambda_4=-0.3303$.}
\end{figure}

Fig.~\ref{fig29} plots the eigenfunctions with respect to arc length for the eigenvalues $\lambda_1=0.3676$ and $\lambda_2=-0.3676$ with different maximum curvature $500$, $1000$ and $1500$.

\begin{figure}
\centering
\subfigure[]{
\includegraphics[width=0.2\textwidth]{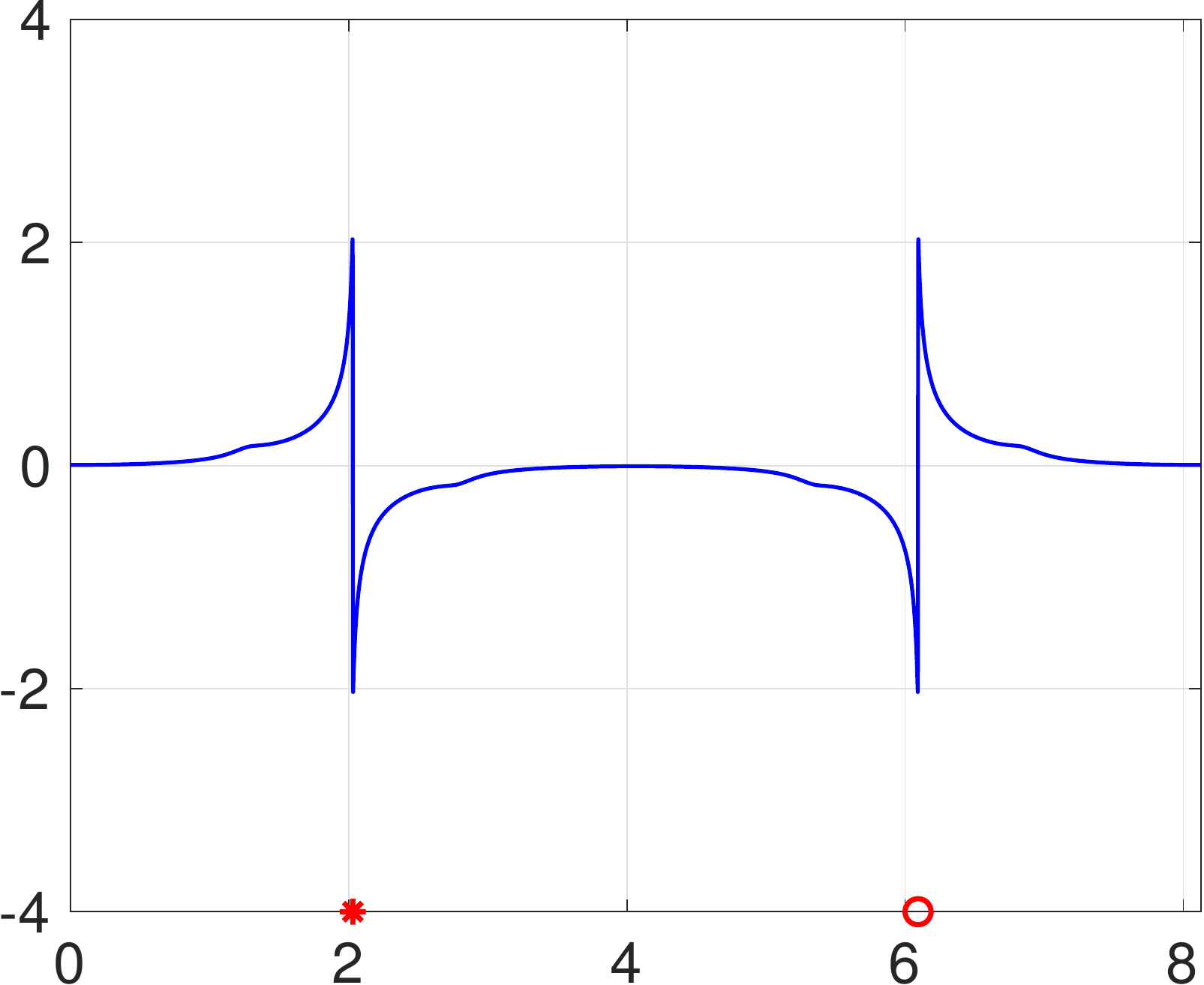}}
\subfigure[]{
\includegraphics[width=0.2\textwidth]{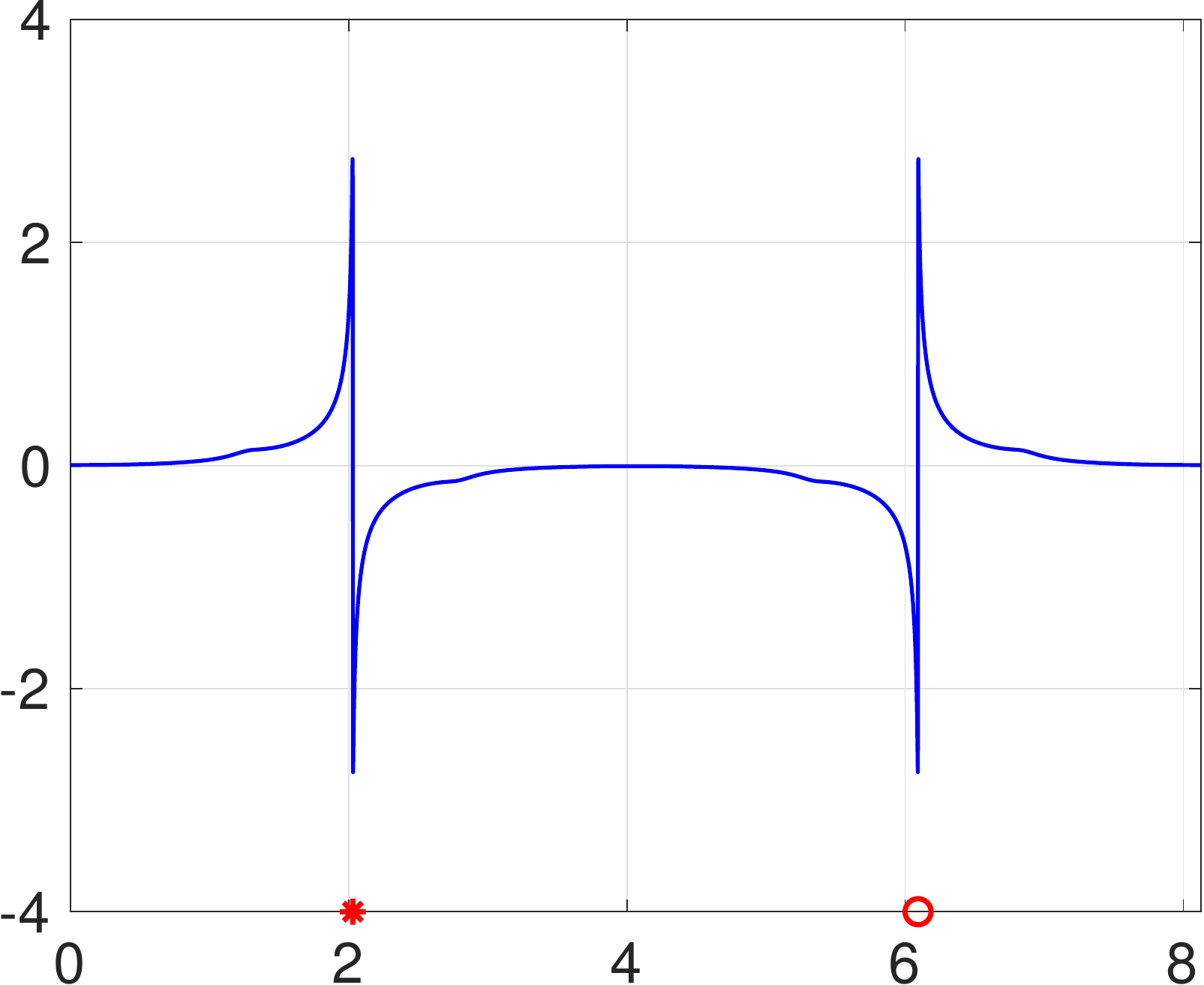}}
\subfigure[]{
\includegraphics[width=0.2\textwidth]{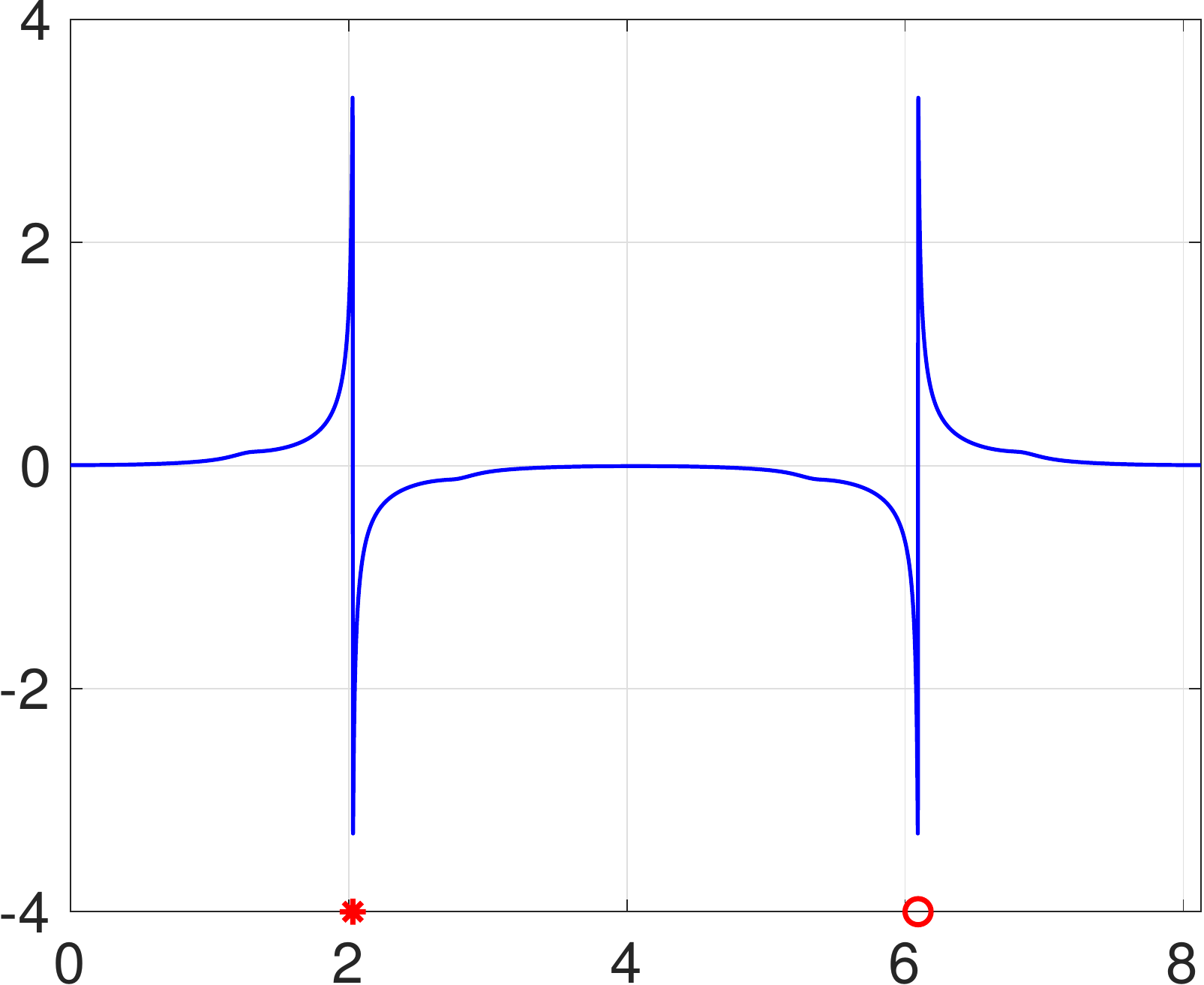}}\\
\subfigure[]{
\includegraphics[width=0.2\textwidth]{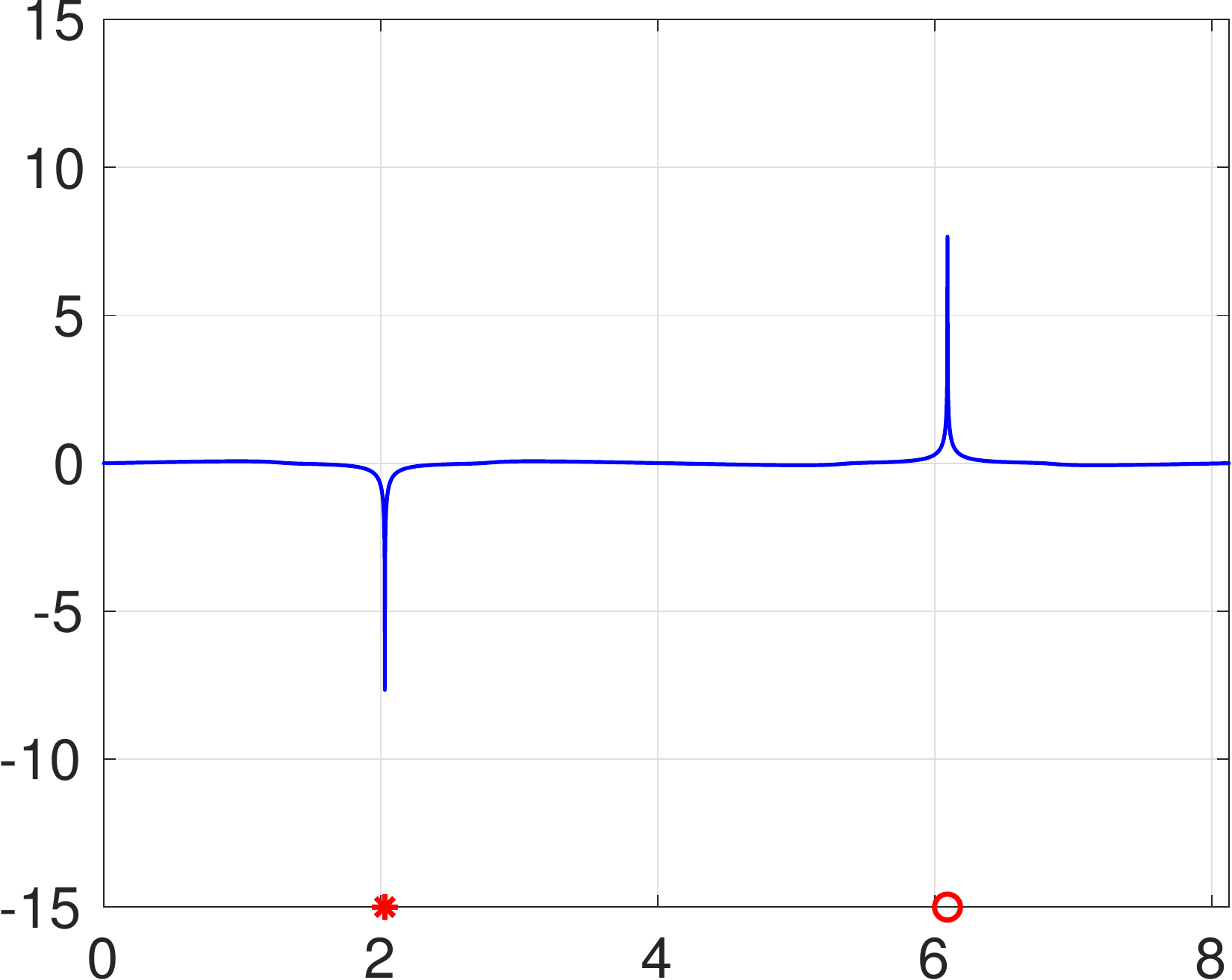}}
\subfigure[]{
\includegraphics[width=0.2\textwidth]{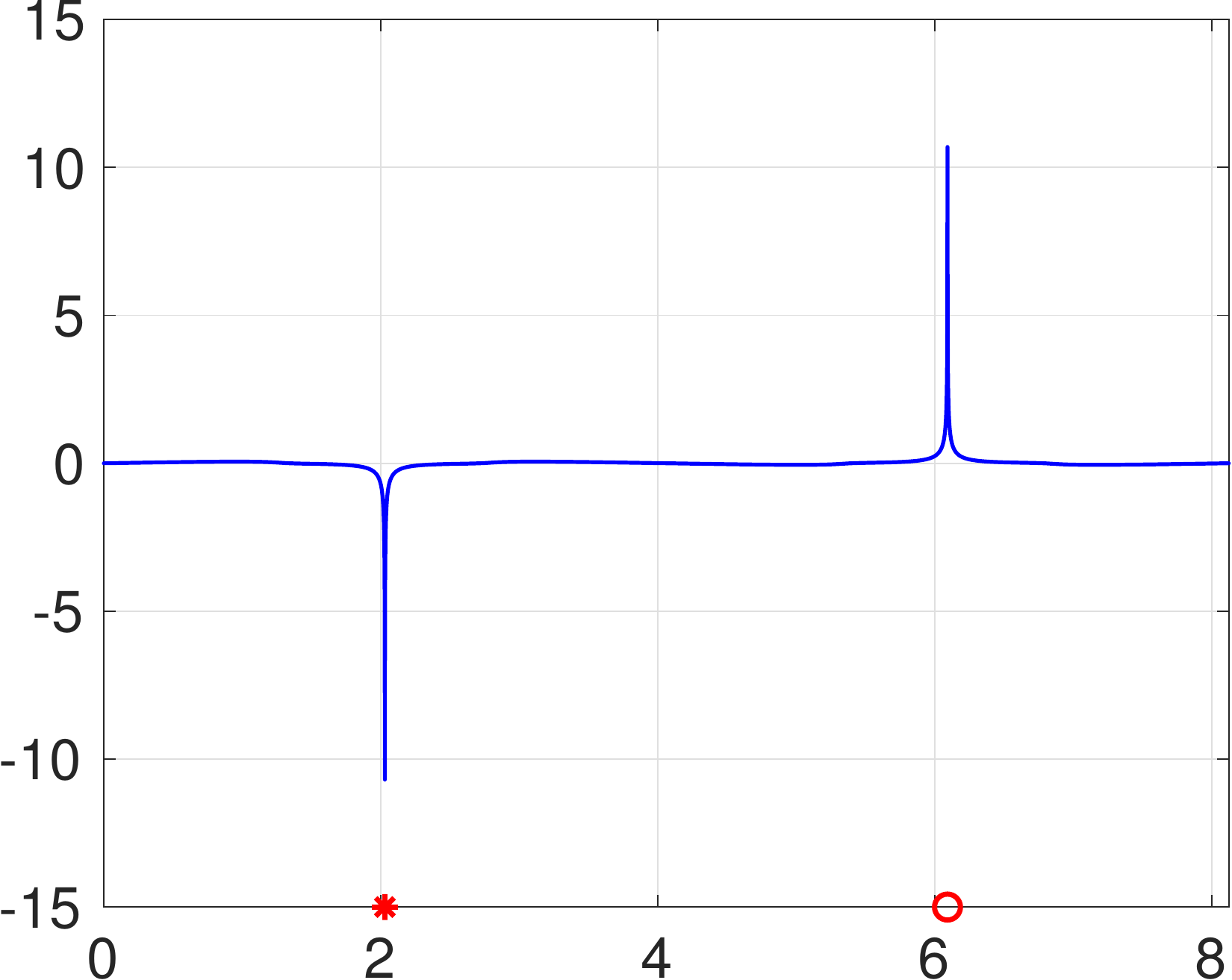}}
\subfigure[]{
\includegraphics[width=0.2\textwidth]{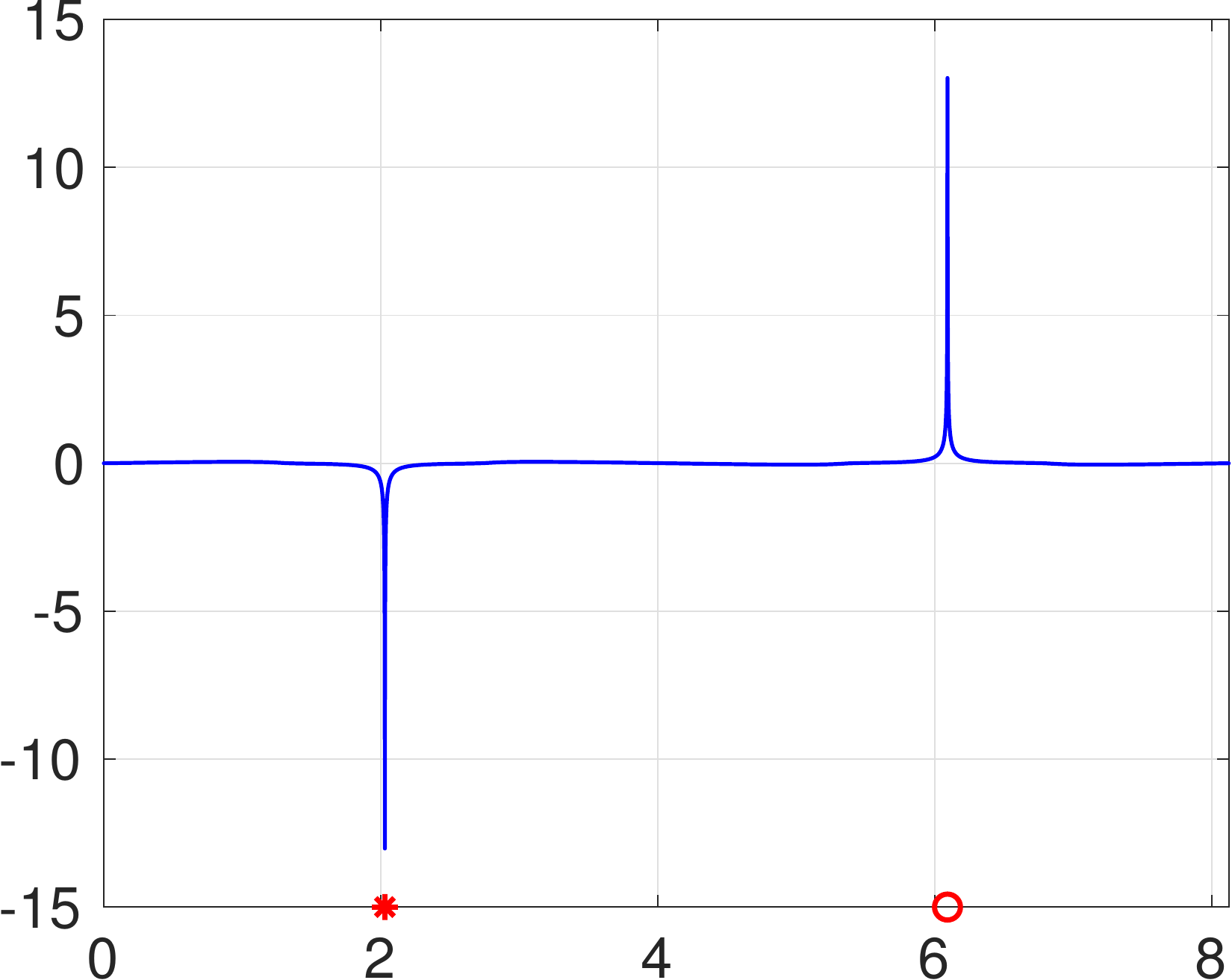}}\\
\caption{\label{fig29} (a), (b), (c).  Plotting the eigenfunctions for the positive eigenvalues $\lambda_1=0.3676$ with different maximum curvature $500$, $1000$ and $1500$. 
(d), (e), (f). The corresponding items for the negative eigenvalue $\lambda_2=-0.3676$.}
\end{figure}

We next investigate the blow-up rate of the NP eigenfunction or its conormal derivative with respect to the curvature.
Therefore we plot the logarithm of the derivative of the absolute value of the eigenfunctions at the high-curvature point for 
 the positive eigenvalues $\lambda_1$, $\lambda_3$ and $\lambda_5$, and the logarithm of the absolute value of the eigenfunctions
  at the high-curvature point for the negative eigenvalues $\lambda_2$, $\lambda_4$ and $\lambda_6$ at the high-curvature 
  point $x_*$ with respect to different curvature  in Fig.~\ref{fig30}. It turns out that blow-up rate also follows the 
rule in \eqref{eq:growthrate}. By regression, we numerically determine the corresponding parameters for those different eigenvalues 
in \eqref{eq:egg3}, and they are listed in Table~\ref{tab6}. 
%

\begin{figure}
\includegraphics[width=0.2\textwidth] {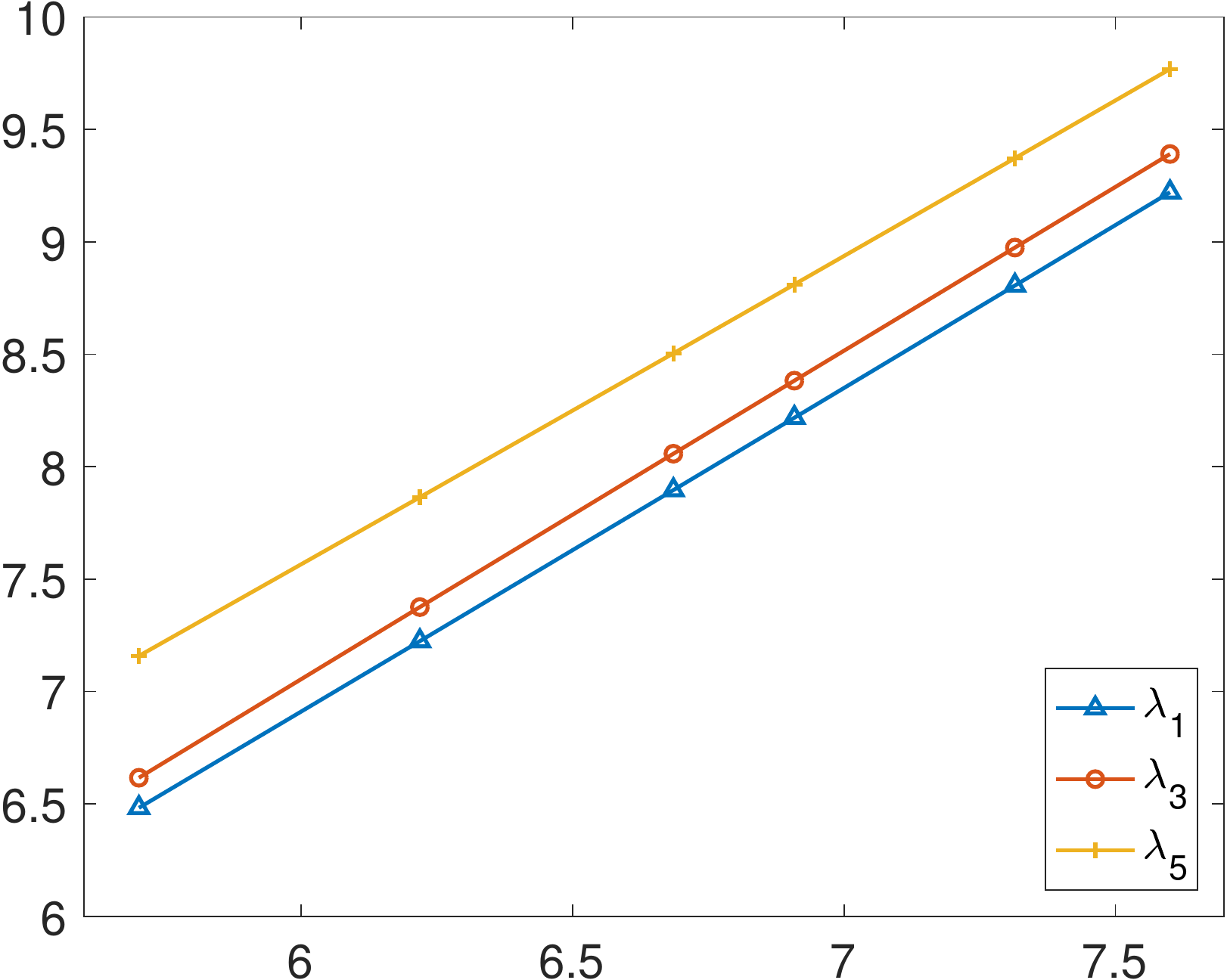}
\hspace{0.8cm}
\includegraphics[width=0.2\textwidth] {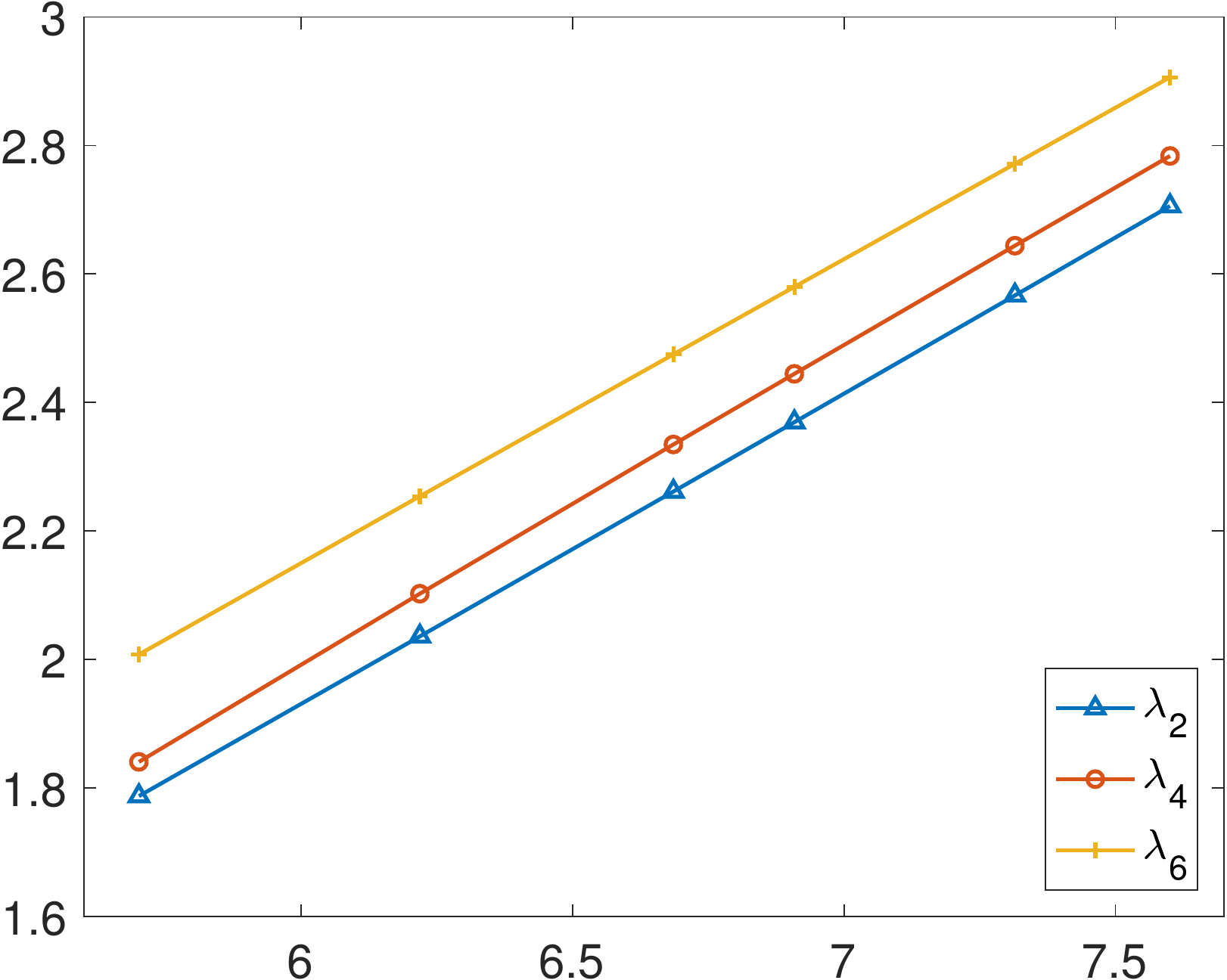}
\caption{\label{fig30} The logarithm of the absolute value of the derivative of the eigenfunction at the high-curvature point is the largest for the positive eigenvalues $\lambda_1$, $\lambda_3$ and $\lambda_5$ and the logarithm of the absolute value of the eigenfunction at the high-curvature point for the negative eigenvalues $\lambda_2$, $\lambda_4$ and $\lambda_6$ with respect to different curvature.}
\end{figure}

\begin{table}[t]
  \centering
  \subtable[]{
    \centering
    \begin{tabular}{cccc}
      \toprule
      & $\lambda_1$ & $\lambda_3$ & $\lambda_5$ \\
      \midrule
      $p$ & 1.4415 & 1.4604 & 1.3742 \\[5pt]
      $\ln(\alpha)$ & -1.7389 & -1.7075 & -0.6801 \\
      \bottomrule
    \end{tabular}}
  \hspace{1.5cm}
\subtable[]{
    \centering
    \begin{tabular}{cccc}
      \toprule
      & $\lambda_2$ & $\lambda_4$ & $\lambda_6$ \\
      \midrule
      $p$ & 0.4836 & 0.4963 & 0.4729 \\[5pt]
      $\ln(\alpha)$ & -0.9712 & -0.9860 & -0.6875 \\
      \bottomrule
    \end{tabular}
    }
  \caption{The parameters of the form \eqref{eq:growthrate} from the regression associated with the eigenvalues in
  \eqref{eq:egg3}: (a) $\lambda_j, j=1, 3, 5$; (b) $\lambda_j, j=2,4,6$.}
  \label{tab6}
\end{table}


\subsection{A non-symmetric domain}

In this subsection, we consider a non-symmetric domain as shown in Fig.~\ref{fig31}, which possesses three boundary points
with relatively large curvatures that are marked as $x_o, x_*$ and $x_\triangle$ in the figure. The corresponding curvatures 
at those three points are respectively given as
\begin{equation}\label{eq:cu}
\kappa_{x_{o}}=500,\quad  \kappa_{x_{*}}=\kappa_{x_{\triangle}}=41.
\end{equation}
It is noted that the domain in Fig.~\ref{fig31} is different from the one in Fig.~\ref{fig11}. Here, we modify the curvatures at the two points $x_*$ and $x_\triangle$ such that the 
domain is no longer symmetric. Obviously, $x_o$ is the high-curvature point. 

\begin{figure}
\includegraphics[width=2.5cm] {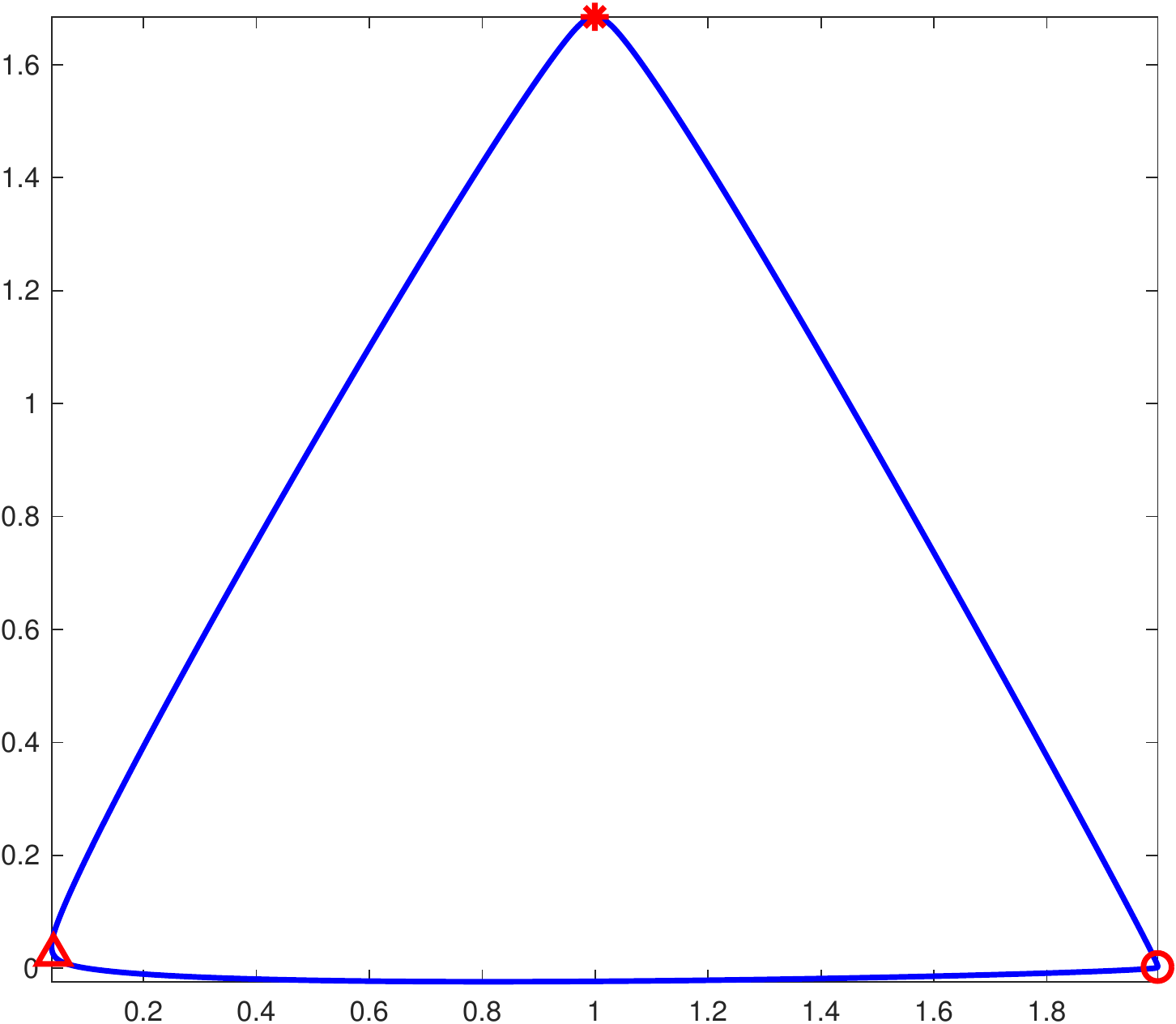}
\caption{\label{fig31} A non-symmetric domain.}
\end{figure}

First, the first five largest NP eigenvalues (in terms of the absolute value) associated with $\partial D$ in Fig.~ \ref{fig31}
are numerically found to be
\begin{equation}\label{eq:egg6}
\lambda_0=0.5, \quad  \lambda_1=0.2710, \quad \lambda_2=-0.2710,\quad  \lambda_3=0.2320,\quad \lambda_4=-0.2320.
\end{equation}

Fig.~\ref{fig32} plots the eigenfunction and the corresponding single-layer potential around the three points
$x_{o}$, $x_{*}$ and $x_{\triangle}$ associated to the eigenvalue $\lambda_3=0.2320$. It can be readily seen that the blow-up 
behaviour does not occur at the high-curvature point $x_o$, and instead it occurs at the two points $x_*$ and $x_\triangle$ which possesses
relatively large curvatures. 

\begin{figure}
\centering
\subfigure[]{
\includegraphics[width=0.2\textwidth]{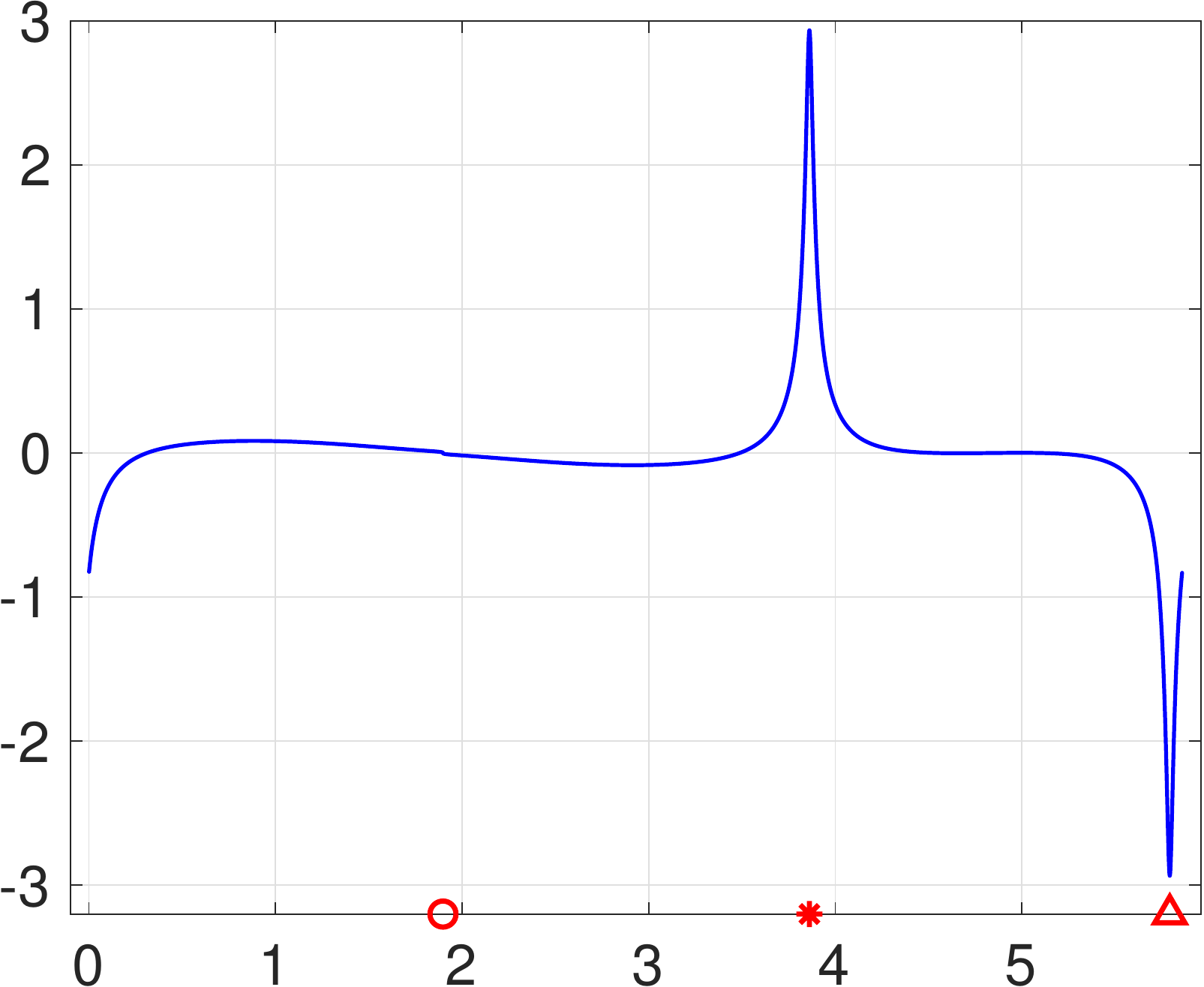}}
\subfigure[]{
\includegraphics[width=0.2\textwidth]{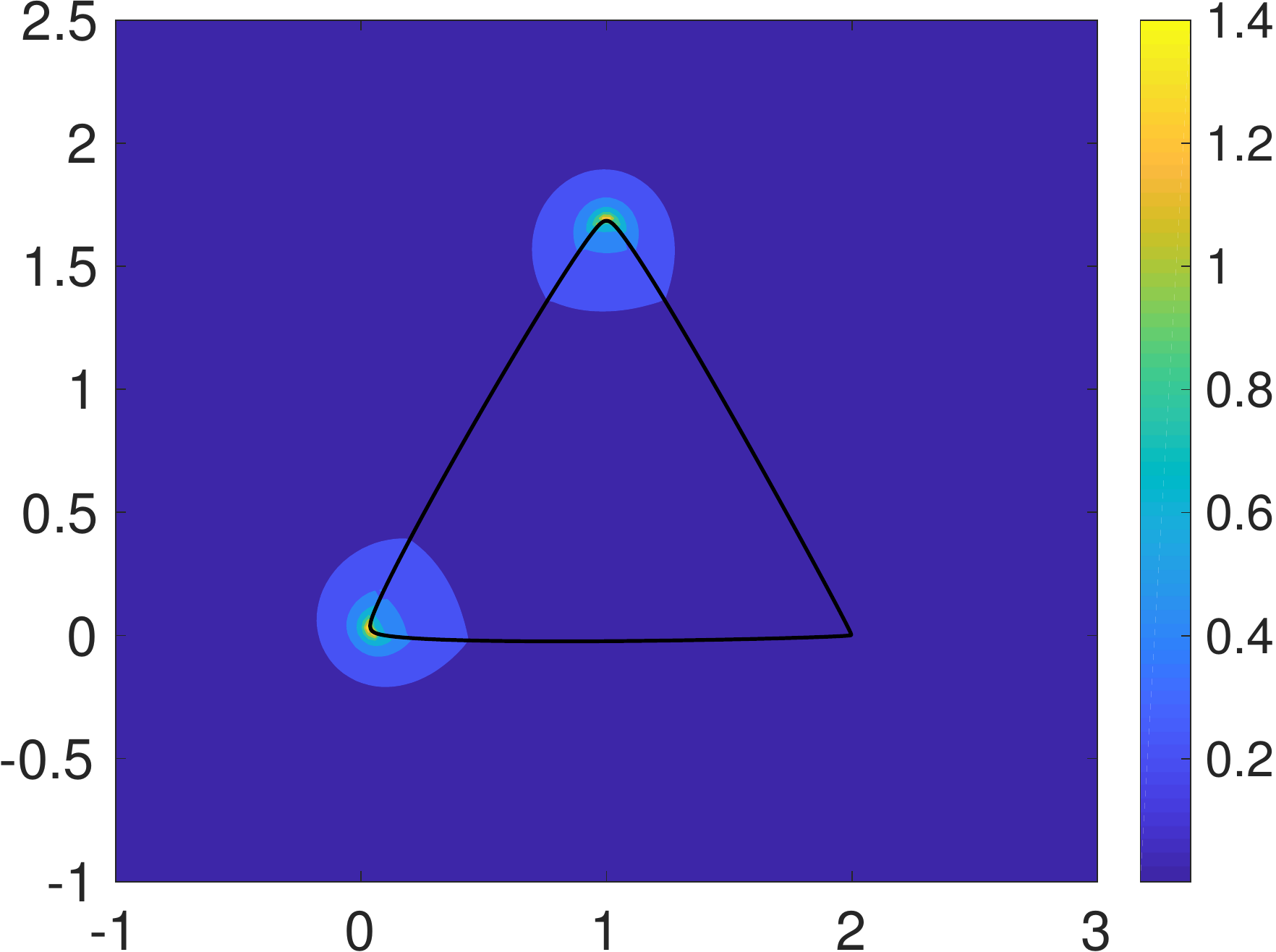}}\\
\subfigure[]{
\includegraphics[width=0.2\textwidth]{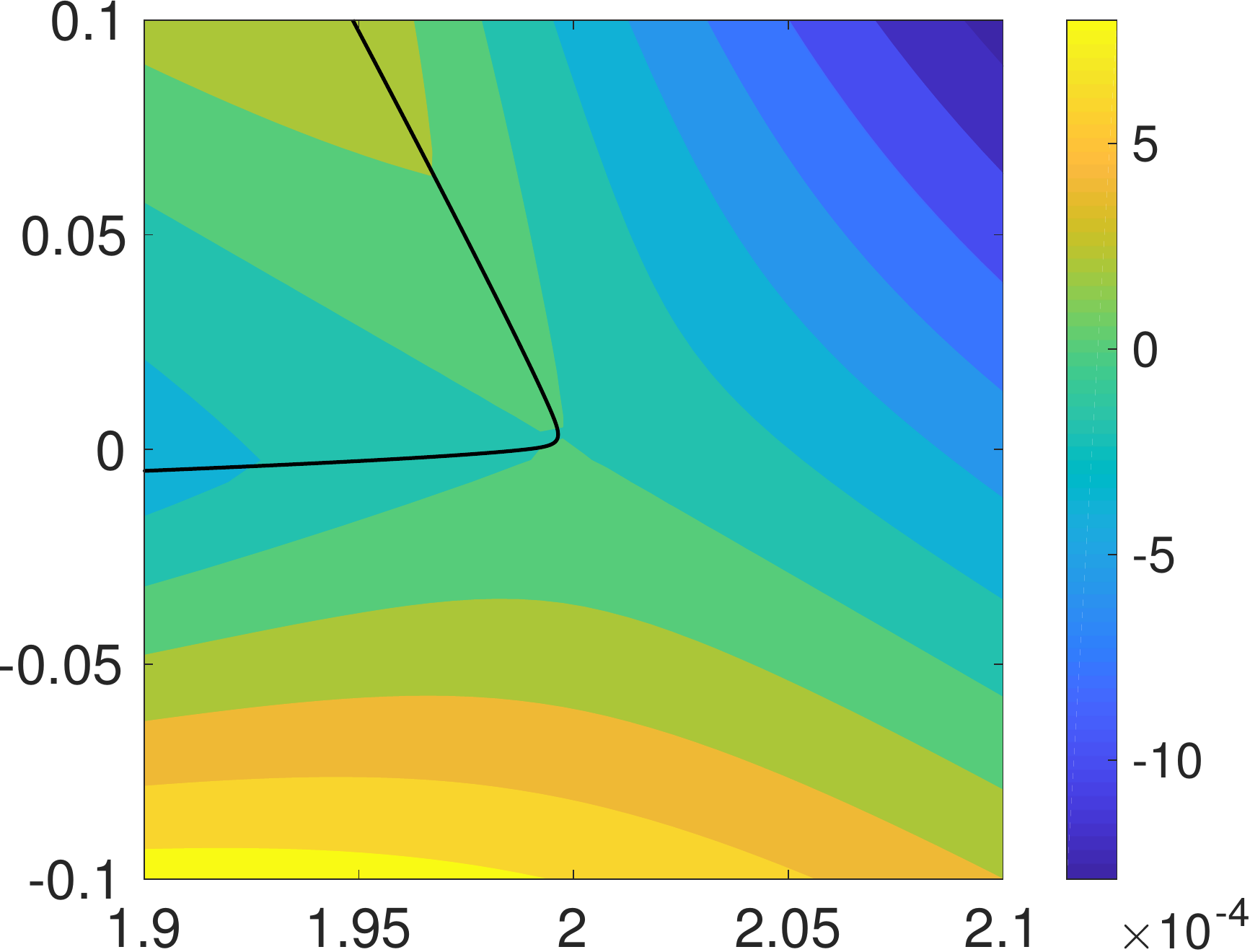}}
\subfigure[]{
\includegraphics[width=0.2\textwidth]{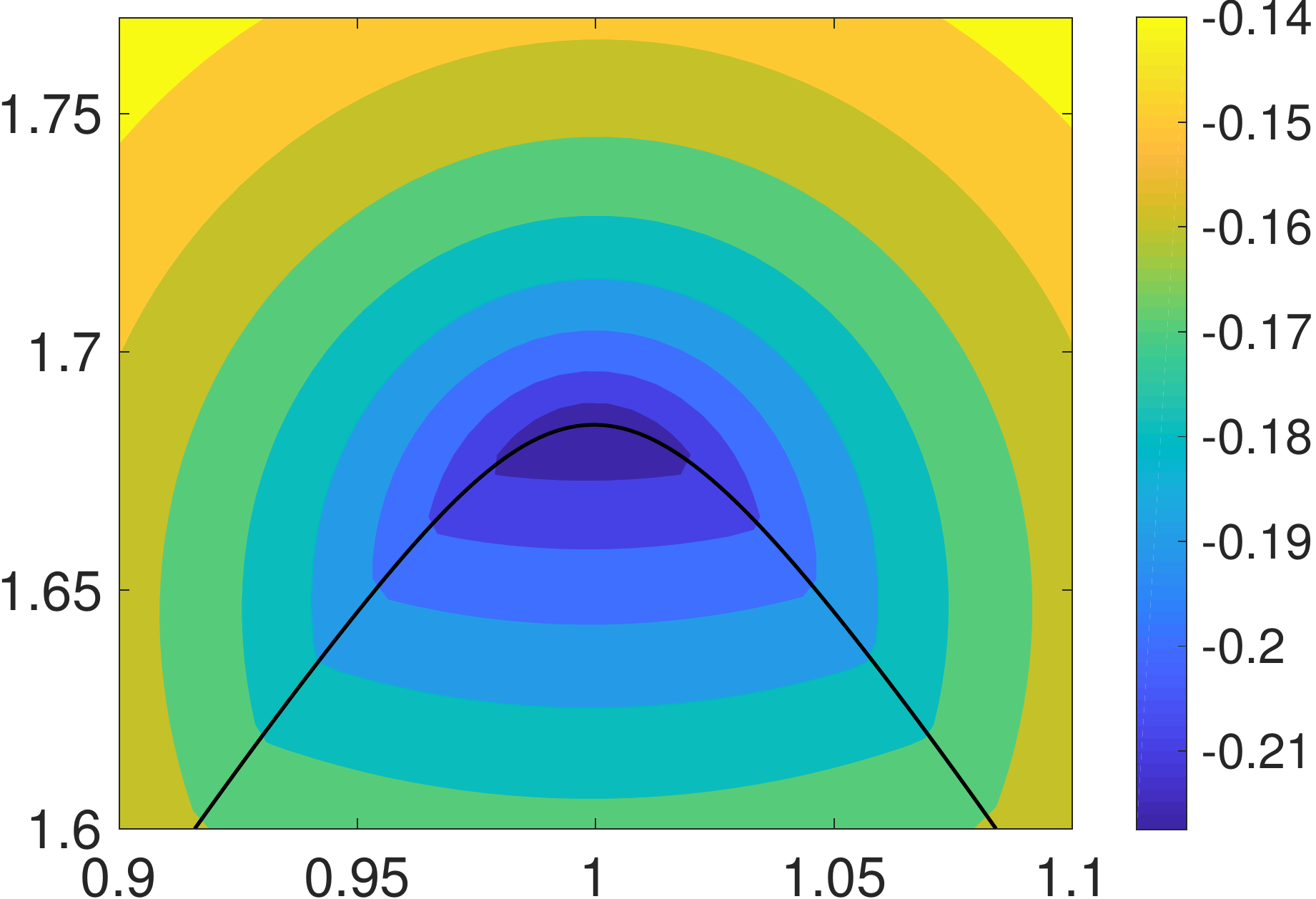}}
\subfigure[]{
\includegraphics[width=0.2\textwidth]{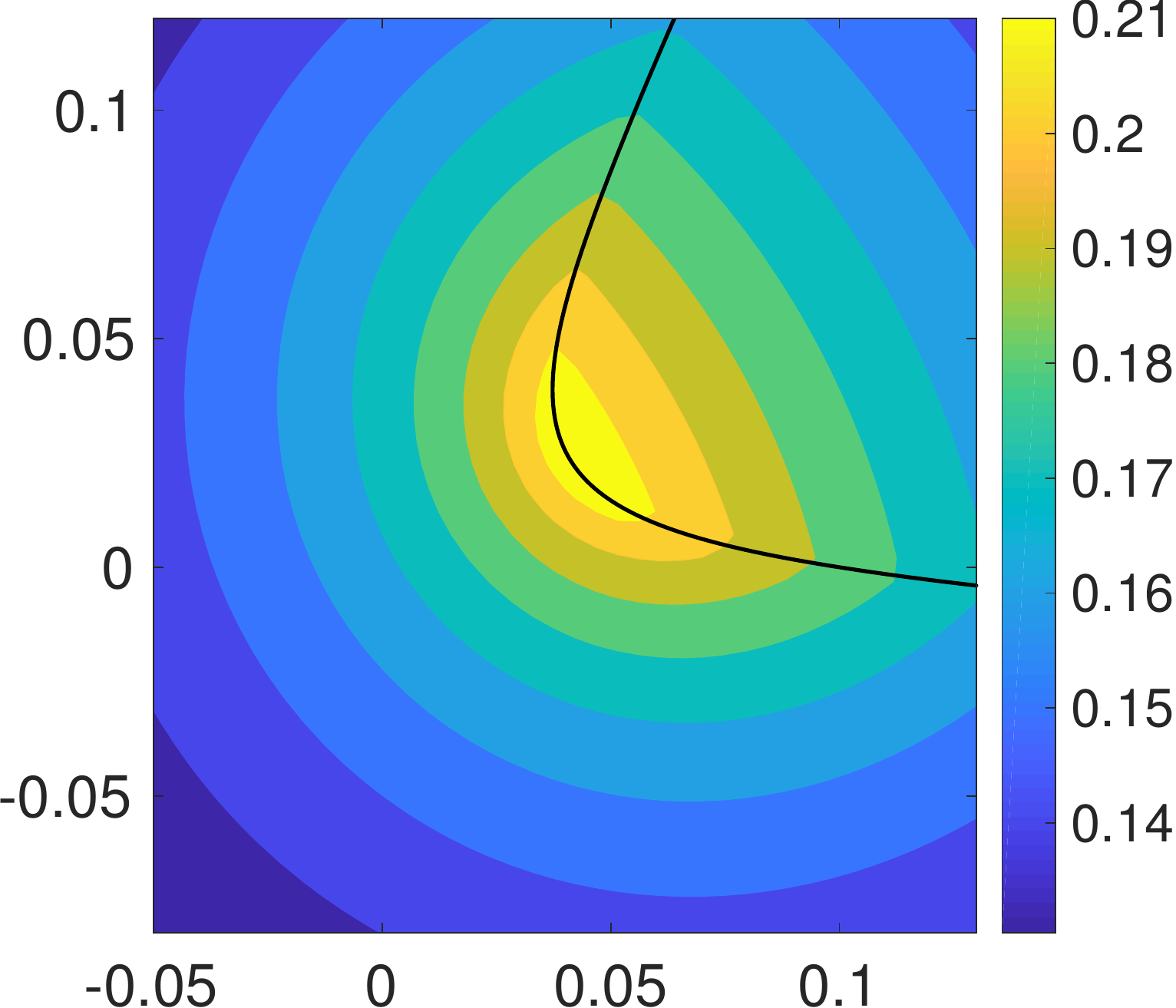}}\\
\caption{\label{fig32}
(a). Eigenfunction with respect to the arc length associated to the eigenvalue $\lambda_3=0.2320$; (b).
The corresponding single-layer potential; (c), (d), (e). The single-layer potential around the three
points $x_{o}$, $x_{*}$ and $x_{\triangle}$, respectively. }
\end{figure}

Fig.~\ref{fig33} plots the eigenfunction, and its conormal derivative as well as the corresponding single-layer potential 
associated to the eigenvalue $\lambda_4=-0.2320$. It can be readily seen that the blow-up behaviour of the conormal derivative 
does not occur at the high-curvature point $x_o$, and instead it occurs at the two points $x_*$ and $x_\triangle$ again. 

Clearly, the previous two examples show that the blow-up behaviour does not follow the one observed for symmetric domains. The two points $x_*$ and $x_\triangle$
are symmetric with respect to $x_o$, and they two may compete with the point $x_o$ to form the blow-up behaviours as observed above. 
However, there are no definite rules for this. 

\begin{figure}[t]
\centering
\subfigure[]{
\includegraphics[width=0.2\textwidth]{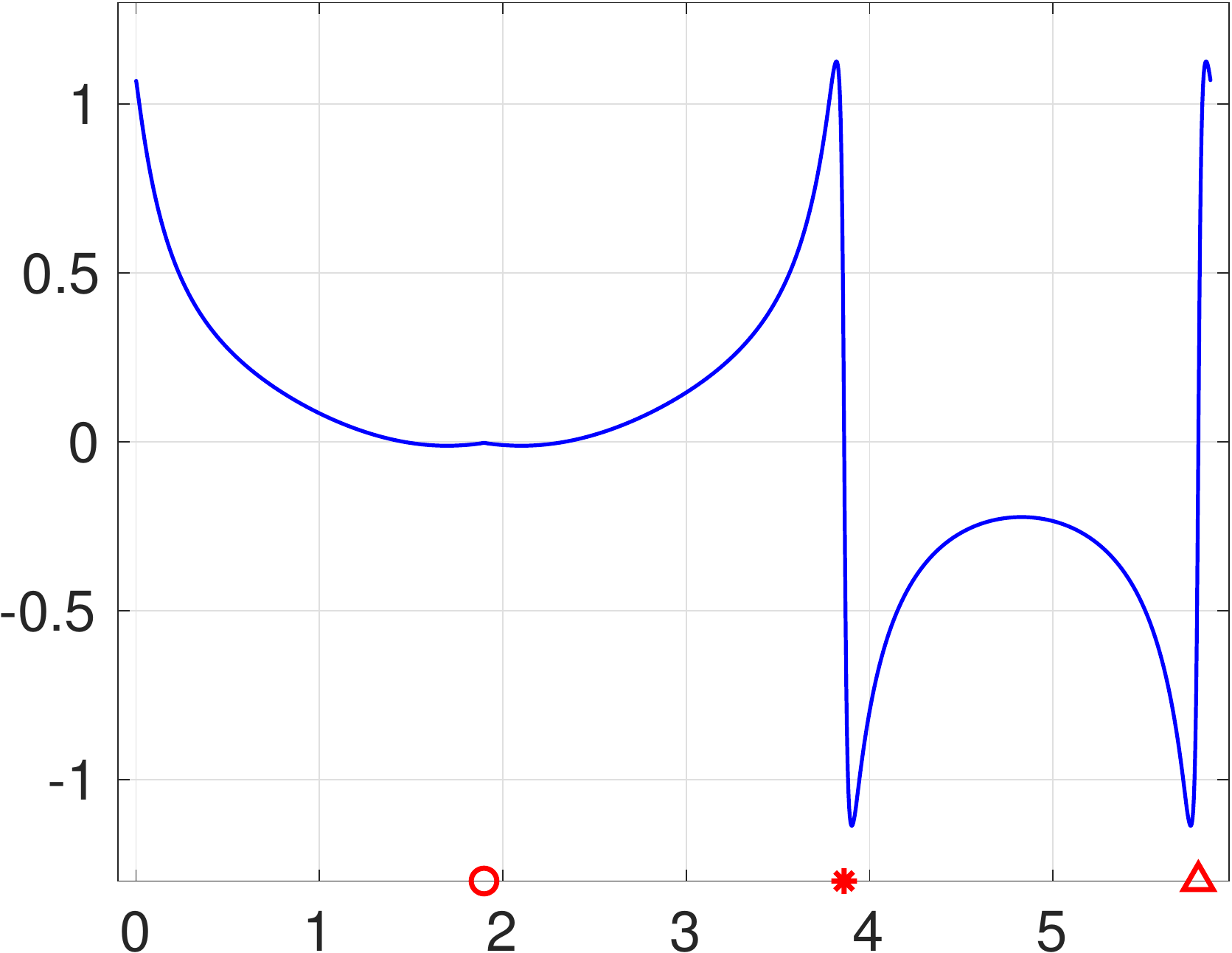}}
\subfigure[]{
\includegraphics[width=0.2\textwidth]{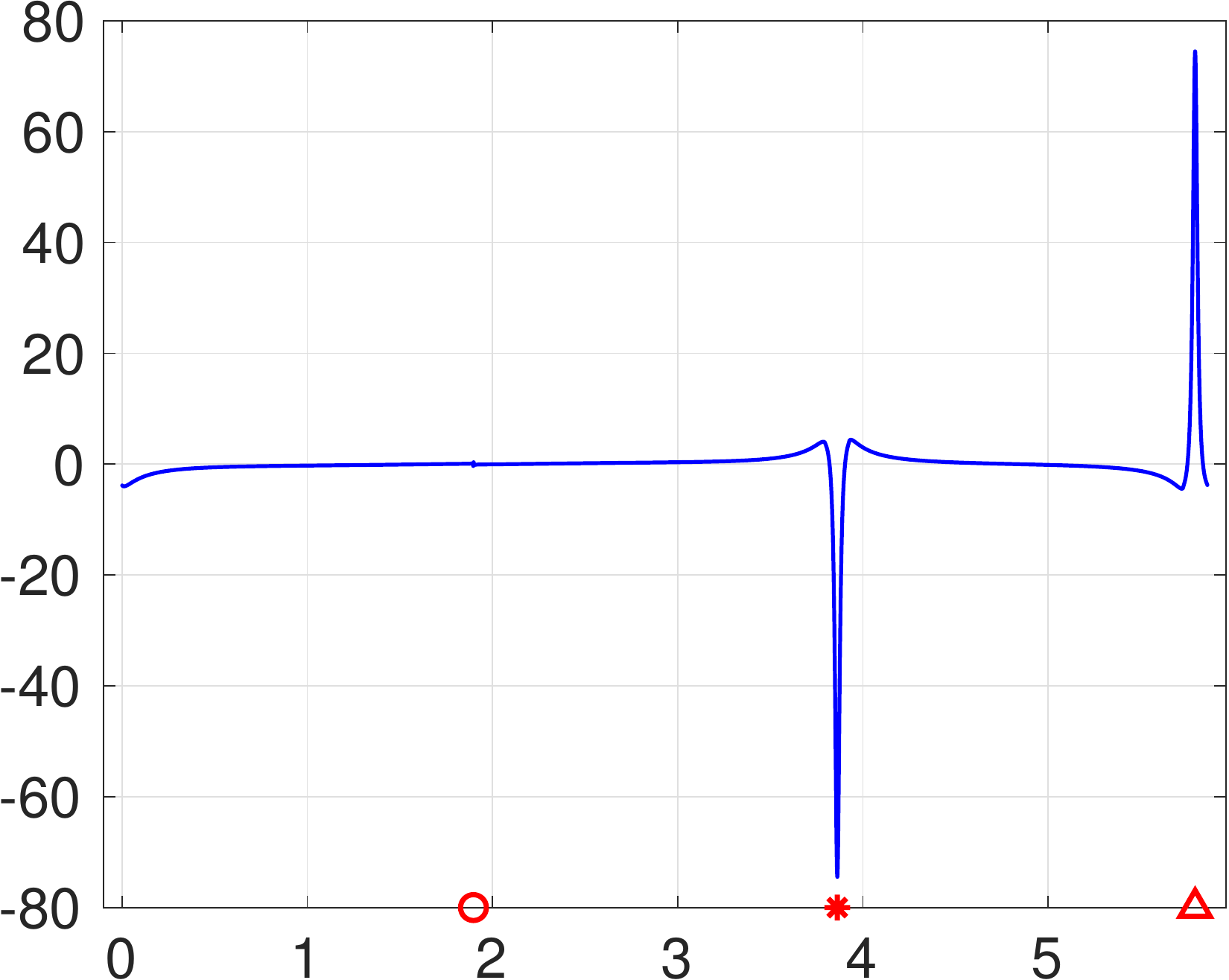}}
\subfigure[]{
\includegraphics[width=0.2\textwidth]{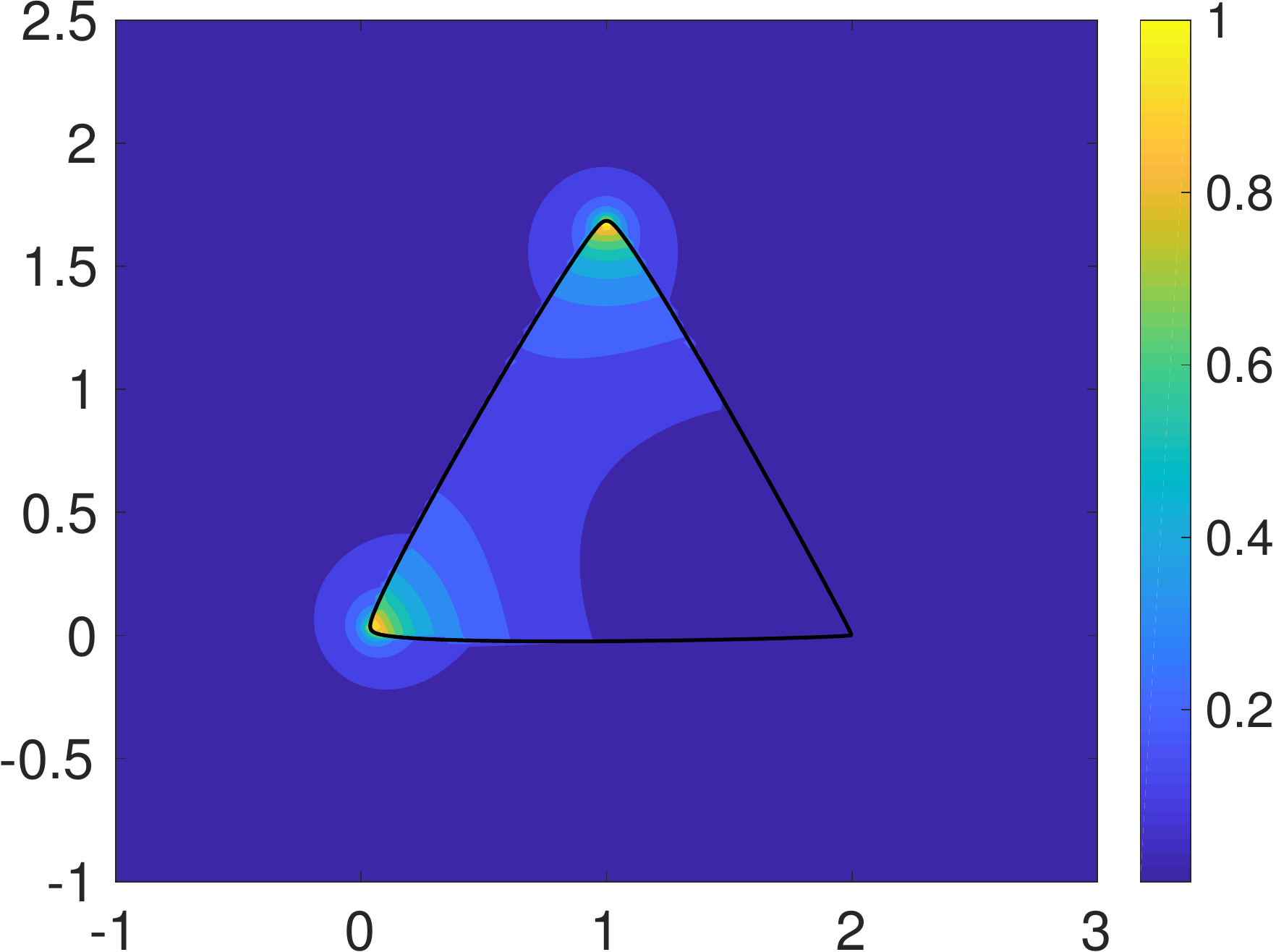}}\\
\subfigure[]{
\includegraphics[width=0.2\textwidth]{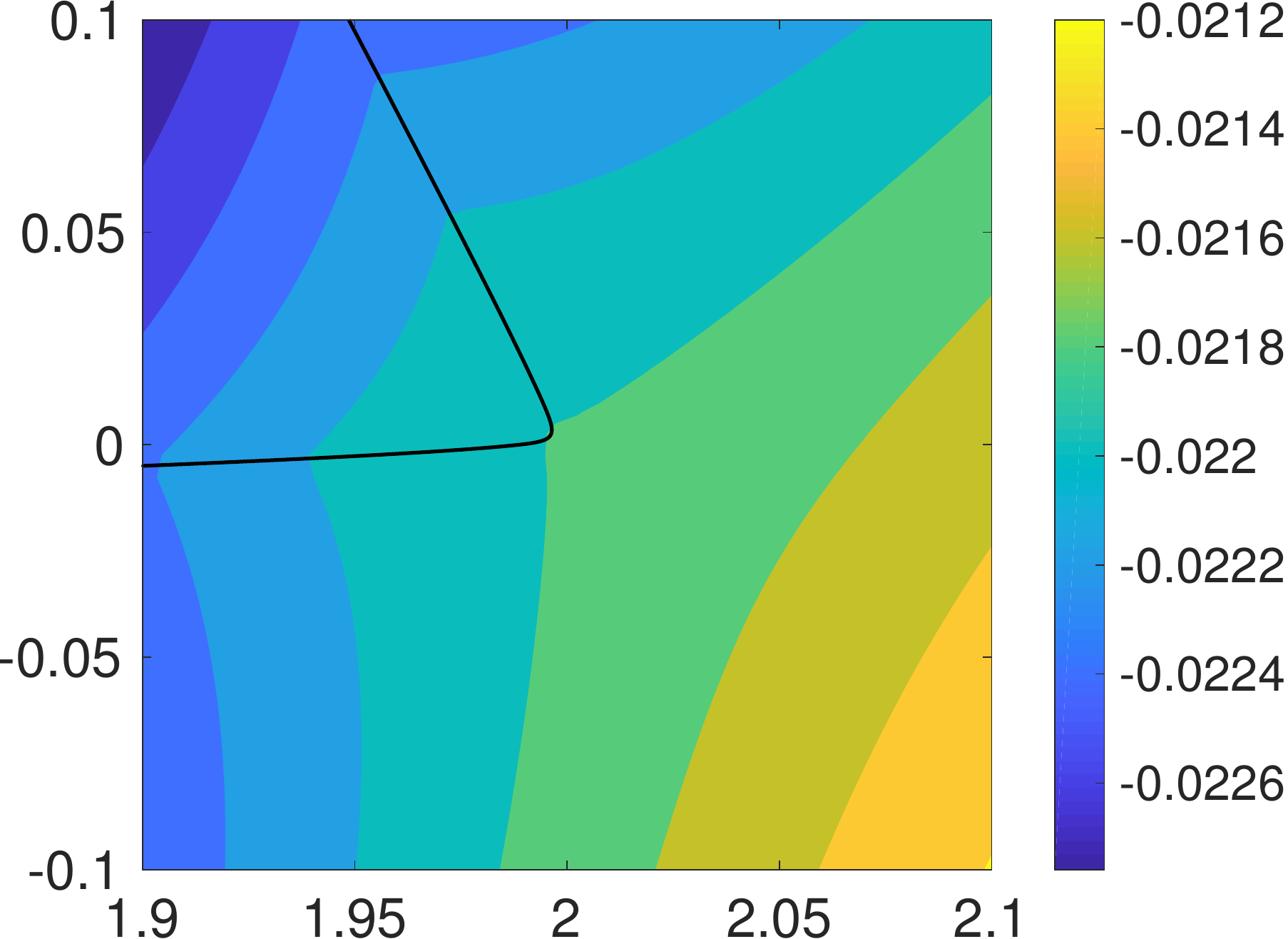}}
\subfigure[]{
\includegraphics[width=0.2\textwidth]{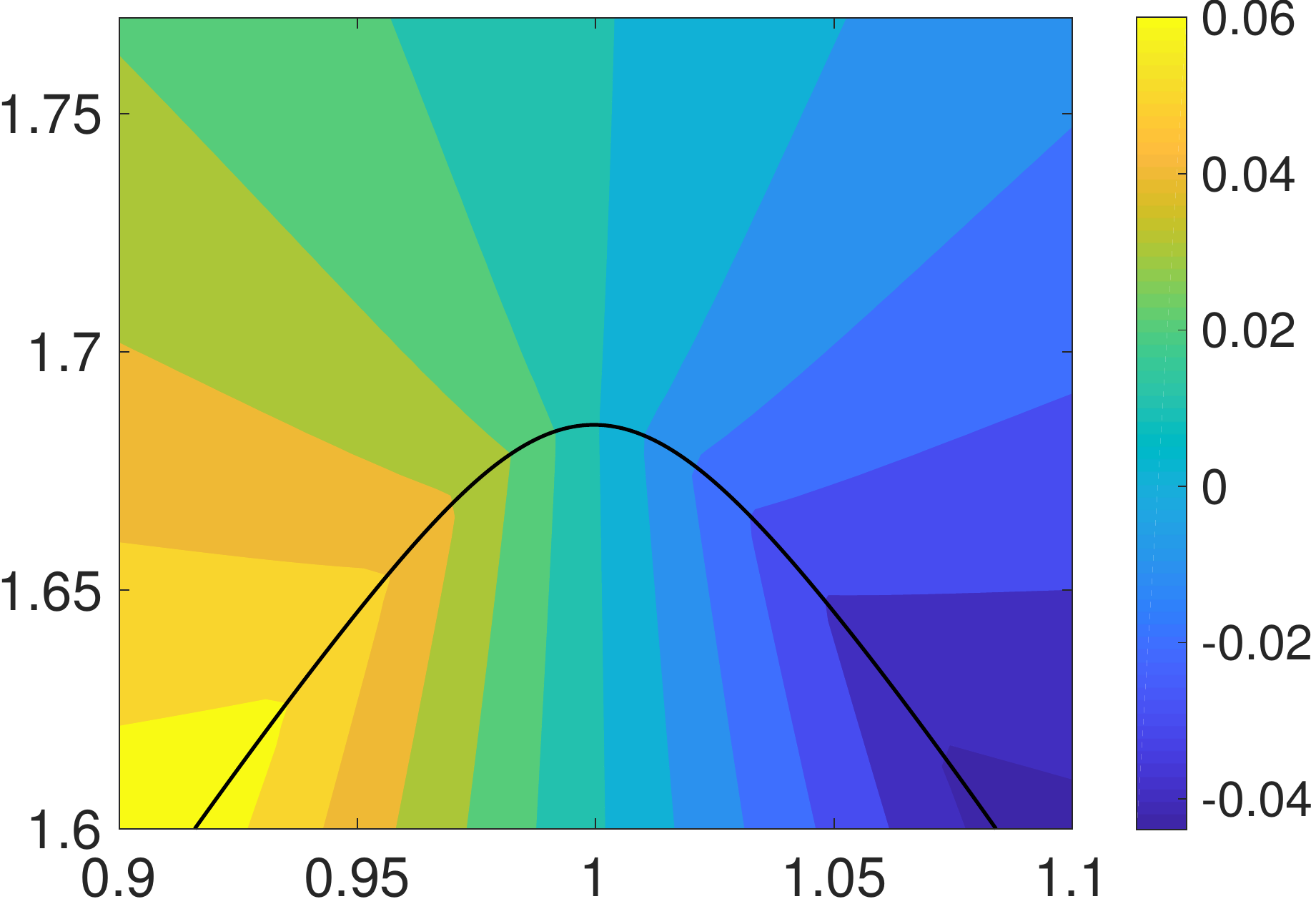}}
\subfigure[]{
\includegraphics[width=0.2\textwidth]{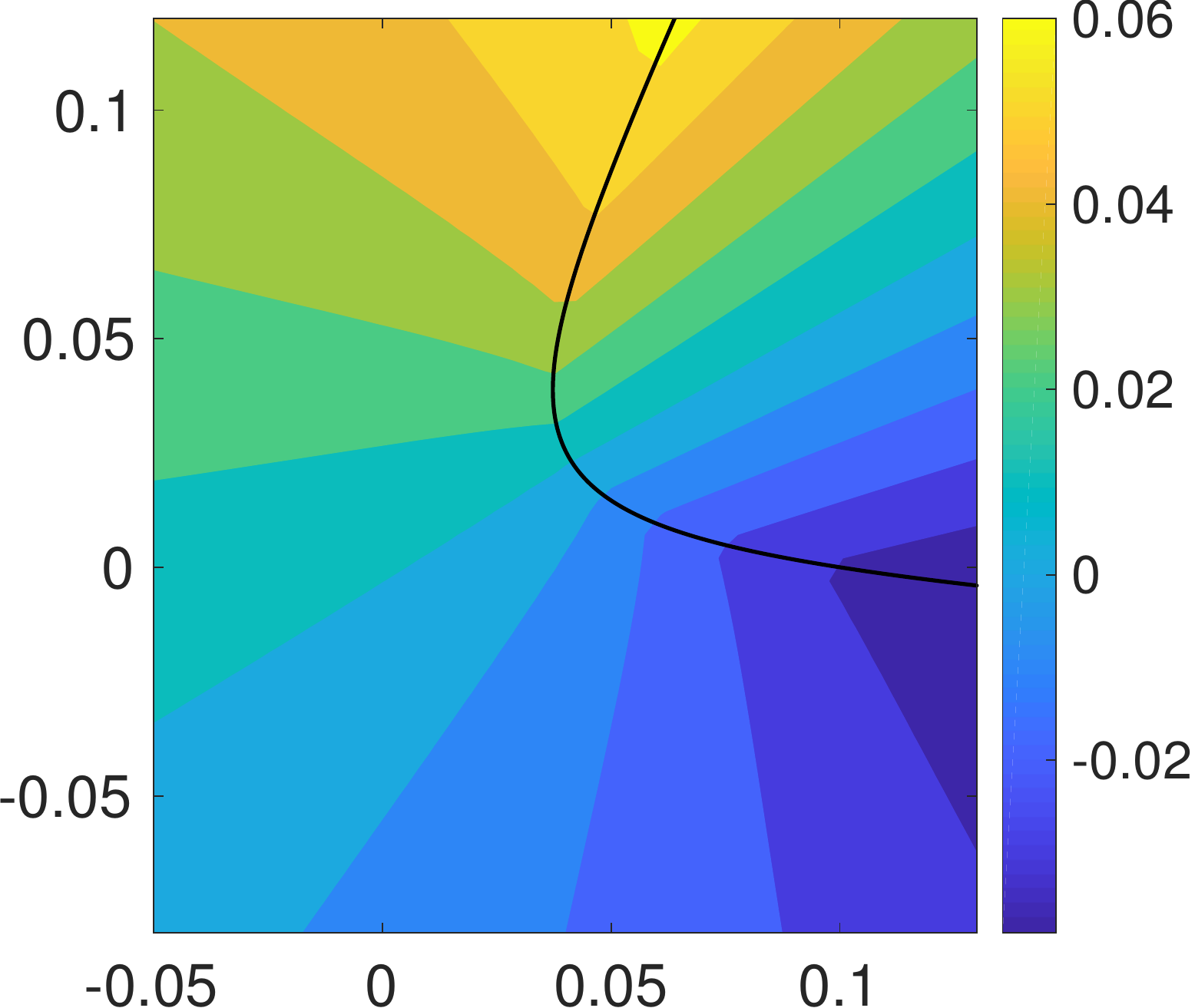}}\\
\caption{\label{fig33} (a), (b). Eigenfunction and its conormal derivative with respect to the arc length associated to the eigenvalue $\lambda_4=-0.2320$; (c), (d), (e), (f).
The corresponding single-layer potential as well as their behaviours around the three points $x_{o}$, $x_{*}$ and $x_{\triangle}$, respectively.}
\end{figure}

\subsection{Another non-symmetric domain}

In this subsection, we consider a non-symmetric domain as shown in Fig.~\ref{fig34}, which possesses three boundary points
with relatively large curvatures that are marked as $x_o, x_*$ and $x_\triangle$ in the figure. The corresponding curvatures 
at those three points are respectively given as
\begin{equation}\label{eq:cu1}
\kappa_{x_{o}}=500, \quad  \kappa_{x_{*}}=804 \quad \mbox{and} \quad \kappa_{x_{\triangle}}=400.
\end{equation}

\begin{figure}
\includegraphics[width=3cm] {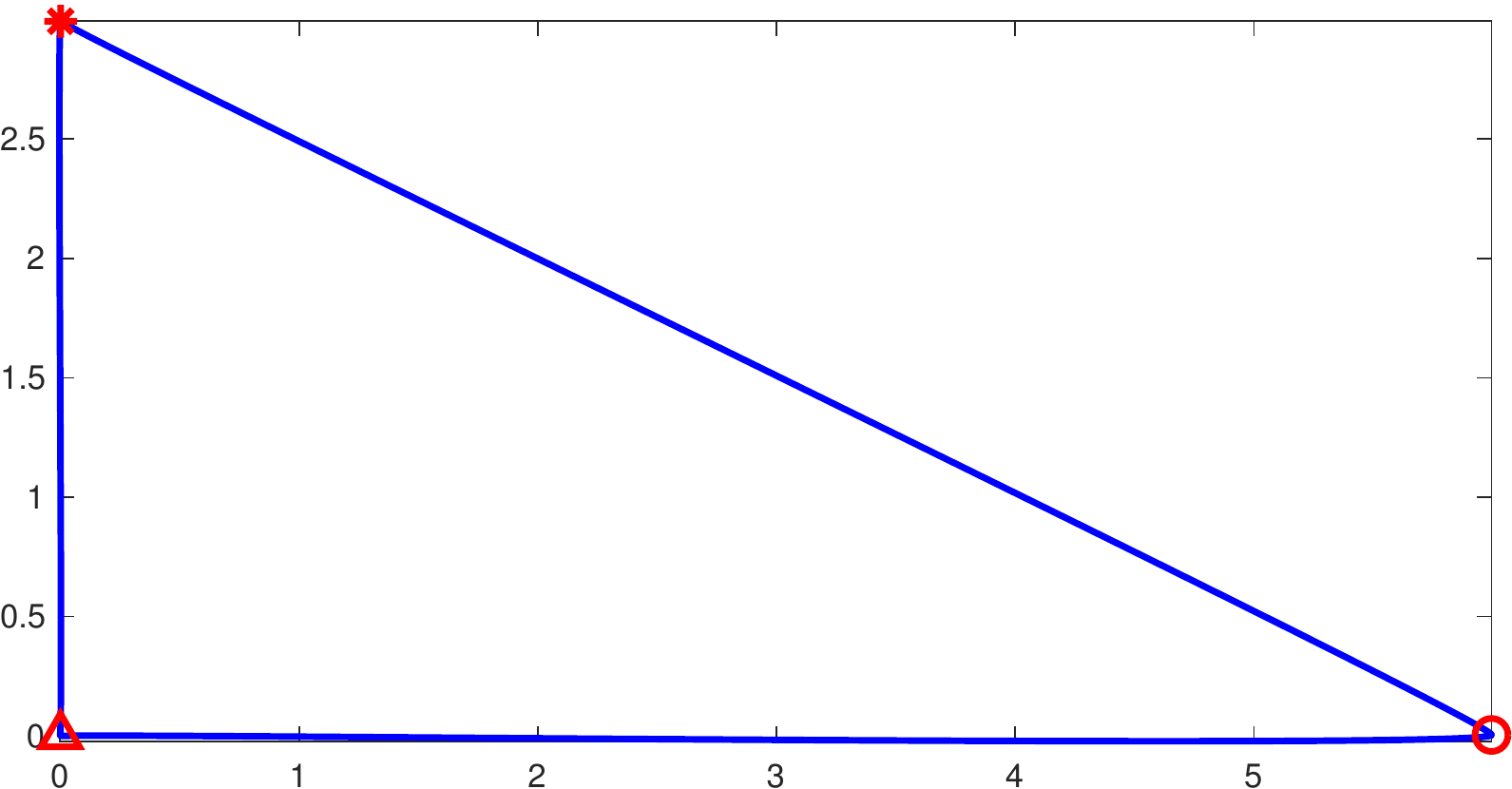}
\caption{\label{fig34} The boundary with three high-curvature points.}
\end{figure}

First, the first three largest NP eigenvalues (in terms of the absolute value) associated with $\partial D$ in Fig.~ \ref{fig34}
are numerically found to be
\begin{equation}\label{eq:egg7}
\lambda_0=0.5, \quad  \lambda_1=0.3792, \quad \lambda_2=-0.3792.
\end{equation}

Fig.~\ref{fig35} plots the eigenfunction and the corresponding single-layer potential around the three points
$x_{o}$, $x_{*}$ and $x_{\triangle}$ associated to the eigenvalue $\lambda_1=0.3792$. It can be readily seen that even if the curvature at the points $x_{o}$ and $x_{*}$ has the relationship 
\[
 \kappa_{x_o}<\kappa_{x_*},
\]
given in \eqref{eq:cu1}, in the Fig.~\ref{fig35}, the figure $a$ shows that the absolute value of the eigenfunction $\varphi$ at the point $x_{o}$ is larger than that at the point $x_{*}$, namely
\[
 \varphi(x_o)>\varphi(x_*).
\]
As for the single layer potential $S_{\partial D}[\varphi]$, $b$, $c$ and $d$ show that 
\[
 S_{\partial D}[\varphi](x_o)>S_{\partial D}[\varphi](x_*).
\]
Therefore for the non-symmetric domain $D$, the larger curvature point does not yield the larger value of the eigenvalue and the associated single layer potential for the positive eigenvalue.

\begin{figure}
\centering
\subfigure[]{
\includegraphics[width=0.2\textwidth]{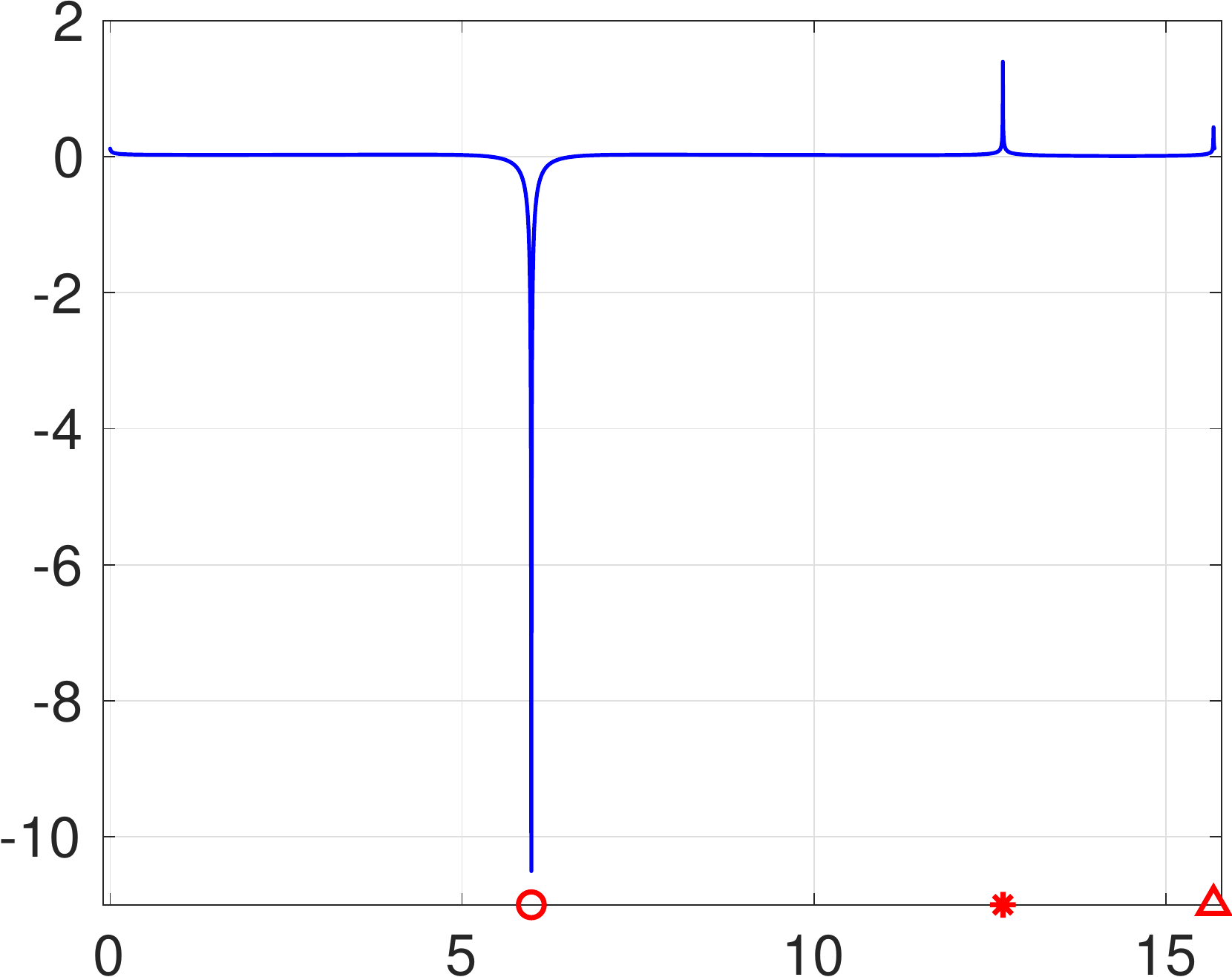}}
\subfigure[]{
\includegraphics[width=0.2\textwidth]{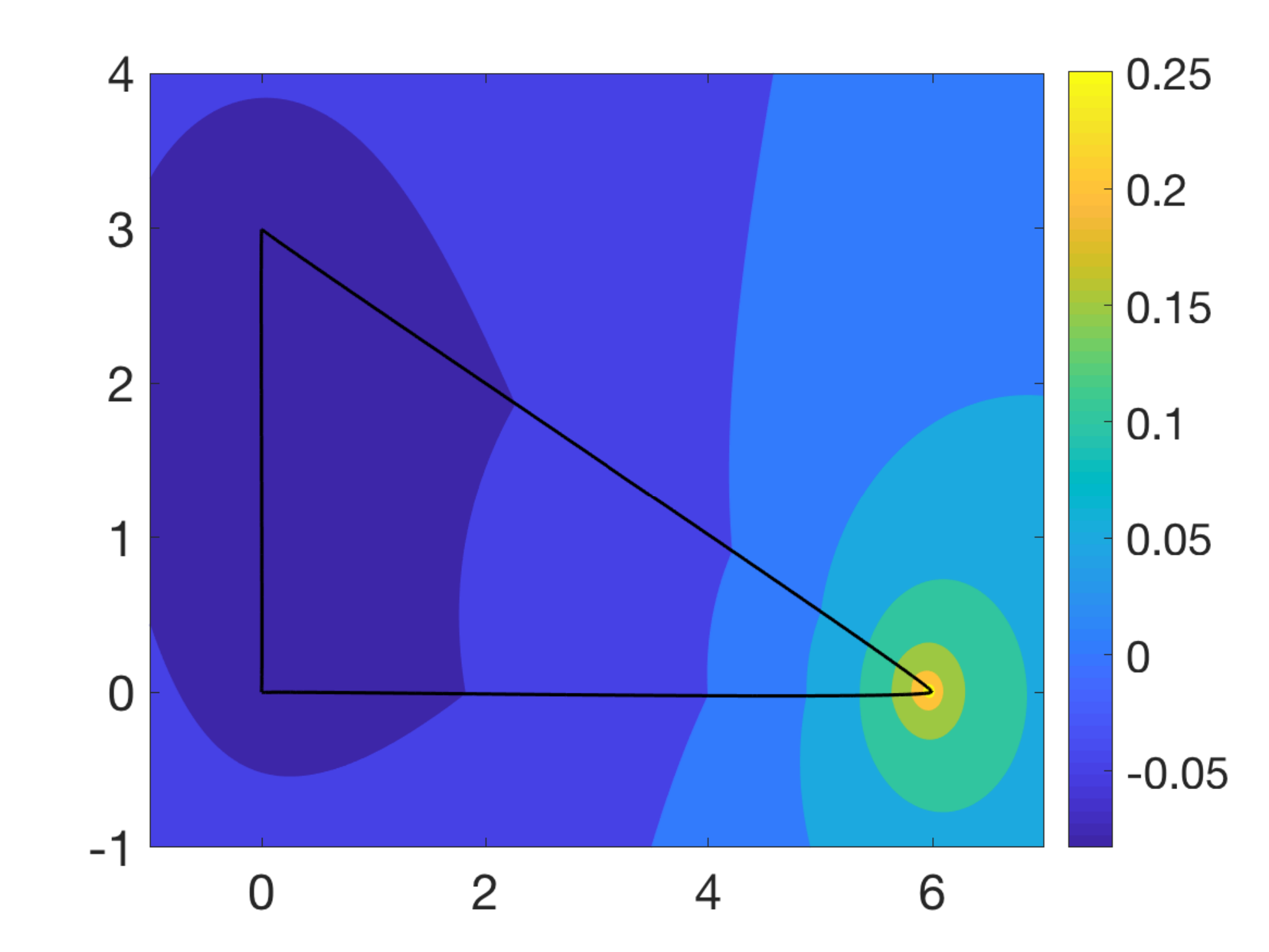}}\\
\subfigure[]{
\includegraphics[width=0.2\textwidth]{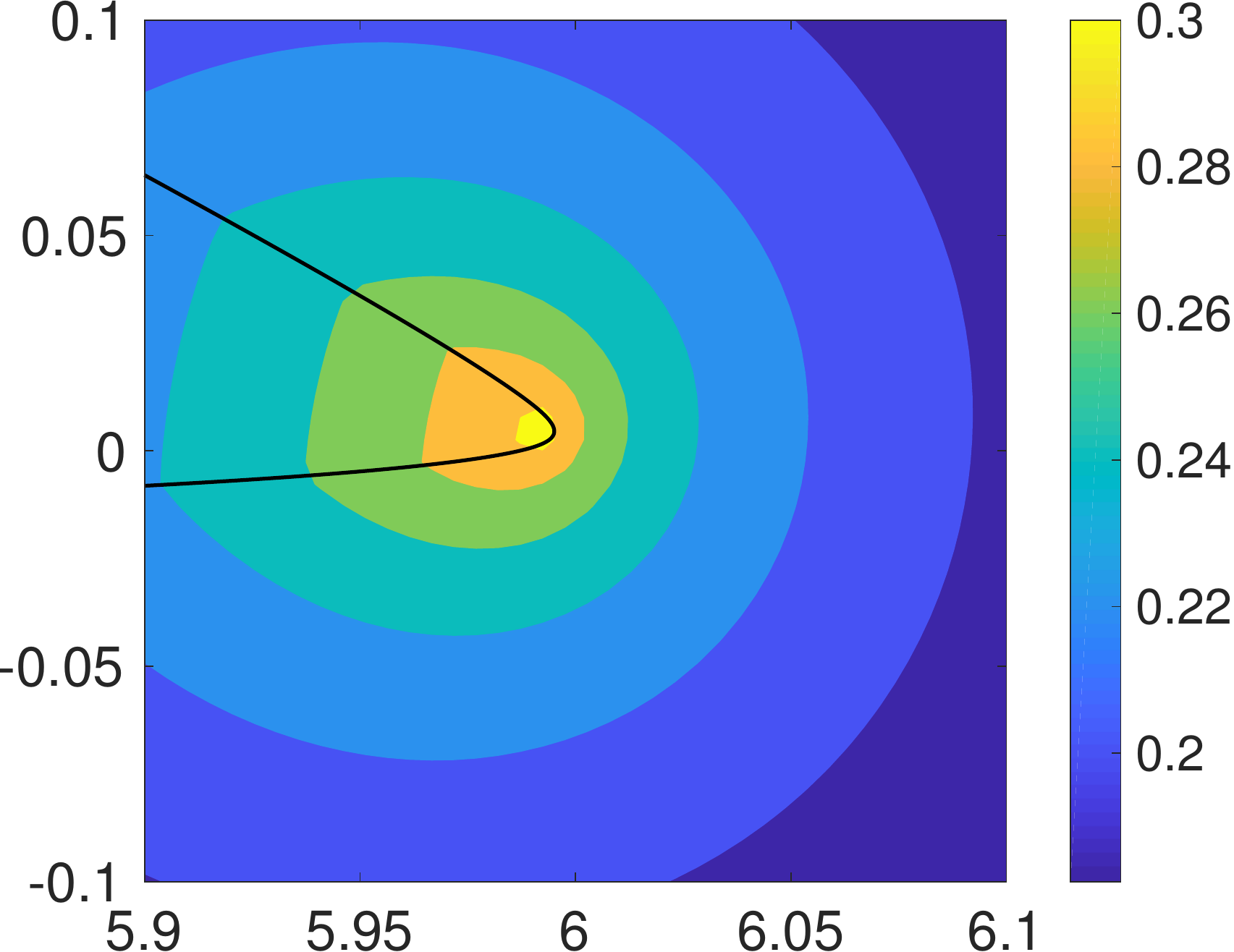}}
\subfigure[]{
\includegraphics[width=0.2\textwidth]{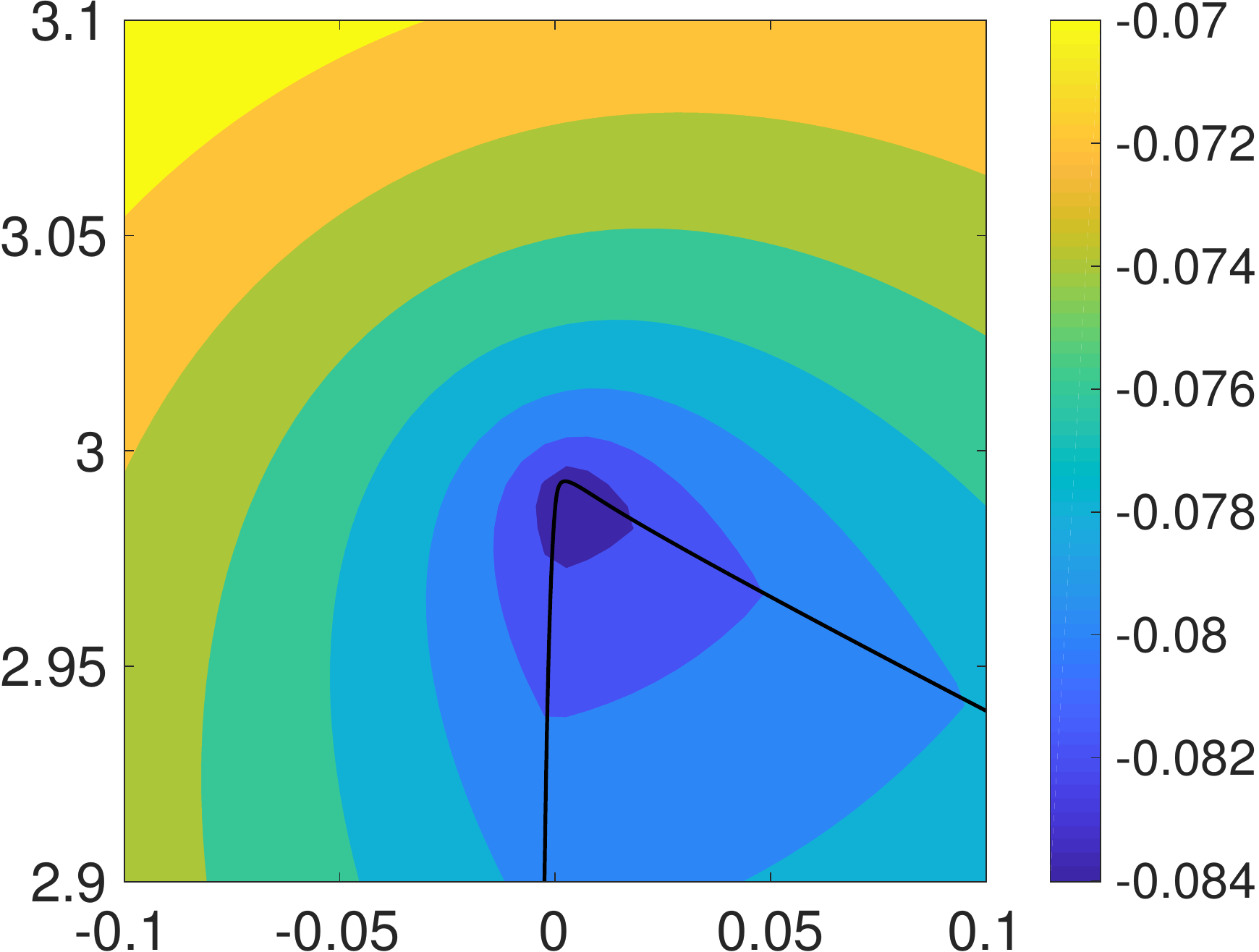}}
\subfigure[]{
\includegraphics[width=0.2\textwidth]{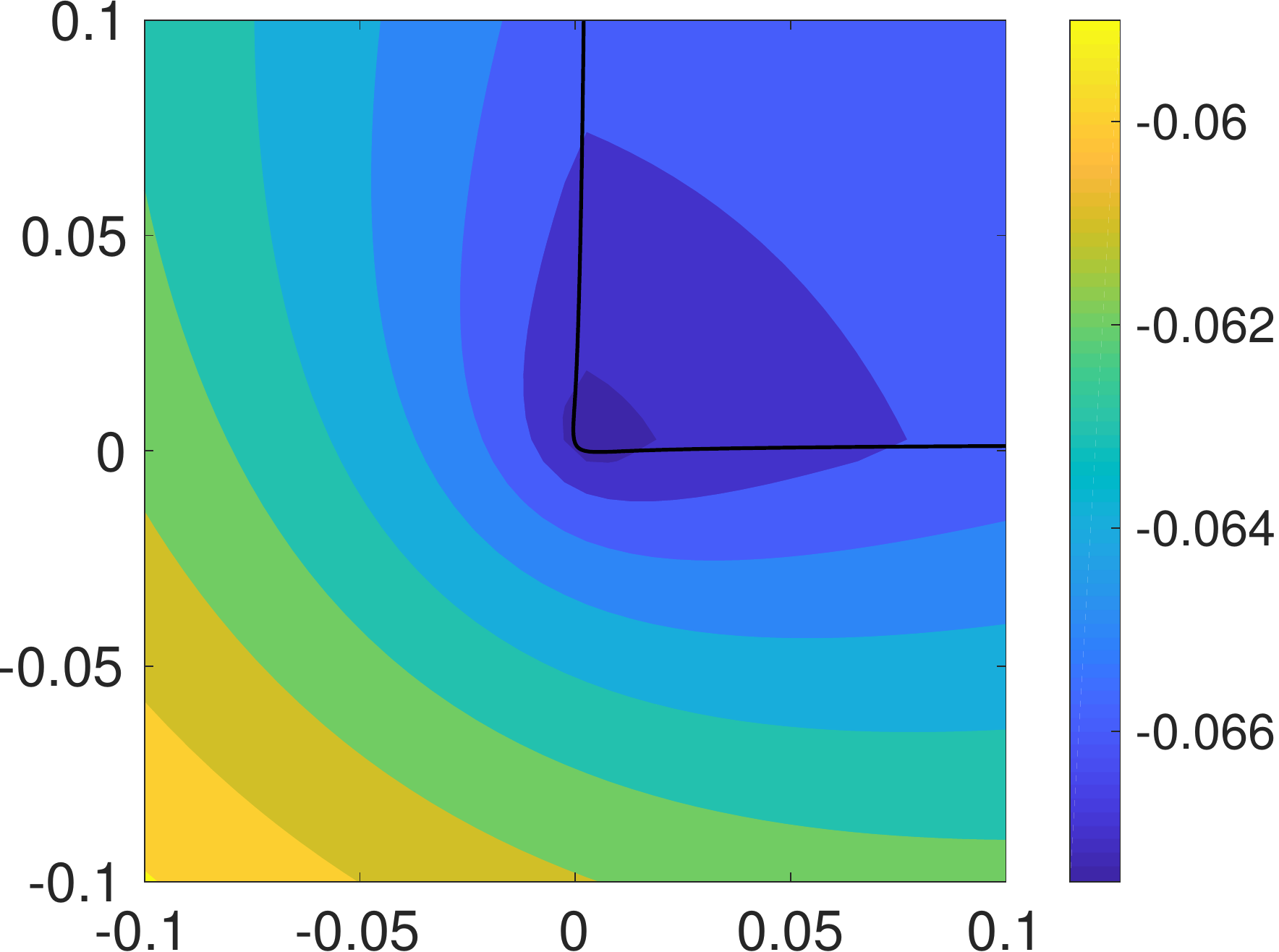}}\\
\caption{\label{fig35} 
(a). Eigenfunction with respect to the arc length associated to the eigenvalue $\lambda_1=0.3792$; (b).
The corresponding single-layer potential; (c), (d), (e). The single-layer potential around the three
points $x_{o}$, $x_{*}$ and $x_{\triangle}$, respectively. }
\end{figure}

Fig.~\ref{fig36} plots the eigenfunction, and its conormal derivative as well as the corresponding single-layer potential 
associated to the eigenvalue $\lambda_2=-0.3792$. It can be readily seen that even if the curvature at the points $x_{o}$ and $x_{*}$ has the relationship 
\[
 \kappa_{x_o}<\kappa_{x_*},
\]
from the figure $a$ and $b$, the values of the derivative of the eigenfunction at the high-curvature points $x_{o}$ and $x_{*}$ satisfy
\[
 d\varphi(x_o)>d\varphi(x_*).
\]
Therefore for the non-symmetric domain $D$, the larger curvature point does not yield the larger value of the conormal derivative of the eigenvalue for the negative eigenvalue.

\begin{figure}
\centering
\subfigure[]{
\includegraphics[width=0.2\textwidth]{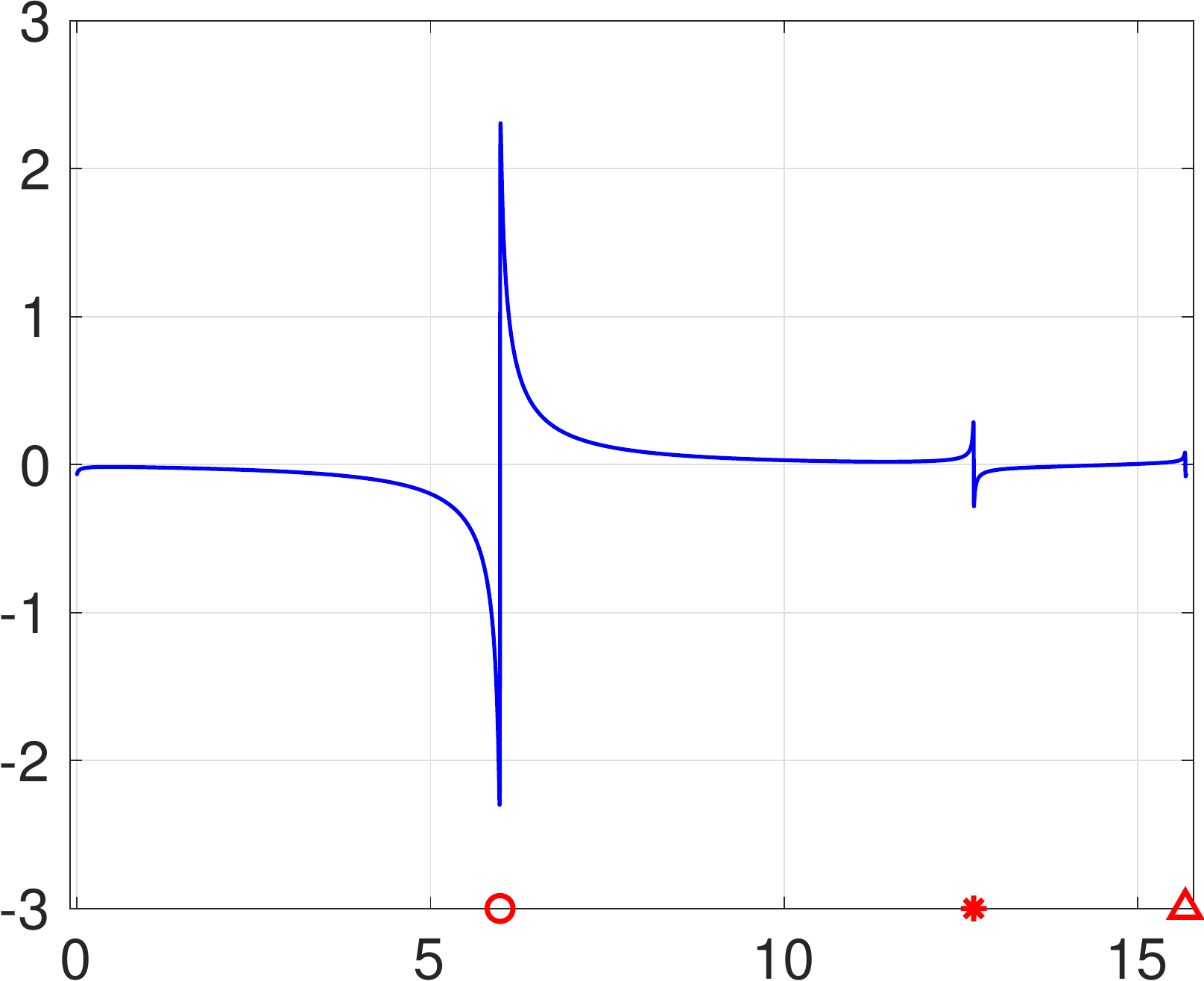}}
\subfigure[]{
\includegraphics[width=0.2\textwidth]{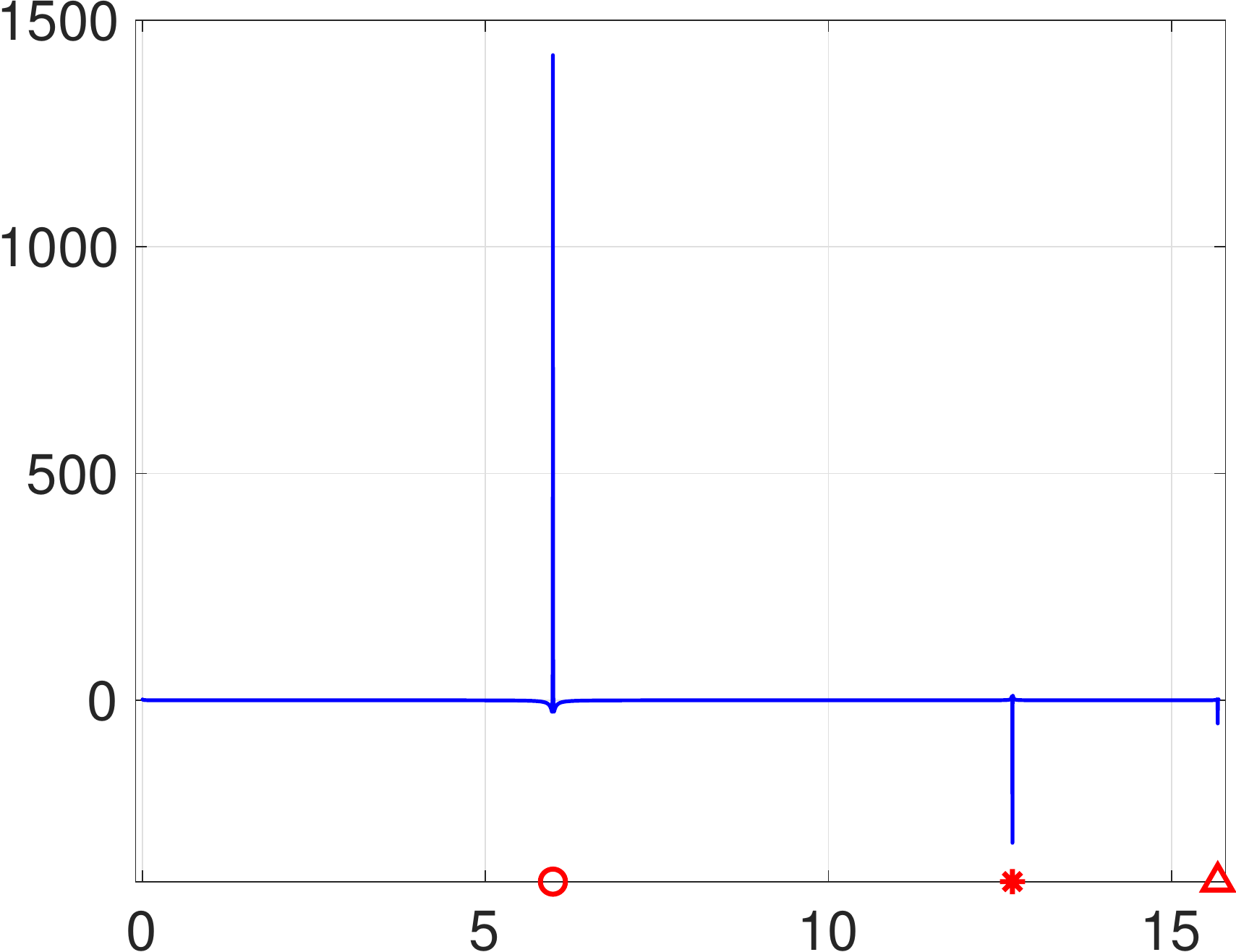}}
\subfigure[]{
\includegraphics[width=0.2\textwidth]{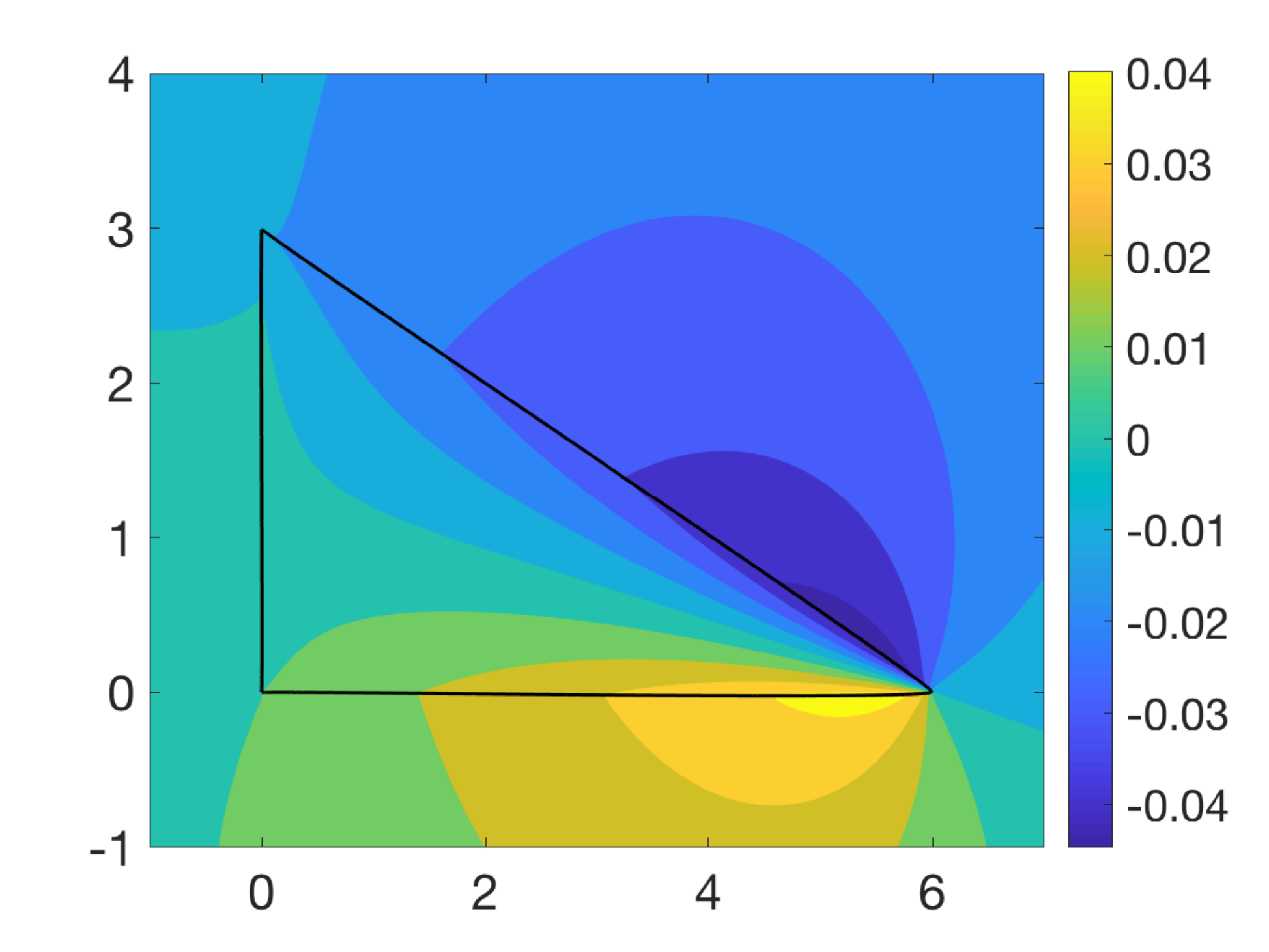}}\\
\subfigure[]{
\includegraphics[width=0.2\textwidth]{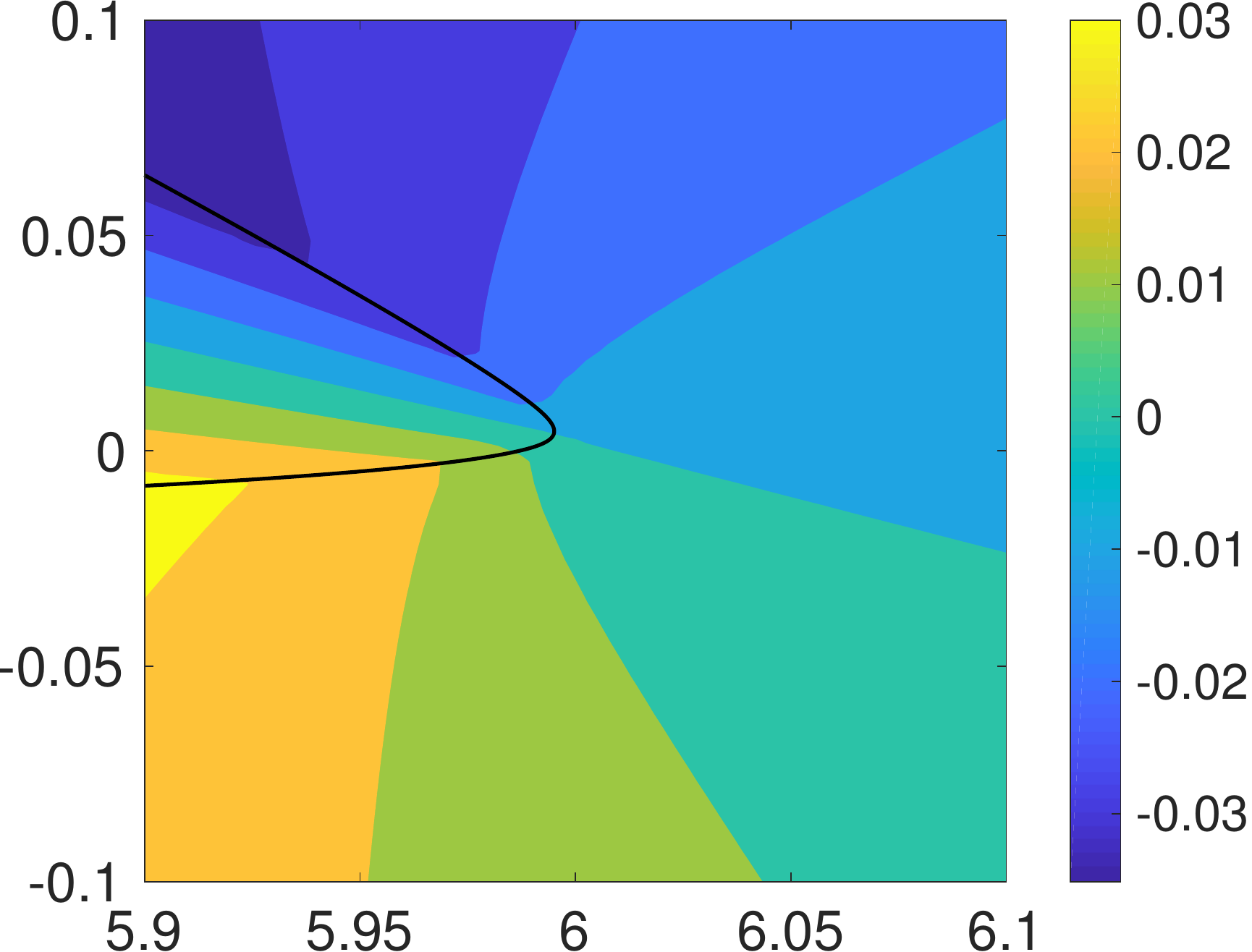}}
\subfigure[]{
\includegraphics[width=0.2\textwidth]{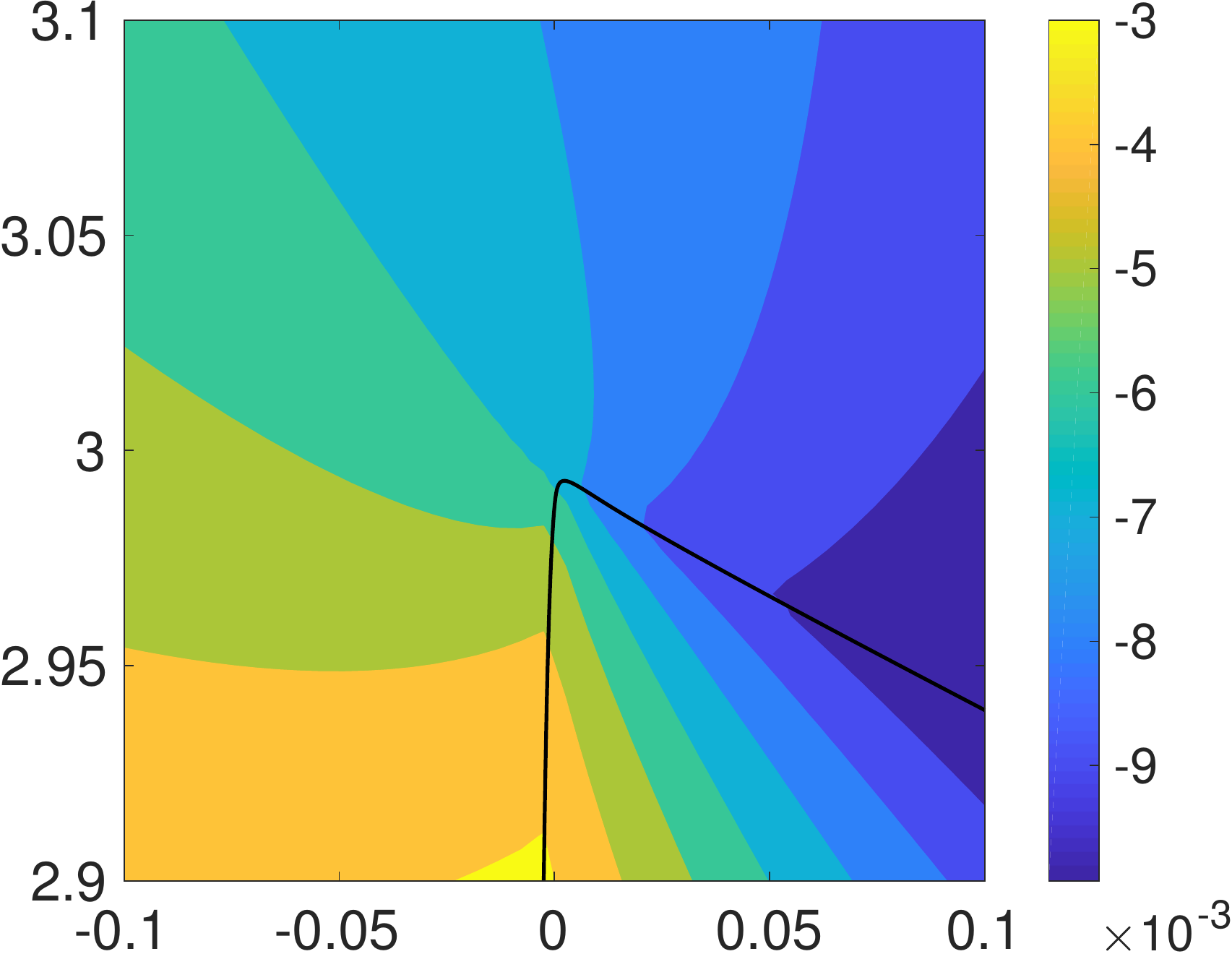}}
\subfigure[]{
\includegraphics[width=0.2\textwidth]{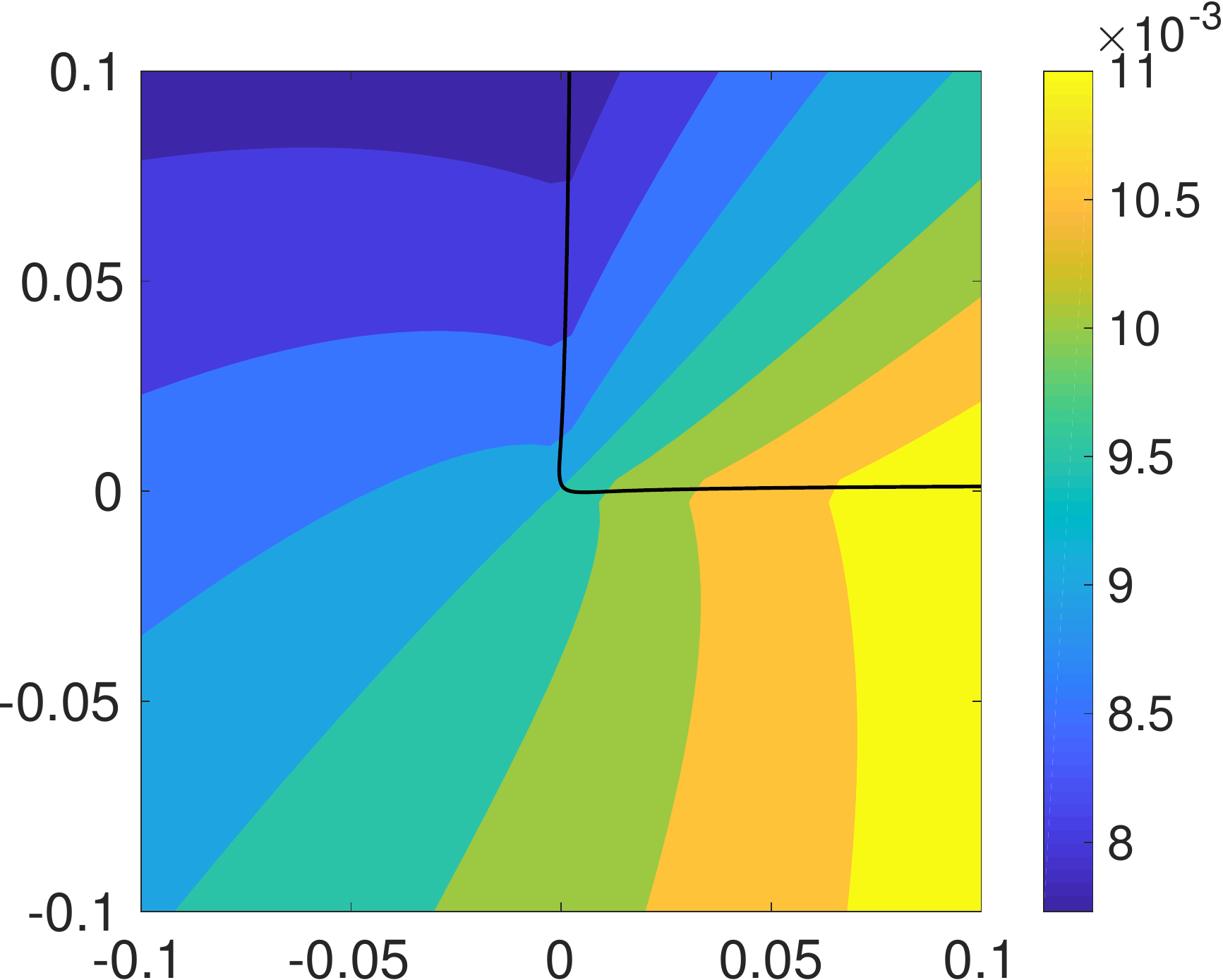}}\\
\caption{\label{fig36}
(a), (b). Eigenfunction and its conormal derivative with respect to the arc length associated to the eigenvalue $\lambda_2=-0.3792$; (c), (d), (e), (f).
The corresponding single-layer potential as well as their behaviours around the three points $x_{o}$, $x_{*}$ and $x_{\triangle}$, respectively.}
\end{figure}

\section{Concluding remarks}

In this paper, we show that the Neumann-Poincar\'e eigenfunctions possess certain delicate and intriguing geometric structures. The results are of independent interest
and significant importance in the spectral theory for the Neumann-Poincar\'e operator. It also opens up an exciting new field for further developments. Furthermore, the results 
can be used to explain the localization and geometrization phenomanon in the plasmon resonances, which is another novel and intriguing discovery made in the present article.
The localization and geometrization phenomenon might be used to produce super-resolution effect in wave imaging. To illustrate this, we present a last numerical example. 
\begin{figure}
\centering

\subfigure[]{
\includegraphics[width=0.2\textwidth]{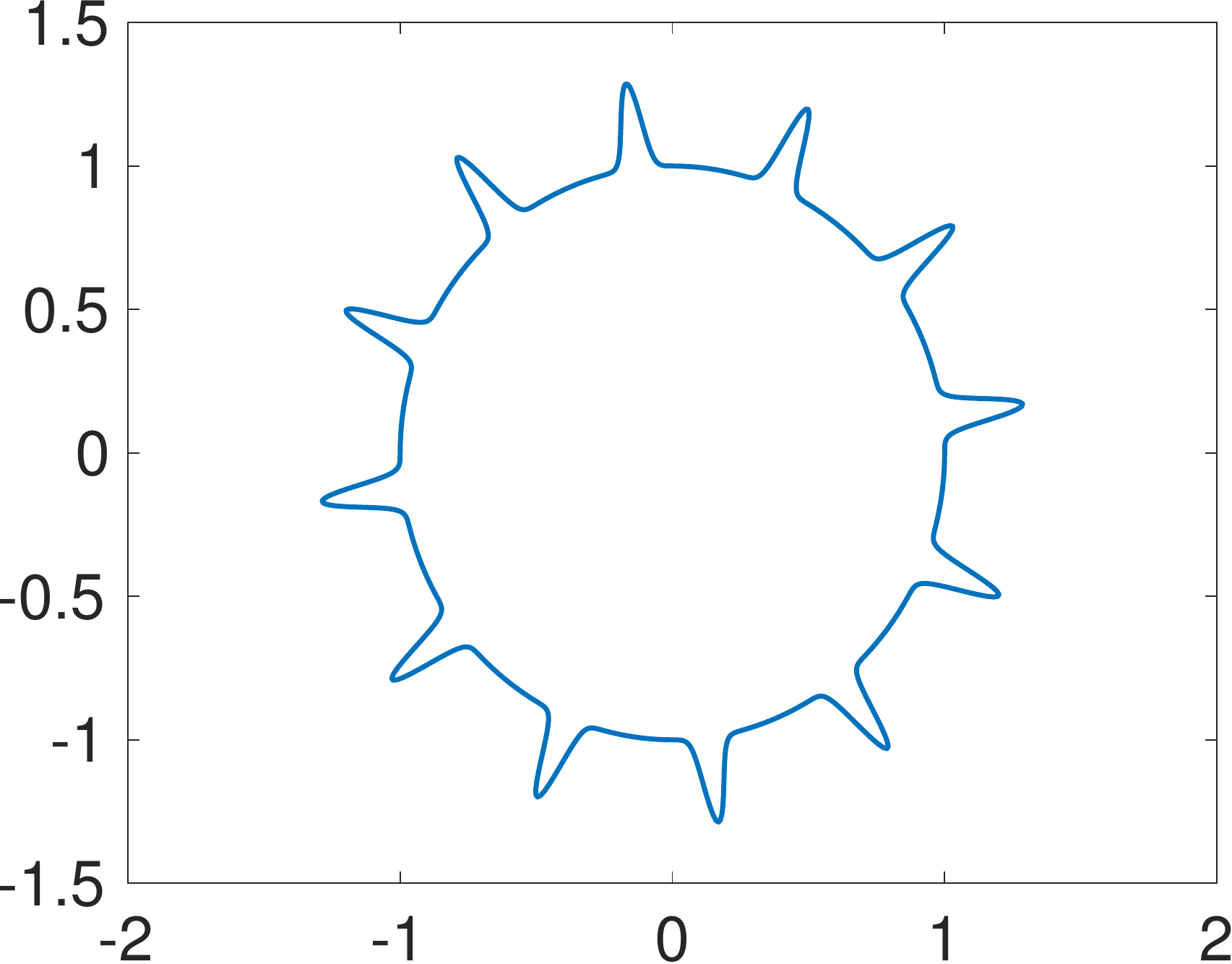}}
\subfigure[]{
\includegraphics[width=0.2\textwidth]{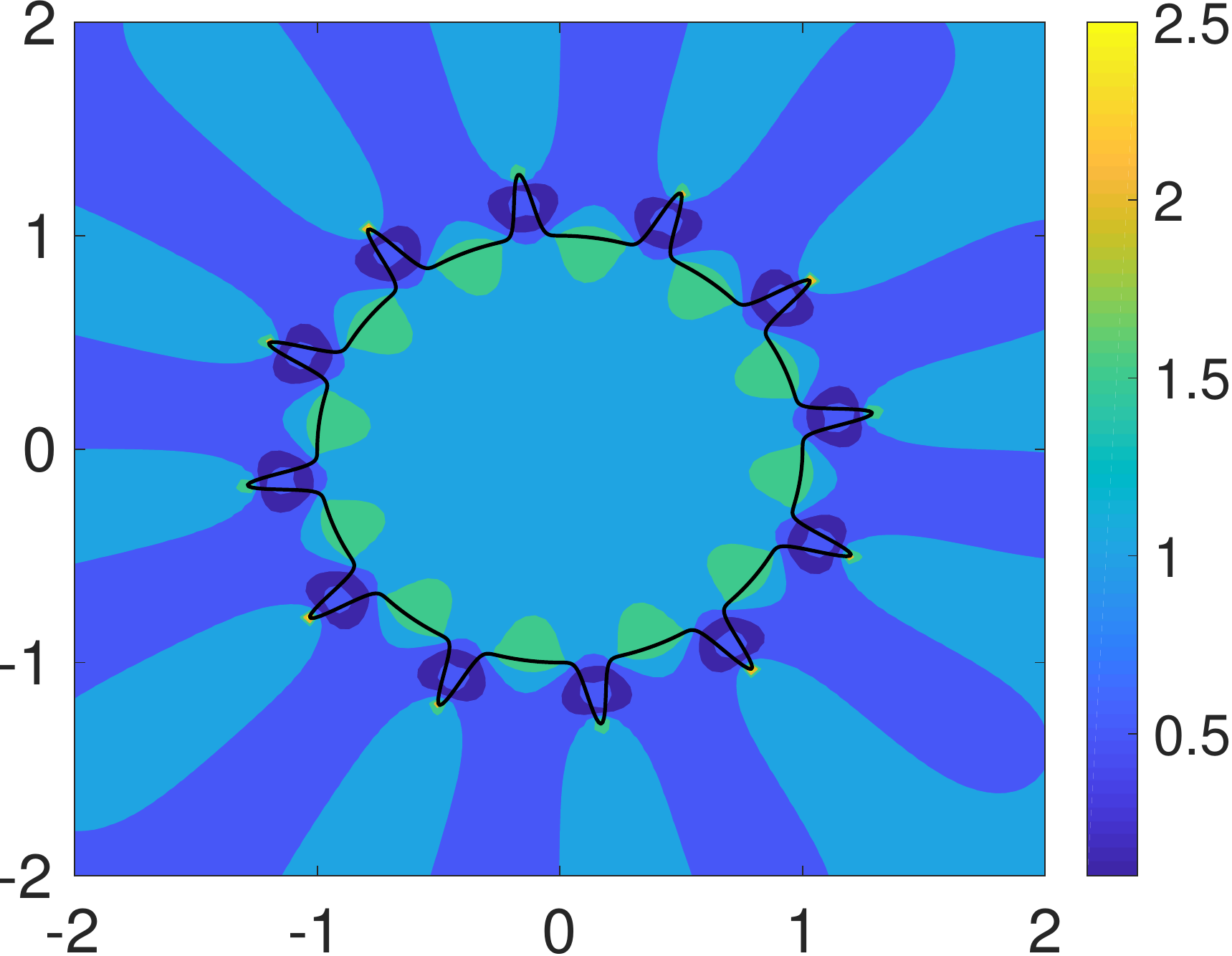}}
\subfigure[]{
\includegraphics[width=0.2\textwidth]{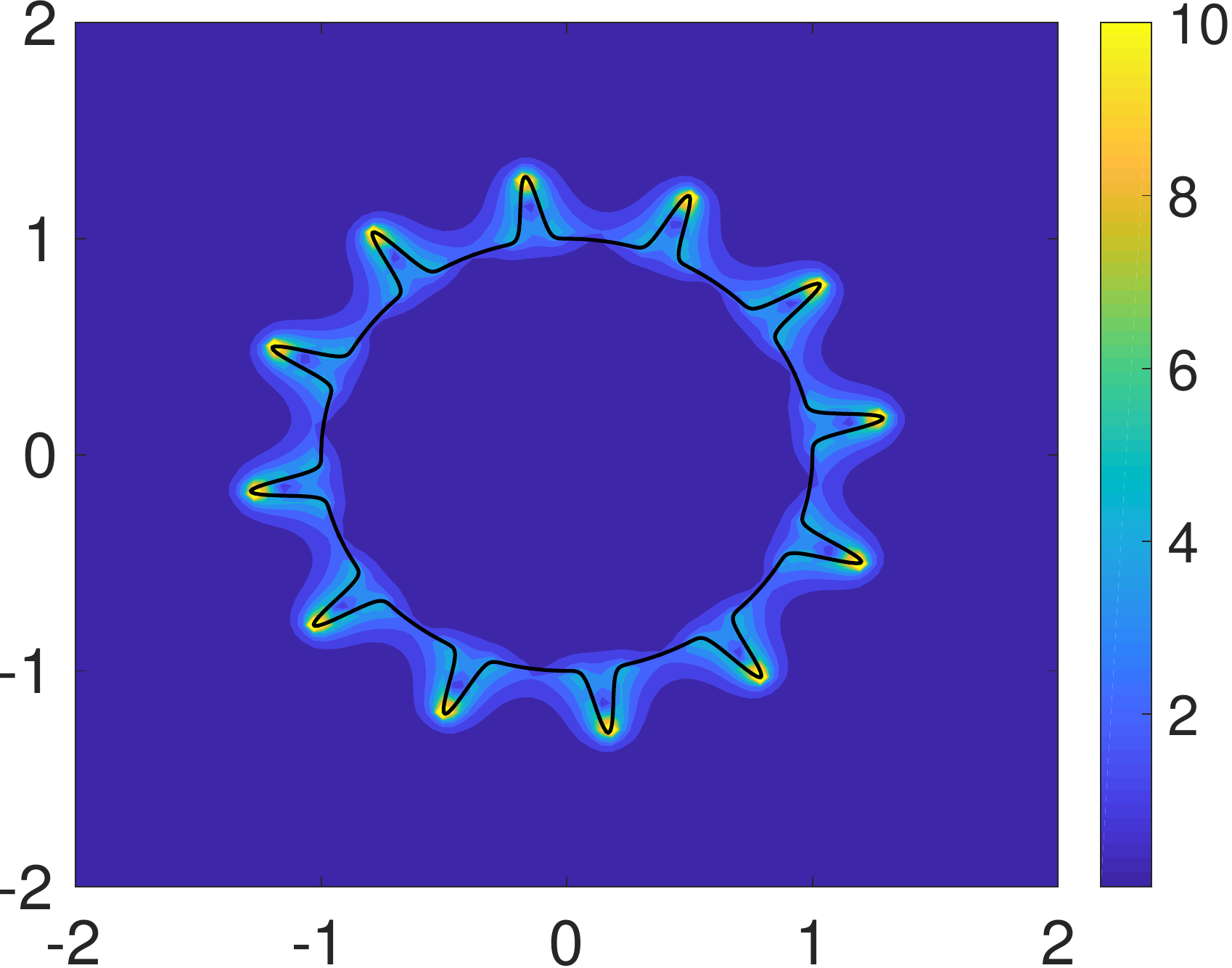}}\\
\caption{\label{fbstar} Left. A star-shaped plasmonic inclusion; Middle. Modulus of the resonant field; Right. Modulus of the gradient of the field. }
\end{figure}
Consider a plasmonic inclusion $D$ as plotted in Fig.~\ref{fbstar}, whose parametrization is given by 
\begin{equation}\label{eq:fr}
 r(\theta)= 1+0.0001e^{8\sin(12\theta)}.
\end{equation}  
There are $12$ cusped points and we denote that by $r_{max}\hat{x}_n$, $n=0,1,\cdots,11$, where
\[
 r_{\max}=1+0.0001e^8,\quad \hat{x}_n=\hat{x}(\pi/24 +n \pi/6),  \quad n=0,1,\cdots,11.
\]
It is easily seen that the distance $\tilde{d}$ between two cusped points is
\begin{equation}\label{eq:distance}
\tilde{d}= |r(\max)\hat{x}_n -r(\max)\hat{x}_{n+1}|=0.6719. 
\end{equation}
Now, let us choose the plasmon parameters inside the domain $D$ as follows,
\[
 \epsilon_c=-2.48907 \quad \mbox{and} \quad \delta=0.00001,
\]
which follows the rule in \eqref{eq:lambda1}, since one of the NP eigenvalues of the operator $K_{\partial D}^*$ can be numerically determined to be 
\[
 \lambda=0.21339.
\]
We also take an incident plane wave of the form \eqref{eq:nn2} with the wave number $k$ replaced by $k=0.01$. By our earlier discussion on the localization
and geometrization in plasmon resonances, it can be observed that strong resonant behaviours occur around the cusped points which possess very large curvatures. 
Those resonant behaviours can apparently be used to locate those cusp parts on the plasmonic inclusion. However, we note that the underlying 
wavelength is given by  
\[
 \lambda=2\pi/k=628.32,
\]
which is much bigger than $\tilde{d}$ in \eqref{eq:distance}. Hence, one might expect to have a certain super-resolution imaging effect. In this last example, we also note
that $k\cdot \mbox{diam}(D)\approx 0.02$, which means that the localization and geometrization phenomenon occurs at an even finer scale in the quasi-static regime.
Clearly, this is mainly due to the peculiar geometric structures of the NP eigenfunctions near high-curvature points. We shall investigate these intriguing problems in our 
future study.

%
%
%

\section*{Acknowledgement}

The work of H Liu was supported by the FRG fund from Hong Kong Baptist University and the Hong Kong RGC grants (projects 12302017 and 12302018).

\end{document}